\newcommand{\leqnomode}{\tagsleft@true}
\newcommand{\reqnomode}{\tagsleft@false}
\newtheorem{theorem}{Theorem}[section]
\newtheorem{lemma}[theorem]{Lemma}
\newtheorem{definition}[theorem]{Definiton}
\newtheorem{proposition}[theorem]{Proposition}
\newtheorem{conjecture}[theorem]{Conjecture}
\theoremstyle{definition}
\newenvironment{remark}
  {\pushQED{\qed}\remx}
  {\popQED\endremx}
\newsavebox\myboxA
\newsavebox\myboxB
\newlength\mylenA
\newcommand*\yoverline[2][0.75]{%
    \sbox{\myboxA}{$\m@th#2$}%
    \setbox\myboxB\null% Phantom box
    \ht\myboxB=\ht\myboxA%
    \dp\myboxB=\dp\myboxA%
    \wd\myboxB=#1\wd\myboxA% Scale phantom
    \sbox\myboxB{$\m@th\overline{\copy\myboxB}$}%  Overlined phantom
    \setlength\mylenA{\the\wd\myboxA}%   calc width diff
    \addtolength\mylenA{-\the\wd\myboxB}%
    \ifdim\wd\myboxB<\wd\myboxA%
       \rlap{\hskip 0.5\mylenA\usebox\myboxB}{\usebox\myboxA}%
    \else
        \hskip -0.5\mylenA\rlap{\usebox\myboxA}{\hskip 0.5\mylenA\usebox\myboxB}%
    \fi}
\numberwithin{equation}{section}
\begin{document}

%-------------------------------------

%New Commands

%-------------------------------------

\newcommand{\diver}{\operatorname{div}}
\newcommand{\lin}{\operatorname{Lin}}
\newcommand{\curl}{\operatorname{curl}}
\newcommand{\ran}{\operatorname{Ran}}
\newcommand{\kernel}{\operatorname{Ker}}
\newcommand{\la}{\langle}
\newcommand{\ra}{\rangle}
\newcommand{\N}{\mathbb{N}}
\newcommand{\R}{\mathbb{R}}
\newcommand{\C}{\mathbb{C}}

%%%%%%%%%%%%%%%%%%%%%%%%%%%%%%%%%%%%%%%%
\newcommand{\ld}{\lambda}
\newcommand{\fai}{\varphi}
\newcommand{\0}{0}
\newcommand{\n}{\mathbf{n}}
\newcommand{\uu}{{\boldsymbol{\mathrm{u}}}}
\newcommand{\UU}{{\boldsymbol{\mathrm{U}}}}
\newcommand{\buu}{\bar{{\boldsymbol{\mathrm{u}}}}}
\newcommand{\ten}{\\[4pt]}
\newcommand{\six}{\\[4pt]}
\newcommand{\nb}{\nonumber}
\newcommand{\hgamma}{H_{\Gamma}^1(\OO)}
\newcommand{\opert}{O_{\varepsilon,h}}
\newcommand{\barx}{\bar{x}}
\newcommand{\barf}{\bar{f}}
\newcommand{\hatf}{\hat{f}}
\newcommand{\xoneeps}{x_1^{\varepsilon}}
\newcommand{\xh}{x_h}
\newcommand{\scaled}{\nabla_{1,h}}
\newcommand{\scaledb}{\widehat{\nabla}_{1,\gamma}}
\newcommand{\vare}{\varepsilon}
\newcommand{\A}{{\bf{A}}}
\newcommand{\RR}{{\bf{R}}}
\newcommand{\B}{{\bf{B}}}
\newcommand{\CC}{{\bf{C}}}
\newcommand{\D}{{\bf{D}}}
\newcommand{\K}{{\bf{K}}}
\newcommand{\oo}{{\bf{o}}}
\newcommand{\id}{{\bf{Id}}}
\newcommand{\E}{\mathcal{E}}
\newcommand{\ii}{\mathcal{I}}
\newcommand{\sym}{\mathrm{sym}}
\newcommand{\lt}{\left}
\newcommand{\rt}{\right}
\newcommand{\ro}{{\bf{r}}}
\newcommand{\so}{{\bf{s}}}
\newcommand{\e}{{\bf{e}}}
\newcommand{\ww}{{\boldsymbol{\mathrm{w}}}}
\newcommand{\vv}{{\boldsymbol{\mathrm{v}}}}
\newcommand{\zz}{{\boldsymbol{\mathrm{z}}}}
\newcommand{\U}{{\boldsymbol{\mathrm{U}}}}
\newcommand{\G}{{\boldsymbol{\mathrm{G}}}}
\newcommand{\VV}{{\boldsymbol{\mathrm{V}}}}
\newcommand{\T}{{\boldsymbol{\mathrm{U}}}}
\newcommand{\II}{{\boldsymbol{\mathrm{I}}}}
\newcommand{\ZZ}{{\boldsymbol{\mathrm{Z}}}}
\newcommand{\hKK}{{{\bf{K}}}}
\newcommand{\f}{{\bf{f}}}
\newcommand{\g}{{\bf{g}}}
\newcommand{\lkk}{{\bf{k}}}
\newcommand{\tkk}{{\tilde{\bf{k}}}}
\newcommand{\W}{{\boldsymbol{\mathrm{W}}}}
\newcommand{\Y}{{\boldsymbol{\mathrm{Y}}}}
\newcommand{\EE}{{\boldsymbol{\mathrm{E}}}}
\newcommand{\F}{{\bf{F}}}
\newcommand{\spacev}{\mathcal{V}}
\newcommand{\spacevg}{\mathcal{V}^{\gamma}(\Omega\times S)}
\newcommand{\spacevb}{\bar{\mathcal{V}}^{\gamma}(\Omega\times S)}
\newcommand{\spaces}{\mathcal{S}}
\newcommand{\spacesg}{\mathcal{S}^{\gamma}(\Omega\times S)}
\newcommand{\spacesb}{\bar{\mathcal{S}}^{\gamma}(\Omega\times S)}
\newcommand{\skews}{H^1_{\barx,\mathrm{skew}}}
\newcommand{\kk}{\mathcal{K}}
\newcommand{\OO}{O}
\newcommand{\bhe}{{\bf{B}}_{\vare,h}}
\newcommand{\pp}{{\mathbb{P}}}
\newcommand{\ff}{{\mathcal{F}}}
\newcommand{\mWk}{{\mathcal{W}}^{k,2}(\Omega)}
\newcommand{\mWa}{{\mathcal{W}}^{1,2}(\Omega)}
\newcommand{\mWb}{{\mathcal{W}}^{2,2}(\Omega)}
\newcommand{\twos}{\xrightharpoonup{2}}
\newcommand{\twoss}{\xrightarrow{2}}
\newcommand{\bw}{\bar{w}}
\newcommand{\br}{\bar{{\bf{r}}}}
\newcommand{\bz}{\bar{{\bf{z}}}}
\newcommand{\tw}{{W}}
\newcommand{\tr}{{{\bf{R}}}}
\newcommand{\tz}{{{\bf{Z}}}}
\newcommand{\lo}{{{\bf{o}}}}
\newcommand{\hoo}{H^1_{00}(0,L)}
\newcommand{\ho}{H^1_{0}(0,L)}
\newcommand{\hotwo}{H^1_{0}(0,L;\R^2)}
\newcommand{\hooo}{H^1_{00}(0,L;\R^2)}
\newcommand{\hhooo}{H^1_{00}(0,1;\R^2)}
\newcommand{\dsp}{d_{S}^{\bot}(\barx)}
\newcommand{\LB}{{\bf{\Lambda}}}
\newcommand{\LL}{\mathbb{L}}
\newcommand{\mL}{\mathcal{L}}
\newcommand{\mhL}{\widehat{\mathcal{L}}}
\newcommand{\loc}{\mathrm{loc}}
\newcommand{\tqq}{\mathcal{Q}^{*}}
\newcommand{\tii}{\mathcal{I}^{*}}
\newcommand{\Mts}{\mathbb{M}}
\newcommand{\pot}{\mathrm{pot}}
\newcommand{\tU}{{\widehat{\bf{U}}}}
\newcommand{\tVV}{{\widehat{\bf{V}}}}
\newcommand{\pt}{\partial}
\newcommand{\bg}{\Big}
\newcommand{\hA}{\widehat{{\bf{A}}}}
\newcommand{\hB}{\widehat{{\bf{B}}}}
\newcommand{\hCC}{\widehat{{\bf{C}}}}
\newcommand{\hD}{\widehat{{\bf{D}}}}
\newcommand{\fder}{\partial^{\mathrm{MD}}}
\newcommand{\Var}{\mathrm{Var}}
\newcommand{\pta}{\partial^{0\bot}}
\newcommand{\ptaj}{(\partial^{0\bot})^*}
\newcommand{\ptb}{\partial^{1\bot}}
\newcommand{\ptbj}{(\partial^{1\bot})^*}
\newcommand{\geg}{\Lambda_\vare}
\newcommand{\tpta}{\tilde{\partial}^{0\bot}}
\newcommand{\tptb}{\tilde{\partial}^{1\bot}}
\newcommand{\ua}{u_\alpha}
\newcommand{\pa}{p\alpha}
\newcommand{\qa}{q(1-\alpha)}
\newcommand{\Qa}{Q_\alpha}
\newcommand{\Qb}{Q_\eta}
\newcommand{\ga}{\gamma_\alpha}
\newcommand{\gb}{\gamma_\eta}
\newcommand{\ta}{\theta_\alpha}
\newcommand{\tb}{\theta_\eta}

%%%%%%%%%%%%%%%%%%%%%%%%%%

\newcommand{\mH}{\mathcal{H}}
\newcommand{\mD}{\mathcal{D}}
\newcommand{\csob}{\mathcal{S}}
\newcommand{\mA}{\mathcal{A}}
\newcommand{\mK}{\mathcal{K}}
\newcommand{\mS}{\mathcal{S}}
\newcommand{\mI}{\mathcal{I}}
\newcommand{\tas}{{2_*}}
\newcommand{\tbs}{{2^*}}
\newcommand{\tm}{{\tilde{m}}}
\newcommand{\tdu}{{\phi}}
\newcommand{\tpsi}{{\tilde{\psi}}}
\newcommand{\Z}{{\mathbb{Z}}}
\newcommand{\tsigma}{{\tilde{\sigma}}}
\newcommand{\tg}{{\tilde{g}}}
\newcommand{\tG}{{\tilde{G}}}
\newcommand{\mM}{\mathcal{M}}
\newcommand{\mC}{\mathcal{C}}
\newcommand{\wlim}{{\text{w-lim}}\,}

\selectlanguage{english}
%%%%%%%%%%%%%%%%%%%%%%
\title{On sharp scattering threshold for the mass-energy double critical NLS via double track profile decomposition}
\author{Yongming Luo \thanks{Institut f\"{u}r Wissenschaftliches Rechnen, Technische Universit\"at Dresden, Germany} \thanks{\href{mailto:yongming.luo@tu-dresden.de}{Email: yongming.luo@tu-dresden.de}}
}

\date{}
\maketitle

\begin{abstract}
The present paper is concerned with the large data scattering problem for the mass-energy double critical NLS
\begin{align}
i\pt_t u+\Delta u\pm |u|^{\frac{4}{d}}u\pm |u|^{\frac{4}{d-2}}u=0\tag{DCNLS}
\end{align}
in $H^1(\R^d)$ with $d\geq 3$. In the defocusing-defocusing regime, Tao, Visan and Zhang show that the unique solution of DCNLS is global and scattering in time for arbitrary initial data in $H^1(\R^d)$. This does not hold when at least one of the nonlinearities is focusing, due to the possible formation of blow-up and soliton solutions. However, precise thresholds for a solution of DCNLS being scattering were open in all the remaining regimes. Following the classical concentration compactness principle, we impose sharp scattering thresholds in terms of ground states for DCNLS in all the remaining regimes. The new challenge arises from the fact that the remainders of the standard $L^2$- or $\dot{H}^1$-profile decomposition fail to have asymptotically vanishing diagonal $L^2$- and $\dot{H}^1$-Strichartz norms simultaneously. To overcome this difficulty, we construct a double track profile decomposition which is capable to capture the low, medium and high frequency bubbles within a single profile decomposition and possesses remainders that are asymptotically small in both of the diagonal $L^2$- and $\dot{H}^1$-Strichartz spaces.
\end{abstract}

%%%%%%%%%%%%%%%%%%%%%%%%%%%%%

\section{Introduction and main results}
In this paper, we study the large data scattering problem for the mass-energy double critical NLS
\begin{equation}\label{NLS double crit}
i\pt_t u+\Delta u+\mu_1|u|^{\tas-2}u+\mu_2|u|^{\tbs-2}u=0\quad\text{in $\R\times \R^d$}\tag{DCNLS}
\end{equation}
with $d\geq 3$, $\mu_1,\mu_2\in\{\pm1\}$, $\tas=2+\frac{4}{d}$ and $\tbs=2+\frac{4}{d-2}$. The equation \eqref{NLS double crit} is a special case of the NLS
with combined nonlinearities
\begin{equation}\label{NLS0}
i\pt_t u+\Delta u+\mu_1|u|^{p_1-2}u+\mu_2|u|^{p_2-2}u=0\quad\text{in $\R\times \R^d$}
\end{equation}
with $\mu_1,\mu_2\in\R$ and $p_1,p_2\in(2,\infty)$. \eqref{NLS0} is a prototype model arising from numerous physical applications such as nonlinear optics and Bose-Einstein condensation. The signs $\mu_i$ can be tuned to be defocusing ($\mu_i<0$) or focusing ($\mu_i>0$), indicating the repulsivity or attractivity of the nonlinearity. For a comprehensive introduction on the physical background of \eqref{NLS0}, we refer to \cite{phy_double_crit_1,Buslaev2001,LeMesurier1988} and the references therein. Formally, \eqref{NLS0} preserves
\begin{alignat*}{3}
&\text{the mass}&&\quad\mM(u)&&=\int_{\R^d}|u|^2\,dx,\\
&\text{the Hamiltonian}&&\quad\mH(u)&&=\int_{\R^d}\frac{1}{2}|\nabla u|^2-\frac{\mu_1}{p_1}|u|^{p_1}-\frac{\mu_2}{p_2}|u|^{p_2}\,dx,\\
&\text{the momentum}&&\quad\mathcal{P}(u)&&=\int_{\R^d}\mathrm{Im}(\bar{u}\nabla u)\,dx
\end{alignat*}
over time. It is also easy to check that any solution $u$ of \eqref{NLS0} is invariant under time and space translation. Direct calculation also shows that \eqref{NLS0} remains invariant under the Galilean transformation
\begin{align*}
u(t,x)\mapsto e^{i\xi\cdot x}e^{-it|\xi|^2}u(t,x-2\xi t)
\end{align*}
for any $\xi\in\R^d$. Moreover, we say that a function $P$ is a \textit{soliton} solution of \eqref{NLS0} if $P$ solves the equation
\begin{align}\label{NLS0 soliton}
-\Delta P+\omega P-\mu_1|P|^{p_1-2}P-\mu_2|P|^{p_2-2}P=0
\end{align}
for some $\omega\in\R$. One easily verifies that $u(t,x):= e^{i\omega t}P(x)$ is a solution of \eqref{NLS0}. As we will see later, the soliton solutions play a very important role in the study of dispersive equations, since they can be seen as the balance point between dispersive and nonlinear effects.

When $\mu_1=0$, \eqref{NLS0} reduces to the NLS
\begin{align}
i\pt_t u+\Delta u+\mu|u|^{p-2}u=0\label{NLS single}
\end{align}
with pure power type nonlinearity, which has been extensively studied in literature. In particular, a solution of \eqref{NLS single} also exhibits the scaling invariance
\begin{align}
u(t,x)\mapsto \ld^{\frac{2}{p-2}}u(\ld^2t,\ld x)\label{scaling}
\end{align}
for any $\ld>0$, which plays a fundamental role in the study of \eqref{NLS single}. We also say that \eqref{NLS single} is $s_c$-critical with $s_c=s_c(p)=\frac{d}{2}-\frac{2}{p-2}$. It is easy to verify that the $\dot{H}^{s_c}$-norm is also invariant under the scaling \eqref{scaling}. We are particularly interested in the cases $s_c=0$ and $s_c=1$: In order to guarantee one or more conservation laws, we demand the solution of the NLS to be at least of class $L^2$ or $\dot{H}^1$. Moreover, we see that the mass and Hamiltonian are invariant under the $0$- and $1$-scaling respectively.

Concerning the Cauchy problem \eqref{NLS single}, Cazenave and Weissler \cite{CazenaveWeissler1,Cazenave1990} show that \eqref{NLS single} with $p\in(2,\tbs]$ defined on some interval $I\ni t_0$ is locally well-posed in $H^{1}(\R^d)$ on the maximal lifespan $I_{\max}\ni t_0$. In particular, if $p\in (2,\tbs)$ (namely the problem is energy-subcritical), then $u$ blows-up at finite time $t_{\sup}:=\sup I_{\max}$ if and only if
\begin{align}\label{subcritical criterion}
\lim_{\,\,t\,\uparrow\, t_{\sup}} \|\nabla u(t)\|_2=\infty.
\end{align}
A similar result holds for the negative time direction. Combining with the Gagliardo-Nirenberg inequality, it is immediate that \eqref{NLS single} having defocusing energy-subcritical nonlinearity or mass-subcritical nonlinearity (regardless of the sign) is always globally well-posed in $H^1(\R^d)$. However, this does not hold for focusing mass-supercritical and energy-subcritical \eqref{NLS single}: One can construct blow-up solutions using the celebrated virial identity due to Glassey \cite{Glassey1977} for initial data possessing negative energy. By a straightforward modification (see for instance \cite{Cazenave2003}) the results from \cite{CazenaveWeissler1,Cazenave1990} extend naturally to \eqref{NLS0}.

The blow-up criterion \eqref{subcritical criterion} does not carry over to the energy-critical case, since in this situation the well-posedness result also depends on the profile of the initial data. Using the so called induction on energy method, Bourgain \cite{BourgainRadial} is able to show that the defocusing energy-critical NLS is globally well-posed and scattering\footnote{For \eqref{NLS single} with pure mass- or energy-critical nonlinearity, the scattering space is referred to $L^2(\R^d)$ or $\dot{H}^1(\R^d)$ respectively, while for \eqref{NLS double crit} we consider scattering in $H^1(\R^d)$.} (we refer to Definition \ref{scaling} below for a precise definition of a scattering solution) for any radial initial data in $\dot{H}^1(\R^d)$ in the case $d=3$. Using the interaction Morawetz inequalities, the I-team \cite{defocusing3d} successfully removes the radial assumption in \cite{BourgainRadial}. The result in \cite{defocusing3d} is later extended to arbitrary dimension $d\geq 4$ \cite{defocusing4d,defocusing5dandhigher} and the well-posedness and scattering problem for the defocusing energy-critical NLS is completely resolved.

Utilizing the Glassey's virial arguments one verifies that a solution of the focusing energy-critical NLS is not always globally well-posed and scattering. On the other hand, appealing to standard contraction iteration we are able to show that the focusing energy-critical NLS is globally well-posed and scattering for small initial data. It turns out that the strict threshold, under which the small data theory takes place, can be described by the Aubin-Talenti-function
$$W(x):=\bg(1+\frac{|x|^2}{d(d-2)}\bg)^{-\frac{d-2}{2}},$$
which solves the Lane-Emden equation
\begin{align*}
-\Delta W=W^{\tbs-1}
\end{align*}
and is an optimizer of the Sobolev inequality
\begin{align*}
\mS:=\inf_{u\in\mD^{1,2}(\R^d)}\frac{\|u\|^2_{\dot{H}^1}}{\|u\|^2_\tbs}.
\end{align*}
Using the concentration compactness principle, Kenig and Merle \cite{KenigMerle2006} are able to prove the following large data scattering result concerning the focusing energy-critical NLS:
\begin{theorem}[\cite{KenigMerle2006}]\label{kenig merle theorem}
Let $d\in\{3,4,5\}$, $p=\tbs$ and $\mu=1$. Let also $u$ be a solution of \eqref{NLS single} with $u(0)=u_0\in \dot{H}^1_{\mathrm{rad}}(\R^d)$, $\mH^*(u_0)<\mH^*(W)$ and $\|u_0\|_{\dot{H}^1}<\|W\|_{\dot{H}^1}$, where
$$ \mH^*(u):=\frac{1}{2}\|u\|_2^2-\frac{1}{\tbs}\|u\|_\tbs^\tbs.$$
Then $u$ is global and scattering in time.
\end{theorem}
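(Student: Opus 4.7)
The plan is to follow the concentration-compactness plus rigidity roadmap of Kenig and Merle. The first step is a variational coercivity estimate. Introduce the auxiliary function
$$G(y):=\tfrac{1}{2}y-\tfrac{1}{\tbs}\mS^{-\tbs/2}y^{\tbs/2},\qquad y\geq 0,$$
so that the sharp Sobolev inequality $\mS\|u\|_{\tbs}^2\leq \|u\|_{\dot H^1}^2$ yields $\mH^*(u)\geq G(\|u\|_{\dot H^1}^2)$. The function $G$ strictly increases on $[0,\|W\|_{\dot H^1}^2]$, strictly decreases afterwards, and attains its maximum $\mH^*(W)$ at $y=\|W\|_{\dot H^1}^2$. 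Conservation of $\mH^*$ combined with the hypotheses then forces, by a continuity in $t$ argument, the uniform bound $\|u(t)\|_{\dot H^1}\leq(1-\delta)\|W\|_{\dot H^1}$ throughout the maximal lifespan, providing the coercive $\dot H^1$ control that underpins the rest of the argument.

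Second, assume the theorem is false and extract a critical element. Strichartz-based small data theory gives $\eta_0>0$ with $\|u_0\|_{\dot H^1}<\eta_0$ implying scattering, so the threshold
$$E_c:=\sup\{E\leq\mH^*(W):\text{every radial }u_0\text{ with }\mH^*(u_0)<E,\,\|u_0\|_{\dot H^1}<\|W\|_{\dot H^1}\text{ scatters}\}$$
satisfies $0<E_c<\mH^*(W)$. Pick a radial minimizing sequence $u_{0,n}$ whose solutions do not scatter and with $\mH^*(u_{0,n})\to E_c$, and apply Keraani's $\dot H^1$-profile decomposition. Radiality kills the spatial translation parameter, leaving only scales $\lambda_n^j$ and time cores $t_n^j$. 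The Pythagorean decomposition of $\mH^*$ together with the Step~1 coercivity puts each nonlinear profile strictly below $E_c$, so by definition of $E_c$ all but one profile must scatter; a nonlinear stability/perturbation lemma upgrades this to a single surviving profile and produces a radial critical element $u_c$ with $\mH^*(u_c)=E_c$, $\|u_c(t)\|_{\dot H^1}<\|W\|_{\dot H^1}$, non-scattering, and with trajectory precompact in $\dot H^1$ modulo the scaling $\lambda(t)^{(d-2)/2}u_c(t,\lambda(t)\,\cdot)$.

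Third, rigidity rules out $u_c$. By the standard Kenig-Merle analysis of $\lambda(t)$ in the radial energy-critical setting, one reduces to two canonical scenarios: $\lambda(t)$ bounded above and below (soliton-like global-in-time orbit), or $\lambda(t)\to\infty$ at an endpoint of the lifespan (self-similar concentration). In each case one applies a truncated virial identity $\tfrac{d^2}{dt^2}\int\chi_R|x|^2|u_c|^2\,dx$ with a smooth radial cutoff $\chi_R$ at a large scale $R$. The variational gap from Step~1 refines into a quantitative bound $\int|\nabla u_c|^2-|u_c|^{\tbs}\,dx\geq c_0>0$ uniformly in $t$, making the main term of the virial identity strictly positive, while the boundary terms from the cutoff are controlled by the precompactness of the orbit. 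Integrating in $t$ contradicts the uniform a priori bounds from compactness, forcing $u_c\equiv 0$ and hence $E_c=0$, a contradiction.

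The main obstacle is the rigidity step: the qualitative precompactness of the orbit has to be upgraded to a quantitative decay and localization that actually closes the truncated virial argument. The radial hypothesis is essential here on two counts, as it eliminates the translation modulation in the profile decomposition and it enables the pointwise radial Sobolev embedding $\|u\|_{L^\infty(|x|\geq R)}\lesssim R^{-(d-2)/2}\|u\|_{\dot H^1}$ that tames the cutoff error terms. The dimensional restriction $d\in\{3,4,5\}$ enters through the smoothness threshold of the nonlinearity $|u|^{\tbs-2}u$ required by the perturbative stability lemma used in the construction of $u_c$.
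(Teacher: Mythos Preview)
The paper does not contain its own proof of this statement: Theorem~\ref{kenig merle theorem} is quoted from \cite{KenigMerle2006} as a background result and is used, not reproved, in the present paper. Your sketch is a faithful outline of the original Kenig--Merle concentration-compactness/rigidity argument from that reference, so there is nothing to compare against here.
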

The result by Kenig and Merle is later extended by Killip and Visan \cite{KillipVisan2010focusing} to arbitrary dimension $d\geq 5$, where the radial assumption is also removed. Until very recently, Dodson \cite{Dodson4dfocusing} also removes the radial assumption in the case $d=4$. The 3D large data scattering problem for general initial data in $\dot{H}^1(\R^3)$ still remains open.

Based on the methodologies developed for the energy-critical NLS, Dodson is able to prove similar global well-posedness and scattering results for the mass-critical NLS. For the defocusing case, Dodson \cite{dodson1d,dodson2d,dodson3d} shows that a solution of the defocusing mass-critical NLS is always global and scattering in time for any initial data $u_0\in L^2(\R^d)$ with $d\geq 1$. To formulate the corresponding result for the focusing case, we denote by $Q$ the unique positive and radial solution of the stationary focusing mass-critical NLS
\begin{align*}
-\Delta Q+Q=Q^{\tas-1}.
\end{align*}
For the existence and uniqueness of $Q$, we refer to \cite{weinstein} and \cite{Kwong_uniqueness} respectively. The following result is due to Dodson \cite{Dodson4dmassfocusing} concerning the focusing mass-critical NLS:
\begin{theorem}[\cite{Dodson4dmassfocusing}]
Let $d\geq 1$, $p=\tas$ and $\mu=1$. Let also $u$ be a solution of \eqref{NLS single} with $u(0)=u_0\in L^2(\R^d)$ and $\mM(u_0)<\mM(Q)$. Then $u$ is global and scattering in time.
\end{theorem}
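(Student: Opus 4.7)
The plan is to follow the Kenig--Merle concentration-compactness/rigidity road map, adapted to the mass-critical setting. First I would set up the small-data and stability theory for the diagonal Strichartz norm $S(I)=L^{2(d+2)/d}_{t,x}(I\times\R^d)$: if $\|e^{it\Delta}u_0\|_{S(\R)}$ is sufficiently small, a contraction mapping argument gives a global scattering solution, which combined with a long-time perturbation lemma yields stability of the class of $S$-bounded solutions. Define
\[ L(m):=\sup\{\|u\|_{S(I_{\max})}:u\text{ solves \eqref{NLS single}},\ \mM(u)\le m\},\qquad m_*:=\sup\{m\ge 0:L(m)<\infty\}.\]
The small-data theory gives $m_*>0$ and the theorem reduces to proving $m_*\ge\mM(Q)$. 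Arguing by contradiction, suppose $m_*<\mM(Q)$.

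The next step is to extract a minimal-mass blow-up solution $u_c$ via a Palais--Smale-type argument. For this I would invoke the mass-critical linear profile decomposition of B\'egout--Vargas/Merle--Vega/Carles--Keraani, which decomposes a bounded sequence in $L^2(\R^d)$ along the full non-compact symmetry group (space-time translations, Galilean modulations, and $L^2$-scaling) as a superposition of asymptotically orthogonal profiles plus a small remainder. Applied to a sequence of solutions with masses tending to $m_*$ and $S$-norms tending to infinity, the orthogonality together with stability forces only one surviving profile; this yields a global-in-$S$ failing solution $u_c$ with $\mM(u_c)=m_*$ whose orbit is precompact in $L^2(\R^d)$ modulo the symmetries, giving a frequency scale $N(t)$ and a spatial center $x(t)$. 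Here the assumption $m_*<\mM(Q)$ enters to rule out bubbling into a $Q$-profile via Weinstein's sharp Gagliardo--Nirenberg inequality.

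The rigidity step is to show that no such $u_c$ exists. The standard mass-critical classification reduces to three scenarios according to the behaviour of $N(t)$: a self-similar blow-up ($N(t)\sim t^{-1/2}$), a double frequency cascade (low-to-high and high-to-low), and a quasi-soliton regime in which $N(t)$ is bounded above and below. In the first two cases one upgrades regularity by a negative-regularity argument using the precompactness to show $u_c\in H^1$, and then the conserved kinetic energy combined with Gagliardo--Nirenberg (again using $\mM(u_c)<\mM(Q)$) forces $u_c\equiv 0$, contradicting $\mM(u_c)=m_*>0$. The main obstacle is the quasi-soliton case: this is where Dodson's long-time Strichartz estimate together with a frequency-localized interaction Morawetz inequality is essential, yielding an averaged-in-time decay that is incompatible with the almost-periodicity of $u_c$. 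The argument is delicate and dimension-sensitive, with $d=1$ requiring an additional bilinear Strichartz refinement in place of the interaction Morawetz. Excluding all three scenarios contradicts $m_*<\mM(Q)$ and completes the proof.
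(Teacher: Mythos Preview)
The paper does not give its own proof of this theorem: it is stated as a cited result of Dodson \cite{Dodson4dmassfocusing} and used as a black box (for instance in Lemma~\ref{persistance l2}). So there is no argument in the paper to compare your proposal against.

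That said, your outline is a fair high-level summary of the Kenig--Merle/Dodson strategy that underlies the cited reference: small-data/stability theory, a mass-critical linear profile decomposition, extraction of an almost-periodic minimal-mass blow-up solution, classification into the three scenarios, and rigidity via additional regularity together with Dodson's long-time Strichartz and frequency-localized interaction Morawetz estimates. The one point to be careful about is that the ``additional regularity plus conserved kinetic energy forces $u_c\equiv 0$'' step is not quite as you describe it; in the self-similar and cascade scenarios one typically shows $u_c\in \dot H^s$ for some $s>0$ and then obtains a contradiction with the cascading or self-similar behaviour of $N(t)$, rather than directly concluding $u_c=0$ from Gagliardo--Nirenberg. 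But since the paper does not attempt this proof, there is nothing further to adjudicate here.
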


In recent years, problems with combined nonlinearities \eqref{NLS0} have been attracting much attention from the mathematical community. The mixed type nature of \eqref{NLS0} prevents itself to be scale-invariant and several arguments for \eqref{NLS single} fail to hold, which makes the analysis for \eqref{NLS0} rather delicate and challenging. A systematic study on \eqref{NLS0} is initiated by Tao, Visan and Zhang in their seminal paper \cite{TaoVisanZhang}. In particular, based on the interaction Morawetz inequalities the authors show that a solution of \eqref{NLS0} with $\mu_1,\mu_2<0$ and $p_1=\tas$, $p_2=\tbs$ (namely the defocusing-defocusing double critical regime) is always global and scattering in time for any initial data $u_0\in H^1(\R^d)$. As can be expected, this does not hold when at least one of the $\mu_i$ in \eqref{NLS0} is negative. Using concentration compactness and perturbation arguments initiated by \cite{Ibrahim2011}, Akahori, Ibrahim, Kikuchi and Nawa \cite{Akahori2013} are able to formulate a sharp scattering threshold for \eqref{NLS0} in the case $d\geq 5$, $\mu_1,\mu_2>0$, $p_1\in(\tas,\tbs)$ and $p_2=\tbs$ (namely the focusing energy-critical NLS perturbed by a focusing mass-supercritical and energy-subcritical nonlinearity). The methodology of \cite{Ibrahim2011,Akahori2013} becomes nowadays a golden rule for the study on large data scattering problems of NLS with combined nonlinearities. In this direction, we refer to the representative papers \cite{MiaoDoubleCrit,killip_visan_soliton,Cheng2020,Carles_Sparber_2021,luo2021sharp} for large data scattering results of \eqref{NLS0} in different regimes, where at least one of the nonlinearities possesses critical growth.

\subsubsection*{Main results}

In this paper, we study the most interesting and difficult case \eqref{NLS double crit}, where the mass- and energy-critical nonlinearities exist simultaneously in the equation. Roughly speaking, we can not consider \eqref{NLS double crit} as the energy-critical NLS perturbed by the mass-critical nonlinearity, nor vice versa, due to the endpoint critical nature of the potential terms. Nevertheless, it is quite natural to have the following heuristics on the long time dynamics of \eqref{NLS double crit} based on the results for NLS with single mass- or energy-critical potentials:
\begin{itemize}
\item For the defocusing-defocusing case, we expect that both of the mass- and energy-critical nonlinear terms are harmless, and a solution of \eqref{NLS double crit} should be global and scattering in time for arbitrary initial data $u_0$ from $H^1(\R^d)$.
\item For the focusing-defocusing case, we expect that under the stabilization of the defocusing energy-critical potential, a solution of \eqref{NLS double crit} should always be global. However, a bifurcation of scattering and soliton solutions might occur, which is determined by the mass of the initial data. In view of scaling, we conjecture that the threshold is given by $\mM(Q)$.
\item For the defocusing-focusing case, we expect that the scattering threshold should be uniquely determined by the Hamiltonian of the initial data. In view of scaling, we conjecture that the threshold is given by $\mH^*(W)$.
\end{itemize}
We should discuss the focusing-focusing case separately, which is the most subtle one among the four regimes. One might expect that the restriction for the scattering threshold is coming from both of the mass and energy sides. In particular, a reasonable guess about the threshold would be
$$ \mM(u_0)<\mM(Q)\,\wedge\, \mH(u_0)<\mH^*(W).$$
This is however not the case. As shown by the following result by Soave, the actual energy threshold is strictly less than $\mH^*(W)$.
\begin{theorem}[\cite{SoaveCritical}]\label{soave}
Let $d\geq 3$ and $\mu_1=\mu_2=1$. Define
\begin{align}\label{variational problem for mc}
m_c:=\inf_{u\in H^1(\R^d)}\{\mH(u):\mM(u)=c,\,\mK(u)=0\},
\end{align}
where $\mK$ is defined by
\begin{align*}
%\mH(u)&:=\frac{1}{2}\|\nabla u\|_2^2-\frac{1}{\tas}\|u\|_\tas^\tas-\frac{1}{\tbs}\|u\|_\tbs^\tbs,\\
\mK(u)&:=\|\nabla u\|_2^2-\frac{d}{d+2}\|u\|_\tas^\tas-\|u\|_\tbs^\tbs.
\end{align*}
Then
\begin{itemize}
\item[(i)]\textbf{Existence of ground state}: For any $c\in(0,\mM(Q))$, the variational problem \eqref{variational problem for mc} has a positive and radially symmetric minimizer $P_c$ with $m_c=\mH(P_c)\in(0,\mH^*(W))$. Moreover, $P_c$ is a solution of
\begin{align}\label{standing wave eq}
-\Delta P_c+\omega P_c=P_c^{\tas-1}+P_c^{\tbs-1}
\end{align}
for some $\omega>0$.
\item[(ii)]\textbf{Blou-up criterion}: Assume that $u_0\in H^1(\R^d)$ satisfies
$$\mM(u_0)\in(0,\mM(Q))\,\wedge\,\mH(u_0)<m_{\mM(u_0)}\,\wedge\,\mK(u_0)<0.$$
Assume also that $|x|u_0\in L^2(\R^d)$. Then the solution $u$ of \eqref{NLS double crit} with $u(0)=u_0$ blows-up in finite time.
\end{itemize}
\end{theorem}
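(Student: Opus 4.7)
The engine driving both parts is the fibering map associated with the $L^2$-preserving scaling $u\mapsto u_\ld(x):=\ld^{d/2}u(\ld x)$. A direct computation yields
$$\psi_u(\ld):=\mH(u_\ld)=\frac{\ld^2}{2}\|\nabla u\|_2^2-\frac{\ld^2}{\tas}\|u\|_\tas^\tas-\frac{\ld^\tbs}{\tbs}\|u\|_\tbs^\tbs,\qquad \ld\,\psi_u'(\ld)=\mK(u_\ld),$$
so $\mK(u)=0$ is precisely the Pohozaev identity for this scaling. When $\mM(u)=c<\mM(Q)$, the sharp $L^2$-critical Gagliardo-Nirenberg inequality forces $\|\nabla u\|_2^2-\frac{2}{\tas}\|u\|_\tas^\tas>0$; since $\tbs>2$, $\psi_u$ then admits a unique strictly positive maximum at some $\ld_u$ with $\psi_u(\ld_u)>0$. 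In particular $m_c>0$ and $m_c=\inf_{\mM(u)=c}\max_{\ld>0}\psi_u(\ld)$ has a mountain-pass character.

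\textbf{Existence of the ground state (i).} For a minimizing sequence $\{u_n\}$ on $\{\mM(u)=c,\,\mK(u)=0\}$, Schwarz symmetrization decreases $\|\nabla u_n\|_2$ and preserves the $L^p$-norms, so $\psi_{u_n^*}\le\psi_{u_n}$ pointwise; the rescaled radial functions $(u_n^*)_{\ld_{u_n^*}}$ then form a new radial minimizing sequence. $H^1$-boundedness follows from $\mK(u_n)=0$ and $\mH(u_n)\to m_c$. The principal difficulty is the failure of compactness caused by the $\tbs$-term: an $H^1$-profile decomposition could shed Aubin-Talenti bubbles, each contributing at least $\mH^*(W)$ to the energy. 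I would exclude bubbling by first establishing the strict gap
$$m_c<\mH^*(W)\qquad\text{for all }c\in(0,\mM(Q)),$$
obtained by plugging a smoothly truncated Aubin-Talenti profile $W_\vare$ rescaled to mass $c$ into $\max_\ld\psi$ and observing that the focusing $L^\tas$-correction strictly lowers the value below $\mH^*(W)$. With this gap, a Brezis-Lieb/concentration-compactness splitting forces all the mass and energy to be captured by the weak limit $P_c$; weak lower semicontinuity yields $\mH(P_c)\le m_c$ and $\mK(P_c)\le 0$, and re-fibering sends us back to the constraint without increasing the energy, so $P_c$ is the minimizer. A Lagrange-multiplier argument (the multiplier attached to $\mK=0$ vanishes, as the Pohozaev manifold is a natural constraint here) then yields \eqref{standing wave eq}; strict positivity of $\omega$ is obtained by combining the Nehari and Pohozaev identities with $\mM(P_c)<\mM(Q)$. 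Positivity and radial symmetry are inherited from the rearrangement.

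\textbf{Blow-up (ii).} This is a Glassey-type virial argument. For $|x|u_0\in L^2(\R^d)$, the variance $V(t):=\int_{\R^d}|x|^2|u(t,x)|^2\,dx$ satisfies
$$V''(t)=8\,\mK(u(t)).$$
I first show that the set $\mathcal{B}:=\{v\in H^1(\R^d):\mM(v)\in(0,\mM(Q)),\,\mH(v)<m_{\mM(v)},\,\mK(v)<0\}$ is invariant under the flow: since $\mM$ and $\mH$ are conserved, $t\mapsto\mK(u(t))$ is continuous and can leave the negative half-line only through zero, but $\mK(u(t_\ast))=0$ with $\mM(u(t_\ast))=\mM(u_0)\in(0,\mM(Q))$ places $u(t_\ast)$ on the Pohozaev manifold, giving $\mH(u_0)=\mH(u(t_\ast))\ge m_{\mM(u_0)}$ and contradicting the hypothesis. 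A uniform bound $\mK(u(t))\le-\delta<0$ then follows from the fibering analysis: the maximum $\ld_{u(t)}<1$ of $\psi_{u(t)}$ together with $\psi_{u(t)}(1)=\mH(u_0)$ converts the strict gap $m_{\mM(u_0)}-\mH(u_0)>0$ into $\ld_{u(t)}\le 1-\eta$, whence $|\mK(u(t))|=\|u(t)\|_\tbs^\tbs(1-\ld_{u(t)}^{\tbs-2})\ge\delta$. Integrating $V''(t)\le-8\delta$ twice contradicts $V\ge 0$ at finite time and forces finite-time blow-up.

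\textbf{Main obstacle.} The heart of the proof is the strict energy gap $m_c<\mH^*(W)$ and the concentration-compactness argument excluding Aubin-Talenti bubbles in the minimizing sequence; once this is in place, both the existence of $P_c$ in (i) and the uniform coercivity of $-\mK$ needed in (ii) follow cleanly from the Pohozaev/fibering structure.
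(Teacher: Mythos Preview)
The paper does not supply its own proof of this theorem: Theorem~\ref{soave} is quoted from \cite{SoaveCritical} and used as a black box (the only related computations the paper carries out are in Appendix~\ref{section endpoint}, concerning the endpoint values $m_0$ and $m_Q$, not the existence/blow-up statements themselves). There is therefore no in-paper proof to compare against.

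That said, your outline is faithful to Soave's original argument. The fibering identity $\ld\psi_u'(\ld)=\mK(u_\ld)$, the mountain-pass reformulation $m_c=\inf_{\mM(u)=c}\max_{\ld>0}\psi_u(\ld)$, the strict gap $m_c<\mH^*(W)$ via truncated Aubin--Talenti profiles with mass normalized to $c$, and the recovery of compactness for a radially rearranged minimizing sequence once bubbling is excluded are exactly the ingredients of \cite{SoaveCritical}. Two places deserve a bit more care than you indicate. First, the claim that the Pohozaev manifold is a ``natural constraint'' (so the Lagrange multiplier attached to $\mK$ vanishes and $P_c$ solves \eqref{standing wave eq}) is not automatic; in Soave's paper this is obtained by a separate computation exploiting $\mM(P_c)<\mM(Q)$ and the specific structure of $\mK$. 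Second, in your blow-up step the passage from $\ld_{u(t)}\le 1-\eta$ to $\mK(u(t))\le-\delta$ via $|\mK(u(t))|=\|u(t)\|_\tbs^\tbs(1-\ld_{u(t)}^{\tbs-2})$ still needs a uniform lower bound on $\|u(t)\|_\tbs^\tbs$; this does follow (for instance from $\mI(u(t))=\mH(u_0)-\tfrac12\mK(u(t))>\mH(u_0)$ once $\mH(u_0)>0$, and directly from Glassey when $\mH(u_0)\le 0$), but you should say so. With these two points filled in, your sketch reproduces Soave's proof.
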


\begin{remark}
The quantity $\mK(u)$ is referred to the virial of $u$, which is closely related to the Glassey's virial identity and plays a fundamental role in the study of NLS.
\end{remark}
We make the intuitive heuristics into the following rigorous statements:
\begin{conjecture}\label{conjecture}
Let $d\geq 3$ and consider \eqref{NLS double crit} on some time interval $I\ni 0$. Let $u$ be the unique solution of \eqref{NLS double crit} with $u(0)=u_0\in H^1(\R^d)$. We also define
$$ \mK(u):=\|\nabla u\|_2^2-\mu_1\frac{d}{d+2}\|u\|_\tas^\tas-\mu_2\|u\|_\tbs^\tbs.$$
Then
\begin{itemize}
\item[(i)] \textbf{Defocusing-defocusing regime}: Let $\mu_1=\mu_2=-1$. Then $u$ is global and scattering in time.
\item[(ii)] \textbf{Focusing-defocusing regime}: Let $\mu_1=1$ and $\mu_2=-1$. Then $u$ is a global solution. If additionally $\mM(u_0)<\mM(Q)$, then $u$ is also scattering in time.
\item[(iii)] \textbf{Defocusing-focusing regime}: Let $\mu_1=-1$ and $\mu_2=1$. Assume that
$$ \mH(u_0)<\mH^*(W)\,\wedge\, \mK(u_0)> 0.$$
Then $u$ is global and scattering in time.
\item[(iv)] \textbf{Focusing-focusing regime}: Let $\mu_1=\mu_2=1$.
Assume that
$$\mM(u_0)<\mM(Q)\,\wedge\, \mH(u_0)<m_{\mM(u_0)}\,\wedge\, \mK(u_0)> 0.$$
Then $u$ is global and scattering in time.
\end{itemize}
\end{conjecture}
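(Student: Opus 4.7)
The plan is to follow the Kenig--Merle concentration-compactness/rigidity paradigm, adapted to the mass-energy double critical setting. Case (i) is already due to Tao--Visan--Zhang, so I focus on (ii)--(iv). The first step is variational: using the Gagliardo--Nirenberg inequality with extremizer $Q$, the Sobolev inequality with extremizer $W$, and in (iv) the Soave ground state $P_c$ from Theorem \ref{soave}, I would show that the hypotheses in each regime are preserved by the flow and produce a uniform $H^1$ bound, hence global existence. For (ii) this is immediate since the defocusing energy-critical term controls the focusing mass-critical contribution once $\mM(u_0)<\mM(Q)$. For (iii) and (iv) one uses a continuity argument to show that the sign $\mK(u(t))>0$ persists on the maximal lifespan, which yields the desired $H^1$ control and global well-posedness.

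Second, I would set up a small-data and long-time stability theory in $H^1$ in which scattering is equivalent to the finiteness of simultaneous diagonal $L^2$- and $\dot H^1$-critical Strichartz norms. Arguing by contradiction, if scattering fails below the threshold in any of (ii)--(iv), there is a sharp threshold value and a sequence of solutions saturating it with Strichartz norms blowing up. To extract a minimal non-scattering solution from such a sequence one needs a profile decomposition in $H^1(\R^d)$ adapted to both critical scales simultaneously, which is the main technical obstacle.

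The technical heart of the approach is the construction of the \emph{double track profile decomposition}. The classical $L^2$-profile decomposition has remainders asymptotically vanishing in the $L^2$-critical diagonal Strichartz norm but not in the $\dot H^1$-critical one, and the $\dot H^1$-decomposition does the opposite. The plan is to produce a single decomposition whose profiles are sorted by scale into three regimes --- purely low (mass-type), bounded, and purely high (energy-type) frequencies --- and whose remainder vanishes asymptotically in both diagonal Strichartz spaces. I would achieve this by iteratively combining the two classical extraction schemes, using a refined inverse Strichartz inequality that detects the characteristic scale of each extracted bubble together with sharp high-/low-frequency cut-offs to decouple the two endpoint scales. Mass, kinetic energy and virial decoupling across profiles then follow from standard orthogonality arguments.

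With the double track decomposition in hand, nonlinear profiles are constructed scale by scale: purely mass-scale profiles are handled by Dodson's focusing mass-critical scattering theory; purely energy-scale profiles are handled by the focusing energy-critical theory of Kenig--Merle, Killip--Visan and Dodson; bounded-scale profiles are subcritical perturbations in $H^1$. The stability theorem for \eqref{NLS double crit}, applied to the sum of these nonlinear profiles together with minimality of the threshold, forces the critical element to reduce to a single surviving profile, yielding a minimal, almost periodic, non-scattering solution with trajectory precompact modulo the symmetries. Rigidity is then obtained from the uniform $H^1$ bound, the coercivity of $\mK$ along the compact trajectory, and a localized virial/Morawetz estimate, which together force $u\equiv 0$ and produce the desired contradiction. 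The genuinely delicate points will be the construction and orthogonality of the double track decomposition, and the book-keeping of nonlinear profiles whose scales concentrate at the $L^2$- and $\dot H^1$-endpoints at the same time.
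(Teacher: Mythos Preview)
Your proposal is correct and mirrors the paper's strategy essentially point for point: the Kenig--Merle roadmap, the variational control of $\mK$ and $H^1$-bounds, the small-data/stability theory, the double track profile decomposition combining the $L^2$- and $\dot H^1$-extractions so that remainders vanish in both diagonal Strichartz norms, the large-scale approximation of extreme-scale profiles by Dodson's mass-critical and the Kenig--Merle/Killip--Visan/Dodson energy-critical theories, and the rigidity via localized virial. The only refinements the paper makes explicit beyond your outline are (a) the precise bifurcation rule in the iterative profile extraction (apply the $L^2$- or $\dot H^1$-inverse Strichartz step according to which diagonal norm of the current remainder is larger), which in particular keeps the Galilean boosts bounded and thereby preserves the $H^1$-decoupling of the Hamiltonian, and (b) the use of a mass--energy indicator functional $\mD$ to organize the induction on a two-dimensional threshold region rather than a single scalar.
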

As mentioned previously, Conjecture \ref{conjecture} (i) is already proved by Tao, Visan and Zhang \cite{TaoVisanZhang}. Moreover, Conjecture \ref{conjecture} (iii) is proved by Cheng, Miao and Zhao \cite{MiaoDoubleCrit} in the case $d\leq 4$ and the author \cite{luo2021scattering} in the case $d\geq 5$, both under the additional assumption that $u_0$ is radially symmetric.

In this paper, we prove Conjecture \ref{conjecture} for general initial data from $H^1(\R^d)$. Our main result is as follows:
\begin{theorem}\label{main theorem}
We assume in the cases $d=3$, $\mu_1=-1$, $\mu_2=1$ and $d=3$, $\mu_1=\mu_2=1$ additionally that $u_0$ is radially symmetric. Then Conjecture \ref{conjecture} holds for any $d\geq 3$.
\end{theorem}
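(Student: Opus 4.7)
The plan is to follow the Kenig--Merle concentration compactness roadmap, upgraded to accommodate the simultaneous presence of the mass- and energy-critical nonlinearities. The first step is variational: under the hypothesis of each sub-case I would show that the admissible set is flow-invariant and that on it $\mK$ is coercive, i.e.\ $\mK(u(t))\geq \delta>0$ and $\mK$ controls $\|\nabla u\|_2^2$ modulo lower-order terms. In regimes (iii) and (iv) this amounts to a Pohozaev-type splitting of the sub-threshold Hamiltonian set into the two connected components $\{\mK>0\}$ and $\{\mK<0\}$; in (iv) the threshold $m_{\mM(u_0)}$ from Theorem~\ref{soave} depends on the initial mass, so one also needs continuity and monotonicity properties of the map $c\mapsto m_c$. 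Combined with the sub-critical blow-up criterion~\eqref{subcritical criterion} this yields global existence and uniform $H^1$ boundedness of the flow; in regime (ii) the defocusing energy-critical term alone already prevents finite-time blow-up, and the mass constraint $\mM(u_0)<\mM(Q)$ is only needed to exclude mass-critical solitons and hence to have any hope of scattering.

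For scattering I would argue by contradiction and extract a minimal non-scattering solution. The central new difficulty is that well-posedness for \eqref{NLS double crit} requires Strichartz control at \emph{both} the mass-critical and energy-critical exponents, whereas the classical $L^2$-profile decomposition of Merle--Vega yields remainders vanishing only in the mass-critical Strichartz norm and Keraani's $\dot{H}^1$-profile decomposition only in the energy-critical one. I would therefore construct the announced \emph{double track profile decomposition}: bubbles would be extracted simultaneously along an energy-critical track (scales $\ld_n\to 0$ or $\infty$, no frequency shift) and a mass-critical track (bounded $\ld_n$ with Galilean boosts $\xi_n$), with remainders tending to zero diagonally in \emph{both} Strichartz spaces. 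Together with asymptotic Pythagorean expansions of $\mM$, $\mH$, $\mK$ and of the two Strichartz norms across these tracks, this decouples the nonlinear profiles by their limiting scale: high-frequency profiles are approximated by the pure energy-critical NLS, low-frequency profiles (after de-Galilean-izing) by the pure mass-critical NLS, and bounded-scale profiles by \eqref{NLS double crit} itself.

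Invoking the already available scattering theorems -- Tao--Visan--Zhang for the defocusing-defocusing case, Dodson for the defocusing mass-critical NLS in all dimensions and the focusing mass-critical NLS below $\mM(Q)$, and Kenig--Merle / Killip--Visan / Dodson for the energy-critical NLS (the radiality hypothesis in dimension three corresponds exactly to the regime where the 3D focusing energy-critical problem without symmetry is still open) -- combined with a long-time perturbation lemma, one deduces that all but one profile must scatter. The surviving profile yields a critical element whose orbit is precompact modulo the symmetries of \eqref{NLS double crit}. A localized virial / Morawetz identity, fed by the uniform lower bound $\mK(u(t))\geq \delta$ from the variational step, then rules this critical element out and delivers the desired contradiction.

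The main obstacle is unquestionably the construction and nonlinear propagation of the double track profile decomposition: asymptotic orthogonality must be established across cores living in genuinely different scale regimes, the Galilean boost parameter must be handled in tandem with the $L^2$-scale, and Pythagorean expansions must be verified for two different Strichartz norms as well as for $\mK$. Once this is in place, the variational analysis and the rigidity step follow the well-established Kenig--Merle / Akahori--Ibrahim--Kikuchi--Nawa template, with the caveat that in regime (iv) the mass-dependence of $m_c$ forces the induction to run in the $(\mM,\mH)$-plane rather than along a single conserved quantity.
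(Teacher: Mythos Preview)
Your roadmap is correct and matches the paper's approach essentially line by line: variational coercivity of $\mK$ and the MEI-functional induction in the $(\mM,\mH)$-plane, the double track profile decomposition built by alternating the $L^2$- and $\dot H^1$-inverse Strichartz inequalities according to which diagonal Strichartz norm of the remainder dominates, large-scale approximation by the pure mass- or energy-critical flows, long-time perturbation, and extinction of the critical element via a localized virial. The one technical point your sketch leaves implicit and that the paper singles out as decisive is that, because each inverse-Strichartz step is applied to an $H^1$-bounded sequence, the Galilean boosts $\xi_n$ arising along the $L^2$-track are automatically \emph{bounded}; this is precisely what allows the Pythagorean decoupling of $\mH$ and $\mK$ across profiles with different frequency centers and without which the energy decomposition would fail.
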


\begin{remark}
The radial assumption by Theorem \ref{main theorem} is removable as long as Theorem \ref{kenig merle theorem} also holds for general non-radial initial data from $\dot{H}^1(\R^3)$, which is widely believed to be true.
\end{remark}

The sharpness of the scattering threshold for the focusing-focusing \eqref{NLS double crit} is already revealed by Theorem \ref{soave}. The criticality of the threshold for the defocusing-focusing \eqref{NLS double crit} is more subtle, since in general there exists no soliton solution for the corresponding stationary equation, see \cite[Thm. 1.2]{SoaveCritical}. Nevertheless, we have the following variational characterization of the scattering threshold:

\begin{proposition}\label{proposition mc for df}
Let $\mu_1=-1$ and $\mu_2=1$. Let $m_c$ be defined through \eqref{variational problem for mc}. Then $m_c=\mH^*(W)$ and \eqref{variational problem for mc} has no optimizer for any $c\in(0,\infty)$.
\end{proposition}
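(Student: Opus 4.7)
The plan is to prove $m_c = \mH^*(W)$ by matching upper and lower bounds, and to rule out minimizers via strictness in the Sobolev inequality along the constraint set. I use throughout the classical identity $\mH^*(W) = \frac{1}{d}\mS^{d/2}$, which follows by testing $-\Delta W = W^{\tbs-1}$ against $W$ (yielding $\|\nabla W\|_2^2 = \|W\|_\tbs^\tbs$) and the Sobolev optimality $\|W\|_\tbs^{\tbs-2} = \mS$.

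For the lower bound and non-attainment I would exploit the happy coincidence $\frac{1}{\tas} = \frac{d}{2(d+2)}$ (from $\tas = 2 + 4/d$). On $\{\mK(u) = 0\}$, substituting $\|\nabla u\|_2^2 = \|u\|_\tbs^\tbs - \frac{d}{d+2}\|u\|_\tas^\tas$ into $\mH(u) = \frac{1}{2}\|\nabla u\|_2^2 + \frac{1}{\tas}\|u\|_\tas^\tas - \frac{1}{\tbs}\|u\|_\tbs^\tbs$ makes the $\tas$-contribution cancel exactly (also using $\frac{1}{2}-\frac{1}{\tbs}=\frac{1}{d}$), leaving
$$
\mH(u) = \frac{1}{d}\|u\|_\tbs^\tbs.
$$
Since $\mM(u)=c>0$ forces $\|u\|_\tas>0$, the constraint $\mK(u)=0$ implies $\|\nabla u\|_2^2 < \|u\|_\tbs^\tbs$ strictly. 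Combined with $\mS\|u\|_\tbs^2 \leq \|\nabla u\|_2^2$ this gives $\|u\|_\tbs^{\tbs-2} > \mS$, hence $\mH(u) > \frac{1}{d}\mS^{d/2} = \mH^*(W)$ strictly. This simultaneously yields $m_c \geq \mH^*(W)$ and shows that no admissible minimizer exists.

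For the matching upper bound I would build a two-scale test family. Let $\chi \in C_c^\infty(B_1)$ with $\chi \equiv 1$ on $B_{1/2}$, and set $W_\epsilon(x) := \epsilon^{-(d-2)/2}\chi(x)W(x/\epsilon)$. Standard Brezis--Nirenberg-type computations yield $\|\nabla W_\epsilon\|_2^2 \to \|\nabla W\|_2^2$ and $\|W_\epsilon\|_\tbs^\tbs \to \|W\|_\tbs^\tbs$ as $\epsilon \to 0$, while $\|W_\epsilon\|_2^2$ and $\|W_\epsilon\|_\tas^\tas$ both vanish (at rates depending on $d$, and distinct in the regimes $d\in\{3,4\}$ versus $d\geq 5$). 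To realize the mass constraint I translate a spread-out bump to infinity: fix $\phi \in C_c^\infty(\R^d)$ with $\|\phi\|_2 = 1$ and set $\phi_R(x) := R^{-d/2}\phi((x-Re_1)/R)$. Then $\|\phi_R\|_2 = 1$, $\mathrm{supp}\,\phi_R$ is disjoint from $B_1$ for $R$ large, and a direct scaling gives $\|\nabla\phi_R\|_2^2, \|\phi_R\|_\tas^\tas, \|\phi_R\|_\tbs^\tbs \to 0$ as $R\to\infty$. The candidate $v_{\epsilon,R} := W_\epsilon + \sqrt{c-\|W_\epsilon\|_2^2}\,\phi_R$ satisfies $\mM(v_{\epsilon,R}) = c$ exactly, and by disjointness of supports every other relevant norm splits additively, so sending $\epsilon \to 0$ and then $R \to \infty$ produces $\mH(v_{\epsilon,R}) \to \mH^*(W)$ and $\mK(v_{\epsilon,R}) \to 0$.

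To enforce $\mK = 0$ exactly I would apply the mass-preserving dilation $v^\lambda(x) := \lambda^{d/2}v(\lambda x)$, under which $\mK(v^\lambda) = \lambda^2(\|\nabla v\|_2^2 + \frac{d}{d+2}\|v\|_\tas^\tas) - \lambda^{2d/(d-2)}\|v\|_\tbs^\tbs$ has a unique positive root $\lambda^*(v) = ((\|\nabla v\|_2^2 + \frac{d}{d+2}\|v\|_\tas^\tas)/\|v\|_\tbs^\tbs)^{(d-2)/4}$. The asymptotics above force $\lambda^*(v_{\epsilon,R}) \to 1$, so by continuity $\mH(v_{\epsilon,R}^{\lambda^*}) \to \mH^*(W)$, yielding $m_c \leq \mH^*(W)$. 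The main obstacle is this upper bound construction, which requires coordinating three scales: the high-frequency bubble concentration $\epsilon \to 0$ (recovering the Aubin--Talenti profile), the low-frequency mass spreading $R \to \infty$ (parking the mass at negligible energy cost), and the terminal virial correction $\lambda^* \to 1$ (tuning $\mK$ to vanish exactly). The mildly delicate point is tracking the vanishing of $\|W_\epsilon\|_2^2$ and $\|W_\epsilon\|_\tas^\tas$ uniformly in the regimes $d\in\{3,4\}$ (where $W \notin L^2$) and $d \geq 5$ (where $W \in L^2$).
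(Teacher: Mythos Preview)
Your proof is correct. The lower bound and non-attainment argument is essentially the same as the paper's (both reduce to $\mH(u) = \tfrac{1}{d}\|u\|_\tbs^\tbs$ on the constraint and invoke Sobolev), though your strictness argument via $\|u\|_\tas > 0$ is more self-contained: the paper instead cites \cite{SoaveCritical} for non-existence of optimizers.

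For the upper bound, however, you and the paper take genuinely different routes. The paper first proves an auxiliary identity $m_c = \widehat m_c$, where $\widehat m_c$ replaces the full virial $\mK$ by the ``energy-critical'' virial $\mK^c(u) = \|\nabla u\|_2^2 - \|u\|_\tbs^\tbs$ (Lemma~\ref{no hat equal hat}, established via the $\dot H^1$-scaling $U_\lambda u(x) = \lambda^{(d-2)/2}u(\lambda x)$, which sends the $\tas$-contribution and the mass to zero while preserving $\mK^c$ and $\mI$). Once this reduction is in hand, the upper bound is almost free: approximate $W$ by $u_\vare \in C_c^\infty$, normalize mass by a scalar multiple $v_\vare = \sqrt{c/\mM(u_\vare)}\,u_\vare$, and dilate via $T_t$ to hit $\mK^c = 0$. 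The key observation is that $\mI$ evaluated at this dilate equals $\tfrac{1}{d}(\|\nabla v_\vare\|_2^2/\|v_\vare\|_\tbs^2)^{d/2}$, and this Sobolev ratio is invariant under scalar multiplication, so one reads off $\mH^*(W)$ immediately without any Brezis--Nirenberg asymptotics. Your two-bump construction (concentrating Aubin--Talenti at high frequency, parking mass in a spread-out bump at low frequency, then correcting the virial by dilation) achieves the same end by direct construction and is perfectly valid, but requires coordinating three scales. The paper's route trades this for the one-time cost of the $m_c = \widehat m_c$ lemma.

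One minor point: for the supports of $W_\epsilon$ and $\phi_R$ to be disjoint as you claim, you need $\phi$ supported in a ball of radius strictly less than $1$ (otherwise $B_R(Re_1)$ contains the origin); this is of course no loss.
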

The proof of Proposition \ref{proposition mc for df} follows the same line of \cite[Prop. 1.2]{MiaoDoubleCrit}, but we will consider the variational problem on a manifold with prescribed mass, which complexifies the arguments at several places. Moreover, it is shown in \cite{MiaoDoubleCrit} that any solution of the defocusing-focusing \eqref{NLS double crit} with initial data $u_0$ satisfying
$$ |x|u_0\in L^2(\R^d)\,\wedge\,\mH(u_0)<\mH^*(W)\,\wedge\, \mK(u_0)< 0$$
must blow-up in finite time. This gives a complete description of the criticality of the scattering threshold for the defocusing-focusing \eqref{NLS double crit}.

For the focusing-defocusing regime, it is shown by Zhang \cite{Zhang2006} and Tao, Visan and Zhang \cite{TaoVisanZhang} that a solution of the focusing-defocusing \eqref{NLS double crit} is always globally well-posed, hence the blow-up solutions are ruled out. Using simple variational arguments we will show the existence of ground states at arbitrary mass level larger than $\mM(Q)$.

\begin{proposition}\label{proposition for ground state}
Let $\mu_1=1$ and $\mu_2=-1$. Define
\begin{align}\label{variational problem fd}
\gamma_c:=\inf_{u\in H^1(\R^d)}\{\mH(u):M(u)=c\}.
\end{align}
Then
\begin{itemize}
\item[(i)] The mapping $c\mapsto \gamma_c$ is monotone decreasing on $(0,\infty)$, equal to zero on $(0,M(Q)]$ and negative on $(M(Q),\infty)$.
\item[(ii)] For all $c\in(0,M(Q)]$, \eqref{variational problem fd} has no minimizer.
\item[(iii)] For all $c\in(M(Q),\infty)$, \eqref{variational problem fd} has a positive and radially symmetric minimizer $S_c$. Consequently, $S_c$ is a solution of
\begin{align}
-\Delta S_c+\omega S_c=S_c^{\tas-1}-S_c^{\tbs-1}
\end{align}
with some $\omega\in\bg(0,\frac{2}{d}\bg(\frac{d}{d+2}\bg)^{\frac{d}{2}}\bg)$.
\end{itemize}
\end{proposition}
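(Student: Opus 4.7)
The plan is to prove the three claims by combining a direct variational analysis for parts (i)--(ii) with a concentration--compactness argument for (iii).

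For part (i), to establish $\gamma_c=0$ on $(0,M(Q)]$ I would first apply the sharp mass-critical Gagliardo--Nirenberg inequality $\|u\|_\tas^\tas\leq\frac{d+2}{d}(\|u\|_2/\|Q\|_2)^{4/d}\|\nabla u\|_2^2$ to get
\[\mH(u)\geq\tfrac{1}{2}\bigl(1-(c/M(Q))^{2/d}\bigr)\|\nabla u\|_2^2+\tfrac{1}{\tbs}\|u\|_\tbs^\tbs\geq 0,\]
and then observe that the $L^2$-preserving dilation $u_\lambda(x)=\lambda^{d/2}u(\lambda x)$ gives $\mH(u_\lambda)\to 0$ as $\lambda\to 0^+$, whence $\gamma_c\leq 0$. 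For $c>M(Q)$, testing against $\phi(x)=\theta\sigma^{d/2}Q(\sigma x)$ with $\theta^2=c/M(Q)>1$ and using the Gagliardo--Nirenberg saturation by $Q$ reduces $\mH(\phi)$ to $-a_c\sigma^2+b_c\sigma^{2d/(d-2)}$ with $a_c,b_c>0$, so optimising in $\sigma$ yields $\mH(\phi)<0$ and therefore $\gamma_c<0$. Monotonicity comes from the dilation $v(x)=u(x/t)$ with $t^d=c_2/c_1>1$: a direct computation gives $\mH(v)=t^d\mH(u)-\tfrac{1}{2}(t^d-t^{d-2})\|\nabla u\|_2^2\leq t^d\mH(u)$, so $\gamma_{c_2}\leq t^d\gamma_{c_1}$, and when $c_1>M(Q)$ the strict inequality $\gamma_{c_2}<\gamma_{c_1}$ follows from $\gamma_{c_1}<0$. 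This strict monotonicity on $(M(Q),\infty)$ is the crucial input for (iii). Part (ii) is then immediate: a minimiser $u$ at $c\in(0,M(Q)]$ would satisfy $\mH(u)=0$, and the Gagliardo--Nirenberg lower bound forces $\|u\|_\tbs=0$ (and $\nabla u=0$ for $c<M(Q)$), contradicting $M(u)>0$.

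For part (iii), I would take a minimising sequence $(u_n)$ at mass $c>M(Q)$ and apply Schwarz symmetrisation to reduce to a nonnegative, radial, radially nonincreasing sequence. $H^1$-boundedness follows from the H\"older interpolation $\|u\|_\tas^\tas\leq\|u\|_2^{4/d}\|u\|_\tbs^2$ and Young's inequality. Passing to a weak limit $u_n\rightharpoonup u$ in $H^1$, the compact embedding $H^1_{\mathrm{rad}}(\R^d)\hookrightarrow L^p(\R^d)$ for $p\in(2,\tbs)$ gives strong $L^\tas$-convergence, and Br\'ezis--Lieb yields
\[\mH(u_n)=\mH(u)+\tfrac{1}{2}\|\nabla(u_n-u)\|_2^2+\tfrac{1}{\tbs}\|u_n-u\|_\tbs^\tbs+o(1).\]
Since the remainder is nonnegative, $\gamma_c\geq\mH(u)\geq\gamma_{c_1}$ with $c_1:=M(u)\leq c$; combined with the monotonicity from (i) this forces $\gamma_c=\gamma_{c_1}$, and the outcomes from (i) then rule out $c_1<c$ (via $\gamma_{c_1}=0$ for $c_1\leq M(Q)$ and strict monotonicity for $c_1\in(M(Q),c)$). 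Hence $S_c:=u$ is a minimiser and $u_n\to S_c$ in $H^1$. Positivity follows from $S_c\geq 0$ and the strong maximum principle applied to the Euler--Lagrange equation $-\Delta S_c+\omega S_c=S_c^{\tas-1}-S_c^{\tbs-1}$ produced by the Lagrange multiplier rule, and pairing this equation with $S_c$ together with the Pohozaev identity yields $\omega c=\tfrac{2}{d+2}\|S_c\|_\tas^\tas>0$. For the upper bound on $\omega$, I would first derive the identity $\gamma_c=-\tfrac{1}{d}\|S_c\|_\tbs^\tbs$ (by eliminating $\|\nabla S_c\|_2^2$ between $\mH(S_c)=\gamma_c$ and Pohozaev), then minimise the one-variable functional $B\mapsto B/\tbs-c^{2/d}B^{2/\tbs}/\tas$ on $[0,\infty)$ to obtain $\gamma_c\geq-\tfrac{c}{d}(d/(d+2))^{d/2}$, and finally feed this together with the H\"older interpolation $\|S_c\|_\tas^\tas\leq c^{2/d}\|S_c\|_\tbs^2$ back into $\omega c=\tfrac{2}{d+2}\|S_c\|_\tas^\tas$ to conclude $\omega<\tfrac{2}{d}(d/(d+2))^{d/2}$; the inequality is strict because H\"older equality would force $|S_c|$ to be constant almost everywhere.

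The main obstacle is the existence step, since the Sobolev-critical $L^\tbs$-norm admits no radial compactness. The resolution rests on two sign structures of the combined nonlinearity: the defocusing sign of the $L^\tbs$-term makes the Br\'ezis--Lieb remainder $\tfrac{1}{2}\|\nabla(u_n-u)\|_2^2+\tfrac{1}{\tbs}\|u_n-u\|_\tbs^\tbs$ nonnegative, while the focusing sign of the mass-critical term supplies the strict monotonicity of $\gamma_c$ on $(M(Q),\infty)$ that rules out mass leakage. Absent either sign the mass-compactness argument would break down.
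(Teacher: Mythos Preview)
Your proof is correct and follows the same overall variational strategy as the paper, but several technical steps differ in ways worth recording. For monotonicity the paper uses the $\dot H^1$-invariant scaling $U_\lambda u(x)=\lambda^{(d-2)/2}u(\lambda x)$, under which only the $L^{\tas}$-term changes, whereas you use the plain dilation $u(\cdot/t)$; both yield $\gamma_{c_2}\leq\gamma_{c_1}$ and strict inequality once $\gamma_{c_1}<0$. In the existence step the paper simply invokes weak lower semicontinuity of $\|\nabla\cdot\|_2^2+\tfrac{2}{\tbs}\|\cdot\|_\tbs^\tbs$ together with the radial $L^{\tas}$-compactness to get $\mH(u)\leq\gamma_c$, and then rules out $\mM(u)<c$ by perturbing with $U_\lambda$ for $\lambda<1$; your Br\'ezis--Lieb splitting is a slightly sharper variant that in addition delivers strong $H^1$-convergence of the minimizing sequence. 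The most substantive difference is the upper bound on $\omega$: the paper argues pointwise, observing that for $\omega\geq\tfrac{2}{d}(\tfrac{d}{d+2})^{d/2}$ the polynomial $t^{4/(d-2)}-\tfrac{d^2}{d^2-4}t^{4/d}+\tfrac{d}{d-2}\omega$ is nonnegative on $[0,\infty)$, which after multiplying by $|u|^2$ and integrating contradicts the Pohozaev identity for any nontrivial solution. Your route---combining $\gamma_c=-\tfrac{1}{d}\|S_c\|_\tbs^\tbs$, the explicit lower bound $\gamma_c\geq-\tfrac{c}{d}(\tfrac{d}{d+2})^{d/2}$, and H\"older interpolation---is equally valid but applies only to the minimizer $S_c$ rather than to arbitrary solutions; since the proposition only concerns $S_c$, this suffices.
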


It remains an interesting problem what can be said about the focusing-defucosing model by the borderline case $\mM(u_0)=\mM(Q)$. As suggested by the results in \cite{carles2020soliton,Murphy2021CPDE}, we conjecture that scattering also takes place at the critical mass. We plan to tackle this problem in a forthcoming paper.

\subsubsection*{Roadmap for the large data scattering results}
To prove Theorem \ref{main theorem}, we follow the standard concentration compactness arguments initiated by Kenig and Merle \cite{KenigMerle2006}. In view of the stability theory (Lemma \ref{long time pert}), the main challenge will be to verify the smallness condition
\begin{align}\label{smalness example}
\|\la\nabla\ra e\|_{L_{t,x}^{\frac{2(d+2)}{d+4}}(\R)}\ll 1
\end{align}
for the error term $e$. Roughly speaking, to achieve \eqref{smalness example} we demand the remainders $w_n^k$ given by the linear profile decomposition to satisfy the asymptotic smallness condition
\begin{align}\label{smallness remainders 1}
\lim_{k\to K^*}\lim_{n\to\infty}\|e^{it\Delta}w_n^k\|_{L_{t,x}^{\frac{2(d+2)}{d}}\,\cap\, L_{t,x}^{\frac{2(d+2)}{d-2}}(\R)}=0.
\end{align}
However, this is impossible by applying solely the $L^2$- or $\dot{H}^1$-profile decomposition. To solve this problem, Cheng, Miao and Zhao \cite{MiaoDoubleCrit} establish a profile decomposition which is obtained by first applying the $L^2$-profile decomposition to the (radial) underlying sequence $(\la\nabla\ra\psi_n)_n$ and then undoing the transformation. The robustness of such profile decomposition lies in the fact that the remainders satisfy the even stronger asymptotic smallness condition
\begin{align*}
\lim_{k\to K^*}\lim_{n\to\infty}\|\la\nabla\ra e^{it\Delta}w_n^k\|_{L_{t,x}^{\frac{2(d+2)}{d}}(\R)}=0.
\end{align*}
\eqref{smallness remainders 1} follows immediately from the Strichartz inequality. However, the radial assumption is essential, which guarantees that the Galilean boosts appearing in the $L^2$-profile decomposition are constantly equal to zero. Indeed, we may also apply the full $L^2$-profile decomposition to the possibly non-radial underlying sequence, by also taking the non-vanishing Galilean boosts into account. However, by doing in such a way the Galilean boosts are generally unbounded, and such unboundedness induces a very strong loss of compactness which leads to the failure of decomposition of the Hamiltonian. Heuristically, the occurrence of the compactness defect is attributed to the fact that the profile decomposition in \cite{MiaoDoubleCrit} can still be seen as a variant of the $L^2$-profile decomposition, hence it is insufficiently sensitive to the high frequency bubbles.

Our solution is based on a refinement of the classical profile decompositions. Notice that the profile decompositions are obtained by an iterative process. At each iterative step we will face a bifurcate decision: either
\begin{align*}
\text{(i) } &\limsup_{n\to\infty}\|e^{it\Delta}w_n^k\|_{L_{t,x}^{\frac{2(d+2)}{d}}(\R)}\geq \limsup_{n\to\infty}\|e^{it\Delta}w_n^k\|_{L_{t,x}^{\frac{2(d+2)}{d-2}}(\R)},\text{ or}\nonumber\\
\text{(ii) } &\limsup_{n\to\infty}\|e^{it\Delta}w_n^k\|_{L_{t,x}^{\frac{2(d+2)}{d}}(\R)}< \limsup_{n\to\infty}\|e^{it\Delta}w_n^k\|_{L_{t,x}^{\frac{2(d+2)}{d-2}}(\R)}.
\end{align*}
In the former case, we apply the $L^2$-decomposition to continue, while in the latter case we apply the $\dot{H}^1$-decomposition. Then \eqref{smallness remainders 1} follows immediately from the construction of the profile decomposition. Moreover, since at each iterative step we are applying the profile decomposition to a bounded sequence in $H^1(\R^d)$, the resulting Galilean boosts are thus bounded. Using this additional property of the Galilean boosts we are able to show that the Hamiltonian of the bubbles are perfectly decoupled as desired. We refer to Lemma \ref{linear profile} for details.

On the other hand, we will build up the minimal blow-up solution using the mass-energy-indicator (MEI) functional $\mD$. This is firstly introduced in \cite{killip_visan_soliton} for studying the large data scattering problems for 3D focusing-defocusing cubic-quintic NLS and later further applied in \cite{luo2021sharp} for the 2D and 3D focusing-focusing cubic-quintic NLS. The usage of the MEI-functional is motivated by the fact that the underlying inductive scheme relies only on the mass and energy of the initial data and the scattering regime is immediately readable from the mass-energy diagram, see Fig. \ref{MEI} below. The idea can be described as follows: a mass-energy pair $(\mM(u),\mH(u))$ being admissible will imply $\mD(u)\in(0,\infty)$; In order to escape the admissible region $\Omega$, a function $u$ must approach the boundary of $\Omega$ and one deduces that $\mD(u)\to\infty$. We can therefore assume that the supremum $\mD^*$ of $\mD(u)$ running over all admissible $u$ is finite, which leads to a contradiction and we conclude that $\mD^*=\infty$, which will finish the desired proof. However, in the regime $\mu_2=1$ a mass-energy pair being admissible does not automatically imply the positivity of the virial $\mK$. In particular, it is not trivial at the first glance that the linear profiles have positive virial. We will appeal to the geometric properties of the MEI-functional $\mD$, combining with the variational arguments from \cite{Akahori2013}, to overcome this difficulty.

\begin{figure}
  % Requires \usepackage{graphicx}
  \centering
  \includegraphics[width=150mm,height=30mm]{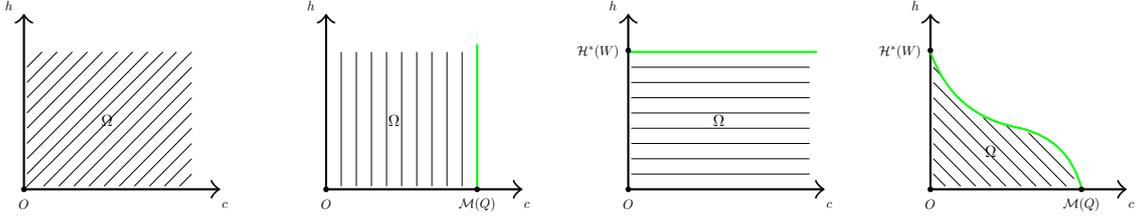}
\caption{An illustration for the admissible domains $\Omega$ in different regimes, where the shadow region is the intersection of $\Omega$ and $(0,\infty)^2$. From left to right: D-D-regime, F-D-regime, D-F-regime and F-F-regime.}\label{MEI}
\end{figure}

\begin{remark}
By straightforward modification of the method developed in this paper, we are also able to give a new proof for the scattering result in the defocusing-defocusing regime using the concentration compactness principle.
\end{remark}

\subsubsection*{Outline of the paper}
The paper is organized as follows: In Section \ref{section preliminaries} we establish the small data and stability theories for the \eqref{NLS double crit}. In Section \ref{section profile decomp} we construct the double track profile decomposition. Section \ref{section f f} to Section \ref{section f d} are devoted to the proof of Theorem \ref{main theorem}, Proposition \ref{proposition mc for df} and Proposition \ref{proposition for ground state}. In Appendix \ref{section endpoint} we establish the endpoint values of the curve $c\mapsto m_c$ for the focusing-focusing \eqref{NLS double crit}.

\subsection{Notations and definitions}
We will use the notation $A\lesssim B$ whenever there exists some positive constant $C$ such that $A\leq CB$. Similarly we define $A\gtrsim B$ and we will use $A\sim B$ when $A\lesssim B\lesssim A$. We denote by $\|\cdot\|_p$ the $L^p(\R^d)$-norm for $p\in[1,\infty]$. We similarly define the $H^1(\R^d)$-norm by $\|\cdot\|_{H^1}$. The following quantities will be used throughout the paper:
\begin{align*}
\mM(u)&:=\|u\|^2_2,\nonumber\\
\mH(u)&:=\frac{1}{2}\|\nabla u\|_2^2-\frac{\mu_1}{\tas}\|u\|^{\tas}_{\tas}-\frac{\mu_2}{\tbs}\|u\|^{\tbs}_{\tbs},\nonumber\\
\mK(u)&:=\|\nabla u\|_2^2-\mu_1\frac{d}{d+2}\|u\|_\tas^\tas-\mu_2\|u\|_\tbs^\tbs,\nonumber\\
\mI(u)&:=\mH(u)-\frac{1}{2}\mK(u)=\frac{\mu_2}{d}\|u\|_\tbs^\tbs.
\end{align*}
We will also frequently use the scaling operator
\begin{align*}
T_\ld u(x)&:=\ld^{\frac{d}{2}} u(\ld x).
\end{align*}
One easily verifies that the $L^2$-norm is invariant under this scaling. Throughout the paper, we denote by $g_{\xi_0,x_0,\ld_0}$ the $L^2$-symmetry transformation which is defined by
\begin{align*}
g_{\xi_0,x_0,\ld_0}f(x):=\ld_0^{-\frac{d}{2}}e^{i\xi_0\cdot x}f(\ld_0^{-1}(x-x_0))
\end{align*}
for $(\xi_0,x_0,\ld_0)\in\R^d\times\R^d\times(0,\infty)$.

We denote by $Q$ the unique positive and radially symmetric solution of
\begin{align*}
-\Delta Q+Q=Q^{\tas-1}.
\end{align*}
We denote by $\mathrm{C}_{\mathrm{GN}}$ the optimal $L^2$-critical Gagliardo-Nirenberg constant, i.e.
\begin{align}\label{def gn l2 crit}
\mathrm{C}_{\mathrm{GN}}=\inf_{u\in H^1(\R^d)\setminus\{0\}}\frac{\|\nabla u\|_2^2\|u\|_2^{\frac{4}{d}}}{\|u\|_{\tas}^{\tas}}.
\end{align}
Using Pohozaev identities (see for instance \cite{lions1}), the uniqueness of $Q$ and scaling arguments one easily verifies that
\begin{align}\label{GN-L1}
\mathrm{C}_{\mathrm{GN}}=\frac{d}{d+2}(\mM(Q))^{\frac{2}{d}}.
\end{align}
We also denote by $\csob$ the optimal constant for the Sobolev inequality, i.e.
\begin{align*}
\csob:=\inf_{u\in\mathcal{D}^{1,2}(\R^d)\setminus \{0\}}\frac{\|\nabla u\|_2^2}{\|u\|_{2^*}^2}.
\end{align*}
Here, the space $\mathcal{D}^{1,2}(\R^d)$ is defined by
\begin{align*}
\mathcal{D}^{1,2}(\R^d):=\{u\in L^{2^*}(\R^d):\nabla u\in L^2(\R^d)\}.
\end{align*}

For an interval $I\subset \R$, the space $L_t^qL_x^r(I)$ is defined by
\begin{align*}
L_t^qL_x^r(I):=\{u:I\times \R^2\to\C:\|u\|_{L_t^qL_x^r(I)}<\infty\},
\end{align*}
where
\begin{align*}
\|u\|^q_{L_t^qL_x^r(I)}:=\int_{\R}\|u\|^q_r\,dt.
\end{align*}
The following spaces will be frequently used throughout the paper:
\begin{align*}
S(I)&:=L_t^\infty L_x^2(I)\cap L_t^2 L_x^{\tbs}(I),\nonumber\\
V_{\tbs}(I)&:=L_t^{\frac{2(d+2)}{d-2}}L_x^{\frac{2d(d+2)}{d^2+4}}(I),\nonumber\\
W_{\tbs}(I)&:=L_{t,x}^{\frac{2(d+2)}{d-2}}(I),\nonumber\\
W_{\tas}(I)&:=L_{t,x}^{\frac{2(d+2)}{d}}(I).
\end{align*}

A pair $(q,r)$ is said to be $L^2$-admissible if $q,r\in[2,\infty]$, $\frac{2}{q}+\frac{d}{r}=\frac{d}{2}$ and $(q,r,d)\neq(2,\infty,2)$. For any $L^2$-admissible pairs $(q_1,r_1)$ and $(q_2,r_2)$ we have the following Strichartz estimates: if $u$ is a solution of
\begin{align*}
i\pt_t u+\Delta u=F(u)
\end{align*}
in $I\subset\R$ with $t_0\in I$ and $u(t_0)=u_0$, then
\begin{align*}
\|u\|_{L_t^q L_x^r(I)}\lesssim \|u_0\|_2+\|F(u)\|_{L_t^{q_2'} L_x^{r_2'}(I)},
\end{align*}
where $(q_2',r_2')$ is the H\"older conjugate of $(q_2,r_2)$. For a proof, we refer to \cite{EndpointStrichartz,Cazenave2003}.

In this paper, we use the following concepts for solution and scattering of \eqref{NLS double crit}:

\begin{definition}[Solution]
A function $u: I\times \R^d\to \C$ is said to be a solution of \eqref{NLS double crit} on the interval $I\subset\R$ if for any compact $J\subset I$, $u\in C(J;H^1(\R^d))$ and for all $t,t_0\in I$
\begin{align*}
u(t)=e^{i(t-t_0)\Delta}u(t_0)+i\int_{t_0}^te^{i(t-s)\Delta}[\mu_1|u|^{\frac{4}{d}}u+\mu_2|u|^{\frac{4}{d-2}}u](s)\,ds.
\end{align*}
\end{definition}

\begin{definition}[Scattering]\label{scattering definition}
A global solution $u$ of \eqref{NLS double crit} is said to be forward in time scattering if there exists some $\phi_+\in H^1(\R^d)$ such that
\begin{align*}
\lim_{t\to\infty}\|u(t)-e^{it\Delta}\phi_+\|_{H^1}=0.
\end{align*}
A backward in time scattering solution is similarly defined. $u$ is then called a scattering solution when it is both forward and backward in time scattering.
\end{definition}

We define the Fourier transformation of a function $f$ by
\begin{align*}
\hat{f}(\xi)=\mathcal{F}(f)(\xi):=(2\pi)^{-\frac{d}{2}}\int_{\R^d}f(x)e^{-i\xi\cdot x}\,dx.
\end{align*}
For $s\in\R$, the multipliers $|\nabla|^s$ and $\la\nabla\ra^s$ are defined by the symbols
\begin{align*}
|\nabla|^s f(x)&=\mathcal{F}^{-1}\bg(|\xi|^s\hat{f}(\xi)\bg)(x),\\
\la\nabla\ra^s f(x)&=\mathcal{F}^{-1}\bg((1+|\xi|^2)^{\frac{s}{2}}\hat{f}(\xi)\bg)(x).
\end{align*}
Let $\psi\in C^\infty_c(\R^2)$ be a fixed radial, non-negative and radially decreasing function such that $\psi(x)=1$ if $|x|\leq 1$ and $\psi(x)=0$ for $|x|\geq \frac{11}{10}$. Then for $N>0$, we define the Littlewood-Paley projectors by
\begin{align*}
P_{\leq N} f(x)&=\mathcal{F}^{-1}\bg(\psi\bg(\frac{\xi}{N}\bg)\hat{f}(\xi)\bg)(x),\\
P_{N} f(x)&=\mathcal{F}^{-1}\bg(\bg(\psi\bg(\frac{\xi}{N}\bg)-\psi\bg(\frac{2\xi}{N}\bg)\bg)\hat{f}(\xi)\bg)(x),\\
P_{> N} f(x)&=\mathcal{F}^{-1}\bg(\bg(1-\psi\bg(\frac{\xi}{N}\bg)\bg)\hat{f}(\xi)\bg)(x).
\end{align*}
We also record the following well-known Bernstein inequalities which will be frequently used throughout the paper: For all $s\geq 0$ and $1\leq p\leq q\leq\infty$ we have
\begin{align*}
\|P_{> N}f\|_{L^p}&\lesssim N^{-s}\||\nabla|^s P_{> N}f\|_{L^p},\\
\||\nabla|^s P_{\leq N}f\|_{L^p}&\lesssim N^{s}\| P_{\leq N}f\|_{L^p},\\
\||\nabla|^{\pm s} P_{ N}f\|_{L^p}&\sim N^{\pm s}\| P_{ N}f\|_{L^p},\\
\|P_{\leq N}f\|_{L^q}&\lesssim N^{\frac{n}{p}-\frac{n}{q}}\|P_{\leq N}f\|_{L^p},\\
\|P_{N}f\|_{L^q}&\lesssim N^{\frac{n}{p}-\frac{n}{q}}\|P_{N}f\|_{L^p}.
\end{align*}
The following useful elementary inequality will be frequently used in the paper: For $s\in\{0,1\}$ and $z_1,\cdots,z_k\in\C$ we have
\begin{align}\label{elementary ineq}
&\bg||\nabla|^s\bg(\bg|\sum_{j=1}^k z_j\bg|^\alpha \bg(\sum_{j=1}^k z_j\bg)-\sum_{j=1}^k |z_j|^\alpha z_j\bg)\bg|
\nonumber\\
\lesssim_{k,\alpha}&\left\{
             \begin{array}{ll}
             \sum_{j\neq j'}||\nabla|^s z_j||z_{j'}|^{\alpha},&\text{if $0<\alpha\leq 1$},\\
             \sum_{j\neq j'}||\nabla|^s z_j||z_{j'}|(|z_{j}|+|z_{j'}|)^{\alpha-1},&\text{if $\alpha> 1$}.
             \end{array}
\right.
\end{align}

We end this section with the following useful local smoothing result:
\begin{lemma}[\cite{KillipVisan2010focusing}]
Given $\phi\in\dot{H}^1(\R^d)$ we have
\begin{align}
%\|\nabla e^{it\Delta}\phi\|^3_{L_{t,x}^2([-T,T]\times \{|x|\leq \R\})}\lesssim T^{\frac{1}{d+2}}R^{\frac{2d+2}{d+2}}
%\|e^{it\Delta}\phi\|_{W_\tas(\R)}\|\nabla \phi\|^2_2,\label{local killip visan 1}\\
\|\nabla e^{it\Delta}\phi\|^3_{L_{t,x}^2([-T,T]\times \{|x|\leq R\})}\lesssim T^{\frac{2}{d+2}}R^{\frac{3d+2}{d+2}}
\|e^{it\Delta}\phi\|_{W_\tbs(\R)}\|\nabla \phi\|^2_2.\label{local killip visan 2}
\end{align}
\end{lemma}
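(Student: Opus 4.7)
The plan is to split $u:=e^{it\Delta}\phi$ via a Littlewood-Paley decomposition $u=P_{\leq N}u+P_{>N}u$ at a frequency $N>0$ to be optimized at the end, estimate each piece by H\"older/Bernstein and Kato local smoothing respectively, and balance the two bounds in $N$. For the low-frequency part, I would apply H\"older's inequality on the bounded set $[-T,T]\times\{|x|\leq R\}$ between $L^2$ and $L^{2(d+2)/(d-2)}$ to obtain
\begin{align*}
\|\nabla P_{\leq N}u\|_{L^2_{t,x}([-T,T]\times\{|x|\leq R\})}\lesssim (TR^d)^{2/(d+2)}\|\nabla P_{\leq N}u\|_{W_\tbs(\R)}.
\end{align*}
The convolution kernel of the Fourier multiplier $i\xi\psi(\xi/N)$ attached to $\nabla P_{\leq N}$ has $L^1$-norm $\lesssim N$, so Young's inequality yields $\|\nabla P_{\leq N}u\|_{W_\tbs(\R)}\lesssim N\|u\|_{W_\tbs(\R)}$, and hence
\begin{align*}
\|\nabla P_{\leq N}u\|_{L^2_{t,x}([-T,T]\times\{|x|\leq R\})}\lesssim N\,T^{2/(d+2)}R^{2d/(d+2)}\|u\|_{W_\tbs(\R)}.
\end{align*}

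For the high-frequency part, I would invoke the dual form of the classical Kato local smoothing estimate
\begin{align*}
\|e^{it\Delta}g\|_{L^2_tL^2_x(\R\times\{|x|\leq R\})}\lesssim R^{1/2}\|g\|_{\dot H^{-1/2}(\R^d)},
\end{align*}
which follows from the standard $\||\nabla|^{1/2}e^{it\Delta}h\|_{L^2_tL^2(\{|x|\leq R\})}\lesssim R^{1/2}\|h\|_2$ by choosing $h=|\nabla|^{-1/2}g$ and using that $|\nabla|^{\pm 1/2}$ commutes with $e^{it\Delta}$. Applying this with $g=\partial_jP_{>N}\phi$ for each $j=1,\dots,d$ and exploiting that $\widehat{P_{>N}\phi}$ is supported in $\{|\xi|\gtrsim N\}$,
\begin{align*}
\|\partial_jP_{>N}\phi\|_{\dot H^{-1/2}}^2=\int|\xi|^{-1}|\xi_j|^2|\widehat{P_{>N}\phi}(\xi)|^2\,d\xi\lesssim N^{-1}\|\partial_j\phi\|_2^2.
\end{align*}
Summing in $j$ via $\|\nabla f\|_{L^2(\{|x|\leq R\})}^2=\sum_j\|\partial_jf\|_{L^2(\{|x|\leq R\})}^2$ then yields
\begin{align*}
\|\nabla P_{>N}u\|_{L^2_{t,x}([-T,T]\times\{|x|\leq R\})}\lesssim R^{1/2}N^{-1/2}\|\nabla\phi\|_2.
\end{align*}

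Setting $A:=T^{2/(d+2)}R^{2d/(d+2)}\|u\|_{W_\tbs(\R)}$ and $B:=R^{1/2}\|\nabla\phi\|_2$, the triangle inequality produces $\|\nabla u\|_{L^2_{t,x}([-T,T]\times\{|x|\leq R\})}\lesssim NA+N^{-1/2}B$; optimizing at $N\sim (B/A)^{2/3}$ gives the bound $A^{1/3}B^{2/3}=T^{2/(3(d+2))}R^{(3d+2)/(3(d+2))}\|u\|_{W_\tbs(\R)}^{1/3}\|\nabla\phi\|_2^{2/3}$, and cubing recovers precisely the estimate \eqref{local killip visan 2}. The main obstacle I anticipate is the high-frequency piece: a naive attempt would try to descend from $\||\nabla|^{3/2}P_{>N}u\|_{L^2(\{|x|\leq R\})}$ (which the primal form of Kato controls directly) to $\|\nabla P_{>N}u\|_{L^2(\{|x|\leq R\})}$ through a local Bernstein inequality, but this is delicate because Littlewood-Paley projections are nonlocal. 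The dual-form Kato estimate sidesteps this difficulty by converting the high-frequency cutoff $P_{>N}$ into the decisive gain of $N^{-1/2}$ directly inside the $\dot H^{-1/2}$ norm, with no local-to-global frequency argument required.
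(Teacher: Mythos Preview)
The paper does not supply its own proof of this lemma; it is quoted verbatim from \cite{KillipVisan2010focusing}. Your argument is correct and is precisely the standard proof from that reference: Littlewood--Paley split at scale $N$, H\"older plus Bernstein on the low piece, Kato local smoothing on the high piece, then optimize in $N$. The arithmetic on the exponents checks out, and your handling of the high-frequency part via the $\dot H^{-1/2}$ form of Kato smoothing is clean and avoids the nonlocality issue you correctly flagged.
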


\section{Small data and stability theories}\label{section preliminaries}
We record in this section the small data and stability theories for \eqref{NLS double crit}. The proof of the small data theory is standard, see for instance \cite{Cazenave2003,KillipVisanNotes}. We will therefore omit the details of the proof here.
\begin{lemma}[Small data theory]\label{well posedness lemma}
For any $A>0$ there exists some $\beta>0$ such that the following is true: Suppose that $t_0\in I$ for some interval $I$. Suppose also that $u_0\in H^1(\R^d)$ with
\begin{gather}
\|u_0\|_{H^1}\leq A,\\
\| e^{i(t-t_0)\Delta}u_0\|_{W_{\tas}\cap W_{\tbs}(I)}\leq \beta.
\end{gather}
Then \eqref{NLS double crit} has a unique solution $u\in C(I;H^1(\R^d))$ with $u(t_0)=u_0$ such that
\begin{align}
\|\la\nabla\ra u\|_{S(I)}&\lesssim \|u_0\|_{H^1},\\
\|u\|_{W_{\tas}\cap W_{\tbs}(I)}&\leq 2\| e^{i(t-t_0)\Delta}u_0\|_{W_{\tas}\cap W_{\tbs}(I)}.
\end{align}
By the uniqueness of the solution $u$ we can extend $I$ to some maximal open interval $I_{\max}=(T_{\min},T_{\max})$. We have the following blow-up criterion: If $T_{\max}<\infty$, then
\begin{align*}
%\|u\|_{W_{\tas}(I)}+\||\nabla|^{\frac{4}{d+2}}u\|_{X(I)}=\infty
\|u\|_{W_{\tas}\cap W_\tbs([T,T_{\max}))}=\infty
\end{align*}
for any $T\in I_{\max}$. A similar result holds for $T_{\min}>-\infty$. Moreover, if
\begin{align*}%\label{scattering criterion}
%\|u\|_{W_{\tas}(I)}+\||\nabla|^{\frac{4}{d+2}}u\|_{X(I)}<\infty,
\|u\|_{W_{\tas}\cap W_\tbs(I_{\max})}<\infty,
\end{align*}
then $I_{\max}=\R$ and $u$ scatters in time.
\end{lemma}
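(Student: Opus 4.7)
The plan is to set up a Banach fixed-point argument for the Duhamel map
$$\Phi(u)(t) := e^{i(t-t_0)\Delta}u_0 + i\int_{t_0}^t e^{i(t-s)\Delta}\bigl[\mu_1|u|^{\tas-2}u + \mu_2|u|^{\tbs-2}u\bigr](s)\,ds$$
on the closed ball
$$X := \bigl\{u \in C(I;H^1(\R^d)) : \|\la\nabla\ra u\|_{S(I)} \leq C_0 A,\ \|u\|_{W_{\tas}\cap W_{\tbs}(I)} \leq 2\beta\bigr\},$$
metrized by the weaker distance $\|u-v\|_{L^\infty_t L^2_x\cap W_{\tas}\cap W_{\tbs}(I)}$, under which $X$ is complete in the Cazenave--Weissler sense. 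The key input is a pair of nonlinear estimates: combining the Strichartz inequality with the fractional chain rule and the pointwise bound \eqref{elementary ineq}, one obtains
\begin{align*}
\|\la\nabla\ra \Phi(u)\|_{S(I)} &\lesssim \|u_0\|_{H^1} + \bigl(\|u\|_{W_{\tas}(I)}^{\tas-2} + \|u\|_{W_{\tbs}(I)}^{\tbs-2}\bigr)\|\la\nabla\ra u\|_{S(I)},\\
\|\Phi(u)\|_{W_{\tas}\cap W_{\tbs}(I)} &\leq \|e^{i(t-t_0)\Delta}u_0\|_{W_{\tas}\cap W_{\tbs}(I)} + C\bigl(\|u\|_{W_{\tas}(I)}^{\tas-2} + \|u\|_{W_{\tbs}(I)}^{\tbs-2}\bigr)\|\la\nabla\ra u\|_{S(I)},
\end{align*}
together with an analogous difference estimate in the weaker metric. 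Choosing $\beta = \beta(A)$ sufficiently small turns $\Phi$ into a self-contraction of $X$, giving existence, uniqueness and both quantitative bounds at once.

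Standard gluing of these local solutions produces the maximal interval $I_{\max} = (T_{\min}, T_{\max})$. For the blow-up criterion, suppose toward contradiction that $T_{\max} < \infty$ while $\|u\|_{W_{\tas}\cap W_{\tbs}([T, T_{\max}))} < \infty$ for some $T \in I_{\max}$. Partition $[T, T_{\max})$ into finitely many subintervals $J_1, \ldots, J_N$ on each of which the $W_{\tas}\cap W_{\tbs}$ norm is below a fixed small threshold. Applying Strichartz to the Duhamel identity based at the left endpoint $t_j$ of $J_j$ together with the nonlinear estimate above forces $\|e^{i(t-t_j)\Delta}u(t_j)\|_{W_{\tas}\cap W_{\tbs}(J_j)} < \beta$; then the local theory applied at a $t_j$ close enough to $T_{\max}$ extends the solution past $T_{\max}$, contradicting maximality. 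Granted $\|u\|_{W_{\tas}\cap W_{\tbs}(I_{\max})} < \infty$, this already yields $I_{\max} = \R$.

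Scattering then follows from Strichartz applied to
$$e^{-iT_2\Delta}u(T_2) - e^{-iT_1\Delta}u(T_1) = i\int_{T_1}^{T_2} e^{-is\Delta}\bigl[\mu_1|u|^{\tas-2}u + \mu_2|u|^{\tbs-2}u\bigr](s)\,ds,$$
which by the same dual-Strichartz bounds produces a Cauchy sequence $(e^{-iT\Delta}u(T))_T$ in $H^1$ as $T\to\infty$; its limit is the desired scattering state $\phi_+$. The backward scattering state is obtained symmetrically.

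The main technical obstacle is the nonlinear estimate for the mass-critical term $|u|^{4/d}u$ in dimensions $d \geq 5$, where $\tas - 2 = 4/d < 1$ renders the nonlinearity only H\"older continuous. In this regime the classical Leibniz-type fractional chain rule does not directly apply, and one must instead invoke the Christ--Weinstein fractional chain rule for H\"older maps in order to place $\la\nabla\ra(|u|^{\tas-2}u)$ into an appropriate dual Strichartz space without losing the required multilinear structure on the right-hand side.
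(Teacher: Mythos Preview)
Your proposal is correct and follows precisely the standard contraction-mapping route the paper has in mind: the paper does not actually give a proof of this lemma but simply writes ``The proof of the small data theory is standard, see for instance \cite{Cazenave2003,KillipVisanNotes}. We will therefore omit the details of the proof here.'' Your sketch is exactly that standard argument.

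One small remark: your final paragraph about needing the Christ--Weinstein fractional chain rule in dimensions $d\geq 5$ is over-cautious in this setting. Since the analysis is at the $H^1$ level, you only ever differentiate once, and the map $z\mapsto |z|^{4/d}z$ is genuinely $C^1$ for every $d\geq 1$ (its partial derivatives are $O(|z|^{4/d})$, continuous through $z=0$). Hence the pointwise chain rule $|\nabla(|u|^{4/d}u)|\lesssim |u|^{4/d}|\nabla u|$ holds distributionally for $u\in H^1$, and the H\"older estimate you need for the contraction in the weaker metric is the elementary bound $\bigl||u|^{4/d}u-|v|^{4/d}v\bigr|\lesssim (|u|^{4/d}+|v|^{4/d})|u-v|$. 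The Christ--Weinstein machinery would only be required if you were working with genuinely fractional $|\nabla|^s$ for $s\notin\Z$; here it is unnecessary.
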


\begin{remark}\label{remark}
%\item[(i)] In view of \eqref{inter d} we know that the solution $u$ given by Lemma \ref{well posedness lemma} also satisfies
%\begin{align}
%\|u\|_{W_\tbs(I)}\leq C(2\beta)^c(CA)^{1-c}.
%\end{align}
Using Strichartz and Sobolev inequalities we infer that
\begin{align*}
\| e^{i(t-t_0)\Delta}u_0\|_{W_{\tas}\cap W_{\tbs}(I)}\lesssim \|u_0\|_{H^1}.
\end{align*}
Thus Lemma \ref{well posedness lemma} is applicable for all $u_0$ with sufficiently small $H^1$-norm.
\end{remark}

We will also need the following persistence of regularity result for \eqref{NLS double crit}.

\begin{lemma}[Persistence of regularity for \eqref{NLS double crit}]\label{nls persistence}
Let $u$ be a solution of \eqref{NLS double crit} on some interval $I$ with $t_0\in I$ and $\|u\|_{W_\tas\cap W_\tbs(I)}<\infty$. Then
\begin{align}
\||\nabla|^s u\|_{S(I)}\leq C(\|u\|_{W_\tas\cap W_\tbs(I)},\||\nabla|^s u(t_0)\|_2).
\end{align}
\end{lemma}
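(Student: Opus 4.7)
The plan is a standard partition--bootstrap--iteration argument. Given $M:=\|u\|_{W_\tas\cap W_\tbs(I)}<\infty$, I first fix a universal smallness parameter $\eta>0$ and partition $I$ into finitely many consecutive subintervals $I_1,\dots,I_K$, with $K=K(M,\eta)$, so that $\|u\|_{W_\tas\cap W_\tbs(I_j)}\leq\eta$ on each $I_j$; this is possible by absolute continuity of the relevant mixed Lebesgue norms. The idea is then to propagate the $\dot H^s$-regularity across each $I_j$ with a universal multiplicative factor and stitch the pieces.

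On $I_j$ with left endpoint $t_j$, I combine Duhamel with Strichartz applied to the three $L^2$-admissible pairs $(\infty,2)$, $W_\tas$ and $V_\tbs$. For the mass-critical nonlinearity the natural dual Strichartz space is $L_{t,x}^{2(d+2)/(d+4)}$ (the dual of $W_\tas$). For the energy-critical nonlinearity, since $W_\tbs$ itself is not $L^2$-admissible, I instead place $|\nabla|^s u$ in the auxiliary $L^2$-admissible pair $V_\tbs$ and keep the bare power $|u|^{4/(d-2)}$ in $W_\tbs$; H\"older then pairs these into the dual Strichartz space $L_t^2 L_x^{2d/(d+2)}$. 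Strichartz yields
\begin{align*}
\||\nabla|^s u\|_{L_t^\infty L_x^2\cap W_\tas\cap V_\tbs(I_j)}
&\lesssim \||\nabla|^s u(t_j)\|_2
+\||\nabla|^s(|u|^{\tas-2}u)\|_{L_{t,x}^{2(d+2)/(d+4)}(I_j)}\\
&\quad+\||\nabla|^s(|u|^{\tbs-2}u)\|_{L_t^2 L_x^{2d/(d+2)}(I_j)},
\end{align*}
and H\"older together with the elementary inequality \eqref{elementary ineq} (for $s\in\{0,1\}$) or a fractional chain rule (for general $s\geq 0$) gives
\begin{align*}
\||\nabla|^s(|u|^{\tas-2}u)\|_{L_{t,x}^{2(d+2)/(d+4)}(I_j)}
&\lesssim \|u\|_{W_\tas(I_j)}^{4/d}\||\nabla|^s u\|_{W_\tas(I_j)},\\
\||\nabla|^s(|u|^{\tbs-2}u)\|_{L_t^2 L_x^{2d/(d+2)}(I_j)}
&\lesssim \|u\|_{W_\tbs(I_j)}^{4/(d-2)}\||\nabla|^s u\|_{V_\tbs(I_j)}.
\end{align*}

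Writing $Y_j$ for the Strichartz norm on the left of the first display, the combined bound reads $Y_j\leq C_0\||\nabla|^s u(t_j)\|_2+C_0(\eta^{4/d}+\eta^{4/(d-2)})Y_j$. Choosing $\eta$ so small that $C_0(\eta^{4/d}+\eta^{4/(d-2)})\leq\tfrac12$ allows me to absorb the $Y_j$ term and deduce $Y_j\leq 2C_0\||\nabla|^s u(t_j)\|_2$. Since $\||\nabla|^s u(t_{j+1})\|_2\leq Y_j$, iterating across the $K$ subintervals produces the geometric bound $\||\nabla|^s u(t_j)\|_2\leq(2C_0)^{j}\||\nabla|^s u(t_0)\|_2$, and summing the $Y_j$ yields a bound on $\||\nabla|^s u\|_{S(I)}$ depending only on $M$ (through $K$) and on $\||\nabla|^s u(t_0)\|_2$, as claimed.

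The main technical point is the choice of spaces for the energy-critical term: $W_\tbs$ is not $L^2$-admissible, so one cannot simply place $|\nabla|^s u$ in $W_\tbs$ on both sides of Strichartz. The resolution is to split $|u|^{4/(d-2)}u$ as a product, control the pure power $|u|^{4/(d-2)}$ in $W_\tbs$ (which supplies the smallness used to close the bootstrap), and carry the derivative on the remaining factor, measured in the $L^2$-admissible pair $V_\tbs$. Once this bookkeeping is in place the argument is routine and parallels the persistence-of-regularity proofs in \cite{TaoVisanZhang,KillipVisanNotes}.
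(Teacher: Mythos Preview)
Your proof is correct and follows the same partition--Strichartz--absorption--iteration scheme as the paper's own proof. The only difference is presentational: the paper compresses your two nonlinear estimates into the single line $\||\nabla|^s u\|_{S(I_j)}\lesssim \||\nabla|^s u(t_j)\|_2+(\eta^{4/d}+\eta^{4/(d-2)})\||\nabla|^s u\|_{S(I_j)}$ by working directly with the full $S$-norm (which, by interpolation between its endpoints $L_t^\infty L_x^2$ and $L_t^2 L_x^{\tbs}$, controls both $W_\tas$ and $V_\tbs$), whereas you spell out explicitly the auxiliary admissible pair $V_\tbs$ and the dual space $L_t^2 L_x^{2d/(d+2)}$ needed for the energy-critical term --- a clarification that is useful but not a different argument.
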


\begin{proof}
We divide $I$ into $m$ subintervals $I_1,I_2,\cdots,I_m$ with $I_j=[t_{j-1},t_j]$ such that
\begin{align*}
\|u\|_{W_\tas\cap W_\tbs(I_j)}\leq \eta\ll 1.
\end{align*}
for some small $\eta$ which is to be determined later. Then by Strichartz we have
\begin{align*}
\||\nabla|^s u\|_{S(I_j)}\lesssim \||\nabla|^s u(t_j)\|_2+(\eta^{\frac{4}{d}}+\eta^{\frac{4}{d-2}})\||\nabla|^s u\|_{S(I_j)}.
\end{align*}
Therefore choosing $\eta$ sufficiently small (where the smallness depends only on the Strichartz constants and is uniform for all subintervals $I_j$) and starting with $j=1$ we have
\begin{align*}
\||\nabla|^s u\|_{S(I_1)}\leq C(\||\nabla|^s u(t_0)\|_2).
\end{align*}
In particular,
\begin{align*}
\||\nabla|^s u(t_1)\|_{2}\leq C(\||\nabla|^s u(t_0)\|_2).
\end{align*}
Arguing inductively for all $j=2,\cdots,m-1$ and summing the estimates on all subintervals up yield the desired claim.
\end{proof}

Now we prove the stability theory for \eqref{NLS double crit}, which is a stronger version of the ones from \cite{MiaoDoubleCrit,luo2021scattering} under the enhanced condition \eqref{condition a}.

\begin{lemma}[Stability theory]\label{long time pert}
Let $d\geq 3$ and let $u\in C(I;H^1(\R^d))$ be a solution of \eqref{NLS double crit} defined on some interval $I\ni t_0$. Assume also that $w\in C(I;H^1(\R^d))$ is an approximate solution of the following perturbed NLS
\begin{align}
i\pt_t w+\Delta w+\mu_1|w|^{\frac{4}{d}}w+\mu_2|w|^{\frac{4}{d-2}}w+e=0
\end{align}
such that
\begin{align}
\|u\|_{L_t^\infty H_x^1(I)}&\leq B_1,\label{condition c1}\\
%\|u(t_0)-w(t_0)\|_{H^1}&\leq B_2,\label{condition c3}\\
\|w\|_{W_\tas\cap W_\tbs (I)}&\leq B_2\label{condition c2}
\end{align}
for some $B_1,B_2>0$. Then there exists some positive $\beta_0=\beta_0(B_1,B_2)\ll 1$ with the following property: if
\begin{align}
\|u(t_0)-w(t_0)\|_{H^1}&\leq \beta,\label{condition a}\\
\|\la\nabla\ra e\|_{L_{t,x}^{\frac{2(d+2)}{d+4}}(I)}&\leq\beta\label{condition b}
\end{align}
for some $0<\beta<\beta_0$, then
\begin{align}\label{also proxy}
\| \la\nabla\ra (u-w)\|_{S (I)}\lesssim_{B_1,B_2}\beta^\kappa.
\end{align}
for some $\kappa\in(0,1)$.
\end{lemma}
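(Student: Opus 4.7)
The plan is to run a standard bootstrap on the difference $v := u-w$, which solves
\begin{equation*}
i\pt_t v+\Delta v+\mu_1\bg(|u|^{\tas-2}u-|w|^{\tas-2}w\bg)+\mu_2\bg(|u|^{\tbs-2}u-|w|^{\tbs-2}w\bg)=-e,
\end{equation*}
and iterate the estimate across a partition of $I$ on which $w$ has small critical Strichartz norm.

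First I would partition $I=\bigsqcup_{j=1}^{J}I_j$ into consecutive subintervals with $\|w\|_{W_\tas(I_j)}+\|w\|_{W_\tbs(I_j)}\le \eta$, where $\eta=\eta(d)\ll 1$ is to be fixed later and $J=J(B_2,\eta)$. Before estimating $v$ directly, I would upgrade the regularity control on $w$: applying the $\la\nabla\ra$-Strichartz inequality to the perturbed equation solved by $w$, and using \eqref{elementary ineq}, \eqref{condition b} and $\|w(t_0)\|_{H^1}\le B_1+\beta$, one gets
\begin{equation*}
\|\la\nabla\ra w\|_{S(I_j)}\lesssim \|w(t_{j-1})\|_{H^1}+(\eta^{4/d}+\eta^{4/(d-2)})\|\la\nabla\ra w\|_{S(I_j)}+\beta,
\end{equation*}
and absorbing the quadratic term then iterating across $j$ yields $\|\la\nabla\ra w\|_{S(I)}\lesssim_{B_1,B_2}1$.

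Next, on each $I_j$ I apply Strichartz directly to the equation for $v$. By \eqref{elementary ineq} and H\"older the nonlinear difference splits into: \emph{(i)} an absorbable quadratic-in-$v$ piece bounded by $(\eta^{4/d}+\eta^{4/(d-2)})\|\la\nabla\ra v\|_{S(I_j)}$, absorbed on the left-hand side once $\eta$ is chosen small; and \emph{(ii)} cross terms of the shape $\|v\|_{Y_j}\cdot\bg(\|\la\nabla\ra u\|_{S(I_j)}+\|\la\nabla\ra w\|_{S(I_j)}\bg)^{p_i-2}$ with $Y_j$ an intermediate Strichartz space. Controlling $\|v\|_{Y_j}$ via interpolation between $\|\la\nabla\ra v\|_{S(I_j)}$ (bounded a priori by \eqref{condition c1}, \eqref{condition c2} and the step just above) and the smallness provided by $\|v(t_{j-1})\|_{H^1}$, one obtains, for some $\kappa\in(0,1)$,
\begin{equation*}
\|\la\nabla\ra v\|_{S(I_j)}\lesssim_{B_1,B_2}\bg(\|\la\nabla\ra v(t_{j-1})\|_2+\beta\bg)^{\kappa}.
\end{equation*}
Iterating over $j=1,\dots,J$ and choosing $\beta_0=\beta_0(B_1,B_2)$ small enough so that each step stays within the small-data regime then yields \eqref{also proxy} (up to replacing $\kappa$ by $\kappa^{J}$).

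The main obstacle is the simultaneous presence of the mass- and energy-critical nonlinearities, which carry different natural scalings. In a single-critical stability argument, a single Strichartz pair controls both the absorbable and the cross terms; here one must estimate $v$ in two distinct critical Strichartz spaces and the partition parameter $\eta$ must be small in both $W_\tas$ and $W_\tbs$ simultaneously. Consequently, the cross-term estimates become nonlinear in $\|\la\nabla\ra v\|_{S(I_j)}$, and the sublinear exponent $\kappa<1$ in the final bound is precisely the price paid for interpolating the intermediate norms of $v$ and for accumulating constants across the $J=J(B_2)$ subintervals.
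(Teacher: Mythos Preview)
Your overall architecture (partition, Strichartz on the difference, iterate) is correct, but the paper's proof is organized differently and your key step has a gap.

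The paper does \emph{not} prove the stability estimate from scratch. It invokes the weaker stability results already established in \cite{MiaoDoubleCrit,luo2021scattering}, which give as a black box
\[
\|u-w\|_{W_\tas\cap W_\tbs(I)}\lesssim_{B_1,B_2}\beta^\kappa
\quad\text{and}\quad
\|\la\nabla\ra u\|_{S(I)}+\|\la\nabla\ra w\|_{S(I)}\lesssim_{B_1,B_2}1.
\]
With these in hand, the paper partitions $I$ so that \emph{both} $u$ and $w$ are small in $W_\tas\cap W_\tbs$ on each subinterval, writes out the pointwise difference estimates (case-splitting on $d$), and closes the $\la\nabla\ra$-level estimate by a clean absorption plus the already-known $\beta^\kappa$ smallness of $\|u-w\|_{W_\tas\cap W_\tbs}$ for the cross terms. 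The lemma is thus an \emph{upgrade} of the cited results to the derivative level, not a self-contained proof.

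Your proposal attempts the full argument from the hypotheses alone, and the gap is in the sentence ``bounded a priori by \eqref{condition c1}, \eqref{condition c2} and the step just above.'' Condition \eqref{condition c1} gives only $\|u\|_{L_t^\infty H_x^1}\le B_1$; it does \emph{not} give a bound on $\|\la\nabla\ra u\|_{S(I_j)}$ (the $L^2_tL^{\tbs}_x$ component is missing), so you have no a priori bound on $\|\la\nabla\ra v\|_{S(I_j)}$ to interpolate against. Relatedly, your absorbable term in (i) has coefficient $\eta^{4/d}+\eta^{4/(d-2)}$ coming from $w$, but the actual difference estimates also produce coefficients like $\|u\|_{W_\tas(I_j)}^{4/d}$, which you cannot bound by $\eta$ without first knowing $\|v\|_{W_\tas(I_j)}$ is small---circular. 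One can close this via a genuine continuity/bootstrap argument on $\|v\|_{W_\tas\cap W_\tbs}$, but that is precisely the content of the cited papers (and is delicate in high dimensions where the nonlinearities are merely H\"older); your ``interpolation'' sketch does not supply it.
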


\begin{proof}
From the results given in \cite{MiaoDoubleCrit,luo2021scattering} we already know that
\begin{align*}
\|u-w\|_{W_\tas\cap W_\tbs (I)}&\lesssim_{B_1,B_2}\beta^\kappa,\\
\|\la\nabla \ra u\|_{S(I)}+\|\la\nabla \ra w\|_{S(I)}&\lesssim_{B_1,B_2}1
\end{align*}
for some $\kappa\in(0,1)$. We divide $I$ into $O\bg(\frac{C(B_1,B_2)}{\delta}\bg)$ intervals $I_1,\cdots,I_m$ such that
\begin{align*}
\| u\|_{W_\tas\cap W_\tbs (I_j)}+\| w\|_{W_\tas\cap W_\tbs (I_j)}\leq \delta
\end{align*}
for all $j=1,\cdots,m$, where $\delta>0$ is some small number to be determined. Denote $I_1=[t_0,t_1]$. Using H\"older and \eqref{elementary ineq} we infer that
\begin{align}
&\||\nabla|^s (|u|^{\frac{4}{d}}u-|w|^{\frac{4}{d}}w)\|_{L_{t,x}^{\frac{2(d+2)}{d+4}}(I_1)}\nonumber\\
\lesssim&
\left\{
             \begin{array}{ll}
             \|u-w\|_{W_\tas(I_1)}(\|u\|^{\frac{4-d}{d}}_{W_\tas(I_1)}+\|w\|^{\frac{4-d}{d}}_{W_\tas(I_1)})
             \||\nabla|^s w\|_{W_\tas(I_1)}\\
             \quad+(\|v\|^{\frac{4}{d}}_{W_\tas(I_1)}+\|w\|^{\frac{4}{d}}_{W_\tas(I_1)})\||\nabla|^s (u- w)\|_{W_\tas(I_1)},&\text{if $d=3$},\\
             \\
             (\|u\|^{\frac{4}{d}}_{W_\tas(I_1)}+\|w\|^{\frac{4}{d}}_{W_\tas(I_1)})\||\nabla|^s (u- w)\|_{W_\tas(I_1)}
             \\
             \quad+\|u-w\|^{\frac{4}{d}}_{W_\tas(I_1)}(\||\nabla|^s u\|_{W_\tas(I_1)}+
             \||\nabla|^s w\|_{W_\tas(I_1)}),&\text{if $d\geq 4$},\\
             \end{array}
\right.
\\
\nonumber\\
&\||\nabla|^s (|u|^{\frac{4}{d-2}}u-|w|^{\frac{4}{d-2}}w)\|_{L_{t,x}^{\frac{2(d+2)}{d+4}}(I_1)}\nonumber\\
\lesssim&
\left\{
             \begin{array}{ll}
             \|u-w\|_{W_\tbs(I_1)}(\|u\|^{\frac{6-d}{d-2}}_{W_\tbs(I_1)}+\|w\|^{\frac{6-d}{d-2}}_{W_\tbs(I_1)})
             \||\nabla|^s w\|_{W_\tas(I_1)}\\
             \quad+(\|u\|^{\frac{4}{d-2}}_{W_\tbs(I_1)}+\|w\|^{\frac{4}{d-2}}_{W_\tbs(I_1)})\||\nabla|^s (u- w)\|_{W_\tas(I_1)},&\text{if $d\leq5$},\\
             \\
             (\|u\|^{\frac{4}{d-2}}_{W_\tbs(I_1)}+\|w\|^{\frac{4}{d-2}}_{W_\tbs(I_1)})\||\nabla|^s (u- w)\|_{W_\tas(I_1)}
             \\
             \quad+\|u-w\|^{\frac{4}{d-2}}_{W_\tbs(I_1)}(\||\nabla|^s u\|_{W_\tas(I_1)}+\||\nabla|^s w\|_{W_\tas(I_1)}),&\text{if $d\geq 6$}\\
             \end{array}
\right.
\end{align}
for $s\in\{0,1\}$. By Strichartz we also see that
\begin{align}
\||\nabla|^s (u-w)\|_{S(I_1)}&\lesssim \||\nabla|^s (u(t_0)-w(t_0))\|_{L^2}
+\||\nabla|^s (|u|^{\frac{4}{d}}u-|w|^{\frac{4}{d}}w)\|_{L_{t,x}^{\frac{2(d+2)}{d+4}}(I_1)}\nonumber\\
&\quad\quad+\||\nabla|^s (|u|^{\frac{4}{d-2}}u-|w|^{\frac{4}{d-2}}w)\|_{L_{t,x}^{\frac{2(d+2)}{d+4}}(I_1)}
+\||\nabla|^s e\|_{L_{t,x}^{\frac{2(d+2)}{d+4}}(I_1)}.
\end{align}
Now we absorb the terms on the r.h.s. with $\|\nabla (u-w)\|_{W_\tas(I_1)}$ to the l.h.s. (which is possible by choosing $\delta$ sufficiently small) to deduce that
\begin{align*}
\||\nabla|^s (u-w)\|_{S(I_1)}\lesssim \beta^{\kappa}
\end{align*}
for some (possibly smaller) $\kappa\in(0,1)$. In particular, we have
\begin{align*}
\|u(t_1)-w(t_1)\|_{H^1}\lesssim \beta^{\kappa}.
\end{align*}
Therefore we can proceed with the previous arguments for all $I_2,\cdots,I_m$ to conclude that
\begin{align*}
\||\nabla|^s (u-w)\|_{S(I_j)}\lesssim \beta^{\kappa}
\end{align*}
for all $j=1,\cdots,m$. The claim follows by summing the estimates on each subinterval up.
\end{proof}

%\begin{remark}
%In the original proof of Lemma \ref{long time pert} given in \cite{luo2021scattering}, we in fact impose the additional assumption $\|u\|_{L_t^\infty H_x^1(I)}\leq B_0$ for some $B_0>0$, which can also be replaced to the assumption $\|w\|_{L_t^\infty H_x^1(I)}\leq B_0$ by a straightforward modification of the proof. Nevertheless, these assumptions are removable by invoking Lemma \ref{nls persistence} given below.
%\end{remark}

\section{Double track profile decomposition} \label{section profile decomp}
In this section we construct the double track profile decomposition for a bounded sequence in $H^1(\R^d)$. We begin with the following inverse Strichartz inequality along the $\dot{H}^1$-track, which is originally proved in \cite{killip_visan_soliton} in the case $d=3$ and can be extended to arbitrary dimension $d\geq 3$ straightforwardly.

\begin{lemma}[Inverse Strichartz inequality, $\dot{H}^1$-track, \cite{killip_visan_soliton}]\label{track h1}
Let $d\geq 3$ and $(f_n)_n\subset H^1(\R^d)$. Suppose that
\begin{align}
\lim_{n\to\infty}\|f_n\|_{H^1}=A<\infty\quad\text{and}\quad\lim_{n\to\infty}\|e^{it\Delta} f_n\|_{W_\tbs(\R)}=\vare>0.
\end{align}
Then up to a subsequence, there exist $\phi\in\dot{H}^1(\R^d)$ and $(t_n,x_n,\ld_n)_n\subset\R\times\R^d\times(0,\infty)$ such that $\ld_n\to \ld_\infty\in[0,\infty)$, and if $\ld_\infty>0$, then $\phi\in H^1(\R^d)$. Moreover,
\begin{align}
\ld_n^{\frac{d}{2}-1}(e^{it_n\Delta}f_n)(\ld_n x+x_n)\rightharpoonup \phi(x)\text{ weakly in }
\left\{
             \begin{array}{ll}
             H^1(\R^d),&\text{if $\ld_\infty>0$},\\
             \dot{H}^1(\R^d),&\text{if $\ld_\infty=0$}.
             \end{array}
\right.
\end{align}
Setting
\begin{align}
\phi_n:=
\left\{
             \begin{array}{ll}
             \ld_n^{-\frac{d}{2}-1}e^{-it_n\Delta}\bg[\phi(\frac{x-x_n}{\ld_n})\bg],&\text{if $\ld_\infty>0$},\\
             \\
             \ld_n^{-\frac{d}{2}-1}e^{-it_n\Delta}\bg[(P_{>\ld_n^\theta}\phi)(\frac{x-x_n}{\ld_n})\bg],&\text{if $\ld_\infty=0$}
             \end{array}
\right.
\end{align}
for some fixed $\theta\in(0,1)$, we have
\begin{align}
&\lim_{n\to\infty}(\|f_n\|^2_{\dot{H}^1}-\|f_n-\phi_n\|^2_{\dot{H}^1})=\|\phi\|^2_{\dot{H}^1}\gtrsim A^2\bg(\frac{\vare}{A}\bg)^{\frac{d(d+2)}{4}},
\label{refined strichartz decomp 1}\\
&\lim_{n\to\infty}(\|f_n\|^2_{\dot{H}^1}-\|f_n-\phi_n\|^2_{\dot{H}^1}-\|\phi_n\|^2_{\dot{H}^1})=0,
\label{refined strichartz decomp 3}\\
&\lim_{n\to\infty}(\|f_n\|^2_{2}-\|f_n-\phi_n\|^2_{2}-\|\phi_n\|^2_{2})=0.\label{refined strichartz decomp 2}
\end{align}
Furthermore, we have
\begin{align}
\text{(i) } &\ld_n\equiv 1\quad\text{or}\quad \ld_n\to 0,\\
\text{(ii) } &t_n\equiv 0\quad\text{or}\quad \frac{t_n}{\ld_n^2}\to \pm\infty
\end{align}
and
\begin{align}
&\|f_n\|_\tas^\tas=\|\phi_n\|_\tas^\tas+\|f_n-\phi_n\|_\tas^\tas+o_n(1),\\
&\|f_n\|_\tbs^\tbs=\|\phi_n\|_\tbs^\tbs+\|f_n-\phi_n\|_\tbs^\tbs+o_n(1).
\end{align}
\end{lemma}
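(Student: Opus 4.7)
The plan is to adapt the classical Keraani / Bahouri--Gérard / Killip--Visan inverse Strichartz strategy, which in three dimensions was carried out in the cited paper and whose extension to $d\geq 3$ is essentially dimension-agnostic. The starting point is a refined Strichartz estimate of the form
\begin{align*}
\|e^{it\Delta}f\|_{W_{\tbs}(\R)}\lesssim \|f\|_{\dot{H}^1}^{\alpha}\,\sup_{N\in 2^{\Z}}\bigl(N^{1-\frac{d}{2}}\|e^{it\Delta}P_N f\|_{L_{t,x}^\infty(\R)}\bigr)^{1-\alpha},
\end{align*}
with $\alpha=\tfrac{4}{d(d+2)}\cdot\tfrac{d-2}{2}$ (or any equivalent refined bound that interpolates between $\dot{H}^1$ and an $L^\infty$-based norm at fixed Littlewood--Paley scale). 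Applied to $(f_n)_n$, this bound converts the hypothesis $\|e^{it\Delta}f_n\|_{W_{\tbs}}\to\vare$ into the existence of dyadic frequencies $N_n$ and space-time points $(t_n,x_n)\in\R\times\R^d$ such that
\begin{align*}
N_n^{1-\frac{d}{2}}\bigl|(e^{it_n\Delta}P_{N_n}f_n)(x_n)\bigr|\gtrsim \bigl(\vare/A\bigr)^{\frac{d(d+2)}{4}\cdot\frac{1}{1-\alpha}}.
\end{align*}

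Next, setting $\ld_n:=N_n^{-1}$, I would introduce the $\dot{H}^1$-rescaled sequence
\[g_n(x):=\ld_n^{\frac{d}{2}-1}(e^{it_n\Delta}f_n)(\ld_n x+x_n),\]
which satisfies $\|g_n\|_{\dot H^1}=\|f_n\|_{\dot H^1}\leq A$. Passing to a subsequence, $g_n\rightharpoonup \phi$ weakly in $\dot H^1$; testing against a fixed Schwartz bump $\psi$ localised at frequency $\sim 1$ and evaluated at the origin converts the pointwise lower bound above into $|\langle g_n,\psi\rangle|\gtrsim (\vare/A)^{\gamma}$, which forces $\|\phi\|_{\dot{H}^1}\gtrsim A(\vare/A)^{d(d+2)/8}$ and hence \eqref{refined strichartz decomp 1}. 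Because $\|g_n\|_2=\ld_n^{-1}\|f_n\|_2$, the sequence $(\ld_n)_n$ is either bounded below, in which case after subsequencing $\ld_n\to\ld_\infty>0$ and $\phi\in H^1$, or $\ld_n\to 0$ and the weak limit lies only in $\dot{H}^1$; the two cases correspond to the two branches in the definition of $\phi_n$. The normalisations~(i)--(ii) are then obtained by a standard diagonal procedure: if $\ld_n$ has a strictly positive limit we absorb it into $\phi$ to set $\ld_n\equiv 1$, and if $t_n/\ld_n^2$ is bounded we absorb the limit into the time variable to set $t_n\equiv 0$.

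The $\dot H^1$ decoupling identities \eqref{refined strichartz decomp 1}--\eqref{refined strichartz decomp 3} follow from the Hilbert-space Pythagorean identity together with the weak convergence $g_n\rightharpoonup\phi$; in the singular case $\ld_\infty=0$ one additionally needs $\|P_{>\ld_n^\theta}\phi-\phi\|_{\dot H^1}=o_n(1)$, which holds for any fixed $\theta\in(0,1)$. For the mass decoupling \eqref{refined strichartz decomp 2}, the case $\ld_\infty>0$ is again Pythagorean in $L^2$, whereas for $\ld_\infty=0$ the truncation is designed precisely so that by Bernstein,
\[\|\phi_n\|_2=\ld_n\|P_{>\ld_n^\theta}\phi\|_2\lesssim \ld_n^{1-\theta}\|\phi\|_{\dot H^1}\longrightarrow 0,\]
which collapses the $L^2$ identity to $\|f_n\|_2^2-\|f_n-\phi_n\|_2^2\to 0$ and is verified directly by expanding the square and using that $\phi_n\to 0$ in $L^2$.

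The main obstacle, as usual, is the decoupling of the nonlinear $L^{\tas}$ and $L^{\tbs}$ norms, which cannot be read off from weak convergence alone. I would handle these via a refined Brezis--Lieb lemma: for the $L^{\tbs}$ term, a Rellich--Kondrachov argument applied to the rescaled sequence $g_n$ gives a.e.\ convergence to $\phi$ on compact sets, and Brezis--Lieb yields the $L^{\tbs}$ identity in the $\dot H^1$-invariant scaling, which then transfers to the original variables. In the case $\ld_\infty=0$ one must additionally control $\|\phi-P_{>\ld_n^\theta}\phi\|_{\tbs}\to 0$ via Sobolev embedding, which is automatic since this tail tends to zero in $\dot H^1$. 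The $L^{\tas}$ decoupling is the delicate part when $\ld_\infty=0$: the rescaled profile $\phi_n$ has vanishing $L^2$ mass but non-vanishing $\dot H^1$ mass, so one interpolates by Gagliardo--Nirenberg to bound $\|\phi_n\|_\tas\lesssim \ld_n^{\eta}$ for some $\eta>0$ depending on $\theta$, which again reduces the $L^{\tas}$ identity to a trivial one. When $\ld_\infty>0$ we are back in the $H^1$ setting and a direct Brezis--Lieb application closes the argument; the careful bookkeeping of the $\theta$-dependent error terms in the singular regime is where I expect the bulk of the technical work to lie.
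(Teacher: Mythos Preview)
The paper does not give its own proof of this lemma: it is quoted from \cite{killip_visan_soliton} with the remark that the $d=3$ argument there extends to all $d\geq 3$ straightforwardly. Your outline is precisely that standard argument, so there is nothing to compare against beyond the cited reference.

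Two small technical points deserve attention. You do not explain why $\lambda_n$ is bounded above (equivalently, why the case $\lambda_\infty=\infty$ is excluded); this uses the $L^2$-bound on $f_n$ together with Bernstein: from the pointwise lower bound one gets
\[
N_n^{1-\frac{d}{2}}\bigl|(e^{it_n\Delta}P_{N_n}f_n)(x_n)\bigr|\lesssim N_n\|f_n\|_2\lesssim N_n A,
\]
forcing $N_n\gtrsim 1$ and hence $\lambda_n=N_n^{-1}\lesssim 1$. Also, your Brezis--Lieb step for the $L^{\tbs}$ identity tacitly assumes $t_n\equiv 0$: since $g_n$ is built from $e^{it_n\Delta}f_n$, Brezis--Lieb decouples $\|e^{it_n\Delta}f_n\|_{\tbs}$ rather than $\|f_n\|_{\tbs}$. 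When $t_n\to\pm\infty$ one instead approximates $\phi$ by a Schwartz function and uses the dispersive estimate to show $\|\phi_n\|_{\tbs}\to 0$ (and then $\|\phi_n\|_{\tas}\to 0$ by interpolation with $L^2$), reducing both identities to the triangle inequality. These are standard completions of the argument and do not affect its overall correctness.
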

Next, we establish the inverse Strichartz inequality along the $L^2$-track by using the arguments from the proof of Lemma \ref{track h1} and from \cite{KillipVisanNotes,torus0}. For each $j\in\Z$, define $\mC_j$ by
\begin{align*}
\mC_j:=\bg\{\Pi_{i=1}^d [2^jk_i,2^j(k_i+1))\subset\R^d:k\in\Z^d\bg\}
\end{align*}
and $\mC:=\cup _{j\in\Z}\,\mC_j$. Given $Q\in\mC$ we define $f_Q$ by $\hat{f}_Q:=\chi_Q \hat{f}$, where $\chi_Q$ is the characteristic function of the cube $Q$. We have the following improved Strichartz estimate:
\begin{lemma}[Improved Strichartz estimate, \cite{KillipVisanNotes}]\label{refined strichartz l2}
Let $d\geq 1$ and $q:=\frac{2(d^2+3d+1)}{d^2}$. Then
\begin{align}
\|e^{it\Delta}f\|_{W_\tas(\R)}\lesssim \|f\|_2^{\frac{d+1}{d+2}}\bg(\sup_{Q\in\mC}|Q|^{\frac{d+2}{dq}-\frac{1}{2}}\|e^{it\Delta}f_Q\|_{L_{t,x}^q(\R)}\bg)^{\frac{1}{d+2}}.
\end{align}
\end{lemma}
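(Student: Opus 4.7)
I plan to follow the classical refined Strichartz strategy of Bourgain and of Moyua--Vargas--Vega, as presented in the Killip--Visan lecture notes. The starting point is the identity $\|e^{it\Delta}f\|_{L^{p}_{t,x}}^2=\||e^{it\Delta}f|^2\|_{L^{p/2}_{t,x}}$ with $p:=\frac{2(d+2)}{d}=\tas$. First I would perform a Whitney decomposition of $(\R^d\times\R^d)$ away from the diagonal, yielding
\begin{align*}
|e^{it\Delta}f|^2=\sum_{j\in\Z}\sum_{\substack{Q,Q'\in\mC_j \\ Q\sim Q'}}(e^{it\Delta}f_Q)\,\overline{(e^{it\Delta}f_{Q'})},
\end{align*}
where $Q\sim Q'$ signifies that $Q,Q'$ are disjoint, share a common sidelength $N=2^j$, and have centres separated by a distance comparable to $N$.

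The main analytic input is Tao's bilinear restriction theorem for the paraboloid: for every $r>\frac{d+3}{d+1}$ and every Whitney pair $(Q,Q')$ at scale $N$,
\begin{align*}
\|(e^{it\Delta}f_Q)(e^{it\Delta}f_{Q'})\|_{L^{r}_{t,x}}\lesssim N^{d-\frac{d+2}{r}}\|f_Q\|_2\|f_{Q'}\|_2.
\end{align*}
Applied directly at $r=p/2=\frac{d+2}{d}$ this already recovers, after summation, the ordinary Strichartz estimate $\|e^{it\Delta}f\|_{L^p_{t,x}}\lesssim\|f\|_2$ with no refinement. To gain the extra information, I would interpolate the bilinear estimate against the trivial H\"older bound
\begin{align*}
\|(e^{it\Delta}f_Q)(e^{it\Delta}f_{Q'})\|_{L^{q/2}_{t,x}}\leq\|e^{it\Delta}f_Q\|_{L^{q}_{t,x}}\|e^{it\Delta}f_{Q'}\|_{L^{q}_{t,x}}.
\end{align*}
Choosing the interpolation parameter $\theta\in(0,1)$ so that $\frac{d}{d+2}=\frac{1-\theta}{r}+\frac{2\theta}{q}$ with $r$ taken just above the Tao threshold and $q=\frac{2(d^2+3d+1)}{d^2}$ produces a bilinear $L^{p/2}$-estimate with factor $N^{(1-\theta)(d-\frac{d+2}{r})}(\|f_Q\|_2\|f_{Q'}\|_2)^{1-\theta}(\|e^{it\Delta}f_Q\|_{L^q}\|e^{it\Delta}f_{Q'}\|_{L^q})^{\theta}$, whose scale factor is exactly matched by the weight $|Q|^{2\theta(\frac{d+2}{dq}-\frac{1}{2})}$ extracted from the supremum on the right-hand side of the lemma.

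To conclude, I would sum over Whitney pairs at each fixed scale using Cauchy--Schwarz and the orthogonality identity $\sum_{Q\in\mC_j}\|f_Q\|_2^2=\|f\|_2^2$, pulling out the supremum $\sup_{Q\in\mC}|Q|^{\frac{d+2}{dq}-\frac{1}{2}}\|e^{it\Delta}f_Q\|_{L^q_{t,x}}$, and then sum over dyadic scales $j\in\Z$, which converges thanks to the remaining positive power of $N$. Taking a $p$-th root delivers the announced inequality with the exponents $\frac{d+1}{d+2}$ and $\frac{1}{d+2}$. The main obstacle will be the simultaneous balancing of three constraints---Tao's bilinear threshold $r>\frac{d+3}{d+1}$, exact cancellation of the $N$-weights against the $|Q|$-weights, and absolute convergence of the dyadic sum in $j$---which together force the precise value $q=\frac{2(d^2+3d+1)}{d^2}$.
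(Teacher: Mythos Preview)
The paper does not supply its own proof of this lemma; it is quoted verbatim from the Killip--Visan lecture notes with only the citation \cite{KillipVisanNotes}. Your proposal correctly sketches the standard argument from that reference---Whitney decomposition of $\R^d\times\R^d$ off the diagonal, Tao's bilinear restriction estimate on each Whitney pair, interpolation against the trivial H\"older product bound, and summation via orthogonality---so there is nothing to compare and your outline is on target.

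One minor caution: your final clause about the dyadic sum in $j$ converging ``thanks to the remaining positive power of $N$'' is not quite the mechanism. In the Whitney decomposition each off-diagonal point $(\xi,\eta)$ lies in essentially one pair $Q\times Q'$, so there is no genuine sum over scales to make converge; the actual summation step in Killip--Visan exploits almost-orthogonality of the bilinear pieces (their space-time Fourier supports have bounded overlap at each scale, and a square-function/Littlewood--Paley argument handles the cross-scale interaction). The exponents and the value of $q$ fall out exactly as you describe, but when you write this up, replace the ``positive power of $N$'' heuristic with the orthogonality argument.
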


\begin{lemma}[Inverse Strichartz inequality, $L^2$-track]\label{refined l2 lemma 1}
Let $d\geq 3$ and $(f_n)_n\subset H^1(\R^d)$. Suppose that
\begin{align}
\lim_{n\to\infty}\|f_n\|_{H^1}=A<\infty\quad\text{and}\quad\lim_{n\to\infty}\|e^{it\Delta} f_n\|_{W_\tas(\R)}=\vare>0.
\end{align}
Then up to a subsequence, there exist $\phi\in L^2(\R^d)$ and $(t_n,x_n,\xi_n,\ld_n)_n\subset\R\times\R^d\times\R^d\times(0,\infty)$ such that $\limsup_{m\to\infty}|\xi_n|<\infty$ and $\lim_{n\to\infty}\ld_n=:\ld_\infty\in(0,\infty]$. Moreover,
\begin{align}\label{l2 refined strichartz basic 5}
&\,\,\ld_n^{\frac{d}{2}}e^{-i\xi_n\cdot(\ld_n x+x_n)}(e^{it_n\Delta}f_n)(\ld_n x+x_n)\nonumber\\
\nonumber\\
\rightharpoonup &\,\,\phi(x)\text{ weakly in }
\left\{
             \begin{array}{ll}
             H^1(\R^d),&\text{if $\limsup_{n\to\infty}|\ld_n\xi_n|<\infty$},\\
             L^2(\R^d),&\text{if $|\ld_n\xi_n|\to\infty$}.
             \end{array}
\right.
\end{align}
Addtionally, if $\limsup_{n\to\infty}|\ld_n\xi_n|<\infty$, then $\xi_n\equiv 0$. Setting
\begin{align}
\phi_n:=
\left\{
             \begin{array}{ll}
             \ld_n^{-\frac{d}{2}}e^{-it_n\Delta}\bg[\phi(\frac{x-x_n}{\ld_n})\bg],&\text{if $\ld_\infty<\infty$},\\
             \\
             \ld_n^{-\frac{d}{2}}e^{-it_n\Delta}\bg[e^{i\xi_n\cdot x}
             (P_{\leq\ld_n^\theta}\phi)(\frac{x-x_n}{\ld_n})\bg],&\text{if $\ld_\infty=\infty$}
             \end{array}
\right.
\end{align}
for some fixed $\theta\in(0,1)$, we have
\begin{align}
&\lim_{n\to\infty}(\|f_n\|^2_{2}-\|f_n-\phi_n\|^2_{2})=\|\phi\|^2_{2}\gtrsim A^2\bg(\frac{\vare}{A}\bg)^{2(d+1)(d+2)},
\label{l2 refined strichartz decomp 1}\\
&\lim_{n\to\infty}(\|f_n\|^2_{\dot{H}^1}-\|f_n-\phi_n\|^2_{\dot{H}^1}-\|\phi_n\|^2_{\dot{H}^1})=0,
\label{l2 refined strichartz decomp 3}\\
&\lim_{n\to\infty}(\|f_n\|^2_{2}-\|f_n-\phi_n\|^2_{2}-\|\phi_n\|^2_{2})=0.\label{l2 refined strichartz decomp 2}
\end{align}
\end{lemma}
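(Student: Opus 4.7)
The plan is to follow the strategy of Lemma \ref{track h1}, swapping the roles of $L^2$ and $\dot H^1$ and introducing an additional Galilean parameter $\xi_n$ tracking the frequency centre of the concentration cube. First, applying the improved Strichartz estimate of Lemma \ref{refined strichartz l2} to $(f_n)_n$ yields, after a subsequence, a cube $Q_n\in\mC$ with
\begin{align*}
|Q_n|^{\frac{d+2}{dq}-\frac{1}{2}}\|e^{it\Delta}(f_n)_{Q_n}\|_{L^q_{t,x}(\R)}\gtrsim \frac{\vare^{d+2}}{A^{d+1}}.
\end{align*}
Writing $Q_n=\xi_n+\lambda_n^{-1}[0,1)^d$ fixes the scale $\lambda_n:=|Q_n|^{-1/d}$ and the vertex $\xi_n$. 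The Fourier support of $(f_n)_{Q_n}$, combined with the $\dot H^1$-bound $\|(f_n)_{Q_n}\|_{\dot H^1}\leq A$ and the quantitative $L^2$-lower bound to be derived next, forces $\limsup_n|\xi_n|<\infty$.

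Next I convert $L^q_{t,x}$-concentration to a pointwise lower bound via the elementary interpolation $\|u\|_{L^q_{t,x}}^q\leq \|u\|_{L^\infty_{t,x}}^{q-\tas}\|u\|_{L^\tas_{t,x}}^\tas$, valid since $\tas<q$ for every $d\geq 3$, together with the mass-critical Strichartz bound $\|e^{it\Delta}(f_n)_{Q_n}\|_{W_\tas}\lesssim A$. This yields a point $(t_n,x_n)$ with $|(e^{it_n\Delta}(f_n)_{Q_n})(x_n)|\gtrsim\lambda_n^{-d/2}A(\vare/A)^{\beta}$ for a suitable explicit exponent $\beta$. Defining
\begin{align*}
g_n(x):=\lambda_n^{d/2}e^{-i\xi_n\cdot(\lambda_n x+x_n)}(e^{it_n\Delta}f_n)(\lambda_n x+x_n),
\end{align*}
which is an $L^2$-isometric image of $f_n$ bounded by $A$, and passing to a subsequence so that $g_n\rightharpoonup\phi$ weakly in $L^2$: a Fourier-support computation shows that on frequencies $\xi\in[0,1)^d$, $\hat g_n$ coincides up to a unimodular phase with the rescaled $\widehat{(f_n)_{Q_n}}$. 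Testing $g_n$ against a Schwartz bump whose Fourier transform equals $1$ on $[0,1)^d$ then converts the pointwise estimate into $\|\phi\|_2^2\gtrsim A^2(\vare/A)^{2(d+1)(d+2)}$, so $\phi\not\equiv 0$.

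I then examine the scaling parameters. Along a further subsequence $\lambda_n\to\lambda_\infty$; the case $\lambda_\infty=0$ is excluded because it would force the relevant Fourier mass of $g_n$ to slip off to infinity and produce $\phi\equiv 0$. When $\limsup_n|\lambda_n\xi_n|<\infty$ I further extract $\lambda_n\xi_n\to\xi^*\in\R^d$, so that $e^{-i\lambda_n\xi_n\cdot x}\to e^{-i\xi^*\cdot x}$ locally uniformly; I absorb the limiting phase into $\phi$ and henceforth take $\xi_n\equiv 0$. A frequency-cutoff estimate using $\|(f_n)_{Q_n}\|_{\dot H^1}\lesssim(|\xi_n|+\lambda_n^{-1})A$ then confines the low-frequency part of $g_n$ to a bounded set in $H^1$, placing $\phi\in H^1$; when $|\lambda_n\xi_n|\to\infty$ only $L^2$-information survives, so $\phi\in L^2$. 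The profile $\phi_n$ is defined by undoing the symmetry, with the extra frequency cut-off $P_{\leq\lambda_n^\theta}$ inserted when $\lambda_\infty=\infty$ to ensure $\phi_n\in H^1(\R^d)$; the approximation $\|P_{\leq\lambda_n^\theta}\phi-\phi\|_2\to 0$ preserves the $L^2$-mass.

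Finally, I verify the three decoupling identities. The $L^2$-identities \eqref{l2 refined strichartz decomp 1} and \eqref{l2 refined strichartz decomp 2} follow from Brezis--Lieb applied to the weak $L^2$-convergence of $g_n$, exploiting that the underlying symmetry is $L^2$-isometric. For the $\dot H^1$-identity \eqref{l2 refined strichartz decomp 3}, when $\lambda_\infty<\infty$ it reduces to standard weak $H^1$-convergence combined with Brezis--Lieb in $\dot H^1$; when $\lambda_\infty=\infty$ the profile $\phi_n$ has Fourier support contained in the shrinking ball $B(\xi_n,\lambda_n^{\theta-1})$, so $|\xi|^2\approx|\xi_n|^2$ is essentially constant on its support and the $\dot H^1$-inner product factorises asymptotically as $|\xi_n|^2$ times the corresponding $L^2$-inner product, for which the orthogonality has already been established. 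The main obstacle I anticipate is precisely this Fourier-localisation bookkeeping in the $\lambda_\infty=\infty$ subcase, together with placing $\phi$ in $H^1$ via the cutoff argument in the $\limsup|\lambda_n\xi_n|<\infty$ regime when only $L^2$-boundedness of $g_n$ is a priori available.
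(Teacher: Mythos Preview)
Your overall strategy matches the paper's, but your justification for $\lambda_\infty>0$ is incorrect. You claim that $\lambda_n\to 0$ would make ``the relevant Fourier mass of $g_n$ slip off to infinity'' and force $\phi\equiv 0$; however, the rescaling defining $g_n$ always maps the cube $Q_n$ to a fixed unit cube near the origin in frequency, so the concentration there persists and $\phi\neq 0$ would still follow regardless of $\lambda_n$. The correct argument uses the sign of the exponent: since $q\in(\tas,\tbs)$ for $d\geq 3$, H\"older and Strichartz give $\|e^{it\Delta}(f_n)_{Q_n}\|_{L^q_{t,x}}\lesssim\|f_n\|_{H^1}\leq A$; combined with the concentration bound and $\frac{d+2}{dq}-\frac12<0$, this forces $|Q_n|\lesssim 1$, i.e.\ $\lambda_n$ bounded below. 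Once this is in place, your a~posteriori argument for $\limsup_n|\xi_n|<\infty$ (via $\|(f_n)_{Q_n}\|_{\dot H^1}\leq A$ together with the $L^2$-lower bound on $(f_n)_{Q_n}$ coming from Bernstein and the pointwise concentration) does go through.

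The paper sidesteps this entire issue by first replacing $f_n$ with a frequency cutoff $f_n^R$ satisfying $\widehat{f_n^R}=\chi_{B_R}\hat f_n$. Since $\|f_n-f_n^R\|_2\lesssim R^{-1}A$ by the $\dot H^1$-bound, the $W_{\tas}$-concentration survives for large $R$, and now the cubes $Q_n$ may be taken inside a fixed ball $B_{R'}(0)$, giving both $|\xi_n|$ bounded and $\lambda_n$ bounded below for free. The weak limit is first extracted for $f_n^R$, then $\phi^R\to\phi$ in $L^2$ as $R\to\infty$, and a single large $R$ is fixed at the end. Your $\dot H^1$-decoupling heuristic (``$|\xi|^2\approx|\xi_n|^2$ on the support'') is morally right; the paper implements it by expanding $\|f_n\|_{\dot H^1}^2-\|f_n-\phi_n\|_{\dot H^1}^2-\|\phi_n\|_{\dot H^1}^2$ via the chain rule into a $|\xi_n|^2$-weighted $L^2$-term (controlled by the $L^2$-decoupling already established), a cross term, and a pure gradient term, and bounds the last two through Bernstein via $\|\nabla P_{\leq\lambda_n^\theta}\phi\|_2\lesssim\lambda_n^\theta\|\phi\|_2$.
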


\begin{proof}
For $R>0$, denote by $f^R$ the function such that $\mathcal{F}(f^R)=\chi_R\hat{f}$, where $\chi_R$ is the characteristic function of the ball $B_R(0)$. First we obtain that
\begin{align}\label{spare0}
\sup_{n\in\N}\|f_n-f_n^R\|_2^2
=\sup_{n\in\N}\int_{|\xi|\geq R}|\hat{f}_n(\xi)|^2\,d\xi\leq R^{-2}\sup_{n\in\N}\|f_n\|_{\dot{H}^1}^2\lesssim R^{-2}A^2\to 0
\end{align}
as $R\to\infty$. Combining with Strichartz, we infer that there exists some $K_1>0$ such that for all $R\geq K_1$ one has
\begin{align*}
\sup_{n\in\N}\|f^R_n\|_{2}\lesssim A\quad\text{and}\quad\sup_{n\in\N}\|e^{it\Delta} f^R_n\|_{W_\tas(\R)}\gtrsim \vare.
\end{align*}
Applying Lemma \ref{refined strichartz l2} to $(f^R_n)_n$, we know that there exists $(Q_n)_n\subset\mC$ such that
\begin{align}\label{l2 refined strichartz basic 1}
\vare^{d+2}A^{-(d+1)}\lesssim \inf_{n\in\N}|Q_n|^{\frac{d+2}{dq}-\frac{1}{2}}\|e^{it\Delta}(f^R_n)_{Q_n}\|_{L^q_{t,x}(\R)}.
\end{align}
Let $\ld_n^{-1}$ be the side-length of $Q_n$. Denote also by $\xi_n$ the center of $Q_n$. Since $q\in(\frac{2(d+2)}{d},\frac{2(d+2)}{d-2})$ for $d\geq 3$, using H\"older and Strichartz we obtain that
\begin{align*}
\sup_{n\in\N}\|e^{it\Delta}(f^R_n)_{Q_n}\|_{L^q_{t,x}(\R)}\lesssim \sup_{n\in\N}\|f_n\|_{H^1}\lesssim A.
\end{align*}
Combining with the fact that $\frac{d+2}{dq}-\frac{1}{2}<0$, we deduce that $\sup_{n\in\N}|Q_n|\lesssim 1$. Since $(\mathcal{F}(f_n^R))_n$ are supported in $B_R(0)$, we may assume that $(Q_n)_n\subset B_{R'}(0)$ for some sufficiently large $R'>0$. Therefore $(\ld_n)_n$ is bounded below and $(\xi_n)_n$ is bounded in $\R^d$. H\"older yields
\begin{align*}
|Q_n|^{\frac{d+2}{dq}-\frac{1}{2}}\|e^{it\Delta}(f^R_n)_{Q_n}\|_{L^q_{t,x}(\R)}
&\lesssim\ld_n^{\frac{d}{2}-\frac{d+2}{q}}\|e^{it\Delta}(f^R_n)_{Q_n}\|^{\frac{d(d+2)}{d^2+3d+1}}_{W_\tas(\R)}
\|e^{it\Delta}(f^R_n)_{Q_n}\|^{\frac{d+1}{d^2+3d+1}}_{L_{t,x}^\infty(\R)}
\nonumber\\
&\lesssim \ld_n^{\frac{d}{2}-\frac{d+2}{q}}\vare^{\frac{d(d+2)}{d^2+3d+1}}\|e^{it\Delta}(f^R_n)_{Q_n}\|^{\frac{d+1}{d^2+3d+1}}_{L_{t,x}^\infty(\R)}.
\end{align*}
Combining with \eqref{l2 refined strichartz basic 1} we infer that there exist $(t_n,x_n)_n\subset\R\times\R^d$ such that
\begin{align}\label{l2 refined strichartz basic 2}
\liminf_{n\to\infty}\ld_n^{\frac{d}{2}}|[e^{it_n\Delta}(f^R_n)_{Q_n}](x_n)|\gtrsim \vare^{(d+1)(d+2)}A^{-(d^2+3d+1)}.
\end{align}
Define
\begin{align*}
h_n(x)&:=\ld_n^{\frac{d}{2}}e^{-i\xi_n(\ld_n x+x_n)}(e^{it_n\Delta}f_n)(\ld_n x+x_n),\\
h^R_n(x)&:=\ld_n^{\frac{d}{2}}e^{-i\xi_n(\ld_n x+x_n)}(e^{it_n\Delta}f^R_n)(\ld_n x+x_n).
\end{align*}
It is easy to verify that $\|h_n\|_2=\|f_n\|_2$. By the $L^2$-boundedness of $(f_n)_n$ we know that there exists some $\phi\in L^2(\R^d)$ such that $h_n\rightharpoonup \phi$ weakly in $L^2(\R^d)$. Arguing similarly, we infer that $(h^R_n)_n$ converges weakly to some $\phi^R\in L^2(\R^d)$. By definition of $\phi$ and $\phi^R$ we see that
\begin{align*}
\|\phi-\phi^R\|^2_2=\lim_{n\to\infty}\la h_n-h_n^R,\phi-\phi^R\ra_{L^2}\leq (\limsup_{n\to\infty}\|h_n-h_n^R\|_2)\|\phi-\phi^R\|_2.
\end{align*}
Using \eqref{spare0} we then obtain that
\begin{align}\label{spare2}
\phi^R\to \phi\quad\text{in $L^2(\R^d)$ as $R\to\infty$}.
\end{align}
Now define the function $\chi$ such that $\hat{\chi}$ is the characteristic function of the cube $[-\frac{1}{2},\frac{1}{2})^d$. From \eqref{l2 refined strichartz basic 2}, the weak convergence of $h^R_n$ to $\phi^R$ in $L^2(\R^d)$ and change of variables it follows
\begin{align}\label{l2 refined strichartz basic 3}
\la\phi^R,\chi\ra=\lim_{n\to\infty}\ld_n^{\frac{d}{2}}|[e^{it_n\Delta}(f^R_n)_{Q_n}](x_n)|\gtrsim \vare^{(d+1)(d+2)}A^{-(d^2+3d+1)}.
\end{align}
On the other hand, using H\"older we also have
\begin{align*}
|\la\phi^R,\chi\ra|\leq\|\phi^R\|_2\|\chi\|_2.
\end{align*}
Thus
\begin{align}\label{spare3}
\|\phi^R\|_2^2\geq C\vare^{2(d+1)(d+2)}A^{-2(d^2+3d+1)}
\end{align}
for some $C=C(d)>0$ which is uniform for all $R\geq K_1$. Now using \eqref{spare2} and \eqref{spare3} we finally deduce that
\begin{align}
\|\phi\|_2^2 \geq\|\phi^R\|_2^2-\frac{C}{2}\vare^{2(d+1)(d+2)}A^{-2(d^2+3d+1)}
\geq \frac{C}{2}\vare^{2(d+1)(d+2)}A^{-2(d^2+3d+1)}
\end{align}
for sufficiently large $R$, which gives the lower bound of \eqref{l2 refined strichartz decomp 1}. From now on we fix $R$ such that the lower bound of \eqref{l2 refined strichartz decomp 1} is valid for this chosen $R$ and let $(t_n,x_n,\xi_n,\ld_n)_n$ be the corresponding symmetry parameters. Since $L^2(\R^d)$ is a Hilbert space, from the weak convergence of $h_n$ to $\phi$ in $L^2(\R^d)$ we obtain that
\begin{align*}
\lim_{n\to\infty}(\|h_n\|^2_{2}-\|\phi\|^2_{2}-\|h_n-\phi\|^2_{2})
=2\lim_{n\to\infty}\mathrm{Re}\,\la \phi,h_n-\phi\ra_{L^2}=0.
\end{align*}
Combining with the fact that
\begin{align*}
\|P_{\leq\ld_n^\theta}\phi-\phi\|_2\to 0\quad\text{as $n\to\infty$}
\end{align*}
for $\ld_n\to\infty$ we conclude the equalities of \eqref{l2 refined strichartz decomp 1} and \eqref{l2 refined strichartz decomp 2}. In the case $\limsup_{n\to\infty}|\ld_n\xi_n|<\infty$, using the boundedness of $(\ld_n\xi_n)_n$ and chain rule, we also infer that $\|h_n\|_{H^1}\lesssim \|f_n\|_{H^1}$. By the $H^1$-boundedness of $(f_n)_n$ and uniqueness of weak convergence we deduce additionally that $\phi\in H^1(\R^d)$ and \eqref{l2 refined strichartz basic 5} follows.

Next we show that we may assume $\xi_n\equiv 0$ under the additional condition $\limsup_{n\to\infty}|\ld_n\xi_n|<\infty$. Define
\begin{align*}
\mathcal{T}_{a,b}u(x):=be^{ia\cdot x}u(x)
\end{align*}
for $a\in\R^d$ and $b\in\C$ with $|b|=1$. Let also
\begin{align*}
(\ld\xi)_\infty&:=\lim_{n\to\infty}\ld_n\xi_n,\\
e^{i(\xi\cdot x)_\infty}&:=\lim_{n\to\infty}e^{i\xi_n\cdot x_n}.
\end{align*}
By the boundedness of $(\ld_n\xi_n)_n$ we infer that $\mathcal{T}_{\ld_n\xi_n,e^{i\xi_n\cdot x_n}}$ is an isometry on $L^2(\R^d)$ and converges strongly to $\mathcal{T}_{(\ld\xi)_\infty,e^{i(\xi\cdot x)_\infty}}$ as operators on $H^1(\R^d)$. We may replace $h_n$ by $\ld_n^{\frac{d}{2}}(e^{it_n\Delta}f_n)(\ld_n x+x_n)$ and $\phi$ by $\mathcal{T}_{(\ld\xi)_\infty,e^{i(\xi\cdot x)_\infty}}\phi$ and \eqref{l2 refined strichartz basic 5}, \eqref{l2 refined strichartz decomp 1} and \eqref{l2 refined strichartz decomp 3} carry over.

Finally, we prove \eqref{l2 refined strichartz decomp 3}. For the case $\ld_\infty<\infty$ we additionally know that $\phi\in H^1(\R^d)$ and $\xi_n\equiv 0$. Using the fact that $\dot{H}^1$ is a Hilbert space and change of variables we obtain that
\begin{align*}
o_n(1)=\|h_n\|_{\dot{H^1}}-\|h_n-\phi\|_{\dot{H^1}}-\|\phi\|_{\dot{H^1}}=\ld_n^2(\|f_n\|_{\dot{H^1}}-\|f_n-\phi_n\|_{\dot{H^1}}-\|\phi_n\|_{\dot{H^1}}).
\end{align*}
Combining with the lower boundedness of $(\ld_n)_n$, this implies that
\begin{align*}
\|f_n\|_{\dot{H^1}}-\|f_n-\phi_n\|_{\dot{H^1}}-\|\phi_n\|_{\dot{H^1}}=\ld_n^{-2}o_n(1)=o_n(1),
\end{align*}
which gives \eqref{l2 refined strichartz decomp 3} in the case $\ld_\infty<\infty$. Assume now $\ld_\infty=\infty$. Using change of variables and chain rule we obtain that
\begin{align}
&\,\|f_n\|^2_{\dot{H}^1}-\|f_n-\phi_n\|^2_{\dot{H}^1}-\|\phi_n\|^2_{\dot{H}^1}\nonumber\\
=&\,|\xi_n|^2\bg(\|h_n\|_2^2-\|h_n-P_{\leq \ld_n^\theta}\phi\|^2_{2}-\|P_{\leq \ld_n^\theta}\phi\|^2_{2}\bg)\nonumber\\
&+2\ld_n^{-1}\mathrm{Re}\bg(\la i\xi_n (h_n-P_{\leq \ld_n^\theta}\phi),\nabla P_{\leq \ld_n^\theta}\phi\ra
+\la i\xi_n P_{\leq \ld_n^\theta}\phi,\nabla (h_n-P_{\leq \ld_n^\theta}\phi)\ra\bg)\nonumber\\
&+\ld_n^{-2}\bg(\|h_n\|^2_{\dot{H}^1}-\|h_n-P_{\leq \ld_n^\theta}\phi\|^2_{\dot{H}^1}-\|P_{\leq \ld_n^\theta}\phi\|^2_{\dot{H}^1}\bg)\nonumber\\
=:&\,I_1+I_2+I_3.
\end{align}
Using the boundedness of $(\xi_n)_n$ and \eqref{l2 refined strichartz decomp 2} we infer that $I_1\to 0$. For $I_2$, using Bernstein and the boundedness of $(\xi_n)_n$ in $\R^d$ and of $(h_n-P_{\leq \ld_n^\theta}\phi)$ in $L^2(\R^d)$ we see that
\begin{align*}
|I_2|\lesssim \ld_n^{-1}\|h_n-P_{\leq \ld_n^\theta}\phi\|_2\|\nabla P_{\leq \ld_n^\theta}\phi\|_2\lesssim \ld_n^{-(1-\theta)}\to 0.
\end{align*}
Finally, $I_3$ can be similarly estimated using Bernstein inequality, we omit the details here. Summing up we conclude \eqref{l2 refined strichartz decomp 2}.
\end{proof}

\begin{lemma}
We have
\begin{align}
\text{(i) } &\ld_n\equiv 1\quad\text{or}\quad \ld_n\to \infty,\\
\text{(ii) } &t_n\equiv 0\quad\text{or}\quad \frac{t_n}{\ld_n^2}\to \pm\infty.
\end{align}
\end{lemma}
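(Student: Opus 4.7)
The plan is to prove both claims by passing to suitable subsequences and absorbing any bounded parameters into the profile $\phi$, thereby reducing to the normalized frame asserted.

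For (i): Lemma~\ref{refined l2 lemma 1} already yields $\lambda_n \to \lambda_\infty \in (0,\infty]$. If $\lambda_\infty = \infty$, then $\lambda_n \to \infty$ along the subsequence and we are done. Otherwise $\lambda_\infty \in (0,\infty)$, and the boundedness of $(\lambda_n)$ combined with $\limsup|\xi_n|<\infty$ forces $\limsup|\lambda_n\xi_n|<\infty$; by the last assertion of Lemma~\ref{refined l2 lemma 1}, we may therefore take $\xi_n \equiv 0$ and $\phi \in H^1(\mathbb{R}^d)$. Setting $\tilde\phi(x) := \lambda_\infty^{-d/2}\phi(x/\lambda_\infty) = (T_{1/\lambda_\infty}\phi)(x)$, a direct change-of-variable computation against a test function $\psi \in C_c^\infty(\mathbb{R}^d)$ shows $(e^{it_n\Delta}f_n)(\cdot + x_n) \rightharpoonup \tilde\phi$ weakly in $L^2(\mathbb{R}^d)$, which upgrades to $H^1$-weak convergence via the uniform $H^1$-bound on $(f_n)$. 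Define the redefined profile $\tilde\phi_n(x) := e^{-it_n\Delta}[\tilde\phi(x - x_n)]$, corresponding to scale $\lambda_n \equiv 1$. Since $e^{-it_n\Delta}$ is an $H^1$-isometry and $T_{1/\lambda}\phi$ depends continuously on $\lambda$ in $H^1(\mathbb{R}^d)$, we have
\[
\|\phi_n - \tilde\phi_n\|_{H^1} = \|(T_{1/\lambda_n} - T_{1/\lambda_\infty})\phi\|_{H^1} \to 0,
\]
so all decompositions \eqref{l2 refined strichartz decomp 1}--\eqref{l2 refined strichartz decomp 2} are preserved after this redefinition.

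For (ii): having normalized (i), set $s_n := t_n/\lambda_n^2$. If $(s_n)$ is unbounded, extract a subsequence with $s_n \to \pm\infty$ and we are done. Otherwise pass to a subsequence along which $s_n \to s_\infty \in \mathbb{R}$. The key identity
\[
e^{-it_n\Delta}\bigl[\phi\bigl(\tfrac{\,\cdot\, - x_n}{\lambda_n}\bigr)\bigr] = \bigl[e^{-is_n\Delta}\phi\bigr]\bigl(\tfrac{\,\cdot\, - x_n}{\lambda_n}\bigr),
\]
verified by inspection on the Fourier side, rewrites $\phi_n$ as $\lambda_n^{-d/2}[e^{-is_n\Delta}\phi]((\cdot - x_n)/\lambda_n)$ (and analogously with the frequency projector $P_{\leq\lambda_n^\theta}$ and the modulation $e^{i\xi_n\cdot x}$ present when $\lambda_\infty = \infty$). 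Replacing $\phi$ by $\tilde\phi := e^{-is_\infty\Delta}\phi$ and setting $t_n \equiv 0$, the strong $L^2$-continuity of the Schr\"odinger group gives $\|e^{-is_n\Delta}\phi - e^{-is_\infty\Delta}\phi\|_2 \to 0$, so the new and old profiles differ in $L^2$ by a vanishing sequence, and hence \eqref{l2 refined strichartz decomp 1}--\eqref{l2 refined strichartz decomp 2} continue to hold in the new frame.

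The main bookkeeping obstacle is handling the case $\lambda_\infty = \infty$ simultaneously with the truncation $P_{\leq\lambda_n^\theta}$ and the Galilean phase $e^{i\xi_n\cdot x}$, since the scaling identity above must be commuted carefully with both operators when recasting $\phi_n$ in the new frame. However, since $[e^{it\Delta}, P_{\leq N}] = 0$, since $P_{\leq\lambda_n^\theta}\to\mathrm{Id}$ strongly on $L^2(\mathbb{R}^d)$ as $\lambda_n\to\infty$, and since the modulation commutes with translations up to an explicit phase, this reduces to a routine commutator computation rather than a genuine analytic obstruction.
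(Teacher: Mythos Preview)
Your argument for (i) is essentially the same as the paper's: when $\lambda_\infty\in(0,\infty)$, absorb the limiting scale into $\phi$ via the dilation operator and use strong $H^1$-continuity of dilations.

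For (ii) your approach is the right one, and in the case $\lambda_n\equiv 1$ (so $\xi_n\equiv 0$) it is complete. The gap is in the case $\lambda_\infty=\infty$, where the Galilean modulation $e^{i\xi_n\cdot x}$ is present. Your ``key identity'' does \emph{not} extend to this case as simply as you suggest: $e^{-it_n\Delta}$ and multiplication by $e^{i\xi_n\cdot x}$ do not commute up to a mere phase. The correct identity is the Galilean covariance
\[
e^{-it_n\Delta}\bigl[g_{\xi_n,x_n,\lambda_n}f\bigr]
= g_{\xi_n,\,x_n-2t_n\xi_n,\,\lambda_n}\bigl[e^{it_n|\xi_n|^2}\,e^{-i(t_n/\lambda_n^2)\Delta}f\bigr],
\]
which the paper uses explicitly. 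This forces you to redefine not only $t_n\equiv 0$ and $\phi\mapsto \beta e^{-i\tau_\infty\Delta}\phi$ (with $\beta=\lim_n e^{it_n|\xi_n|^2}$), but also the spatial center $x_n\mapsto x_n-2t_n\xi_n$. Your final paragraph gestures at a ``routine commutator computation,'' but the spatial shift is not visible in what you wrote.

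Second, you claim that $L^2$-continuity of the flow suffices to preserve \eqref{l2 refined strichartz decomp 1}--\eqref{l2 refined strichartz decomp 2}. This is fine for the $L^2$ decompositions, but \eqref{l2 refined strichartz decomp 3} is an $\dot{H}^1$ statement, and when $\lambda_\infty=\infty$ the profile $\phi$ is only in $L^2$. The paper closes this by computing $\|\phi_n^{\mathrm{old}}-\phi_n^{\mathrm{new}}\|_{\dot{H}^1}$ directly: the chain rule produces a term controlled by $|\xi_n|$ times an $L^2$ difference (which vanishes), plus a term $\lambda_n^{-1}\|\nabla P_{\leq\lambda_n^\theta}(\cdots)\|_2$, which Bernstein bounds by $\lambda_n^{-(1-\theta)}\to 0$. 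This step is short but not a tautology, and your $L^2$ argument alone does not cover it.
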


\begin{proof}
If $\ld_n\to\infty$, then there is nothing to prove. Otherwise assume that $\ld_\infty<\infty$. By the boundedness of $(\xi_n)_n$ we also know that $\phi\in H^1(\R^d)$ and $(\ld_n\xi_n)_n$ is bounded, thus $\xi_n\equiv 0$ and $h_n(x)$ reduces to $\ld_n^{\frac{d}{2}}(e^{it_n\Delta}f_n)(\ld_n x+x_n)$. Define
\begin{align*}
\mathcal{J}_{\ld}f(x):=\ld^{-\frac{d}{2}}f(\ld^{-1}x).
\end{align*}
Then $\mathcal{J}_{\ld_n}$ and $\mathcal{J}^{-1}_{\ld_n}$ converge strongly to $\mathcal{J}_{\ld_\infty}$ and $\mathcal{J}^{-1}_{\ld_\infty}$ strongly as operators in $H^1(\R^d)$. We may redefine $\ld_n\equiv 1$ and replace $\phi$ by $\mathcal{J}_{\ld_\infty}\phi$, and all the statements from Lemma \ref{refined l2 lemma 1} continue to hold.

We now prove (ii). If $\frac{t_n}{\ld_n^2}\to\pm\infty$, then we are done. Otherwise assume that $\frac{t_n}{\ld_n^2}\to\tau_\infty\in\R$. Recall that for $(\xi_0,x_0,\ld_0)\in\R^d\times\R^d\times(0,\infty)$ the operator $g_{\xi_0,x_0,\ld_0}$ is defined by
\begin{align*}
g_{\xi_0,x_0,\ld_0}f(x)=\ld_0^{-\frac{d}{2}}e^{i\xi_0\cdot x}f(\ld_0^{-1}(x-x_0)).
\end{align*}
Then
\begin{align*}
f_n=e^{-it_n\Delta}[g_{\xi_n,x_n,\ld_n}h_n](x)
\end{align*}
and
\begin{align*}
\phi_n=
\left\{
             \begin{array}{ll}
             e^{-it_n\Delta}[g_{\xi_n,x_n,\ld_n}\phi](x),&\text{if $\ld_\infty<\infty$},\\
             \\
             e^{-it_n\Delta}[g_{\xi_n,x_n,\ld_n}P_{\leq\ld_n^\theta}\phi](x),&\text{if $\ld_\infty=\infty$}.
             \end{array}
\right.
\end{align*}
Using the invariance of the NLS-flow under the Galilean transformation we infer that
\begin{align}\label{Galilean invariance}
e^{-it_n\Delta}[g_{\xi_n,x_n,\ld_n}f](x)=g_{\xi_n,x_n-2t_n\xi_n,\ld_n}[e^{it_n|\xi_n|^2}e^{-i\frac{t_n}{\ld_n^2}\Delta}f](x).
\end{align}
Denote $\beta:=\lim_{n\to\infty}e^{it_n|\xi_n|^2}$. We can therefore redefine $t_n$ by $0$, $x_n$ by $x_n-2t_n\xi_n$ and $\phi$ by $\beta e^{-i\tau_\infty\Delta}\phi$. One easily checks that up to \eqref{l2 refined strichartz decomp 3} in the case $\ld_\infty=\infty$, the statements from Lemma \ref{refined l2 lemma 1} carry over, due to the strong continuity of the linear Schr\"odinger flow on $H^1(\R^d)$ and the fact that $g$ is an isometry on $L^2(\R^d)$. To see \eqref{l2 refined strichartz decomp 3} in the case $\ld_\infty=\infty$, we obtain that
\begin{align}
&\|g_{\xi_n,x_n-2t_n\xi_n,\ld_n}[e^{it_n|\xi_n|^2}e^{-i\frac{t_n}{\ld_n^2}\Delta}P_{\leq \ld_n^\theta}\phi]
-g_{\xi_n,x_n-2t_n\xi_n,\ld_n}[\beta e^{-i\tau_\infty\Delta}P_{\leq \ld_n^\theta}\phi]\|_{\dot{H}^1}\nonumber\\
\lesssim &\,|\xi_n|\|e^{it_n|\xi_n|^2}e^{-i\frac{t_n}{\ld_n^2}\Delta}P_{\leq \ld_n^\theta}\phi-\beta e^{-i\tau_\infty\Delta}P_{\leq \ld_n^\theta}\phi\|_2\nonumber\\
&+\ld_n^{-1}\|e^{it_n|\xi_n|^2}e^{-i\frac{t_n}{\ld_n^2}\Delta}P_{\leq \ld_n^\theta}\phi-\beta e^{-i\tau_\infty\Delta}P_{\leq \ld_n^\theta}\phi\|_{\dot{H}^1}
=:I_1+I_2.
\end{align}
By the boundedness of $(\xi_n)_n$ one easily verifies that $I_1\to 0$. Using Bernstein we see that
\begin{align}
|I_2|\lesssim \ld_n^{-(1-\theta)}\|P_{\leq \ld_n^\theta}\phi\|_2\lesssim \ld^{-(1-\theta)}_n \|\phi\|_2\to 0.
\end{align}
%\begin{align*}
%|I_2|&\lesssim \ld_n^{-1}|e^{it_n|\xi_n|^2}-\beta|\|e^{-i\frac{t_n}{\ld_n^2}\Delta}P_{\leq \ld_n^\theta}\phi\|_{\dot{H}^1}
%+\ld_n^{-1}\|P_{\leq \ld_n^\theta}(e^{-i\frac{t_n}{\ld_n^2}\Delta}-e^{-i\tau_\infty\Delta})\phi\|_{\dot{H}^1}\nonumber\\
%&\lesssim \ld_n^{-(1-\theta)}\|e^{-i\frac{t_n}{\ld_n^2}\Delta}P_{\leq \ld_n^\theta}\phi\|_{2}
%+\ld_n^{-(1-\theta)}\|P_{\leq \ld_n^\theta}(e^{-i\frac{t_n}{\ld_n^2}\Delta}-e^{-i\tau_\infty\Delta})\phi\|_{2}\to 0.
%\end{align*}
This completes the desired proof.
\end{proof}

\begin{remark}
Using \eqref{Galilean invariance}, redefining the parameters and taking Lemma \ref{track h1} into account we may assume that
\begin{align*}
\phi_n=
\left\{
             \begin{array}{ll}
             \ld_ng_{0,x_n,\ld_n}[e^{it_n\Delta}P_{>\ld_n^\theta}\phi](x),&\text{if $\ld_\infty=0$},\\
             \\
             e^{it_n\Delta}\phi(x-x_n),&\text{if $\ld_\infty=1$},\\
             \\
             g_{\xi_n,x_n,\ld_n}[e^{it_n\Delta}P_{\leq\ld_n^\theta}\phi](x),&\text{if $\ld_\infty=\infty$}.
             \end{array}
\right.
\end{align*}
\end{remark}

\begin{lemma}
We have
\begin{align}
&\|f_n\|_\tas^\tas=\|\phi_n\|_\tas^\tas+\|f_n-\phi_n\|_\tas^\tas+o_n(1),\label{decomp tas}\\
&\|f_n\|_\tbs^\tbs=\|\phi_n\|_\tbs^\tbs+\|f_n-\phi_n\|_\tbs^\tbs+o_n(1).\label{decomp tbs}
\end{align}
\end{lemma}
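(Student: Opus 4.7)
The plan is to split into cases according to the behaviour of $\lambda_n$ (and, in the bounded-scaling regime, of $t_n$) and, in each case, either reduce the identity to the Brezis--Lieb lemma on an almost-everywhere convergent sequence, or show that $\|\phi_n\|_p\to 0$ for $p\in\{\tas,\tbs\}$ so that the identity becomes trivial. Both strategies rest on the $L^2$-isometry of $e^{it\Delta}$ combined with either Rellich--Kondrachov compactness, Bernstein's inequality, or the standard dispersive estimate.

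In the case $\lambda_\infty=\infty$, using the reformulation $\phi_n = g_{\xi_n,x_n,\lambda_n}[e^{it_n\Delta}P_{\leq\lambda_n^\theta}\phi]$ from the preceding Remark, a direct change of variables gives
\[\|\phi_n\|_p^p = \lambda_n^{d-\frac{dp}{2}}\|e^{it_n\Delta}P_{\leq\lambda_n^\theta}\phi\|_p^p.\]
Since $e^{it_n\Delta}$ commutes with the Fourier multiplier $P_{\leq\lambda_n^\theta}$ and preserves the $L^2$-norm, Bernstein's inequality bounds the right-hand side by $\lambda_n^{d-dp/2+p\theta d(1/2-1/p)}\|\phi\|_2^p$, which for $p=\tas$ reduces to $\lambda_n^{-2(1-\theta)}\|\phi\|_2^\tas$ and for $p=\tbs$ to $\lambda_n^{-2d(1-\theta)/(d-2)}\|\phi\|_2^\tbs$. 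Both tend to $0$ as $\lambda_n\to\infty$ since $\theta\in(0,1)$, so \eqref{decomp tas}--\eqref{decomp tbs} follow from the triangle inequality $||a|^p-|b|^p|\lesssim (|a|^{p-1}+|b|^{p-1})|a-b|$ and the uniform $L^p$-boundedness of $(f_n)_n$ via Sobolev embedding.

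In the case $\lambda_\infty=1$ we have $\lambda_n\equiv 1$, $\xi_n\equiv 0$, $\phi\in H^1(\R^d)$ and $h_n(x):=(e^{it_n\Delta}f_n)(x+x_n)\rightharpoonup\phi$ weakly in $H^1(\R^d)$. If $t_n\equiv 0$ then $\phi_n(x)=\phi(x-x_n)$ and $h_n=f_n(\cdot+x_n)$; applying Rellich--Kondrachov on an exhaustion of $\R^d$ by balls and passing to a diagonal subsequence yields $h_n\to\phi$ almost everywhere on $\R^d$. Since $(h_n)_n$ is uniformly bounded in $L^p$ for every $p\in[2,\tbs]$ by Sobolev embedding, the Brezis--Lieb lemma produces the desired decoupling, which transfers to $(f_n)_n$ via the translation $x\mapsto x-x_n$. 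If instead $|t_n|\to\infty$, then $\phi_n = e^{-it_n\Delta}[\phi(\cdot-x_n)]$, and I would show $\|\phi_n\|_p\to 0$ for $p\in\{\tas,\tbs\}$ by a density-plus-dispersion argument: decomposing $\phi=\psi+\eta$ with $\psi\in C_c^\infty(\R^d)$ and $\|\eta\|_{H^1}<\varepsilon$, the dispersive estimate $\|e^{-it_n\Delta}\psi\|_p\lesssim|t_n|^{-d(1/2-1/p)}\|\psi\|_{p'}$ handles the Schwartz part, while Sobolev embedding gives $\|e^{-it_n\Delta}\eta(\cdot-x_n)\|_p\lesssim\|\eta\|_{H^1}<\varepsilon$ for the error; letting $n\to\infty$ and then $\varepsilon\to 0$ yields the claim, after which \eqref{decomp tas}--\eqref{decomp tbs} again follow as in the previous case.

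The step I expect to be the most delicate is the Brezis--Lieb application at the critical exponent $\tbs$, where only weak convergence in $H^1(\R^d)$ is available. However, Rellich--Kondrachov applied on bounded balls together with a diagonal extraction still produces almost-everywhere convergence on all of $\R^d$, and $L^{\tbs}$-boundedness follows from $H^1$-boundedness by the Sobolev inequality, so Brezis--Lieb applies with no further modification and both identities go through.
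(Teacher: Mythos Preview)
Your proof is correct and follows essentially the same case split and strategy as the paper: for $\lambda_\infty=\infty$ or $\lambda_n\equiv 1$ with $|t_n|\to\infty$ you show $\|\phi_n\|_p\to 0$ via Bernstein (respectively a density-plus-dispersion argument), and for $\lambda_n\equiv 1$, $t_n\equiv 0$ you invoke Brezis--Lieb. The only cosmetic difference is that in the $\lambda_\infty=\infty$ case the paper bounds $\|\phi_n\|_{\tbs}$ via Sobolev and a Bernstein estimate on the $\dot H^1$-norm and then interpolates with $L^2$ to reach $L^{\tas}$, whereas you apply Bernstein directly from $L^2$ to $L^p$ for both exponents; either route works.
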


\begin{proof}
Assume first that $\ld_\infty=\infty$. Using Bernstein and Sobolev we infer that
\begin{align*}
\|\phi_n\|_\tbs= \ld^{-1}_n\|P_{\leq\ld_n^\theta}\phi\|_{\dot{H}^1}
\lesssim  \ld^{-(1-\theta)}_n \|\phi\|_2\to 0.
\end{align*}
Hence $\|\phi_n\|_\tbs=o_n(1)$. Therefore by triangular inequality
\begin{align*}
\bg|\|f_n\|_\tbs-\|f_n-\phi_n\|_\tbs\bg|\leq \|\phi_n\|_\tbs\to 0
\end{align*}
and \eqref{decomp tbs} follows. Now suppose that $\ld_\infty=1$ and $t_n\to\pm\infty$. For $\beta>0$ let $\psi\in\mathcal{S}(\R^d)$ such that
\begin{align*}
 \|\phi-\psi\|_{H^1}\leq\beta.
\end{align*}
Define
\begin{align*}
\psi_n:=e^{it_n\Delta}\psi(x-x_n).
\end{align*}
Then by dispersive estimate we deduce that
\begin{align*}
\|\psi_n\|_{\tbs}\lesssim |t_n|^{-1}\|\psi\|_{(\tbs)'}\to 0.
\end{align*}
On the other hand, by Sobolev we have
\begin{align*}
\|\psi_n-\phi_n\|_\tbs\lesssim \|\psi-\phi\|_{\dot{H}^1}\leq \beta.
\end{align*}
Hence $\|\psi_n\|_\tbs\lesssim \beta$ for all sufficiently large $n$. Therefore by triangular inequality
\begin{align*}
\bg|\|f_n\|_\tbs-\|f_n-\psi_n\|_\tbs\bg|\lesssim \beta
\end{align*}
and \eqref{decomp tbs} follows by taking $\beta$ arbitrarily small. Now we assume $\ld_\infty=1$ and $t_n\equiv 0$. Then we additionally know that $\phi\in H^1(\R^d)$ and $h_n\rightharpoonup \phi$ in $H^1(\R^d)$. Using the Brezis-Lieb lemma we deduce that
\begin{align*}
\|h_n\|_\tbs^\tbs=\|\phi\|_\tbs^\tbs+\|h_n-\phi\|_\tbs^\tbs+o_n(1).
\end{align*}
Undoing the transformation we obtain \eqref{decomp tbs}.

We now consider \eqref{decomp tas}. When $\ld_\infty=\infty$ or $\ld_\infty=1$ and $t_n\to\pm\infty$, then $\|\psi_n\|_\tbs\to 0$, and by H\"older we will also have $\|\psi_n\|_\tas\to 0$, thus \eqref{decomp tas} follows. For the case $\ld_\infty=1$ and $t_n\equiv 0$, \eqref{decomp tas} follows again from the Brezis-Lieb lemma. This completes the desired proof.
\end{proof}

Having all the preliminaries we are in the position to establish the double track profile decomposition.

\begin{lemma}[Double track profile decomposition]\label{linear profile}
Let $(\psi_n)_n$ be a bounded sequence in $H^1(\R^d)$. Then up to a subsequence, there exist nonzero linear profiles $(\tdu^j)_j\subset \dot{H}^1(\R^d)\cup L^2(\R^d)$, remainders $(w_n^k)_{k,n}\subset H^1(\R^d)$, parameters $(t^j_n,x^j_n,\xi^j_n,\ld^j_n)_{j,n}\subset\R\times\R^d\times\R^d\times(0,\infty)$ and $K^*\in\N\cup\{\infty\}$, such that
\begin{itemize}
\item[(i)] For any finite $1\leq j\leq K^*$ the parameters satisfy
\begin{align}
1&\gtrsim_j\lim_{n\to\infty}|\xi_n^j|,\nonumber\\
\lim_{n\to\infty}\ld^j_n&=:t_\infty^j\in\{0,\pm\infty\},\nonumber\\
\lim_{n\to\infty}\ld^j_n&=:\ld_\infty^j\in\{0,1,\infty\},\nonumber\\
t_n^j&\equiv 0\quad\text{if $t_\infty^j=0$},\nonumber\\
\ld_n^j&\equiv 1\quad\text{if $\ld_\infty^j=1$},\nonumber\\
\xi_n^j&\equiv 0\quad\text{if $\ld_\infty^j\in\{0,1\}$}.
\end{align}

\item[(ii)]For any finite $1\leq k\leq K^*$ we have the decomposition
\begin{align}\label{decomp}
\psi_n=\sum_{j=1}^k T^j_n P_n^j\tdu^j+w_n^k.
\end{align}
Here, the operators $T_n^j$ and $P_n^j$ are defined by
\begin{align}
T^j_n u(x):=
\left\{
             \begin{array}{ll}
             \ld^j_n g_{0,x^j_n,\ld^j_n}[e^{it^j_n\Delta}u](x),&\text{if $\ld^j_\infty=0$},\\
             \\ \,
             \,[e^{it^j_n\Delta}u](x-x^j_n),&\text{if $\ld^j_\infty=1$},\\
             \\
             g_{\xi^j_n,x^j_n,\ld^j_n}[e^{it^j_n\Delta}u](x),&\text{if $\ld^j_\infty=\infty$}
             \end{array}
\right.
\end{align}
and
\begin{align}
P^j_n u:=
\left\{
             \begin{array}{ll}
             P_{>(\ld_n^j)^\theta}u,&\text{if $\ld^j_\infty=0$},\\
             \\
             u,&\text{if $\ld^j_\infty=1$},\\
             \\
             P_{\leq(\ld_n^j)^\theta}u,&\text{if $\ld^j_\infty=\infty$}
             \end{array}
\right.
\end{align}
for some $\theta\in(0,1)$. Moreover,
\begin{align}
\tdu^j\in
\left\{
             \begin{array}{ll}
             \dot{H}^1(\R^d),&\text{if $\ld^j_\infty=0$},\\
             \\
             H^1(\R^d),&\text{if $\ld^j_\infty=1$},\\
             \\
             L^2(\R^d),&\text{if $\ld^j_\infty=\infty$}.
             \end{array}
\right.
\end{align}

\item[(iii)] The remainders $(w_n^k)_{k,n}$ satisfy
\begin{align}\label{to zero wnk}
\lim_{k\to K^*}\lim_{n\to\infty}\|e^{it\Delta}w_n^k\|_{W_\tas\cap W_\tbs(\R)}=0.
\end{align}

\item[(iv)] The parameters are orthogonal in the sense that
\begin{align}\label{orthog of pairs}
 \frac{\ld_n^k}{\ld_n^j}+ \frac{\ld_n^j}{\ld_n^k}+\ld_n^k|\xi_n^j-\xi_n^k|+\bg|t_k\bg(\frac{\ld_n^k}{\ld_n^j}\bg)^2-t_n^j\bg|
+\bg|\frac{x_n^j-x_n^k-2t_n^k(\ld_n^k)^2(\xi_n^j-\xi_n^k)}{\ld_n^k}\bg|\to\infty
\end{align}
for any $j\neq k$.

\item[(v)] For any finite $1\leq k\leq K^*$ we have the energy decompositions
\begin{align}\label{orthog L2 and H1}
\||\nabla|^s\psi_n\|_2^2&=\sum_{j=1}^k\||\nabla|^sT_n^jP_n^j\tdu^j\|_2^2+\||\nabla|^s w_n^k\|_2^2+o_n(1),\\
\mH(\psi_n)&=\sum_{j=1}^k\mH(T_n^jP_n^j\tdu^j)+\mH(w_n^k)+o_n(1),\label{conv of h}\\
\mK(\psi_n)&=\sum_{j=1}^k\mK(T_n^jP_n^j\tdu^j)+\mK(w_n^k)+o_n(1),\label{conv of k}\\
\mI(\psi_n)&=\sum_{j=1}^k\mI(T_n^jP_n^j\tdu^j)+\mI(w_n^k)+o_n(1)\label{conv of i}
\end{align}
with $s\in\{0,1\}$.
\end{itemize}
\end{lemma}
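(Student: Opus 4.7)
The plan is to build the decomposition inductively by a \emph{bifurcate} extraction, alternating between Lemma \ref{refined l2 lemma 1} and Lemma \ref{track h1} according to which of the two diagonal Strichartz norms of the current remainder is dominant. Set $w_n^0:=\psi_n$ and $A:=\sup_n\|\psi_n\|_{H^1}$. Given $w_n^{k-1}$, put
$a_k:=\limsup_n\|e^{it\Delta}w_n^{k-1}\|_{W_\tas(\R)}$ and $b_k:=\limsup_n\|e^{it\Delta}w_n^{k-1}\|_{W_\tbs(\R)}$. If $\max(a_k,b_k)=0$ I stop with $K^*=k-1$; otherwise I extract $\tdu^k$ together with parameters $(t_n^k,x_n^k,\xi_n^k,\ld_n^k)$ by applying Lemma \ref{refined l2 lemma 1} when $a_k\geq b_k$ and Lemma \ref{track h1} when $a_k<b_k$, then I form $T_n^k$ and $P_n^k$ as in the statement and set $w_n^k:=w_n^{k-1}-T_n^kP_n^k\tdu^k$. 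Because each application is to a sequence bounded in $H^1(\R^d)$, the Galilean boosts produced by the $L^2$-track are bounded with constant depending on $k$; this, together with the dichotomies on $t_n^k$ and $\ld_n^k$ inherited from Lemmas \ref{track h1} and \ref{refined l2 lemma 1}, gives item (i).

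Next I show by induction on $k$ that $(T_n^jP_n^j)^{-1}w_n^k\rightharpoonup 0$ in the appropriate target space for every $j\leq k$, the base case $j=k$ being immediate from the construction of $\tdu^k$ as a weak limit. For the inductive step I obtain the orthogonality \eqref{orthog of pairs} by contradiction: if the parameter tuple of step $k$ failed to diverge from that of some $j<k$, a subsequential passage to the limit in $(T_n^k)^{-1}T_n^jP_n^j\tdu^j$, using the explicit Galilean identity \eqref{Galilean invariance} and the trichotomies $\ld_\infty\in\{0,1,\infty\}$ and $t_n\equiv 0$ or $|t_n|/\ld_n^2\to\infty$, would force $(T_n^kP_n^k)^{-1}w_n^{k-1}$ to pick up a nontrivial contribution from $\tdu^j$, contradicting the vanishing weak limit established at the previous step.

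For termination and \eqref{to zero wnk}, I combine the quantitative lower bounds at each step with parameter orthogonality. A $W_\tas$-step contributes $\|\tdu^k\|_2^2\gtrsim A^2(a_k/A)^{2(d+1)(d+2)}$ by \eqref{l2 refined strichartz decomp 1}, while a $W_\tbs$-step contributes $\|\tdu^k\|_{\dot{H}^1}^2\gtrsim A^2(b_k/A)^{d(d+2)/4}$ by \eqref{refined strichartz decomp 1}. Once \eqref{orthog of pairs} is available, a standard change-of-variables and Riemann--Lebesgue argument yields the asymptotic $L^2$- and $\dot{H}^1$-Pythagoras identities in \eqref{orthog L2 and H1}; in particular the partial sums $\sum_{j\leq k}\bg(\|T_n^jP_n^j\tdu^j\|_2^2+\|T_n^jP_n^j\tdu^j\|_{\dot{H}^1}^2\bg)$ are controlled by $A^2+o_n(1)$. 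Hence $\max(a_k,b_k)\to 0$ as $k\to K^*$, which together with Strichartz proves \eqref{to zero wnk}.

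The main obstacle will be the nonlinear decompositions \eqref{conv of h}--\eqref{conv of i}, which amount to Pythagoras identities for $\|\cdot\|_\tas^\tas$ and $\|\cdot\|_\tbs^\tbs$ across profiles of \emph{different scale types} coexisting in one decomposition. The Littlewood--Paley cutoffs $P_n^j$ are designed precisely for this: for an $L^2$-profile with $\ld_\infty^j=\infty$, Bernstein and Sobolev give $\|T_n^jP_n^j\tdu^j\|_\tbs\lesssim(\ld_n^j)^{-(1-\theta)}\|\tdu^j\|_2\to 0$, so this bubble contributes only to the $L^\tas$-sum; symmetrically, a $\dot{H}^1$-profile with $\ld_\infty^j=0$ satisfies $\|T_n^jP_n^j\tdu^j\|_\tas\lesssim(\ld_n^j)^{2(1-\theta)/(d+2)}\|\tdu^j\|_{\dot{H}^1}\to 0$ and contributes only to the $L^\tbs$-sum. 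Iterating the Brezis--Lieb-type argument already carried out for \eqref{decomp tas}--\eqref{decomp tbs}, killing the cross terms between profiles of comparable scale via \eqref{orthog of pairs}, and using the boundedness of the Galilean boosts from item (i) to prevent the oscillatory phases $e^{i\xi_n\cdot x}$ from destroying the decoupling, yield the nonlinear Pythagoras identities, and \eqref{conv of h}--\eqref{conv of i} then follow by linearity from the definitions of $\mH$, $\mK$ and $\mI$.
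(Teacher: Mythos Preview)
Your proposal is correct and follows essentially the same approach as the paper: bifurcate extraction between the $L^2$- and $\dot{H}^1$-tracks according to the dominant diagonal Strichartz norm, orthogonality \eqref{orthog of pairs} by contradiction via the composite identity \eqref{composite of g's}, termination of \eqref{to zero wnk} from the quantitative lower bounds \eqref{refined strichartz decomp 1} and \eqref{l2 refined strichartz decomp 1}, and the nonlinear decompositions from the single-step Brezis--Lieb identities \eqref{decomp tas}--\eqref{decomp tbs} together with the vanishing of cross-scale norms enforced by the Littlewood--Paley cutoffs $P_n^j$.

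One small organizational difference: you deduce the $L^2$/$\dot{H}^1$ Pythagoras identities in \eqref{orthog L2 and H1} from the parameter orthogonality \eqref{orthog of pairs}, whereas the paper obtains them directly by iterating the single-step identities \eqref{refined strichartz decomp 3}--\eqref{refined strichartz decomp 2} and \eqref{l2 refined strichartz decomp 3}--\eqref{l2 refined strichartz decomp 2}, which come from the weak convergence $(T_n^k)^{-1}w_n^{k-1}\rightharpoonup\tdu^k$ alone and do not require \eqref{orthog of pairs}. Also, your phrasing of the contradiction in (iv) is slightly off: the failure of orthogonality between $j<k$ does not make $(T_n^kP_n^k)^{-1}w_n^{k-1}$ ``pick up $\tdu^j$'' (that profile was already subtracted), but rather forces the composite $e^{-it_n^k\Delta}(g_n^k)^{-1}g_n^je^{it_n^j\Delta}$ to converge strongly, so that $(T_n^k)^{-1}w_n^{k-1}$ inherits the vanishing weak limit of $(T_n^j)^{-1}w_n^{k-1}$ and of each $(T_n^j)^{-1}T_n^lP_n^l\tdu^l$ (by the inductive orthogonality for $l<k$), yielding $\tdu^k=0$. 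This is the paper's argument; your sketch contains the right ingredients but the contradiction should be stated as $\tdu^k=0$ rather than as a clash with a previously vanishing limit.
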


\begin{proof}
We construct the linear profiles iteratively and start with $k=0$ and $w_n^0:=\psi_n$. We assume initially that the linear profile decomposition is given and its claimed properties are satisfied for some $k$. Define
\begin{align*}
\vare_{k}:=\lim_{n\to\infty}\|e^{it\Delta}w_n^k\|_{W_\tas\cap W_\tbs(\R)}.
\end{align*}
If $\vare_k=0$, then we stop and set $K^*=k$. Otherwise we have
\begin{align}
\text{(i) } &\limsup_{n\to\infty}\|e^{it\Delta}w_n^k\|_{W_\tas(\R)}\geq \limsup_{n\to\infty}\|e^{it\Delta}w_n^k\|_{W_\tbs(\R)},\text{ or}\nonumber\\
\text{(ii) } &\limsup_{n\to\infty}\|e^{it\Delta}w_n^k\|_{W_\tas(\R)}< \limsup_{n\to\infty}\|e^{it\Delta}w_n^k\|_{W_\tbs(\R)}.\label{decision}
\end{align}
For the first situation we apply the $L^2$-decomposition to $w_n^k$, while for the latter case we apply the $\dot{H}^1$-decomposition. In both cases we obtain the sequence $(\tdu^{k+1},w_n^{k+1},t_n^{k+1},x_n^{k+1},\xi_n^{k+1},\ld_n^{k+1})_{n}.$ We should still need to check that the items (iii) and (iv) are satisfied for $k+1$. That the other items are also satisfied for $k+1$ follows directly from the construction of the linear profile decomposition. If $\vare_k=0$, then item (iii) is automatic; otherwise we have $K^*=\infty$ and $\vare_j>0$ for all $j\in\N\cup\{0\}$. Let $S_1\subset\N$ denote the set of indices such that for each $j\in S_1$, we apply the $\dot{H}^1$-profile decomposition at the $j-1$-step. Also define $S_2:=\N\setminus S_1$. Using \eqref{refined strichartz decomp 1}, \eqref{l2 refined strichartz decomp 1} and \eqref{orthog L2 and H1} we obtain that
\begin{align}
&\sum_{j\in S_1}A_{j-1}^2\bg(\frac{\vare_{j-1}}{A_{j-1}}\bg)^{\frac{d(d+2)}{4}}+\sum_{j\in S_2}A_{j-1}^2\bg(\frac{\vare_{j-1}}{A_{j-1}}\bg)^{2(d+1)(d+2)}\nonumber\\
\lesssim &\sum_{j\in S_1}\|\tdu^j\|^2_{\dot{H}^1}+\sum_{j\in S_2}\|\tdu^j\|^2_{2}
=\sum_{j\in S_1}\lim_{n\to\infty}\|T_jP_n^j\phi^j\|^2_{\dot{H}^1}+\sum_{j\in S_2}\lim_{n\to\infty}\|T_jP_n^j\phi^j\|^2_{2}\nonumber\\
\leq& \lim_{n\to\infty}\|\psi_n\|^2_{H^1}= A_0^2,
\end{align}
where $A_j:=\lim_{n\to\infty}\|w_n^j\|_{H^1}$. By \eqref{orthog L2 and H1} we know that $(A_j)_j$ is monotone decreasing, thus also bounded. Since $S_1\cup S_2=\N$, at least one of both is an infinite set. Suppose that $|S_1|=\infty$. Then
\begin{align*}
A_j^2\bg(\frac{\vare_j}{A_j}\bg)^{\frac{d(d+2)}{4}}\to 0\quad\text{as $j\to\infty$}.
\end{align*}
Combining with the boundedness of $(A_j)_j$ we immediately conclude that $\vare_i\to 0$. The same also holds for the case $|S_2|=\infty$ and the proof of item (iii) is complete. Finally we show item (iv). Denote
\begin{align*}
g_n^j:=
\left\{
             \begin{array}{ll}
             \ld^j_n g_{0,x^j_n,\ld^j_n},&\text{if $\ld^j_\infty=0$},\\
             \\
             g_{\xi^j_n,x^j_n,\ld^j_n},&\text{if $\ld^j_\infty\in\{1,\infty\}$}.
             \end{array}
\right.
\end{align*}
Assume that item (iv) does not hold for some $j<k$. By the construction of the profile decomposition we have
\begin{align*}
w_n^{k-1}=w_n^j-\sum_{l=j+1}^{k-1}g_n^l e^{-it_n^l}P_n^l \tdu^l.
\end{align*}
Then by definition of $\tdu^k$ we know that
\begin{align}
\tdu^k&=\wlim_{n\to\infty}e^{-it_n^k\Delta}[(g_n^k)^{-1}w_n^{k-1}]\nonumber\\
&=\wlim_{n\to\infty}e^{-it_n^k\Delta}[(g_n^j)^{-1}w_n^{j}]-\sum_{l=j+1}^{k-1}\wlim_{n\to\infty}e^{-it_n^k\Delta}[(g_n^k)^{-1}P_n^l \tdu^l],
\end{align}
where the weak limits are taken in the $\dot{H}^1$- or $L^2$-topology, depending on the bifurcation \eqref{decision}. Our aim is to show that $\tdu^k$ is zero, which leads to a contradiction and proves item (iv). We first consider the case $\ld_\infty^k=\infty$. Then the weak limit is taken w.r.t. the $L^2$-topology. For the first summand, we obtain that
\begin{align*}
e^{-it_n^k\Delta}[(g_n^k)^{-1}w_n^{j}]=(e^{-it_n^k\Delta}(g_n^k)^{-1}g_n^je^{it_n^j\Delta})[e^{-it_n^j\Delta}(g_n^j)^{-1}w_n^j].
\end{align*}
Direct calculation yields
\begin{align}\label{composite of g's}
&e^{-it_n^k\Delta}(g_n^k)^{-1}g_n^je^{it_n^j\Delta}\nonumber\\
=&\,\beta_{n}^{j,k}g_{\ld_n^k(\xi_n^j-\xi_n^k),\frac{x_n^j-x_n^k-2t_n^k(\ld_n^k)^2(\xi_n^j-\xi_n^k)}{\ld_n^k},\frac{\ld_n^j}{\ld_n^k}}
e^{-i\bg(t_n^k\bg(\frac{\ld_n^k}{\ld_n^j}\bg)^2-t_n^j\bg)\Delta}.
\end{align}
with $\beta_{n}^{j,k}=e^{i(\xi_n^j-\xi_n^k)x_n^k+t_n^k(\ld_n^k)^2|\xi_n^j-\xi_n^k|^2}$. Therefore, the failure of item (iv) will lead to the strong convergence of the adjoint of $e^{-it_n^k\Delta}(g_n^k)^{-1}g_n^je^{it_n^j\Delta}$ in $L^2(\R^d)$. On the other hand, we must have $\ld_\infty^j=\infty$, otherwise item (iv) would be satisfied. By construction of the profile decomposition we have
\begin{align*}
e^{-it_n^j\Delta}(g_n^j)^{-1}w_n^j\rightharpoonup 0\quad\text{in $L^2(\R^d)$}
\end{align*}
and we conclude that the first summand weakly converges to zero in $L^2(\R^d)$. Now we consider the single terms in the second summand. We can rewrite each single summand to
\begin{align*}
e^{-it_n^k\Delta}[(g_n^k)^{-1}P_n^l \tdu^l]=(e^{-it_n^k\Delta}(g_n^k)^{-1}g_n^je^{it_n^j\Delta})[e^{-it_n^j\Delta}(g_n^j)^{-1}P_n^l \tdu^l].
\end{align*}
By the previous arguments it suffices to show that
\begin{align*}
e^{-it_n^j\Delta}(g_n^j)^{-1}P_n^l \tdu^l\rightharpoonup 0\quad\text{in $L^2(\R^d)$}.
\end{align*}
Assume first $\ld_\infty^l=0$. In this case, we can in fact show that
\begin{align}
e^{-it_n^j\Delta}(g_n^j)^{-1}P_n^l \tdu^l\to 0\quad\text{in $L^2(\R^d)$}.
\end{align}
Indeed, using Bernstein we have
\begin{align*}
\|e^{-it_n^j\Delta}(g_n^j)^{-1}P_n^l \tdu^l\|_2=\ld_n^l\|P_{>(\ld_n^l)^\theta}\tdu^l\|_2\lesssim (\ld_n^l)^{1-\theta}\|\tdu^l\|_{\dot{H}^1}
\to 0.
\end{align*}
Next we consider the cases $\ld_\infty^{l}\in\{1,\infty\}$. By the construction of the decomposition and the inductive hypothesis we know that $\tdu^l\in L^2(\R^d)$ and item (iv) is satisfied for the pair $(j,l)$. Using the fact that
\begin{align*}
\|P_{\leq(\ld_n^l)^\theta}\tdu^l-\tdu^l\|_2\to 0\quad\text{when $\ld_n^l\to \infty$}
\end{align*}
and density arguments, it suffices to show that
\begin{align*}
I_n:=e^{-it_n^j\Delta}(g_n^j)^{-1}g_n^l e^{it_n^l\Delta} \tdu\rightharpoonup 0\quad\text{in $L^2(\R^d)$}
\end{align*}
for arbitrary $\tdu\in C_c^\infty(\R^d)$. Using \eqref{composite of g's} we obtain that
\begin{align*}
I_n=\beta_n^{j,l}g_{\ld_n^l(\xi_n^j-\xi_n^l),\frac{x_n^j-x_n^l-2t_n^l(\ld_n^l)^2(\xi_n^j-\xi_n^l)}{\ld_n^l},\frac{\ld_n^j}{\ld_n^l}}
e^{-i\bg(t^l_n\bg(\frac{\ld_n^l}{\ld_n^j}\bg)^2-t_n^j\bg)\Delta}\tdu.
\end{align*}
Assume first that $\lim_{n\to\infty}\frac{\ld_n^j}{\ld_n^l}+\frac{\ld_n^l}{\ld_n^j}=\infty$. Then for any $\psi\in C_c^\infty(\R^d)$ we have
\begin{align*}
|\la I_n,\psi\ra|\leq \min\bg\{\bg(\frac{\ld_n^j}{\ld_n^l}\bg)^{\frac{d}{2}}\|\tdu\|_1\|\psi\|_\infty,\,
\bg(\frac{\ld_n^j}{\ld_n^l}\bg)^{-\frac{d}{2}}\|\psi\|_1\|\tdu\|_\infty\bg\}\to 0.
\end{align*}
So we may assume that $\lim_{n\to\infty}\frac{\ld_n^j}{\ld_n^l}\in(0,\infty)$. Suppose now $t^l_n\bg(\frac{\ld_n^l}{\ld_n^j}\bg)^2-t_n^j\to\pm\infty$. Then the weak convergence of $I_n$ to zero in $L^2(\R^d)$ follows immediately from the dispersive estimate. Hence we may also assume that $\lim_{n\to\infty}t^l_n\bg(\frac{\ld_n^l}{\ld_n^j}\bg)^2-t_n^j\in\R$. Finally, it is left with the options
\begin{align*}
|\ld_n^l(\xi_n^j-\xi_n^l)|\to\infty\quad\text{or}\quad\bg|\frac{x_n^j-x_n^l-2t_n^l(\ld_n^l)^2(\xi_n^j-\xi_n^l)}{\ld_n^l}\bg|\to\infty.
\end{align*}
For the latter case, we utilize the fact that the symmetry group composing by unbounded translations weakly converges to zero as operators in $L^2(\R^d)$ to deduce the claim; For the former case, we can use the same arguments as the ones for the translation symmetry by considering the Fourier transformation of $I_n$ in the frequency space. This completes the desired proof for the case $\ld_n^k=\infty$. It remains to show the claim for the cases $\ld_\infty^k\in\{0,1\}$. We only need to prove that for $\ld_\infty^l=\infty$, we must have
\begin{align}
e^{-it_n^j\Delta}(g_n^j)^{-1}g_n^le^{it_n^l\Delta}P_{\leq(\ld_n^l)^\theta} \tdu^l\to 0\quad\text{in $\dot{H}^1(\R^d)$},
\end{align}
the other cases can be dealt similarly as by the case $\ld_\infty^k=\infty$ (or alternatively, one can consult \cite[Thm. 7.5]{killip_visan_soliton} for full details). Notice in this case, $e^{-it_n^j\Delta}(g_n^j)^{-1}$ is an isometry on $\dot{H}^1$. Using Bernstein, the boundedness of $(\xi_n^l)_n$ and chain rule we obtain that
\begin{align*}
&\,\|e^{-it_n^j\Delta}(g_n^j)^{-1}g_n^le^{it_n^l\Delta}P_{\leq(\ld_n^l)^\theta}\tdu^l\|_{\dot{H}^1}\nonumber\\
\lesssim &\,(\ld_n^l)^{-1}|\xi_n^l|\|P_{\leq(\ld_n^l)^\theta}\tdu^l\|_2+(\ld_n^l)^{-1}\|P_{\leq(\ld_n^l)^\theta}\tdu^l\|_{\dot{H}^1}\nonumber\\
\lesssim &\,(\ld_n^l)^{-1}\|\tdu^l\|_2+(\ld_n^l)^{-(1-\theta)}\|\tdu^l\|_{2}\to 0.
\end{align*}
This finally completes the proof of item (iv).
\end{proof}

\section{Scattering threshold for the focusing-focusing \eqref{NLS double crit}}\label{section f f}
Throughout this section we restrict ourselves to the focusing-focusing \eqref{NLS double crit}
\begin{align}\label{NLS}
i\pt_t u+\Delta u+|u|^{\tas-2}u+|u|^{\tbs-2}u=0%\tag{FFDCNLS}
\end{align}
We also define the set $\mA$ by
\begin{align*}
\mA:=\{u\in H^1(\R^d):\mM(u)<\mM(Q),\,\mH(u)<m_{\mM(u)},\,\mK(u)>0 \}.
\end{align*}
\subsection{Variational estimates and MEI-functional}\label{variational arguments ff}
We derive below the necessary variational estimates which will be later used in Section \ref{Existence of the minimal blow-up solution} and Section \ref{Extinction of the minimal blow-up solution}. Particularly, we give the precise construction of the MEI-functional $\mD$, which will help us to set up the inductive hypothesis given in Section \ref{Existence of the minimal blow-up solution}.
\begin{lemma}\label{positive or negative k}
Let $u\in H^1(\R^d)\setminus\{0\}$ with $\mM(u)<{\mM(Q)}$. Then there exists a unique $\ld(u)>0$ such that
\begin{equation*}
\mK(T_\ld u)\left\{
             \begin{array}{ll}
             >0, &\text{if $\ld\in(0,\ld(u))$},  \\
             =0,&\text{if $\ld=\ld(u)$},\\
             <0,&\text{if $\ld\in(\ld(u),\infty)$}.
             \end{array}
\right.
\end{equation*}
\end{lemma}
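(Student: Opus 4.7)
The plan is to compute $\mathcal{K}(T_\lambda u)$ as an explicit function of $\lambda$ and read off the sign change directly. First I would use the scaling law $\|T_\lambda f\|_p^p = \lambda^{d(p-2)/2}\|f\|_p^p$, together with $\|\nabla T_\lambda u\|_2^2 = \lambda^2\|\nabla u\|_2^2$, noting that $d(2_*-2)/2 = 2$ and $d(2^*-2)/2 = 2^*$. In the focusing-focusing regime this gives the clean identity
\begin{equation*}
\mathcal{K}(T_\lambda u)=\lambda^{2}\Bigl(\|\nabla u\|_2^2-\tfrac{d}{d+2}\|u\|_{2_*}^{2_*}\Bigr)-\lambda^{2^*}\|u\|_{2^*}^{2^*}=:\lambda^{2}A(u)-\lambda^{2^*}B(u),
\end{equation*}
with $B(u)>0$ for $u\neq 0$.

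Next I would show $A(u)>0$ using the mass constraint $\mathcal{M}(u)<\mathcal{M}(Q)$. By the sharp Gagliardo--Nirenberg inequality \eqref{def gn l2 crit} with constant $\mathrm{C}_{\mathrm{GN}}=\tfrac{d}{d+2}\mathcal{M}(Q)^{2/d}$ given in \eqref{GN-L1},
\begin{equation*}
\tfrac{d}{d+2}\|u\|_{2_*}^{2_*}\leq \mathcal{M}(Q)^{-2/d}\|\nabla u\|_2^2\,\|u\|_2^{4/d}=\Bigl(\tfrac{\mathcal{M}(u)}{\mathcal{M}(Q)}\Bigr)^{2/d}\|\nabla u\|_2^2,
\end{equation*}
so that
\begin{equation*}
A(u)\geq\Bigl(1-\bigl(\mathcal{M}(u)/\mathcal{M}(Q)\bigr)^{2/d}\Bigr)\|\nabla u\|_2^2>0,
\end{equation*}
since $\mathcal{M}(u)<\mathcal{M}(Q)$ and $u\not\equiv 0$ implies $\|\nabla u\|_2>0$.

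Finally, factoring $\mathcal{K}(T_\lambda u)=\lambda^{2}\bigl(A(u)-B(u)\lambda^{2^*-2}\bigr)$ and using $2^*-2>0$, the bracket is a strictly decreasing function of $\lambda\in(0,\infty)$ that is positive at $\lambda=0^+$ and tends to $-\infty$ as $\lambda\to\infty$. Hence there is exactly one zero
\begin{equation*}
\lambda(u)=\Bigl(\tfrac{A(u)}{B(u)}\Bigr)^{1/(2^*-2)},
\end{equation*}
with the sign behaviour claimed in the statement. The only nontrivial step is establishing the positivity $A(u)>0$ from the subcritical mass condition, which is precisely where the identification of the sharp Gagliardo--Nirenberg constant \eqref{GN-L1} is used; once this is in hand, the rest is just the elementary single-variable analysis of $\lambda\mapsto A\lambda^{2}-B\lambda^{2^*}$.
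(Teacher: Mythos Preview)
Your proof is correct and follows essentially the same approach as the paper: compute $\mathcal{K}(T_\lambda u)=\lambda^{2}A(u)-\lambda^{2^*}B(u)$, use the sharp Gagliardo--Nirenberg inequality with \eqref{GN-L1} to get $A(u)>0$ from $\mathcal{M}(u)<\mathcal{M}(Q)$, and then do elementary single-variable analysis. The only cosmetic difference is that the paper differentiates in $\lambda$ to locate a unique maximum before concluding there is one sign change, whereas you factor out $\lambda^{2}$ and use monotonicity of $\lambda\mapsto A(u)-B(u)\lambda^{2^*-2}$ directly, which even yields the explicit value $\lambda(u)=(A(u)/B(u))^{1/(2^*-2)}$.
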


\begin{proof}
We first obtain that
\begin{align*}
\mK(T_\ld u)&=\ld^2 \bg(\|\nabla u\|_2^2-\frac{d}{d+2} \|u\|_\tas^\tas\bg)-\ld^{\tbs}\|u\|_\tbs^\tbs,\nonumber\\
\frac{d}{d\ld}\mK(T_\ld u)&=2\ld\bg(\|\nabla u\|_2^2-\frac{d}{d+2}\|u\|_{\tas}^\tas\bg)-\tbs\ld^{\tbs-1}\|u\|_\tbs^\tbs\nonumber\\
\end{align*}
with
\begin{align}\label{GN-L2}
\|\nabla u\|_2^2-\frac{d}{d+2}\|u\|_{\tas}^\tas\geq \bg(1-\bg(\frac{\mM(u)}{{\mM(Q)}}\bg)^{\frac{2}{d}}\bg)\|\nabla u\|_2^2>0.
\end{align}
Since $\tbs>2$, $\frac{d}{d\ld}\mK(T_\ld u)$ has a unique zero $\beta(u)\in(0,\infty)$ which is the global maxima of $\mK(T_\ld u)$. Also, $\mK(T_\ld u)$ is increasing on $(0,\beta(u))$ and decreasing on $(\beta(u),\infty)$. One easily verifies that $\mK(T_\ld u)$ is positive on $(0,\beta(u))$ and $\mK(T_\ld u)\to-\infty$ as $\ld\to\infty$. Consequently, $\mK(T_\ld u)$ has a first and unique zero $\ld(u)\in(\beta(u),\infty)$ and $\mK(T_\ld u)$ is positive on $(0,\ld(u))$ and negative on $(\ld(u,\infty))$. This completes the proof.
\end{proof}

\begin{lemma}\label{pos of k implies pos of h}
Assume that $\mK(u)\geq 0$. Then $\mH(u)\geq 0$. If additionally $\mK(u)> 0$, then also $\mH(u)> 0$.
\end{lemma}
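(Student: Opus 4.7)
The proof plan is essentially a one-line application of the identity for $\mI$ that has already been recorded in the Notations subsection. Recall that the paper defines
\[
\mI(u) := \mH(u) - \tfrac{1}{2}\mK(u) = \frac{\mu_2}{d}\|u\|_\tbs^\tbs,
\]
and in the present focusing-focusing setting $\mu_2 = 1$, so
\[
\mH(u) = \tfrac{1}{2}\mK(u) + \tfrac{1}{d}\|u\|_\tbs^\tbs.
\]
One can verify this algebraic identity directly using $\tfrac{1}{\tas} = \tfrac{d}{2(d+2)}$ and $\tfrac{1}{2} - \tfrac{1}{\tbs} = \tfrac{1}{d}$, but since it is stated earlier I would simply invoke it.

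Given this decomposition, both conclusions are immediate. First, if $\mK(u) \geq 0$, then both $\tfrac{1}{2}\mK(u) \geq 0$ and $\tfrac{1}{d}\|u\|_\tbs^\tbs \geq 0$, hence $\mH(u) \geq 0$. Second, if $\mK(u) > 0$, then the first summand is already strictly positive and the second is nonnegative, so $\mH(u) > 0$.

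There is no obstacle here; the lemma is essentially a bookkeeping consequence of the fact that in the focusing-focusing regime, the energy-critical term contributes only nonnegatively to the combination $\mH - \tfrac{1}{2}\mK$. I would write the proof in two short sentences, citing the identity from the Notations subsection rather than re-deriving it. The reason the authors isolate this observation as a separate lemma is that it will be used repeatedly in the variational and MEI-functional machinery of Section~\ref{variational arguments ff}, where statements about sign of $\mK$ need to be automatically upgraded to sign information for $\mH$.
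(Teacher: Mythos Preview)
Your proof is correct and is essentially the same as the paper's: both rely on the identity $\mH(u)-\tfrac12\mK(u)=\tfrac1d\|u\|_\tbs^\tbs$ to conclude $\mH(u)\geq 0$ from $\mK(u)\geq 0$. For the strict case the paper argues that $\mK(u)>0$ forces $u\neq 0$ and hence $\tfrac1d\|u\|_\tbs^\tbs>0$, whereas you use directly that $\tfrac12\mK(u)>0$; both are equally valid and the difference is cosmetic.
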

\begin{proof}
We have%Since $\mK(u)\geq0$, we know that $u\neq 0$. Then
\begin{align}
\mH(u)\geq \mH(u)-\frac{1}{2}\mK(u)=\frac{1}{d}\|u\|_{\tbs}^\tbs\geq 0.\label{not strict}
\end{align}
It is trivial that \eqref{not strict} becomes strict when $u\neq 0$, which is the case when $\mK(u)>0$.
\end{proof}

\begin{lemma}\label{bound of gradient by energy}
Let $u\in \mA$. Suppose also that $\mM(u)\leq (1-\delta)^{\frac{d}{2}}\mM(Q)$ with some $\delta\in(0,1)$. Then
\begin{align}
\|u\|_\tbs^\tbs&\leq \|\nabla u\|_2^2,\label{bound of tbs by 2}\\
%\|u\|_\tas^\tas&\leq \frac{d+2}{d}(1-\delta)\|\nabla u\|_2^2,\label{bound of tbs by 3}\\
\frac{\delta}{d} \|\nabla u\|_2^2&\leq \mH(u)\leq \frac{1}{2}\|\nabla u\|_2^2.\label{third}
\end{align}
\end{lemma}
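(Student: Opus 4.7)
My plan is to exploit two different linear combinations of $\mH$ and $\mK$ so as to cleanly absorb the $L^{\tas}$-term using the sharp Gagliardo--Nirenberg inequality and the mass constraint, and then to read off both bounds from the sign of $\mK(u)$.

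First, the estimate $\|u\|_\tbs^\tbs \leq \|\nabla u\|_2^2$ is immediate from $\mK(u)>0$: dropping the non-negative term $\frac{d}{d+2}\|u\|_\tas^\tas$ in the definition of $\mK$ gives $\|u\|_\tbs^\tbs < \|\nabla u\|_2^2$. Similarly, the upper bound $\mH(u)\leq \frac{1}{2}\|\nabla u\|_2^2$ is trivial since in the focusing-focusing regime $\mu_1=\mu_2=1$ both potential terms in $\mH$ appear with a negative sign.

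The real content is the lower bound. I would compute, using $\frac{1}{\tas}=\frac{d}{2(d+2)}$ and $\frac{1}{\tbs}=\frac{d-2}{2d}$, the identity
\begin{align*}
\mH(u)-\frac{1}{\tbs}\mK(u)=\frac{1}{d}\|\nabla u\|_2^2-\frac{1}{d+2}\|u\|_\tas^\tas,
\end{align*}
noticing that the coefficient in front of $\|u\|_\tbs^\tbs$ cancels (this is the key algebraic reason the identity is useful). Then I would apply the sharp Gagliardo--Nirenberg inequality together with \eqref{GN-L1}, which yields
\begin{align*}
\frac{1}{d+2}\|u\|_\tas^\tas\leq \frac{1}{d}\bg(\frac{\mM(u)}{\mM(Q)}\bg)^{2/d}\|\nabla u\|_2^2\leq \frac{1-\delta}{d}\|\nabla u\|_2^2,
\end{align*}
where the second inequality uses the hypothesis $\mM(u)\leq (1-\delta)^{d/2}\mM(Q)$. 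Combined with $\mK(u)>0$, this gives
\begin{align*}
\mH(u)\geq \mH(u)-\frac{1}{\tbs}\mK(u)\geq \frac{\delta}{d}\|\nabla u\|_2^2,
\end{align*}
which is the required lower bound.

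I do not expect any step here to be a serious obstacle: everything is essentially bookkeeping once one guesses the correct combination $\mH-\frac{1}{\tbs}\mK$ (in analogy with how Lemma \ref{pos of k implies pos of h} uses $\mH-\frac{1}{2}\mK$). The only mild subtlety is to choose the linear combination that eliminates the energy-critical term $\|u\|_\tbs^\tbs$, so that the mass-subcritical Gagliardo--Nirenberg inequality, together with the explicit value of $\mathrm{C}_{\mathrm{GN}}$ in \eqref{GN-L1}, delivers exactly the gain $\delta/d$ dictated by the assumed mass gap.
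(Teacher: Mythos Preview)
Your proof is correct and follows essentially the same route as the paper: both arguments drop a non-negative term to get \eqref{bound of tbs by 2} and the upper bound in \eqref{third}, and both obtain the lower bound by forming the combination $\mH(u)-\frac{1}{\tbs}\mK(u)=\frac{1}{d}\bigl(\|\nabla u\|_2^2-\frac{d}{d+2}\|u\|_\tas^\tas\bigr)$ and then invoking the sharp Gagliardo--Nirenberg inequality \eqref{GN-L1} together with the mass constraint.
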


\begin{proof}
\eqref{bound of tbs by 2} follows immediately from the fact that $\mK(u)\geq0$ for $u\in{\mA}$ and the non-positivity of the nonlinear potentials. %For \eqref{bound of tbs by 3}, we obtain that
%\begin{align*}
%\|u\|_\tas^\tas
%&\leq \mathrm{C}^{-1}_{\mathrm{GN}}\|\nabla u\|_2^2 \bg(\mM(u)\bg)^{\frac{2}{d}}\nonumber\\
%&\leq \frac{d+2}{d}{\mM(Q)}^{-\frac{2}{d}}\|\nabla u\|_2^2(1-\delta){\mM(Q)}^{\frac{2}{d}}\nonumber\\
%&=\frac{d+2}{d}(1-\delta)\|\nabla u\|_2^2.
%\end{align*}
The first $\leq$ in \eqref{third} follows from
\begin{align*}
\mH(u)&\geq \mH(u)-\frac{1}{\tbs}\mK(u)\nonumber\\
&=\frac{1}{d}(\|\nabla u\|_2^2-\frac{d}{d+2}\|u\|_\tas^\tas)\nonumber\\
&\geq \frac{1}{d}\bg(1-\bg(\frac{\mM(u)}{{\mM(Q)}}\bg)^{\frac{2}{d}}\bg)\|\nabla u\|_2^2\geq \frac{\delta}{d}\|\nabla u\|_2^2
\end{align*}
and the second $\leq$ follows immediately from the non-positivity of the power potentials.
\end{proof}

\begin{lemma}\label{monotone lemma}
The mapping $c\mapsto m_c$ is continuous and monotone decreasing on $(0,{\mM(Q)})$.
\end{lemma}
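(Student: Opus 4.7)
The plan splits naturally into monotonicity and continuity. For the strict monotonicity, I introduce, for each nonzero $u \in H^1(\R^d)$ with $\mM(u) < \mM(Q)$ and each $\alpha > 0$ small enough that $\alpha^2 \mM(u) < \mM(Q)$, the competitor functional
\[
G_u(\alpha) := \sup_{\ld > 0} \mH(T_\ld(\alpha u)).
\]
By Lemma \ref{positive or negative k} applied to $\alpha u$, this supremum is attained at a unique $\ld^*(\alpha) \in (0, \infty)$ with $\mK(T_{\ld^*(\alpha)}(\alpha u)) = 0$, so that $T_{\ld^*(\alpha)}(\alpha u)$ is admissible for the variational problem defining $m_{\alpha^2 \mM(u)}$; in particular $m_{\alpha^2 \mM(u)} \leq G_u(\alpha)$. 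Since $\ld^*(\alpha)$ is a critical point of $\ld \mapsto \mH(T_\ld(\alpha u))$, the envelope identity reduces $G_u'(\alpha)$ to $\partial_\alpha \mH(T_\ld(\alpha u))\big|_{\ld = \ld^*(\alpha)}$; a direct computation combined with the constraint $\mK = 0$ (used to eliminate the kinetic term) yields the explicit formula
\[
G_u'(\alpha) = -\frac{2\alpha^{\tas - 1}(\ld^*(\alpha))^2 \|u\|_\tas^\tas}{d + 2} < 0.
\]

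Given $0 < c_1 < c_2 < \mM(Q)$, I then take $u := P_{c_1}$ (supplied by Theorem \ref{soave}) and $\alpha := \sqrt{c_2/c_1} > 1$. Since $\mK(P_{c_1}) = 0$ we have $\ld^*(1) = 1$ and $G_u(1) = \mH(P_{c_1}) = m_{c_1}$, so the strict decrease of $G_u$ yields $m_{c_2} \leq G_u(\alpha) < G_u(1) = m_{c_1}$, establishing strict monotonicity. For the continuity at $c \in (0, \mM(Q))$, the analogous construction with $u = P_c$ and $\alpha_n = \sqrt{c_n/c} \to 1$ gives $m_{c_n} \leq G_{P_c}(\alpha_n) \to m_c$, hence $\limsup_n m_{c_n} \leq m_c$; when $c_n < c$ the reverse bound $\liminf_n m_{c_n} \geq m_c$ is immediate from the just-proved monotonicity. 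The only delicate case is $c_n \downarrow c$: here I reverse the roles, plugging $u_n := P_{c_n}$ and $\beta_n := \sqrt{c/c_n} \to 1$ into the variational problem for $m_c$ to obtain $m_c \leq G_{P_{c_n}}(\beta_n)$, so it suffices to show $G_{P_{c_n}}(\beta_n) - m_{c_n} = G_{P_{c_n}}(\beta_n) - G_{P_{c_n}}(1) \to 0$.

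This last step is the main technical obstacle and requires a uniform-in-$n$ Lipschitz bound on $G_{P_{c_n}}$ near $\alpha = 1$. Via the explicit derivative formula, such a bound reduces to uniform two-sided control on $\ld^*(\beta)$ together with a uniform bound on $\|P_{c_n}\|_\tas^\tas$. The latter follows from a uniform $H^1$-bound on $P_{c_n}$: since $c_n$ is eventually confined to a compact subinterval of $(0, \mM(Q))$, Lemma \ref{bound of gradient by energy} applies with its parameter $\delta$ bounded below, so $\|\nabla P_{c_n}\|_2^2 \lesssim m_{c_n}$, which is itself bounded by monotonicity, and Gagliardo--Nirenberg then controls $\|P_{c_n}\|_\tas$. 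Uniform control on $\ld^*(\beta)$ is then obtained by inspection of the defining equation $\mK(T_{\ld^*}(\beta P_{c_n})) = 0$, whose coefficients remain uniformly bounded and uniformly nondegenerate in this regime. Assembling these pieces yields $\liminf m_{c_n} \geq m_c$ and completes the proof.
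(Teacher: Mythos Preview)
Your argument is correct and takes a route genuinely different from the paper's. The paper never invokes the existence of the minimizers $P_c$ from Theorem~\ref{soave}; for monotonicity it works with \emph{minimizing sequences}, performs a Bellazzini-type cut-off and disjoint-support construction to pass from mass $c_1$ to $c_2$, and controls the resulting mountain-pass value via continuity of the explicit function $f(a,b)=\max_{t>0}\{at^2-bt^{\tbs}\}$. For continuity from the right it again uses minimizing sequences $u_n\in V(c_n)$, rescales them to mass $c$, extracts uniform $H^1$-bounds from $\mK(u_n)=0$ and Gagliardo--Nirenberg, and then re-enters the $f(a,b)$ machinery. By contrast you treat the mountain-pass map $\alpha\mapsto G_u(\alpha)$ directly via the envelope identity, producing the closed-form derivative and hence strict monotonicity (the paper only obtains $m_{c_2}\le m_{c_1}$). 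Your approach is shorter and more conceptual once the minimizers $P_c$ are taken as given; the paper's approach is more self-contained in that it does not rely on Soave's existence theorem, so there is no risk of hidden circularity between the two statements.

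One small technical point: when you invoke Lemma~\ref{bound of gradient by energy} for $P_{c_n}$, note that the lemma is stated for $u\in\mA$, which requires $\mK(u)>0$, whereas $\mK(P_{c_n})=0$. The inequality you need (namely $\mH(P_{c_n})\geq d^{-1}\bigl(1-(c_n/\mM(Q))^{2/d}\bigr)\|\nabla P_{c_n}\|_2^2$) follows directly from $\mH(P_{c_n})=\mH(P_{c_n})-\tbs^{-1}\mK(P_{c_n})$ together with \eqref{GN-L2}, exactly as in the proof of that lemma; you should either cite the proof rather than the statement, or derive this one line explicitly. For the uniform lower bound on $\|P_{c_n}\|_\tbs^\tbs$ needed to control $\ld^*(\beta)$ from above, observe that $\|P_{c_n}\|_\tbs^\tbs=d\,\mI(P_{c_n})=d\,m_{c_n}\geq d\,m_{c'}>0$ for any fixed $c'\in(c,\mM(Q))$ eventually containing all $c_n$, using the already-proved monotonicity and the positivity of $m_{c'}$ from Theorem~\ref{soave}.
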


\begin{proof}
The proof follows the arguments of \cite{Bellazzini2013}, where we also need to take the mass constraint into account. We first show that the function $f$ defined by
\begin{align*}
f(a,b):=\max_{t>0}\{at^2-bt^{\tbs}\}
\end{align*}
is continuous on $(0,\infty)^2$. In fact, the global maxima can be calculated explicitly. Let
\begin{align*}
g(t,a,b):=at^2-bt^{\tbs}.
\end{align*}
and let $t^*\in (0,\infty)$ such that $\pt_t g(t^*,a,b)=0$. Then $t^*=\bg(\frac{2a}{\tbs b}\bg)^{\frac{d-2}{4}}$. Particularly, $\pt_t g(t,a,b)$ is positive on $(0,t^*)$ and negative on $(t^*,\infty)$. Thus
$$ f(a,b)=g(t^*,a,b)=\bg(\frac{2a}{\tbs b}\bg)^{\frac{d-2}{2}}\frac{2a}{d}$$
and we conclude the continuity of $f$ on $(0,\infty)^2$.

We now show the monotonicity of $c\mapsto m_c$. It suffices to show that for any $0<c_1<c_2<{\mM(Q)}$ and $\vare>0$ we have
\begin{align*}
m_{c_2}\leq m_{c_1}+\vare.
\end{align*}
Define the set $V(c)$ by
\begin{align*}
V(c):=\{u\in H^1(\R^d):\mM(u)=c,\,\mK(u)=0\}.
\end{align*}
By the definition of $m_{c_1}$ there exists some $u_1\in V(c_1)$ such that
\begin{align}\label{pert 2}
\mH(u_1)\leq m_{c_1}+\frac{\vare}{2}.
\end{align}
Let $\eta\in C^{\infty}_c(\R^d)$ be a cut-off function such that $\eta=1$ for $|x|\leq 1$, $\eta=0$ for $|x|\geq 2$ and $\eta\in [0,1]$ for $|x|\in(1,2)$. For $\delta>0$, define
\begin{equation*}
\tilde{u}_{1,\delta}(x):= \eta(\delta x)\cdot u_1(x).
\end{equation*}
Then $\tilde{u}_{1,\delta}\to u_1$ in $H^1(\R^d)$ as $\delta\to 0$. Therefore,
\begin{align*}
\|\nabla\tilde{u}_{1,\delta}\|_2&\to \|\nabla u_1\|_2, \\
\|\tilde{u}_{1,\delta}\|_p&\to \| u_1\|_p
\end{align*}
for all $p\in[2,\tbs]$ as $\delta\to 0$. Using Gagliardo-Nirenberg we know that $\frac{1}{2}\|\nabla v\|_2^2>\frac{1}{\tas}\|v\|_\tas^\tas$ for all $v\in H^1(\R^d)$ with $\mM(v)<{\mM(Q)}$. Since $c_1\in(0,{\mM(Q)})$, we infer that $\mM(\tilde{u}_{1,\delta})\in (0,{\mM(Q)})$ for sufficiently small $\delta$. Combining with the continuity of $f$ we conclude that
\begin{align}\label{pert 1}
\max_{t>0}\mH(T_t\tilde{u}_{1,\delta})&=\max_{t>0}\{t^2(\frac{1}{2}\|\nabla\tilde{u}_{1,\delta}\|_2^2-\frac{1}{\tas}\|\tilde{u}_{1,\delta}\|_\tas^\tas)
-\frac{t^\tbs}{\tbs}\|\tilde{u}_{1,\delta}\|_\tbs^\tbs\}\nonumber\\
&\leq \max_{t>0}\{t^2(\frac{1}{2}\|\nabla u_1\|_2^2-\frac{1}{\tas}\|u_1\|_\tas^\tas)
-\frac{t^\tbs}{\tbs}\|u_1\|_\tbs^\tbs\}+\frac{\vare}{4}\nonumber\\
&=\max_{t>0}\mH(T_t u_1)+\frac{\vare}{4}
\end{align}
for sufficiently small $\delta>0$. Now let $v\in C_c^\infty(\R^d)$ with $\mathrm{supp}\,v\subset\R^d\backslash B(0,2\delta^{-1})$ and define
\begin{equation*}
v_0:= \frac{(c_2-\mM(\tilde{u}_{1,\delta}))^{\frac{1}{2}}}{(\mM(v))^{\frac{1}{2}}}\,v.
\end{equation*}
We have $\mM(v_0)=c_2-\mM(\tilde{u}_{1,\delta})$. Define
\begin{align*}
w_\ld:=\tilde{u}_{1,\delta}+T_\ld v_0
\end{align*}
with some to be determined $\ld>0$. For sufficiently small $\delta$ the supports of $\tilde{u}_{1,\delta}$ and $v_0$ are disjoint, thus\footnote{The order logic is as follows: we first fix $\delta$ such that $\tilde{u}_{1,\delta}$ and $v_0$ have disjoint supports. Then $\tilde{u}_{1,\delta}$ and $T_\ld v_0$ have disjoint supports for any $\ld\in(0,1)$.}
\begin{align*}
\|w_\ld\|^p_p=\|\tilde{u}_{1,\delta}\|^p_p+\|T_\ld v_0\|^p_p
\end{align*}
for all $p\in [2,2^*]$. Hence $\mM(w_\ld)=c_2$. Moreover, one easily verifies that
\begin{align*}
\|\nabla w_\ld\|_2&\to\|\nabla \tilde{u}_{1,\delta}\|_2,\\
\| w_\ld\|_p&\to\| \tilde{u}_{1,\delta}\|_p
\end{align*}
for all $p\in(2,2^*]$ as $\ld\to 0$. Using the continuity of $f$ once again we obtain that
\begin{align*}
\max_{t>0}\mH(T_tw_\ld)\leq \max_{t>0}\mH(T_t \tilde{u}_{1,\delta})+\frac{\vare}{4}
\end{align*}
for sufficiently small $\ld>0$. Finally, combing with \eqref{pert 2} and \eqref{pert 1} we infer that
\begin{align*}
m_{c_2}&\leq \max_{t>0}\mH(T_t w_\lambda)\leq \max_{t>0}\mH(T_t\tilde{u}_{1,\delta})+\frac{\varepsilon}{4}\nonumber\\
&\leq \max_{t>0}\mH(u_1^t)+\frac{\varepsilon}{2}=\mH(u_1)+\frac{\varepsilon}{2}\leq m_{c_1}+\varepsilon,
\end{align*}
which implies the monotonicity of $c\mapsto m_c$ on $(0,\mM(Q))$.

Finally, we show the continuity of the curve $c\mapsto m_c$. Since $c\mapsto m_c$ is non-increasing, it suffices to show that for any $c\in(0,\mM(Q))$ and any sequence $c_n\downarrow c$ we have
\begin{align*}
m_c\leq \lim_{n\to \infty}m_{c_n}.
\end{align*}
By the same reasoning we can also prove that $m_c\geq \lim_{n\to \infty}m_{c_n}$ for any sequence $c_n\uparrow c$ and the continuity follows. Let $\vare>0$ be an arbitrary positive number. By the definition of $m_{c_n}$ we can find some $u_n\in V(c_n)$ such that
\begin{align}\label{pert 3}
\mH(u_n)\leq m_{c_n}+\frac{\vare}{2}\leq m_c+\frac{\vare}{2}.
\end{align}
We define $\tilde{u}_n=(c_n^{-1}c)^{\frac{1}{2}} \cdot u_n:=\rho_n u_n$. Then $\mM(\tilde{u}_n)=c$ and $\rho_n\uparrow 1$. Since $u_n\in V(c_n)$, we obtain that
\begin{align}
m_{c}+\frac{\vare}{2}\geq m_{c_n}+\frac{\vare}{2}&\geq \mH(u_n)=\mH(u_n)-\frac{1}{\tbs}\mK(u_n)\nonumber\\
&=\frac{1}{d}\bg(\|\nabla u_n\|_2^2-\frac{d}{d+2}\|u_n\|_\tas^\tas\bg)\nonumber\\
&\geq \frac{1}{d}\bg(1-\bg(\frac{\mM(u_n)}{{\mM(Q)}}\bg)^{\frac{2}{d}}\bg)\|\nabla u_n\|_2^2\nonumber\\
&= \frac{1}{d}\bg(1-\bg(\frac{c+o_n(1)}{{\mM(Q)}}\bg)^{\frac{2}{d}}\bg)\|\nabla u_n\|_2^2.
\end{align}
Thus $(u_n)_n$ is bounded in $H^1(\R^d)$ and up to a subsequence we infer that there exist $A,B\geq 0$ such that
\begin{align}
\|\nabla u_n\|_2^2-\frac{d}{d+2}\|u_n\|_\tas^\tas=A+o_n(1),\quad\|u_n\|_\tbs^\tbs=B+o_n(1).
\end{align}
On the other hand, using $\mK(u_n)=0$ and Sobolev inequality we see that
\begin{align}\label{lower bound tbs}
\frac{1}{d}\bg(1-\bg(\frac{c+o_n(1)}{{\mM(Q)}}\bg)^{\frac{2}{d}}\bg)\|\nabla u_n\|_2^2
\leq \frac{1}{d}\bg(\|\nabla u_n\|_2^2-\frac{d}{d+2}\|u_n\|_\tas^\tas\bg)=\frac{1}{d}\|u_n\|_\tbs^\tbs\leq
\frac{\mS^{\frac{d}{2-d}}}{d}\|\nabla u_n\|_2^\tbs.
\end{align}
Hence $\liminf_{n\to\infty}\|\nabla u_n\|_2^2>0$, which combining with \eqref{lower bound tbs} also implies
\begin{align*}
A=\lim_{n\to\infty}\bg(\|\nabla u_n\|_2^2-\frac{d}{d+2}\|u_n\|_\tas^\tas\bg)>0,\quad
B=\lim_{n\to\infty}\|u_n\|_\tbs^\tbs>0.
\end{align*}
Therefore $f$ is continuous at the point $(A,B)$. Using also the fact that $\rho_n\uparrow 1$ we obtain
\begin{align}
m_{c}&\leq \max_{t>0}\mH(T_t \tilde{u}_n)
=\max_{t>0}\bg\{\frac{t^2\rho_n^2}{2}\|\nabla{u}_{n}\|_2^2
- \frac{t^2\rho_n^\tas}{\tas}\|{u}_{n}\|_\tas^\tas
-\frac{t^\tbs\rho_n^\tbs}{\tbs}\|{u}_{n}\|_\tbs^\tbs\bg\}\nonumber\\
&\leq\max_{t>0}\bg\{t^2\frac{A}{2}-t^\tbs\frac{B}{\tbs}\bg\}+\frac{\vare}{4}\nonumber\\
&\leq \max_{t>0}\bg\{\frac{t^2}{2}\|\nabla{u}_{n}\|_2^2
- \frac{t^2}{\tas}\|{u}_{n}\|_\tas^\tas
-\frac{t^\tbs}{\tbs}\|{u}_{n}\|_\tbs^\tbs\bg\}+\frac{\vare}{2}\nonumber\\
&=\max_{t>0}\mH(T_t u_n)+\frac{\vare}{2}=\mH(u_n)+\frac{\vare}{2}\leq m_{c_n}+\vare
\end{align}
by choosing $n$ sufficiently large. The claim follows from the arbitrariness of $\vare$.
\end{proof}

The following lemma shows that the NLS-flow leaves solutions starting from ${\mA}$ invariant.
\begin{lemma}\label{invariant lemma}
Let $u$ be a solution of \eqref{NLS} with $u(0)\in {\mA}$. Then $u(t)\in {\mA}$ for all $t$ in the maximal lifespan. Assume also $\mM(u)=(1-\delta)^{\frac{d}{2}}\mM(Q)$, then
\begin{align}\label{lower bound kt}
&\,\inf_{t\in I_{\max}}\mK(u(t))\nonumber\\
\geq&\,\min\bg\{\frac{4\delta}{d}\mH(u(0)),
\bg(\bg(\frac{d}{\delta(d-2)}\bg)^{\frac{d-2}{4}}-1\bg)^{-1}\bg(m_{\mM(u(0))}-\mH(u(0))\bg)\bg\}.
\end{align}
\end{lemma}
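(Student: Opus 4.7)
I would first verify that $u(t)\in\mA$ on the entire maximal lifespan $I_{\max}$. Conservation of $\mM$ and $\mH$ along \eqref{NLS} preserves $\mM(u(t))<\mM(Q)$ and, since $m_{\mM(u(t))}=m_{\mM(u(0))}$ by mass conservation, also $\mH(u(t))<m_{\mM(u(t))}$. For the persistence of $\mK(u(t))>0$, I would use that $t\mapsto\mK(u(t))$ is continuous (because $u\in C(I_{\max};H^1(\R^d))$) and that $u(t)\neq 0$ throughout (since $u(0)\neq 0$, as $\mK(u(0))>0$, and the mass is conserved). If $\mK(u(t^*))=0$ for some $t^*$, then $u(t^*)$ would be admissible in the variational problem \eqref{variational problem for mc} defining $m_{\mM(u(t^*))}$, forcing $\mH(u(t^*))\geq m_{\mM(u(t^*))}$, which contradicts the conservation of $\mH$.

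\textbf{Step 2 (Setup for the quantitative bound).} Fix $t\in I_{\max}$ and abbreviate $u:=u(t)$, $K:=\mK(u)$, $H:=\mH(u(0))=\mH(u)$, $m:=m_{\mM(u(0))}$. Set $A:=\|\nabla u\|_2^2-\frac{d}{d+2}\|u\|_\tas^\tas$ (positive by \eqref{GN-L2}) and $B:=\|u\|_\tbs^\tbs$, so that $K=A-B$ and the identity $\mH(u)=\frac{A}{d}+\frac{K}{\tbs}$ yields $A=dH-\frac{d-2}{2}K$ and $B=dH-\frac{d}{2}K$. Lemma~\ref{positive or negative k} furnishes a unique $\ld(u)>1$ with $\mK(T_{\ld(u)}u)=0$, equivalently $\rho:=\ld(u)^{\tbs-2}=A/B$, and by the defining property of $m$ one has $\mH(T_{\ld(u)}u)\geq m$.

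\textbf{Step 3 (Case analysis).} I would split into two cases. If $K\geq\frac{4\delta}{d}H$ the first bound in the minimum follows immediately. Otherwise $K<\frac{4\delta}{d}H$; the explicit formula for $B$ then yields $B\gtrsim H$ and hence $\rho-1=K/B\lesssim\delta/d^2$. On the other hand, the identity $\mH(T_\ld u)=\tfrac{\ld^2}{2}A-\tfrac{\ld^\tbs}{\tbs}B$ evaluated at $\ld=\ld(u)$, combined with $\mH(T_{\ld(u)}u)\geq m$ and the substitution $B=K/(\rho-1)$, produces after elementary algebra the inequality
\[
K\ \geq\ \frac{2d(m-H)(\rho-1)}{2\rho^{d/2}-d\rho+(d-2)}.
\]
Since $\tbs=\frac{2d}{d-2}$, the linear term of the denominator at $\rho=1$ cancels and the denominator vanishes to second order there. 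Combined with the upper bound on $\rho-1$ inherited from the Case II hypothesis, this amplification delivers $K\geq(m-H)/(\ld^*-1)$ with $\ld^*:=\bg(\frac{d}{\delta(d-2)}\bg)^{(d-2)/4}$.

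\textbf{Main obstacle.} The conceptually delicate step is the final algebraic verification in Case II: confirming that the ratio $2d(\rho-1)/[2\rho^{d/2}-d\rho+(d-2)]$ is bounded below by $1/(\ld^*-1)$ whenever $\rho-1$ satisfies the smallness bound induced by $K<\frac{4\delta}{d}H$. This is a finite-dimensional inequality in the two parameters $\rho,\delta$ and can be settled by combining the second-order Taylor expansion of the denominator at $\rho=1$ with the monotonicity of $\rho\mapsto(\rho-1)/[2\rho^{d/2}-d\rho+(d-2)]$.
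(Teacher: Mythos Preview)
Your Step 1 is correct and essentially identical to the paper's argument. Your Step 2 sets up a legitimately different route: you work with the exact formula $\mH(T_{\ld(u)}u)-H=\tfrac{B}{2d}\bigl(2\rho^{d/2}-d\rho+(d-2)\bigr)$ and the identity $K=B(\rho-1)$, whereas the paper never writes this out.

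There is, however, a real gap in Step 3. Your proposed resolution of the ``main obstacle'' does not work as stated. The Taylor expansion of the denominator at $\rho=1$ is only an approximation, and in your Case~II the parameter $\rho_{\max}=1+\tfrac{4\delta}{d(d-2\delta)}$ is \emph{not} uniformly close to~$1$: for $\delta$ near~$1$ it is bounded away from~$1$ (e.g.\ $\rho_{\max}\approx 2.3$ when $d=3$, $\delta=0.99$), so a second-order expansion with uncontrolled remainder cannot yield the sharp constant $(\ld^*-1)^{-1}$. Monotonicity of $g(\rho)=2d(\rho-1)/f(\rho)$ (which is true) only reduces the problem to verifying $g(\rho_{\max})\geq 1/(\ld^*-1)$; this is a nontrivial one-variable inequality in $\delta$ that you have not established.

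The paper proceeds differently. It splits cases on the sign of $\mK(u)-\tfrac{2}{d-2}\|u\|_\tbs^\tbs$ rather than on $K$ versus $\tfrac{4\delta}{d}H$. In the second case it first bounds the scaling parameter directly: from $\|u\|_\tbs^\tbs>\tfrac{\delta(d-2)}{d}\|\nabla u\|_2^2$ (a consequence of the case hypothesis and \eqref{GN-L2}) and $\|u\|_\tbs^\tbs\leq \ld_*^{2-\tbs}\|\nabla u\|_2^2$ (from $\mK(T_{\ld_*}u)=0$) one reads off $\ld_*\leq\ld^*$. Then, rather than manipulating the exact value of $\mH(T_{\ld_*}u)$, the paper shows that $\ld\mapsto\mH(T_\ld u)$ is \emph{concave} on $[1,\ld_*]$ by computing $\tfrac{d^2}{d\ld^2}\mH(T_\ld u)$ and checking it is nonpositive there (this is where the case hypothesis is used again). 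First-order Taylor with concavity then gives $(\ld_*-1)K\geq \mH(T_{\ld_*}u)-H\geq m-H$, and the bound $\ld_*\leq\ld^*$ finishes.

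If you wish to rescue your explicit-formula route, the cleanest fix is to prove the pointwise inequality $g(\rho)\geq 1/(\rho^{(d-2)/4}-1)$ for all $\rho>1$; this is exactly the paper's concavity statement rewritten in your variables, and once you have it your bound $\rho\leq\rho_{\max}\leq d/(\delta(d-2))$ (which you can check algebraically) yields $\ld_*\leq\ld^*$ and closes the argument.
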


\begin{proof}
By the mass and energy conservation, to show the invariance of solutions starting from $\mA$ under the NLS-flow, we only need to show that $\mK(u(t))> 0$ for all $t\in I_{\max}$. Suppose that there exist some $t$ such that $\mK(u(t))\leq 0$. By continuity of $u(t)$ there exists some $s\in(0,t]$ such that $\mK(u(s))=0$. By conservation of mass we also know that $0<\mM(u(s))<{\mM(Q)}$. By the definition of $m_c$ we immediately obtain that
\begin{align*}
m_{\mM(u(s))}\leq \mH(u(s))<m_{\mM(u(0))}= m_{\mM(u(s))},
\end{align*}
a contradiction. We now show \eqref{lower bound kt}. Direct calculation yields
\begin{align}\label{der of h}
\frac{d^2}{d\ld^2}\mH(T_\ld u(t))=-\frac{1}{\ld^2}\mK(T_\ld u(t))+\frac{2}{\ld^2}\bg(\mK(T_\ld u(t))-\frac{2}{d-2}\|T_\ld u(t)\|_\tbs^\tbs\bg).
\end{align}
If $\mK(u(t))-\frac{2}{d-2}\| u(t)\|_\tbs^\tbs\geq 0$, then using \eqref{GN-L2} we see that
\begin{align*}
\mK(u(t))&=\|\nabla u\|_2^2-\frac{d}{d+2}\|u\|_\tas^\tas-\|u\|_\tbs^\tbs\nonumber\\
&\geq \delta\|\nabla u\|_2^2-\frac{d-2}{2}\mK(u(t)),
\end{align*}
which combining with \eqref{third} implies that
\begin{align}\label{lower bd 1}
\mK(u(t))\geq \frac{2\delta}{d}\|\nabla u(t)\|_2^2\geq  \frac{4\delta}{d}\mH(u(0)),
\end{align}
where for the last inequality we also used the conservation of energy. Suppose now that
\begin{align}\label{negative 0}
\mK(u(t))-\frac{2}{d-2}\| u(t)\|_\tbs^\tbs< 0.
\end{align}
Then
\begin{align*}
\frac{2}{d-2}\| u(t)\|_\tbs^\tbs&>\|\nabla u(t)\|_2^2-\frac{d}{d+2}\|u\|_\tas^\tas-\|u\|_\tbs^\tbs\nonumber\\
&\geq \delta\|\nabla u(t)\|_2^2-\|u(t)\|_\tbs^\tbs.
\end{align*}
Hence
\begin{align}\label{2 by 2*}
\|u(t)\|_\tbs^\tbs> \frac{\delta(d-2)}{d}\|\nabla u(t)\|_2^2.
\end{align}
Since $\mK(u(t))>0$, by Lemma \ref{positive or negative k} we know that there exists some $\ld_*\in(1,\infty)$ such that
\begin{align}\label{positive ld for K}
\mK(T_\ld u(t))>0\quad\forall\,\ld\in[1,\ld_*)
\end{align}
and
\begin{align*}
0=\mK(T_{\ld_*}u(t))=\ld_*^2\bg(\|\nabla u(t)\|_2^2-\frac{d}{d+2}\|u(t)\|_\tas^\tas\bg)-\ld_*^\tbs\|u(t)\|_\tbs^\tbs,
\end{align*}
which gives
\begin{align}\label{2* by 2}
\|u(t)\|_\tbs^\tbs&\leq \ld_*^{2-\tbs}(\|\nabla u(t)\|_2^2-\frac{d}{d+2}\|u(t)\|_\tas^\tas)\leq \ld_*^{2-\tbs}\|\nabla u(t)\|_2^2.
\end{align}
\eqref{2 by 2*} and \eqref{2* by 2} then yield
\begin{align}\label{bound of ld}
\ld_*\leq\bg(\frac{d}{\delta(d-2)}\bg)^{\frac{d-2}{4}}.
\end{align}
On the other hand, one easily checks that
\begin{align}\label{negative 1}
\frac{d}{d\ld}\bg(\frac{1}{\ld^2}\bg(\mK(T_\ld u(t))-\frac{2}{d-2}\|T_\ld u(t)\|_\tbs^\tbs\bg)\bg)
=-\frac{2(\tbs-2)}{d-2}\ld^{2^*-3}\|u(t)\|_\tbs^\tbs<0.
\end{align}
Integrating \eqref{negative 1} and using \eqref{negative 0}, we find that for $\ld\geq 1$ we have
\begin{align}\label{negative der of k}
\frac{1}{\ld^2}\bg(\mK(T_\ld u(t))-\frac{2}{d-2}\|T_\ld u(t)\|_\tbs^\tbs\bg)\leq 0.
\end{align}
\eqref{der of h}, \eqref{positive ld for K} and \eqref{negative der of k} then imply that $\frac{d^2}{d\ld^2}\mH(T_\ld u(t))\leq 0$ for all $\ld\in[1,\ld_*]$. Finally, combining with \eqref{bound of ld}, the fact that $\mK(T_{\ld_*}u(t))=0$ and Taylor expansion we infer that
\begin{align}
&\,\bg(\bg(\frac{d}{\delta(d-2)}\bg)^{\frac{d-2}{4}}-1\bg)\mK(u(t))\nonumber\\
\geq&\, \,(\ld_*-1)\bg(\frac{d}{d\ld}\mH(T_\ld u(t))\bg|_{\ld=1}\bg)\nonumber\\
\geq&\,\, \mH(T_{\ld_*}u(t))-\mH(u(t))\nonumber\\
\geq&\,\, m_{\mM(u(0))}-\mH(u(0)).
\end{align}
This together with \eqref{lower bd 1} yields \eqref{lower bound kt}.
\end{proof}

\begin{lemma}\label{mtilde equal m}
Let
\begin{align}\label{variational problem mtildec}
%\hat{m}_\delta&:=\inf_{u\in H^1(\R^d)\setminus\{0\}}\{\mH(u):\|u\|_2\leq {}{\mM(Q)},\mK(u)=0\},\\
\tilde{m}_{c}&:=\inf_{u\in H^1(\R^d)}\{\mI(u):\mM(u)= c,\,\mK(u)\leq 0\}.
\end{align}
Then $m_{c}=\tm_c$ for any $c\in(0,\mM(Q))$.
\end{lemma}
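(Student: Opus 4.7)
The plan is a clean rescaling argument. One direction is essentially by definition of the two sets, and the other direction rescales down to the Pohozaev manifold $\{\mK=0\}$ using Lemma \ref{positive or negative k}.

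First I would observe the easy inequality $\tm_c\leq m_c$. Any admissible $u$ for the variational problem \eqref{variational problem for mc} (that is, $\mM(u)=c$ and $\mK(u)=0$) is in particular admissible for \eqref{variational problem mtildec}, and on the constraint set $\{\mK(u)=0\}$ one has $\mH(u)=\mH(u)-\tfrac12\mK(u)=\mI(u)$. Taking infimum over this smaller class of functions (with the identical functional values) gives $\tm_c\leq m_c$ immediately.

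For the reverse inequality $m_c\leq\tm_c$ I would take an arbitrary $u\in H^1(\R^d)$ with $\mM(u)=c\in(0,\mM(Q))$ and $\mK(u)\leq 0$, and produce a competitor for \eqref{variational problem for mc} whose Hamiltonian is bounded above by $\mI(u)$. Since $\mM(u)<\mM(Q)$, Lemma \ref{positive or negative k} supplies a unique $\ld_0:=\ld(u)\in(0,\infty)$ with $\mK(T_{\ld_0}u)=0$, and the sign description of $\mK(T_\ld u)$ forces $\ld_0\leq 1$ precisely because $\mK(u)=\mK(T_1 u)\leq 0$. Since $T_\ld$ preserves the $L^2$-norm, $T_{\ld_0}u$ is admissible for \eqref{variational problem for mc}, so $m_c\leq \mH(T_{\ld_0}u)$.

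The final step is the pointwise comparison $\mH(T_{\ld_0}u)\leq \mI(u)$. Using $\mK(T_{\ld_0}u)=0$ and the identity $\mI=\mH-\tfrac12\mK$, together with the scaling relation $\|T_\ld u\|_{\tbs}^{\tbs}=\ld^{\tbs}\|u\|_{\tbs}^{\tbs}$, I compute
\begin{align*}
\mH(T_{\ld_0}u)=\mI(T_{\ld_0}u)=\tfrac{1}{d}\|T_{\ld_0}u\|_{\tbs}^{\tbs}=\tfrac{\ld_0^{\tbs}}{d}\|u\|_{\tbs}^{\tbs}\leq \tfrac{1}{d}\|u\|_{\tbs}^{\tbs}=\mI(u),
\end{align*}
where the inequality uses $\ld_0\leq 1$ and $\tbs>0$. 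Taking the infimum over all admissible $u$ for \eqref{variational problem mtildec} yields $m_c\leq\tm_c$, finishing the proof. There is no real obstacle here: everything is reduced to the monotonicity of $\ld\mapsto \mK(T_\ld u)$ already packaged in Lemma \ref{positive or negative k} and to the scaling-invariance of $\mM$ under $T_\ld$.
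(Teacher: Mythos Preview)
Your proof is correct and follows essentially the same approach as the paper: both directions are handled identically, with the key step being the rescaling via $T_\ld$ and Lemma~\ref{positive or negative k} to project onto $\{\mK=0\}$ while only decreasing $\mI$. The only cosmetic difference is that the paper phrases the $m_c\leq\tm_c$ direction via a minimizing sequence rather than an arbitrary admissible $u$, and your final sentence slightly misnames the input from Lemma~\ref{positive or negative k} as ``monotonicity'' when what you actually (and correctly) use is its sign description.
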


\begin{proof}
Let $(u_n)_n$ be a minimizing sequence for the variational problem \eqref{variational problem mtildec}, i.e.
\begin{align*}
\lim_{n\to\infty}\mI(u_n)=\tilde{m}_c,\quad
\mM(u_n)=c,\quad
\mK(u_n)\leq 0.
\end{align*}
Using Lemma \ref{positive or negative k} we know that there exists some $\ld_n\in(0,1]$ such that $\mK(T_{\ld_n}u_n)=0$. Thus
\begin{align*}
m_c\leq \mH(T_{\ld_n}u_n)=\mI(T_{\ld_n}u_n)\leq \mI(u_n)=\tilde{m}_c+o_n(1).
\end{align*}
Sending $n\to\infty$ we infer that $m_c\leq \tm_c$. On the other hand,
\begin{align}
\tm_c
&\leq\inf_{u\in H^1(\R^d)}\{\mI(u):\mM(u)=c,\mK(u)=0\}\nonumber\\
&=\inf_{u\in H^1(\R^d)}\{\mH(u):\mM(u)=c,\mK(u)=0\}=m_c.
\end{align}
This completes the proof.
\end{proof}

Let $m_0:=\lim_{c\downarrow 0}m_c$ and $m_{Q}:=\lim_{c\uparrow{\mM(Q)}}m_{c}$. We define the set $\Omega$ by its complement
\begin{align}
%\Omega^c&=\{(c,h,k)\in\R^3:k< 0\}\cup \{(c,h,k)\in\R^3: c\geq {\mM(Q)}\}\nonumber\\
%&\cup\{(c,h,k)\in\R^3: c\geq 0 \wedge h\geq m_c\},\\
\Omega^c&:=\{(c,h)\in\R^2: c\geq {\mM(Q)}\}\cup\{(c,h)\in\R^2: c\in[0,\mM(Q)),\, h\geq m_c\}
\end{align}
and the function $\mD:\R^2\to[0,\infty]$ by
\begin{align}\label{MEI functional}
\mD(c,e,k)=\left\{
             \begin{array}{ll}
             h+\frac{h+c}{\mathrm{dist}((c,h),\Omega^c)},&\text{if $(c,h)\in \Omega$},\\
             \infty,&\text{otherwise}.
             \end{array}
\right.
\end{align}
For $u\in H^1(\R^d)$ also define $\mD(u):=\mD(\mM(u),\mH(u))$.

\begin{remark}
By modifying the arguments in \cite[Thm. 1.2]{SoaveCritical} and \cite[Lem. 3.3]{wei2021normalized} we are able to show that
\begin{align*}
m_0=\mH^*(W),\quad m_{Q}=0.
\end{align*}
Nevertheless, the precise values of $m_0$ and $m_Q$ have no impact on the scattering result, all we need here is the monotonicity and continuity of the curve $c\mapsto m_c$. We will therefore postpone the proof to Appendix \ref{section endpoint}.
\end{remark}

\begin{lemma}\label{killip visan curve}
Assume $v\in H^1(\R^d)$ such that $\mK(v)\geq 0$. Then
\begin{itemize}
\item[(i)]$\mD(v)=0$ if and only if $v=0$.
\item[(ii)] $0<\mD(v)<\infty$ if and only if $v\in\mA$.
\item[(iii)] $\mD$ leaves $\mA$ invariant under the NLS flow.
\item[(iv)] Let $u_1,u_2\in\mA$ with $\mM(u_1)\leq \mM(u_2)$ and $\mH(u_1)\leq \mH(u_2)$, then $\mD(u_1)\leq \mD(u_2)$. If in addition either $\mM(u_1)<\mM(u_2)$ or $\mH(u_1)<\mH(u_2)$, then $\mD(u_1)<\mD(u_2)$.
\item[(v)] Let $\mD_0\in(0,\infty)$. Then
\begin{align}
\|\nabla u\|^2_{2}&\sim_{\mD_0}\mH(u),\\
\|u\|^2_{H^1}&\sim_{\mD_0}\mH(u)+\mM(u)\sim_{\mD_0}\mD(u)
\end{align}
uniformly for all $u\in \mA$ with $\mD(u)\leq \mD_0$.
\item[(vi)] For all $u\in \mA$ with $\mD(u)\leq \mD_0$ with $\mD_0\in(0,\infty)$we have
\begin{align}\label{small of unaaa}
|\mH(u)-m_{\mM(u)}|\gtrsim 1.
\end{align}
\end{itemize}
\end{lemma}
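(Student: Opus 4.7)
The plan is to derive all six items from a simple monotonicity property of the set $\Omega^c$, together with one quantitative input that forces a uniform separation of $\mM(u)$ from $\mM(Q)$ whenever $\mD(u)\leq\mD_0$.

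\textbf{Geometric preliminaries.} The first step is to observe that $\Omega^c$ is closed under componentwise upward-rightward translations: if $(c,h)\in\Omega^c$ and $a,b\geq 0$, then $(c+a,h+b)\in\Omega^c$. Indeed, either $c+a\geq\mM(Q)$ or $c+a<\mM(Q)$, in which case the monotonicity of $c\mapsto m_c$ from Lemma \ref{monotone lemma} gives $m_{c+a}\leq m_c\leq h\leq h+b$. A direct consequence is that $(c,h)\mapsto\mathrm{dist}((c,h),\Omega^c)$ is non-increasing in each coordinate on $\Omega$, because any near-optimal $q\in\Omega^c$ at $(c_1,h_1)$ can be translated to $q+(c_2-c_1,h_2-h_1)\in\Omega^c$.

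\textbf{Items (i)--(iv).} For (i), if $v=0$ then $(\mM(v),\mH(v))=(0,0)\in\Omega$ (since $m_0>0$), and both summands in the definition of $\mD$ vanish; conversely, the hypothesis $\mK(v)\geq 0$ forces $\mH(v)\geq 0$ by Lemma \ref{pos of k implies pos of h}, so $\mD(v)=0$ implies $\mH(v)=\mM(v)=0$. For (ii), $\mD(v)<\infty$ is equivalent to $(c,h)\in\Omega$; if additionally $\mK(v)=0$ with $v\neq 0$, then by the very definition of $m_{\mM(v)}$ one would have $\mH(v)\geq m_{\mM(v)}$, contradicting $(c,h)\in\Omega$, so $\mK(v)>0$ and $v\in\mA$. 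Item (iii) follows from conservation of mass and energy together with the invariance of $\{\mK>0\}$ under the NLS flow (Lemma \ref{invariant lemma}). Item (iv) uses the geometric observation: the distance term is non-increasing under the componentwise order, while the numerator $\mH(u)+\mM(u)$ is non-decreasing, and strict inequality in either mass or energy propagates through the numerator.

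\textbf{Items (v) and (vi).} The key quantitative input is that $\mD(u)\leq\mD_0$ bounds $\mM(u)$ away from $\mM(Q)$. Using $\mathrm{dist}((c,h),\Omega^c)\leq \mM(Q)-c$ and $\mH(u)\geq 0$, the definition of $\mD$ yields $\mD_0\geq(\mH(u)+\mM(u))/(\mM(Q)-\mM(u))$, which rearranges to $\mM(u)\leq \tfrac{\mD_0}{\mD_0+1}\mM(Q)=:(1-\delta)^{d/2}\mM(Q)$ for some $\delta=\delta(\mD_0)>0$. Lemma \ref{bound of gradient by energy} then gives $\|\nabla u\|_2^2\sim_{\mD_0}\mH(u)$, hence $\|u\|_{H^1}^2\sim_{\mD_0}\mH(u)+\mM(u)$. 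For the equivalence with $\mD(u)$ and the distance bound in (vi), the plan is a dichotomy: if $\mH(u)+\mM(u)\leq\vare(\mD_0)$ is chosen sufficiently small, then $(\mM(u),\mH(u))$ is close to the origin and $m_c\to m_0>0$ as $c\downarrow 0$ forces $\mathrm{dist}((\mM(u),\mH(u)),\Omega^c)\geq m_0/2$; otherwise $\mH(u)+\mM(u)\geq\vare$, and the assumption $\mD(u)\leq\mD_0$ directly yields $\mathrm{dist}\geq\vare/\mD_0$. In either case $\mathrm{dist}\geq c(\mD_0)>0$, which simultaneously proves $\mD(u)\lesssim_{\mD_0}\mH(u)+\mM(u)$ and, using $m_{\mM(u)}-\mH(u)\geq\mathrm{dist}$, the lower bound in (vi).

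\textbf{Main obstacle.} The only real difficulty is the dichotomy underlying (v) and (vi): near the origin both summands defining $\mD$ can be small without $\mathrm{dist}$ being automatically bounded below, so one must invoke $m_0>0$ (equivalently the Aubin--Talenti threshold) to rule out degeneration as $(\mM(u),\mH(u))\to(0,0)$.
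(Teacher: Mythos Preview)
Your approach is correct and essentially the same as the paper's: the monotonicity of $\mathrm{dist}(\cdot,\Omega^c)$ you isolate in the geometric preliminary is precisely what drives the paper's case analysis in (v), and your mass-separation bound $\mM(u)\leq\tfrac{\mD_0}{\mD_0+1}\mM(Q)$ matches the paper's opening computation there. One small imprecision: the specific constant $m_0/2$ in your near-origin case is not justified as stated, since the nearest point of $\Omega^c$ to the origin need not lie over $c=0$; the clean fix uses your own monotonicity tool to compare with a fixed interior point such as $(\tfrac{1}{2}\mM(Q),\tfrac{1}{2}m_{\mM(Q)/2})$, which is exactly the paper's third case. Your direct argument for (vi) via $m_{\mM(u)}-\mH(u)\geq\mathrm{dist}((\mM(u),\mH(u)),\Omega^c)\geq c(\mD_0)$ is slightly more efficient than the paper's proof by contradiction.
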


\begin{proof}
\begin{itemize}
\item[(i)]That $v=0$ implies $\mD(v)=0$ is trivial. The other direction follows immediately from \eqref{third} and the definition of $\mD$.

\item[(ii)]It is trivial that $v\in \mA$ implies $\mD(v)<\infty$. By Lemma \ref{pos of k implies pos of h} we also know that $\mH(v)> 0$, which implies $\mD(v)>0$. Now let $\0<\mD(v)<\infty$. Then $\mM(v)\in(0,{\mM(Q)})$. By definition of $\mD$ and Lemma \ref{pos of k implies pos of h} we infer that $0\leq\mH(v)<m_{\mM(v)}$, which also implies $\mK(v)>0$ by the definition of $m_{\mM(v)}$. Hence we conclude that $v\in\mA$.

\item[(iii)]This follows immediately from the conservation of mass and energy of the NLS flow, the definition of $\mD$ and Lemma \ref{invariant lemma}.
\item[(iv)]This follows from the fact that $c\mapsto m_c$ is monotone decreasing on $(0,{\mM(Q)})$ and the definition of $\mD$.

\item[(v)]Since $u\in\mA$, we know that $\mM(u)\in(0,\mM(Q))$ and using Lemma \ref{pos of k implies pos of h} also $\mH(u)\in [0,m_{\mM(u)})$. Thus
\begin{align*}
\mathrm{dist}\bg((\mM(u),\mH(u)),\Omega^c\bg)\leq \mathrm{dist}\bg((\mM(u),\mH(u)),(\mM(Q),\mH(u))\bg)=\mM(Q)-\mM(u).
\end{align*}
Since $\mH(u)\geq 0$, we have
\begin{align}\label{mass constraint}
\mD(u)\geq \frac{\mM(u)}{\mM(Q)-\mM(u)},
\end{align}
which implies that
\begin{align*}
\frac{1}{1+\mD(u)}\leq 1-\frac{\mM(u)}{\mM(Q)}.
\end{align*}
Since $1-\alpha\lesssim_d 1-\alpha^{\frac{2}{d}}$ for $\alpha\in[0,1]$, we deduce that
\begin{align*}
\frac{1}{1+\mD(u)}\lesssim 1-\bg(\frac{\mM(u)}{\mM(Q)}\bg)^{\frac{2}{d}}.
\end{align*}
Using $\mK(u)\geq 0$ we have
\begin{align}\label{energy constraint}
\mD(u)\geq&\, \mH(u)\geq \mH(u)-\frac{1}{\tbs}\mK(u)\nonumber\\
=&\,\frac{1}{d}(\|\nabla u\|_2^2-\frac{d}{d+2}\|u\|_\tas^\tas)\nonumber\\
\geq &\,\frac{1}{d}\bg(1-\bg(\frac{\mM(u)}{{\mM(Q)}}\bg)^{\frac{2}{d}}\bg)\|\nabla u\|_2^2
\gtrsim \frac{\|\nabla u\|_2^2}{d(1+\mD(u))}.
\end{align}
Therefore $\|\nabla u\|_2^2\lesssim_{\mD_0}\mH(u)$. Combining with \eqref{third} we conclude that
\begin{align*}
\|\nabla u\|_2^2\sim_{\mD_0}\mH(u),\quad
\|u\|_{H^1}^2\sim_{\mD_0}\mH(u)+\mM(u).
\end{align*}
It remains to show $\mH(u)+\mM(u)\sim_{\mD_0}\mD(u)$. Using \eqref{mass constraint} and \eqref{energy constraint} we infer that
\begin{align*}
\mH(u)+\mM(u)\sim_{\mD_0}\|u\|^2_{H^1}\lesssim_{\mD_0} \mD(u).
\end{align*}
To show $\mD(u)\lesssim_{\mD_0} \mH(u)+\mM(u)$ we discuss the following different cases: If $\mM(u)\geq\frac{1}{2}\mM(Q)$, then using the fact that $\mH(u)\geq 0$ we have
\begin{align*}
\mathrm{dist}\bg((\mM(u),\mH(u)),\Omega^c\bg)\geq \frac{\mM(u)}{\mD_0}\geq\frac{\mM(Q)}{2\mD_0},
\end{align*}
which implies
\begin{align*}
\mD(u)\leq \frac{2\mD_0}{\mM(Q)}\bg(\mM(u)+\mH(u)\bg)+\mH(u).
\end{align*}
If $\mM(u)< \frac{1}{2}\mM(Q)$ and $\mH(u)\geq \frac{1}{2}m_{\frac{1}{2}\mM(Q)}$, then analogously we obtain
\begin{align*}
\mD(u)\leq \frac{2\mD_0 }{m_{\frac{1}{2}\mM(Q)}}\bg(\mM(u)+\mH(u)\bg)+\mH(u).
\end{align*}
If $\mM(u)< \frac{1}{2}\mM(Q)$ and $\mH(u)< \frac{1}{2}m_{\frac{1}{2}\mM(Q)}$, then
\begin{align*}
\mathrm{dist}\bg((\mM(u),\mH(u)),\Omega^c\bg)\geq\mathrm{dist}\bg(\bg(\frac{1}{2}\mM(Q),\frac{1}{2}m_{\frac{1}{2}\mM(Q)}\bg),\Omega^c\bg)=:\alpha_0>0,
\end{align*}
where the first inequality and the positivity of $\alpha_0$ follows form the monotonicity of $c\mapsto m_c$. Therefore
\begin{align*}
\mD(u)\leq \frac{1}{\alpha_0}\bg(\mM(u)+\mH(u)\bg)+\mH(u).
\end{align*}
Summing up the proof of (v) is complete.

\item[(vi)]
If this were not the case, then we could find a sequence $(u_n)_n\subset\mA$ such that
\begin{align}\label{small of un}
|\mH(u_n)-m_{\mM(u_n)}|=o_n(1).
\end{align}
But then
\begin{align*}
\mathrm{dist}\bg(\bg(\mM(u_n),\mH(u_n)\bg),\Omega^c\bg)\leq& \, \mathrm{dist}\bg(\bg(\mM(u_n),\mH(u_n)\bg),\bg(\mM(u_n),m_{\mM(u_n)}\bg)\bg)\nonumber\\
=&\,|m_{\mM(u_n)}-\mH(u_n)|=o_n(1).
\end{align*}
If $\mM(u_n)\gtrsim 1$, then $\mD(u_n)\gtrsim \frac{1}{o_n(1)}$, contradicting $\mD(u_n)\leq \mD_0$. If $\mM(u_n)=o_n(1)$, then by \eqref{small of un} we know that $\mH(u_n)\gtrsim 1$ and similarly we may again derive the contradiction $\mD(u_n)\gtrsim \frac{1}{o_n(1)}$. This finishes the proof of (vi) and also the desired proof of Lemma \ref{killip visan curve}.
\end{itemize}
\end{proof}

\subsection{Large scale approximation}
In this section, we show that the nonlinear profiles corresponding to low frequency and high frequency bubbles can be well approximated by the solutions of the mass- and energy-critical NLS respectively.

\begin{lemma}[Large scale approximation for $\ld_\infty=\infty$]\label{persistance l2}
Let $u$ be the solution of the focusing mass-critical NLS
\begin{align}
i\pt_t u+\Delta u+|u|^{\frac{4}{d}}u=0\label{mass-critical nls}
\end{align}
with $u(0)=u_0\in H^1(\R^d)$ and $\mM(u_0)<\mM(Q)$. Then $u$ is global and
\begin{align}
\|u\|_{W_\tas(\R)}&\leq C(\mM(u_0)),\label{persistancy l2 1}\\
\||\nabla|^s u\|_{S(\R)}&\lesssim_{\mM(u_0)}\||\nabla|^s u_0\|_2\label{persistancy l2 2}
\end{align}
for $s\in\{0,1\}$. Moreover, we have the following large scale approximation result for \eqref{mass-critical nls}: Let $(\ld_n)_n\subset(0,\infty)$ such that $\ld_n\to \infty$, $(t_n)_n\subset\R$ such that either $t_n\equiv 0$ or $t_n\to\pm\infty$ and $(\xi_n)_n\subset\R^d$ such that $(\xi_n)_n$ is bounded. Define
$$\phi_n:=g_{\xi_n,x_n,\ld_n}e^{it_n\Delta}P_{\leq \ld_n^\theta}\tdu$$
for some $\theta\in(0,1)$. Then for all sufficiently large $n$ the solution $u_n$ of \eqref{NLS} with $u_n(0)=\phi_n$ is global and scattering in time with
\begin{align}
\limsup_{n\to\infty}\|\la\nabla\ra u_n\|_{S(\R)}&\leq C(\mM(\tdu)),\label{L2 proxy 1}\\
\lim_{n\to\infty}\|u_n\|_{W_\tbs(\R)}&= 0.\label{L2 proxy 2}
\end{align}
Furthermore, for every $\beta>0$ there exists $N_\beta\in\N$ and $\phi_\beta\in C_c^\infty(\R\times \R^d)$ such that
\begin{align}
\bg\|u_n-\ld_n^{-\frac{d}{2}}e^{-it|\xi_n|^2}e^{i\xi_n\cdot x}\phi_\beta\bg(\frac{t}{\ld_n^2}+t_n,\frac{x-x_n-2t\xi_n}{\ld_n}\bg)\bg\|_{W_\tas(\R)}\leq \beta\label{L2 proxy 3},\\
\bg\|\nabla u_n-i\xi_n\ld_n^{-\frac{d}{2}}e^{-it|\xi_n|^2}e^{i\xi_n\cdot x}\phi_\beta\bg(\frac{t}{\ld_n^2}+t_n,\frac{x-x_n-2t\xi_n}{\ld_n}\bg)\bg\|_{W_\tas(\R)}\leq \beta\label{L2 proxy 6}
\end{align}
for all $n\geq N_\beta$.
\end{lemma}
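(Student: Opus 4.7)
The first part of the statement---global existence and scattering for \eqref{mass-critical nls} with $\mM(u_0) < \mM(Q)$, together with \eqref{persistancy l2 1} and \eqref{persistancy l2 2}---is a direct consequence of Dodson's scattering theorem for the focusing mass-critical NLS (cited in the introduction). The spacetime bound \eqref{persistancy l2 1} is immediate; \eqref{persistancy l2 2} follows from a persistence-of-regularity argument identical in structure to Lemma \ref{nls persistence}: partition $\R$ into $O_{\mM(u_0)}(1)$ subintervals on which $\|u\|_{W_\tas}$ is subcritical, apply Strichartz to $|\nabla|^s u$, and absorb.

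For the approximation statement, my strategy is to build an explicit approximate solution $w_n$ of \eqref{NLS double crit} out of a solution of \eqref{mass-critical nls} and feed the construction into Lemma \ref{long time pert}. Let $v_n$ denote the solution of \eqref{mass-critical nls} with $v_n(0) = P_{\leq \ld_n^\theta}\tdu$ when $t_n \equiv 0$; when $t_n \to \pm\infty$, let $v_n$ instead be the solution scattering to $e^{it\Delta} P_{\leq \ld_n^\theta}\tdu$ as $t \to \pm\infty$ (existence via the wave operator for \eqref{mass-critical nls}, available since $\mM(\tdu) < \mM(Q)$). By the first paragraph plus Bernstein one has $\|v_n\|_{W_\tas(\R)} \leq C(\mM(\tdu))$ and $\|\la\nabla\ra v_n\|_{S(\R)} \lesssim \ld_n^\theta \|\tdu\|_2$. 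Now set
\begin{align*}
w_n(t, x) := \ld_n^{-\tfrac{d}{2}}\, e^{-it|\xi_n|^2}\, e^{i\xi_n \cdot x}\, v_n\!\left(\tfrac{t}{\ld_n^2} + t_n,\; \tfrac{x - x_n - 2t\xi_n}{\ld_n}\right).
\end{align*}
Since parabolic scalings and Galilean boosts are exact symmetries of \eqref{mass-critical nls}, $w_n$ is itself an exact solution of \eqref{mass-critical nls}, and a direct change of variables (combined with the defining property of $v_n$ when $t_n \to \pm\infty$) yields $\|w_n(0) - \phi_n\|_{H^1} = o_n(1)$. Two scaling identities are central:
\begin{align*}
\|w_n\|_{W_\tas(\R)} = \|v_n\|_{W_\tas(\R)} \lesssim_{\mM(\tdu)} 1, \qquad \|w_n\|_{W_\tbs(\R)} = \ld_n^{-1}\|v_n\|_{W_\tbs(\R)} \lesssim \ld_n^{\theta - 1} \to 0,
\end{align*}
and, most importantly, $\|\la\nabla\ra w_n\|_{S(\R)} \lesssim_{\mM(\tdu)} 1$ uniformly in $n$: the gradient of $w_n$ decomposes as $i\xi_n w_n$ (bounded since $(\xi_n)_n$ is bounded) plus $\ld_n^{-1}$ times the scaled gradient of $v_n$, and the $\ld_n^{-1}$ precisely cancels the $\ld_n^{\theta}$ growth of $\|\nabla v_n\|$. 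This cancellation is the principal technical obstacle of the argument.

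Viewing $w_n$ as an approximate solution of \eqref{NLS double crit}, the error is the missing energy-critical term $e_n = -|w_n|^{4/(d-2)} w_n$. By H\"older and \eqref{elementary ineq},
\begin{align*}
\|\la\nabla\ra e_n\|_{L_{t,x}^{2(d+2)/(d+4)}(\R)} \lesssim \|w_n\|_{W_\tbs(\R)}^{4/(d-2)}\,\|\la\nabla\ra w_n\|_{V_\tbs(\R)} = o_n(1).
\end{align*}
Lemma \ref{long time pert} then delivers $\|\la\nabla\ra(u_n - w_n)\|_{S(\R)} \to 0$, which, combined with the bounds on $w_n$ above, yields \eqref{L2 proxy 1} and \eqref{L2 proxy 2} at once; global scattering of $u_n$ follows from the blow-up criterion in Lemma \ref{well posedness lemma}. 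Finally, for \eqref{L2 proxy 3} and \eqref{L2 proxy 6}, I would approximate the limiting mass-critical solution $v$ (with $v(0) = \tdu$ when $t_n \equiv 0$, or the corresponding wave-operator solution when $t_n \to \pm\infty$) by a function $\phi_\beta \in C_c^\infty(\R \times \R^d)$ in $W_\tas(\R)$ via density, use mass-critical $L^2$-stability to replace $v$ by $v_n$ up to an error of order $\beta$, and transport the resulting approximation to $u_n$ using the explicit symmetry formula defining $w_n$; the gradient version \eqref{L2 proxy 6} uses the same decomposition $\la\nabla\ra w_n = i\xi_n w_n + \ld_n^{-1}(\cdots)$ to extract the dominant $i\xi_n$ contribution, the remainder being $O(\ld_n^{\theta-1})$.
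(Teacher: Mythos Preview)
Your proposal is correct and follows essentially the same route as the paper: construct a Galilean-boosted, rescaled mass-critical solution as an approximate DCNLS solution and apply Lemma \ref{long time pert}, with the error being the missing energy-critical nonlinearity. Two small points: in your H\"older estimate for $\la\nabla\ra e_n$ the correct pairing places $\la\nabla\ra w_n$ in $W_\tas$ rather than $V_\tbs$; and you skip the verification of hypothesis \eqref{condition c1} for $u_n$, which the paper handles by a continuity argument using $\mH(\phi_n)\to 0$ (from $\|\nabla\phi_n\|_2\to 0$ via Bernstein) together with the coercivity $\|\nabla u_n(t)\|_2^2 \lesssim \mH(\phi_n)+\|\nabla u_n(t)\|_2^{\tbs}$.
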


\begin{proof}
\eqref{persistancy l2 1} and the fact that $u$ is global are proved in \cite{Dodson4dmassfocusing}. We denote $C_1=C(\mM(u_0))$. By Strichartz and H\"older, for any time interval $I\ni s_0$, we have
\begin{align}
\||\nabla|^s v\|_{S(I)}\leq C_2(\||\nabla|^s v(s_0)\|_{2}+\|v\|^{\frac{4}{d}}_{W_\tas(I)}\||\nabla|^s v\|_{S(I)})
\end{align}
for any solution $v$ of \eqref{mass-critical nls} defined on $I$, where $C_2$ is some positive constant depending only on $d$. We divide $I$ into $m$ intervals $I_1,I_2,\cdots,I_m$ such that
\begin{align*}
\|u\|_{W_\tas(I_j)}\leq (2C_2)^{-\frac{d}{4}}\quad\forall j=1,\cdots,m.
\end{align*}
Then $m\leq C_1(2C_2)^{\frac{d}{4}}+1$. For $I_1=[t_0,t_1]$, we have particularly
\begin{align*}
\||\nabla|^s u\|_{S(I_1)}\leq 2C_2\||\nabla|^s u(t_0)\|_{2}
\end{align*}
and thus also
\begin{align*}
\||\nabla|^s u(t_1)\|_2\leq 2C_2\||\nabla|^s u(t_0)\|_{2}.
\end{align*}
Arguing inductively for all $j=2,\cdots,m-1$ and summing the estimates on all subintervals up yield \eqref{persistancy l2 2}, since $C_1$ depends only on $\mM(u_0)$ and $C_2$ only on $d$.

Next, we prove the claims concerning the large scale approximation. Let $w$ and $w_n$ be the solutions of \eqref{mass-critical nls} with $w(0)=\phi$ and $w_n(0)=\phi_n$ respectively when $t_n\equiv 0$. For $t_n\to \pm\infty$ we define $w$ and $w_n$ as solutions of \eqref{mass-critical nls} which scatter to $e^{it\Delta}\tdu$ and $e^{it\Delta}P_{\leq\ld_n^\theta}\tdu$ in $L^2(\R^d)$ as $t\to\pm\infty$ respectively. By \cite{Dodson4dmassfocusing} we know that $w$ is global, scatters in time and
\begin{align*}
\|w\|_{S(\R)}\leq C(\mM(\tdu)).
\end{align*}
On the other hand, since
\begin{align*}
&\lim_{n\to\infty}\lim_{t\to\pm\infty}\|w_n(t)-w(t)\|_2\nonumber\\
\leq &\lim_{n\to\infty}\lim_{t\to\pm\infty}\bg(\|w_n(t)-e^{it\Delta}P_{\leq \ld_n^\theta}\tdu\|_2
+\|w(t)-e^{it\Delta}\tdu\|_2+\|\tdu-P_{\leq \ld_n^\theta}\tdu\|_2\bg)=0,
\end{align*}
by the standard stability result for mass-critical NLS (see for instance \cite{KillipVisanNotes}) we know that $w_n$ is global and scattering in time for all sufficiently large $n$ and
\begin{align*}
\limsup_{n\to\infty}\|w_n\|_{W_\tas(\R)}\lesssim_{\mM(\tdu)}1.
\end{align*}
Using Bernstein, Strichartz and Lemma \ref{persistance l2} we additionally have
\begin{align*}
\|w_n\|_{W_\tbs(\R)}\lesssim \|\nabla w_n\|_{S(\R)}\lesssim_{\mM(\tdu)} \ld_n^\theta.
\end{align*}
We now define
\begin{align}
\tilde{u}_n(t,x):=\ld_n^{-\frac{d}{2}}e^{i\xi_n\cdot x}e^{-it|\xi_n|^2}w_n\bg(\frac{t}{\ld_n^2}+t_n,\frac{x-x_n-2t\xi_n}{\ld_n}\bg).
\end{align}
Using the symmetry invariance for mass-critical NLS one easily verifies that $\tilde{u}_n$ is also a global and scattering solution of \eqref{mass-critical nls}. In particular, we have
\begin{align}
\|\la\nabla\ra\tilde{u}_n\|_{S(\R)}&\lesssim(1+|\xi_n|)\|w_n\|_{S(\R)}+\ld_n^{-1}\|w_n\|_{S(\R)}\lesssim 1+\ld_n^{-(1-\theta)}\to 1,\label{L2 proxy 4}\\
\|\tilde{u}_n\|_{W_\tbs(\R)}&=\ld_n^{-1}\|w_n\|_{W_\tbs(\R)}\lesssim\ld^{-1}_n\|\nabla w_n\|_{S(\R)}\lesssim \ld_n^{-(1-\theta)}\to 0\label{L2 proxy 5}
\end{align}
as $n\to\infty$. We next show that $\tilde{u}_n$ is asymptotically a good approximation of $u_n$ using Lemma \ref{long time pert}. Rewrite \eqref{mass-critical nls} for $\tilde{u}_n$ to
\begin{align}
i\pt_t \tilde{u}_n+\Delta \tilde{u}_n+|\tilde{u}_n|^{\frac{4}{d}}\tilde{u}_n
+|\tilde{u}_n|^{\frac{4}{d-2}}\tilde{u}_n+e=0,
\end{align}
where $e=-|\tilde{u}_n|^{\frac{4}{d-2}}\tilde{u}_n$. Using \eqref{GN-L2}, Sobolev and conservation of energy we obtain that
\begin{align*}
\|\nabla u_n(t)\|^2_2\lesssim \mH(u_n(t))+\frac{1}{\tbs}\|u_n(t)\|_\tbs^\tbs\lesssim
\mH(\phi_n)+\|\nabla u_n(t)\|_2^\tbs.
\end{align*}
But using Bernstein we also see that
\begin{align*}
\|\nabla \phi_n\|_2\lesssim \ld_n^{-1}|\xi_n|\|\phi\|_2+\ld_n^{-(1-\theta)}\|\phi\|_2\to 0,
\end{align*}
which implies
\begin{align*}
\mH(\phi_n)\lesssim \|\nabla \phi_n\|_2^2\to 0.
\end{align*}
By standard continuity arguments we conclude that $\limsup_{n\to\infty}\|u_n\|_{L_t^\infty \dot{H}^1_x(I)}<\infty$, and \eqref{condition c1} is satisfied by combining with conservation of mass for sufficiently large $n$. It remains to show \eqref{condition b}. Indeed, using H\"older we obtain that
\begin{align}
\|\la\nabla\ra e\|_{L_{t,x}^{\frac{2(d+2)}{d+4}}}\leq\|\tilde{u}_n\|_{W_\tbs(\R)}^{\frac{4}{d-2}}\|\la\nabla\ra\tilde{u}_n\|_{W_\tas(\R)}.
\end{align}
Then \eqref{condition b} follows from \eqref{L2 proxy 4} and \eqref{L2 proxy 5}. \eqref{L2 proxy 1} and \eqref{L2 proxy 2} now follow from Lemma \ref{nls persistence}, \eqref{also proxy} and \eqref{L2 proxy 5}. Finally, to show \eqref{L2 proxy 3} and \eqref{L2 proxy 6} we first choose $\phi_\beta\in C_c^\infty(\R\times\R^d)$ and sufficiently large $n$ such that
\begin{align*}
\|w-\phi_\beta\|_{W_\tas(\R)}+
\|w-w_n\|_{W_\tas(\R)}+
\|\la\nabla\ra \tilde{u}_n-\la\nabla\ra u_n\|_{W_\tas(\R)}\lesssim \beta.
\end{align*}
Using chain rule and Bernstein we also deduce that
\begin{align}
\|\nabla \tilde{u}_n-i\xi_n \tilde{u}_n\|_{W_\tas(\R)}=\ld_n^{-1}\|\nabla w_n\|_{W_\tas(\R)}\lesssim\ld_n^{-(1-\theta)}\to 0.
\end{align}
Then \eqref{L2 proxy 3} and \eqref{L2 proxy 6} follow from triangular inequality and taking $n$ sufficiently large.
\end{proof}

Analogously, we have the following energy-critical version of Lemma \ref{persistance l2}, where the arguments from \cite{Dodson4dmassfocusing} are replaced by \cite{KenigMerle2006,Dodson4dfocusing,KillipVisan2010focusing}. We therefore omit the proof.

\begin{lemma}[Large scale approximation for $\ld_\infty=0$]\label{persistance h1}
Let $u$ be the solution of the focusing energy-critical NLS
\begin{align}
i\pt_t u+\Delta u+|u|^{\frac{4}{d-2}}u=0\label{energy-critical nls}
\end{align}
with $u(0)=u_0\in H^1(\R^d)$, $\mH^*(u_0)<\mH^*(W)$ and $\|u_0\|_{\dot{H}^1}< \|W\|_{\dot{H}^1}$. Additionally assume that $u_0$ is radial when $d=3$. Then $u$ is global and
\begin{align}
\|u\|_{W_\tbs(\R)}&\leq C(\mH^*(u_0)),\label{persistancy h1 1}\\
\||\nabla|^s u\|_{S(\R)}&\lesssim_{\mH^*(u_0)}\||\nabla|^s u_0\|_2\label{persistancy h1 2}
\end{align}
for $s\in\{0,1\}$. Moreover, we have the following large scale approximation result for \eqref{energy-critical nls}: Let $(\ld_n)_n\subset(0,\infty)$ such that $\ld_n\to 0$, $(t_n)_n\subset\R$ such that either $t_n\equiv 0$ or $t_n\to\pm\infty$. Define
$$\phi_n:=\ld_n g_{0,x_n,\ld_n}e^{it_n\Delta}P_{> \ld_n^\theta}\tdu$$
for some $\theta\in(0,1)$. Then for all sufficiently large $n$ the solution $u_n$ of \eqref{NLS} with $u_n(0)=\phi_n$ is global and scattering in time with
\begin{align}
\limsup_{n\to\infty}\|\la\nabla\ra u_n\|_{S(\R)}&\leq C(\mH^*(\tdu)),\label{H1 proxy 1}\\
\lim_{n\to\infty}\|u_n\|_{W_\tas(\R)}&= 0.\label{H1 proxy 2}
\end{align}
Furthermore, for every $\beta>0$ there exists $N_\beta\in\N$, $\phi_\beta\in C_c^\infty(\R\times \R^d)$ and $\psi_\beta\in C_c^\infty(\R\times \R^d;\C^d)$ such that
\begin{align}
\bg\|u_n-\ld_n^{-\frac{d}{2}+1}\phi_\beta\bg(\frac{t}{\ld_n^2}+t_n,\frac{x-x_n}{\ld_n}\bg)\bg\|_{W_\tbs(\R)}\leq \beta\label{H1 proxy 3},\\
\bg\|\nabla u_n-\ld_n^{-\frac{d}{2}}\psi_\beta\bg(\frac{t}{\ld_n^2}+t_n,\frac{x-x_n}{\ld_n}\bg)\bg\|_{W_\tas(\R)}\leq \beta\label{H1 proxy 4}
\end{align}
for all $n\geq N_\beta$.
\end{lemma}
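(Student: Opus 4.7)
The plan is to mirror the proof of Lemma \ref{persistance l2} almost verbatim, replacing the roles of the mass-critical quantities by their energy-critical counterparts. The global well-posedness, scattering and bound \eqref{persistancy h1 1} for the focusing energy-critical NLS under the subthreshold assumption are exactly the theorems of Kenig--Merle \cite{KenigMerle2006} (for $d\in\{3,4,5\}$ with radial data), Killip--Visan \cite{KillipVisan2010focusing} (for $d\geq 5$, non-radial), and Dodson \cite{Dodson4dfocusing} (for $d=4$, non-radial). The persistence of regularity estimate \eqref{persistancy h1 2} is obtained by the standard interval-splitting bootstrap: partition $\R$ into $O(C(\mH^*(u_0)))$ subintervals $I_j$ on which $\|u\|_{W_\tbs(I_j)}\leq \eta$ with $\eta$ depending only on the Strichartz constants, then apply the Strichartz estimate with the energy-critical nonlinearity on each piece and sum inductively, exactly as done in the proof of Lemma \ref{persistance l2}.

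For the large-scale approximation, I would first define a reference solution $w$ of \eqref{energy-critical nls} to play the role of $u$ above. Namely, when $t_n\equiv 0$ let $w$ solve \eqref{energy-critical nls} with $w(0)=\tdu$; when $t_n\to\pm\infty$ let $w$ be the global solution whose $\dot{H}^1$-scattering datum at $\pm\infty$ equals $\tdu$ (which exists by the small-data/scattering theory). Set $w_n$ analogously, with initial datum $P_{>\ld_n^\theta}\tdu$. Since $\mH^*(\tdu)<\mH^*(W)$ and $\|\tdu\|_{\dot H^1}<\|W\|_{\dot H^1}$, a straightforward variational continuity argument (using $P_{>\ld_n^\theta}\tdu\to\tdu$ in $\dot H^1$) shows the same strict inequalities hold for $w_n(0)$ for all large $n$, so $w_n$ is global, scatters, and
\[
\limsup_{n\to\infty}\|w_n\|_{W_\tbs(\R)}\leq C(\mH^*(\tdu)),\qquad \limsup_{n\to\infty}\|\nabla w_n\|_{S(\R)}\lesssim_{\mH^*(\tdu)}1
\]
by \eqref{persistancy h1 1}--\eqref{persistancy h1 2}, combined with the standard $\dot{H}^1$-stability for energy-critical NLS to control the difference $w_n-w$.

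Next I would define the scaled candidate
\[
\tilde u_n(t,x):=\ld_n^{-\frac{d-2}{2}}w_n\bigl(\tfrac{t}{\ld_n^2}+t_n,\tfrac{x-x_n}{\ld_n}\bigr),
\]
which solves \eqref{energy-critical nls}, i.e.\ \eqref{NLS} with error $e=-|\tilde u_n|^{\tas-2}\tilde u_n$. By scaling, $\|\nabla\tilde u_n\|_{S(\R)}=\|\nabla w_n\|_{S(\R)}$ while $\|\tilde u_n\|_{W_\tas(\R)}=\ld_n\|w_n\|_{W_\tas(\R)}$ and $\|\tilde u_n\|_{L^\infty_t L^2_x(\R)}=\ld_n\|w_n\|_{L^\infty_t L^2_x(\R)}$. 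The high-frequency cutoff together with Bernstein yields $\|w_n(0)\|_2\lesssim \ld_n^{-\theta}\|\tdu\|_{\dot H^1}$, and then by Lemma \ref{persistance h1} (the mass-Strichartz bound already established in this lemma) $\|w_n\|_{W_\tas(\R)}+\|w_n\|_{L^\infty_t L^2_x(\R)}\lesssim_{\mH^*(\tdu)}\ld_n^{-\theta}$. Consequently
\[
\|\tilde u_n\|_{W_\tas(\R)}+\|\tilde u_n\|_{L^\infty_t L^2_x(\R)}\lesssim \ld_n^{1-\theta}\to 0.
\]
This is the crux of the argument: the mass-critical nonlinearity is harmless at the high-frequency scale. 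Holder together with \eqref{elementary ineq} then gives
\[
\|\la\nabla\ra e\|_{L_{t,x}^{\frac{2(d+2)}{d+4}}(\R)}\lesssim \|\tilde u_n\|_{W_\tas(\R)}^{\frac{4}{d}}\|\la\nabla\ra\tilde u_n\|_{W_\tas(\R)}\to 0,
\]
while the above $L^\infty_t L^2_x$ bound on $\tilde u_n$ together with the $H^1$-smallness of $\phi_n$ (again by Bernstein, $\|\phi_n\|_{H^1}\lesssim 1$) supplies the initial-data smallness \eqref{condition a} and the uniform $L^\infty_tH^1_x$ bound \eqref{condition c1}. Applying Lemma \ref{long time pert} with $\tilde u_n$ as approximate solution and Lemma \ref{nls persistence} for the full regularity yields the global well-posedness of $u_n$ together with \eqref{H1 proxy 1}, and \eqref{H1 proxy 2} follows from the triangle inequality and $\|\tilde u_n\|_{W_\tas(\R)}\to 0$.

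Finally, the approximation statements \eqref{H1 proxy 3}--\eqref{H1 proxy 4} are obtained by first picking $\phi_\beta\in C_c^\infty(\R\times\R^d)$ and $\psi_\beta\in C_c^\infty(\R\times\R^d;\C^d)$ so that $\|w-\phi_\beta\|_{W_\tbs(\R)}+\|\nabla w-\psi_\beta\|_{W_\tas(\R)}<\beta/3$ (possible by density of $C_c^\infty$ in the scattering Strichartz spaces), then using the $\dot H^1$-stability to deduce $\|w_n-w\|_{W_\tbs(\R)}+\|\nabla w_n-\nabla w\|_{W_\tas(\R)}<\beta/3$ for large $n$, and finally absorbing the difference $u_n-\tilde u_n$ via \eqref{also proxy}. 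Undoing the scaling produces precisely the profiles appearing on the right-hand sides of \eqref{H1 proxy 3}--\eqref{H1 proxy 4}. The main obstacle — verifying that the mass-critical perturbation is negligible for high-frequency bubbles — is precisely the symmetric counterpart of the easier estimate $\|\tilde u_n\|_{W_\tbs(\R)}\to 0$ in Lemma \ref{persistance l2}, and is driven by the same scaling gain $\ld_n^{1-\theta}$ (here $\ld_n\to 0$ instead of $\ld_n^{-(1-\theta)}$ with $\ld_n\to\infty$).
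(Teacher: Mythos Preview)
Your proposal is correct and follows precisely the approach intended by the paper: the paper itself omits the proof of Lemma \ref{persistance h1}, stating only that it is the energy-critical analogue of Lemma \ref{persistance l2} with the references \cite{Dodson4dmassfocusing} replaced by \cite{KenigMerle2006,Dodson4dfocusing,KillipVisan2010focusing}. You have spelled out exactly this analogy, including the interval-splitting persistence argument, the construction of the scaled proxy $\tilde u_n$ via the energy-critical scaling, the key Bernstein gain $\ld_n^{1-\theta}$ controlling the mass-critical error, and the density/stability argument for \eqref{H1 proxy 3}--\eqref{H1 proxy 4}.
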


\subsection{Existence of the minimal blow-up solution}\label{Existence of the minimal blow-up solution}
Having all the preliminaries we are ready to construct the minimal blow-up solution. Define
\begin{align*}
\tau(\mD_0):=\sup\bg\{&\|\psi\|_{{W_\tas\cap W_\tbs}(I_{\max})}:\nonumber\\
&\quad\quad\quad\quad\text{$\psi$ is solution of \eqref{NLS}, }\psi(0)\in {\mA},\mD(\psi(0))\leq \mD_0\bg\}%\label{def of tau}
\end{align*}
and
\begin{align}\label{introductive hypothesis}
\mD^*&:=\sup\{\mD_0>0:\tau(\mD_0)<\infty\}.
\end{align}
By Lemma \ref{well posedness lemma}, Remark \ref{remark} and Lemma \ref{killip visan curve} (v) we know that $\mD^*>0$ and $\tau(\mD_0)<\infty$ for sufficiently small $\mD_0$. We will therefore assume that $\mD^*<\infty$ and aim to derive a contradiction, which will imply $\mD^*=\infty$ and the whole proof will be complete in view of Lemma \ref{killip visan curve} (ii). By the inductive hypothesis we may find a sequence $(\psi_n)_n$ with $(\psi_n(0))_n\subset {\mA}$ which are solutions of \eqref{NLS} with maximal lifespan $(I_{n})_n$ such that
\begin{gather}
\lim_{n\to\infty}\|\psi_n\|_{{W_\tas\cap W_\tbs}((\inf I_n,0])}=\lim_{n\to\infty}\|\psi_n\|_{{W_\tas\cap W_\tbs}([0, \sup I_n))}=\infty,\label{oo1}\\
\lim_{n\to\infty}\mD(\psi_n(0))=\mD^*.\label{oo2}
\end{gather}
Up to a subsequence we may also assume that
\begin{align*}
(\mM(\psi_n(0)),\mH(\psi_n(0)),\mI(\psi_n(0)))\to(\mM_0,\mH_0,\mI_0)\quad\text{as $n\to\infty$}.
\end{align*}
By continuity of $\mD$ and finiteness of $\mD^*$ we know that
\begin{align*}
\mD^*=\mD(\mM_0,\mH_0),\quad
\mM_0\in(0,\mM(Q)),\quad
\mH_0\in[0,m_{\mM_0}).
\end{align*}
From Lemma \ref{killip visan curve} (v) it follows that $(\psi_n(0))_n$ is a bounded sequence in $H^1(\R^d)$ and Lemma \ref{linear profile} is applicable for $(\psi_n(0))_n$. We define the nonlinear profiles as follows: For $\ld_\infty^k\in\{0,\infty\}$, we define $v_n^k$ as the solution of \eqref{NLS} with $v_n^k(0)=T_n^kP_n^k\tdu^k$. For $\ld_\infty^k=1$ and $t^k_\infty=0$, we define $v^k$ as the solution of \eqref{NLS} with $v^k(0)=\tdu^k$; For $\ld_\infty^k=1$ and $t^k_\infty\to\pm\infty$, we define $v^k$ as the solution of \eqref{NLS} that scatters forward (backward) to $e^{it\Delta}\tdu^k$ in $H^1(\R^d)$. In both cases for $\ld_\infty^k=1$ we define
\begin{align*}
v_n^k:=v^j(t+t_n,x-x_n^k).
\end{align*}
Then $v_n^j$ is also a solution of \eqref{NLS}. In all cases we have for each finite $1\leq k \leq K^*$
\begin{align}\label{conv of nonlinear profiles in h1}
\lim_{n\to\infty}\|v_n^k(0)-T_n^kP_n^k\tdu^k\|_{H^1}=0.
\end{align}

In the following, we establish a Palais-Smale type lemma which is essential for the construction of the minimal blow-up solution.

\begin{lemma}[Palais-Smale-condition]\label{Palais Smale}
Let $(\psi_n)_n$ be a sequence of solutions of \eqref{NLS} with maximal lifespan $I_n$, $\psi_n\in\mA$ and $\lim_{n\to\infty}\mD(u_n)=\mD^*$. Assume also that there exists a sequence $(t_n)_n\subset\prod_n I_n$ such that
\begin{align}\label{precondition}
\lim_{n\to\infty}\|\psi_n\|_{W_\tas\cap W_\tbs((\inf I_n,\,t_n])}=\lim_{n\to\infty}\|\psi_n\|_{W_\tas\cap W_\tbs([t_n,\,\sup I_n)}=\infty.
\end{align}
Then up to a subsequence, there exists a sequence $(x_n)_n\subset\R^d$ such that $(\psi_n(t_n, \cdot+x_n))_n$ strongly converges in $H^1(\R^d)$.
\end{lemma}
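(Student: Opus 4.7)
Time translation invariance of \eqref{NLS} lets us assume $t_n\equiv 0$, so both forward and backward Strichartz norms of $\psi_n$ blow up at $0$. By Lemma \ref{killip visan curve} (v), $\mD(\psi_n(0))\to\mD^*<\infty$ yields an $H^1$-bound on $(\psi_n(0))_n$, and the double track profile decomposition (Lemma \ref{linear profile}) applies. The goal is to show that the decomposition reduces to a single bounded-scale profile with $\ld_\infty^1=1$, $t_n^1\equiv 0$, and remainder vanishing in $H^1(\R^d)$; the required sequence is then $x_n:=x_n^1$.

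The argument is the standard Kenig--Merle inductive scheme. Each linear profile carries limiting mass and energy no larger than $(\mM_0,\mH_0)$, and if either multiple profiles or a non-negligible remainder are present, then by the strict mass decomposition in \eqref{orthog L2 and H1} and the energy decomposition \eqref{conv of h} combined with the monotonicity of $\mD$ from Lemma \ref{killip visan curve} (iv), each profile satisfies $\mD(T_n^jP_n^j\tdu^j)\le\mD^*-\delta_0$ for some $\delta_0>0$ and $n$ large. I would then construct a subcritical nonlinear profile $v_n^j$ in each regime: for $\ld_\infty^j=1$ the induction hypothesis $\tau(\mD^*-\delta_0)<\infty$ applies to $v^j$; for $\ld_\infty^j=0$ the limit lives in $\dot H^1$ with energy strictly below $\mH^*(W)$ (since $m_{\mM_0}<m_0=\mH^*(W)$ via the monotonicity of $c\mapsto m_c$ and \eqref{conv of h}), so Lemma \ref{persistance h1} supplies a globally bounded approximating solution; for $\ld_\infty^j=\infty$ the limit lives in $L^2$ with mass strictly below $\mM(Q)$, and Lemma \ref{persistance l2} does the analogous job. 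The delicate point hidden here is $\mK(T_n^jP_n^j\tdu^j)>0$, since under $\mu_2=+1$ admissibility of $(\mM,\mH)$ does not by itself force positive virial: for $\ld_\infty^j=\infty$ Bernstein forces $\|\nabla T_n^jP_n^j\tdu^j\|_2\to 0$ and \eqref{GN-L2} gives $\mK>0$ directly; for $\ld_\infty^j=0$ the $L^2$-mass vanishes and the Kenig--Merle variational picture for the focusing energy-critical NLS below $\mH^*(W)$ gives $\mK>0$; for $\ld_\infty^j=1$, $\tdu^j\in\mA$ by Lemma \ref{killip visan curve} (ii), and positivity of $\mK(v^j(t))$ is then propagated by Lemma \ref{invariant lemma}.

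Assembling the standard approximate solution
\begin{equation*}
\tilde\psi_n^k(t):=\sum_{j=1}^k v_n^j(t)+e^{it\Delta}w_n^k,
\end{equation*}
and bounding the error
\begin{equation*}
e_n^k:=(i\pt_t+\Delta)\tilde\psi_n^k+|\tilde\psi_n^k|^{\tas-2}\tilde\psi_n^k+|\tilde\psi_n^k|^{\tbs-2}\tilde\psi_n^k
\end{equation*}
via the orthogonality of parameters \eqref{orthog of pairs}, the large-scale approximations \eqref{L2 proxy 3}, \eqref{L2 proxy 6}, \eqref{H1 proxy 3}, \eqref{H1 proxy 4}, the asymptotic vanishing \eqref{to zero wnk}, and the elementary inequality \eqref{elementary ineq}, a cross-term analysis shows that for any $\eta>0$ one can choose $k$ and then $N$ so that for $n\ge N$ both $\|\tilde\psi_n^k(0)-\psi_n(0)\|_{H^1}$ and $\|\la\nabla\ra e_n^k\|_{L_{t,x}^{\frac{2(d+2)}{d+4}}(\R)}$ are smaller than $\eta$, while $\|\tilde\psi_n^k\|_{W_\tas\cap W_\tbs(\R)}$ stays uniformly bounded. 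The stability Lemma \ref{long time pert} then yields $\|\psi_n\|_{W_\tas\cap W_\tbs(I_n)}\lesssim 1$ uniformly, contradicting \eqref{precondition}. This forces $K^*=1$ with $\ld_\infty^1=1$ and $t_n^1\equiv 0$ (if $\ld_\infty^1\in\{0,\infty\}$ or $|t_n^1|/(\ld_n^1)^2\to\infty$, then $\|e^{it\Delta}T_n^1P_n^1\tdu^1\|_{W_\tas\cap W_\tbs}$ is already small on one half-line, so the same scheme bounds $\psi_n$ on that half-line). The single profile $\tdu^1\in H^1(\R^d)$ then carries the full limiting mass and energy, $(\mM(w_n^1),\mH(w_n^1))\to(0,0)$ by the decompositions, and Lemma \ref{killip visan curve} (v) upgrades this to $\|w_n^1\|_{H^1}\to 0$, completing the proof. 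The main obstacle is the virial positivity on each scale, resolved by the three-regime analysis above.
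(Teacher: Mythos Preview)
Your overall architecture matches the paper's, and the endgame (single profile with $\ld_\infty^1=1$, $t_n^1\equiv 0$, remainder vanishing in $H^1$) is handled correctly. However, your treatment of the ``delicate point'' --- the positivity of $\mK(T_n^jP_n^j\tdu^j)$ --- contains a genuine circularity that the paper avoids by a different mechanism.

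For $\ld_\infty^j=1$ you invoke Lemma~\ref{killip visan curve}~(ii) to place $\tdu^j$ in $\mA$, but that lemma carries the standing hypothesis $\mK(v)\ge 0$; you are assuming what you want to prove. For $\ld_\infty^j=0$ you appeal to the Kenig--Merle trapping, but that requires \emph{both} $\mH^*<\mH^*(W)$ and $\|\nabla u\|_2<\|\nabla W\|_2$. Neither follows from the energy decomposition \eqref{conv of h} alone: in the focusing-focusing regime $\mH$ can be negative, so without first knowing $\mH(T_n^iP_n^i\tdu^i)\ge 0$ for \emph{every} $i$ (which via Lemma~\ref{pos of k implies pos of h} again presupposes $\mK\ge 0$), you cannot bound an individual $\mH(T_n^jP_n^j\tdu^j)$ by $\mH_0$. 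Your sentence ``each linear profile carries limiting mass and energy no larger than $(\mM_0,\mH_0)$'' is therefore unjustified at this stage. Only your $\ld_\infty^j=\infty$ argument (small gradient via Bernstein, then \eqref{GN-L2} plus Sobolev) is self-contained.

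The paper breaks this circularity by working with $\mI(u)=\tfrac{1}{d}\|u\|_\tbs^\tbs$ rather than $\mH$. Since $\mI\ge 0$ unconditionally, the decomposition \eqref{conv of i} gives $\mI(T_n^jP_n^j\tdu^j)\le \mI(\psi_n(0))+o_n(1)\le \mH(\psi_n(0))+o_n(1)<m_{\mM_0}$ immediately. If $\mK(T_n^jP_n^j\tdu^j)\le 0$ held, then Lemma~\ref{mtilde equal m} (the identity $\tilde m_c=m_c$) would force $\mI(T_n^jP_n^j\tdu^j)\ge m_{\mM(T_n^jP_n^j\tdu^j)}\ge m_{\mM_0+\varepsilon}$, and the monotonicity of $c\mapsto m_c$ (Lemma~\ref{monotone lemma}) yields a contradiction. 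This single argument covers all three scale regimes at once; only \emph{after} $\mK>0$ is secured does the paper deduce $\mH>0$ for every profile and then verify the Kenig--Merle hypotheses for the $\ld_\infty^j=0$ bubbles. You should reorganise Step~1 along these lines.
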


\begin{proof}
By time translation invariance we may assume that $t_n\equiv 0$. Let $(v_n^j)_{j,n}$ be the nonlinear profiles corresponding to the linear profile decomposition of $(\psi_n(0))_n$. Define
\begin{align*}
\Psi_n^k:=\sum_{j=1}^k v_n^j+e^{it\Delta}w_n^k.
\end{align*}
We will show that there exists exactly one non-trivial bad linear profile, relying on which the desired claim follows. We divide the remaining proof into three steps.
\subsubsection*{Step 1: Decomposition of energies and large scale proxies}
In the first step we show that the low and high frequency bubbles asymptotically meet the preconditions of Lemma \ref{persistance l2} and Lemma \ref{persistance h1} respectively. We first show that
\begin{align}
\mH(T_n^jP_n^j\phi^j)&> 0,\label{bd for S}\\
\mK(T_n^jP_n^j\phi^j)&> 0\label{pos of K}
\end{align}
for any finite $1\leq j\leq K^*$ and all sufficiently large $n=n(j)\in\N$. Since $\phi^j\neq 0$ we know that $T_n^jP_n^j\phi^j\neq 0$ for sufficiently large $n$. Suppose now that \eqref{pos of K} does not hold. Up to a subsequence we may assume that $\mK(T_n^jP_n^j\phi^j)\leq 0$ for all sufficiently large $n$. By the non-negativity of $\mI$, \eqref{conv of i} and \eqref{small of unaaa} we know that there exists some sufficiently small $\delta>0$ depending on $\mD^*$ and some sufficiently large $N_1$ such that for all $n>N_1$ we have
\begin{align}\label{contradiction1}
\tm_{\mM(T_n^jP_n^j\phi^j)}&\leq\mI(T_n^jP_n^j\phi^j)\leq \mI(\psi_n(0))+\delta\nonumber\\
&\leq\mH(\psi_n(0))+\delta\leq m_{\mM(\psi_n(0))}-2\delta,
\end{align}
where $\tm$ is the quantity defined by Lemma \ref{mtilde equal m}. By continuity of $c\mapsto m_c$ we also know that for sufficiently large $n$ we have
\begin{align}\label{contradiction3}
m_{\mM(\psi_n(0))}-2\delta\leq m_{\mM_0}-\delta.
\end{align}
Using \eqref{orthog L2 and H1} we deduce that for any $\vare>0$ there exists some large $N_2$ such that for all $n>N_2$ we have
\begin{align*}
\mM(T_n^jP_n^j\phi^j)\leq \mM_0+\vare.
\end{align*}
From the continuity and monotonicity of $c\mapsto m_c$ and Lemma \ref{mtilde equal m}, we may choose some sufficiently small $\vare$ to see that
\begin{align}\label{contradiction2}
\tm_{\mM(T_n^jP_n^j\phi^j)}=m_{\mM(T_n^jP_n^j\phi^j)}\geq m_{\mM_0+\vare}\geq m_{\mM_0}-\frac{\delta}{2}.
\end{align}
Now \eqref{contradiction1}, \eqref{contradiction3} and \eqref{contradiction2} yield a contradiction. Thus \eqref{pos of K} holds, which combining with Lemma \ref{pos of k implies pos of h} also yields \eqref{bd for S}. Similarly, for each $1\leq k\leq K^*$ we deduce
\begin{align}
\mH(w_n^k)&> 0,\label{bd for S wnj} \\
\mK(w_n^k)&> 0\label{pos of K wnj}
\end{align}
for sufficiently large $n$. Now using \eqref{orthog L2 and H1} to \eqref{conv of i} we have for any $1\leq k\leq K^*$
\begin{align}
\mM_0&=\mM(\psi_n(0))+o_n(1)=\sum_{j=1}^k \mM(S_n^j\tdu^j)+\mM(w_n^k)+o_n(1),\label{mo sum}\\
\mH_0&=\mH(\psi_n(0))+o_n(1)=\sum_{j=1}^k \mH(S_n^j\tdu^j)+\mH(w_n^k)+o_n(1)\label{eo sum},\\
\mI_0&=\mH(\psi_n(0))+o_n(1)=\sum_{j=1}^k \mI(S_n^j\tdu^j)+\mI(w_n^k)+o_n(1).\label{io sum}
\end{align}
From \eqref{mo sum} it is immediate that Lemma \ref{persistance l2} is applicable for solutions with initial data $T_n^j P_n^j\tdu^j$ for all sufficiently large $n$ in the case $\ld_\infty^j=\infty$. We will show that Lemma \ref{persistance h1} is applicable for solutions with initial data $T_n^j P_n^j\tdu^j$ for all sufficiently large $n$ in the case $\ld_\infty^j=0$. From Theorem \ref{soave}, Lemma \ref{mtilde equal m} and Lemma \ref{killip visan curve} we know that there exists some $\vare>0$ such that
\begin{align}\label{small of mass and energy}
\mM(u_0)\leq\mM(Q)-2\vare,\quad \mH_0\leq \mH^*(W)-2\vare,\quad \mI_0\leq \mH^*(W)-2\vare.
\end{align}
Since $\|T_n^j P_n^j\tdu^j\|_2\to 0$, by interpolation we have that
\begin{align*}
\mH(T_n^j P_n^j\tdu^j)-\mH^*(T_n^j P_n^j\tdu^j)\to 0,
\end{align*}
which implies
\begin{align*}
\mH^*(T_n^j P_n^j\tdu^j)\leq \mH_0+\vare \leq \mH^*(W)-\vare
\end{align*}
for all sufficiently large $n$. Similarly,
\begin{align*}
\|T_n^j P_n^j\tdu^j\|_{\dot{H}^1}&=2\mH^*(T_n^j P_n^j\tdu^j)+\frac{d-2}{d}\mI(T_n^j P_n^j\tdu^j)\nonumber\\
&\leq 2(\mH_0+\vare)+\frac{d-2}{d}(\mI_0+\vare)\nonumber\\
&\leq 2(\mH^*(W)-\vare)+\frac{d-2}{d}(\mH^*(W)-\vare)=\|W\|_{\dot{H}^1}-\bg(3-\frac{2}{d}\bg)\vare
\end{align*}
for all sufficiently large $n$. This completes the proof of Step 1.

\subsubsection*{Step 2: There exists at least one bad profile.}
First we claim that there exists some $1\leq J\leq K^*$ such that for all $j\geq J+1$ and all sufficiently large $n$, $v_n^j$ is global and
\begin{align}\label{uniform bound of unj}
\sup_{J+1\leq j\leq K^*}\lim_{n\to\infty}\|v_n^j\|_{W_\tas\cap W_\tbs(\R)}\lesssim 1.
\end{align}
Indeed, using \eqref{orthog L2 and H1} we infer that
\begin{align}\label{small initial data}
\lim_{k\to K^*}\lim_{n\to\infty}\sum_{j=1}^k\|T_n^jP_n^j\tdu^j\|_{H^1}<\infty.
\end{align}
Then \eqref{uniform bound of unj} follows from Lemma \ref{well posedness lemma}. In the same manner, by Lemma \ref{well posedness lemma} we infer that
\begin{align}\label{kkkk uniform bound of unj}
\sup_{J+1\leq k\leq K^*}\lim_{n\to\infty}\|\sum_{j=J+1}^k \la\nabla \ra v_n^j\|_{S(\R)}\lesssim 1
\end{align}
for any $J+1\leq k\leq K^*$. We now claim that there exists some $1\leq J_0\leq J$ such that
\begin{align}
\limsup_{n\to\infty}\|v_n^{J_0}\|_{W_\tas\cap W_\tbs(\R)}=\infty.
\end{align}
We argue by contradiction and assume that
\begin{align}\label{uniform bound of unj small}
\limsup_{n\to\infty}\|v_n^j\|_{W_\tas\cap W_\tbs(\R)}<\infty\quad\forall\,1\leq j\leq J.
\end{align}
Combining with \eqref{kkkk uniform bound of unj} and Lemma \ref{nls persistence} we deduce that
\begin{align}\label{super uniform}
\sup_{J+1\leq k\leq K^*}\lim_{n\to\infty}\|\sum_{j=1}^k \la\nabla\ra v_n^j\|_{S(\R)}\lesssim 1.
\end{align}
Therefore, using \eqref{orthog L2 and H1}, \eqref{conv of nonlinear profiles in h1} and Strichartz we confirm that the conditions \eqref{condition c1} to \eqref{condition a} are satisfied for sufficiently large $k$ and $n$, where we set $u=\psi_n$ and $w=\Psi_n^k$ therein. Once we can show that \eqref{condition b} is satisfied, we may apply Lemma \ref{long time pert} to obtain the contradiction
\begin{align}\label{contradiction 1}
\limsup_{n\to\infty}\|\psi_n\|_{W_\tas\cap W_\tbs(\R)}<\infty.
\end{align}
It is readily to see that
\begin{align}
e&=\,i\pt_t\Psi_n^k+\Delta\Psi_n^k+|\Psi_n^k|^{\frac{4}{d}}\Psi_n^k+|\Psi_n^k|^{\frac{4}{d-2}}\Psi_n^k\nonumber\\
&=\bg(\sum_{j=1}^k (i\pt_tv_n^j+\Delta v_n^j)+|\sum_{j=1}^kv_n^j|^{\frac{4}{d}}\sum_{j=1}^kv_n^j+|\sum_{j=1}^kv_n^j|^{\frac{4}{d-2}}\sum_{j=1}^kv_n^j\bg)\nonumber\\
&\quad+\bg(|\Psi_n^k|^{\frac{4}{d}}\Psi_n^k-|\Psi_n^k-e^{it\Delta}w_n^k|^{\frac{4}{d}}(\Psi_n^k-e^{it\Delta}w_n^k)\bg)\nonumber\\
&\quad+\bg(|\Psi_n^k|^{\frac{4}{d-2}}\Psi_n^k-|\Psi_n^k-e^{it\Delta}w_n^k|^{\frac{4}{d-2}}(\Psi_n^k-e^{it\Delta}w_n^k)\bg)\nonumber\\
&=:I_1+I_2+I_3.
\end{align}
In the following we show the asymptotic smallness of $I_1$ to $I_3$.
\subsubsection*{Step 2a: Smallness of $I_1$}
We will show that
\begin{align}
\lim_{k\to K^*}\lim_{n\to\infty}\|\la\nabla\ra I_1\|_{L_{t,x}^{\frac{2(d+2)}{d+2}}}=0.
\end{align}
Since $v_n^j$ solves \eqref{NLS}, we can rewrite $I_1$ to
\begin{align*}
I_1&=\sum_{j=1}^k \bg(-|v_n^j|^{\frac{4}{d}}v_n^j-|v_n^j|^{\frac{4}{d-2}}v_n^j\bg)
+|\sum_{j=1}^kv_n^j|^{\frac{4}{d}}\sum_{j=1}^kv_n^j-|\sum_{j=1}^kv_n^j|^{\frac{4}{d-2}}\sum_{j=1}^kv_n^j\nonumber\\
&=-\bg(\sum_{j=1}^k|v_n^j|^{\frac{4}{d}}v_n^j-|\sum_{j=1}^kv_n^j|^{\frac{4}{d}}\sum_{j=1}^kv_n^j\bg)
-\bg(\sum_{j=1}^k|v_n^j|^{\frac{4}{d-2}}v_n^j-|\sum_{j=1}^kv_n^j|^{\frac{4}{d-2}}\sum_{j=1}^kv_n^j\bg)
\end{align*}
By H\"older and \eqref{elementary ineq} we obtain for $s\in\{0,1\}$ that
\begin{align}
&\||\nabla|^s I_1\|_{L_{t,x}^{\frac{2(d+2)}{d+4}}}\nonumber\\
\lesssim_k&
\left\{
             \begin{array}{ll}
             \sum_{j\neq j'}\bg(\|v_n^j|\nabla|^s v_n^{j'}\|_{L_{t,x}^{\frac{d+2}{d}}(\R)}(\|v_n^j\|^{\frac{4}{d}-1}_{W_\tas(\R)}
             +\|v_n^{j'}\|^{\frac{4}{d}-1}_{W_\tas(\R)})\\
             \quad\quad\quad\quad\quad\quad\quad\quad+\|v_n^j|\nabla|^s v_n^{j'}\|_{L_{t,x}^{\frac{d+2}{d-1}}(\R)}(\|v_n^j\|^{\frac{4}{d-2}-1}_{W_\tbs(\R)}
             +\|v_n^{j'}\|^{\frac{4}{d-2}-1}_{W_\tbs(\R)})\bg),
             &\text{if $d=3$,}\\
             \\
             \sum_{j\neq j'}\bg(\|v_n^j|\nabla|^s v_n^{j'}\|^{\frac{4}{d}}_{L_{t,x}^{\frac{d+2}{d}}(\R)}\||\nabla|^s v_n^{j'}\|^{1-\frac{4}{d}}_{W_\tas(\R)}
             \\
             \quad\quad\quad\quad\quad\quad\quad\quad
             +\|v_n^j|\nabla|^s v_n^{j'}\|_{L_{t,x}^{\frac{d+2}{d-1}}(\R)}
             (\|v_n^j\|^{\frac{4}{d-2}-1}_{W_\tbs(\R)}+\|v_n^{j'}\|^{\frac{4}{d-2}-1}_{W_\tbs(\R)})\bg),
             &\text{if $d\in\{4,5\}$,}\\
             \\
             \sum_{j\neq j'}\bg(\|v_n^j|\nabla|^s v_n^{j'}\|^{\frac{4}{d}}_{L_{t,x}^{\frac{d+2}{d}}(\R)}
             \||\nabla|^s v_n^{j'}\|^{1-\frac{4}{d}}_{W_\tas(\R)}\\
             \quad\quad\quad\quad\quad\quad\quad\quad+\|v_n^j|\nabla|^s v_n^{j'}\|^{\frac{4}{d-2}}_{L_{t,x}^{\frac{d+2}{d-1}}(\R)}
             \||\nabla|^s v_n^{j'}\|^{1-\frac{4}{d-2}}_{W_\tbs(\R)}\bg),
             &\text{if $d\geq6$.}
             \end{array}\label{verylong}
\right.
\end{align}
In view of \eqref{uniform bound of unj} and \eqref{uniform bound of unj small} we only need to show that for any fixed $1\leq i,j\leq K^*$ with $i\neq j$ and any $s\in\{0,1\}$
\begin{align}
\lim_{n\to\infty}\bg(\|v_n^i |\nabla|^s v_n^j\|_{L_{t,x}^{\frac{d+2}{d}}(\R^d)}+\|v_n^i |\nabla|^s v_n^j\|_{L_{t,x}^{\frac{d+2}{d-1}}(\R^d)}\bg)=0.
\end{align}
We first consider the term $\|v_n^i v_n^j\|_{L_{t,x}^{\frac{d+2}{d}}(\R^d)}$. Notice that it suffices to consider the case $\ld_\infty^i,\ld_\infty^j\in\{1,\infty\}$. Indeed, using \eqref{H1 proxy 2} (which is applicable due to Step 1) and H\"older we already conclude that
\begin{align}\label{reduction to infinity case}
\|v_n^i v_n^j\|_{L_{t,x}^{\frac{d+2}{d}}(\R)}\lesssim \|v_n^i\|_{W_\tas(\R)}\|v_n^j\|_{W_\tas(\R)}\to 0
\end{align}
when $\ld_\infty^i$ or $\ld_\infty^j$ is equal to zero. Next, we claim that for any $\beta>0$ there exists some $\psi^i_\beta,\psi_\beta^j\in C_c^\infty(\R\times\R^d)$ such that
\begin{align}
\bg\|v^i_n-(\ld^i_n)^{-\frac{d}{2}}e^{-it|\xi^i_n|^2}e^{i\xi^i_n\cdot x}\psi^i_\beta\bg(\frac{t}{(\ld^i_n)^2}+t^i_n,\frac{x-x^i_n-2t\xi^i_n}{\ld^i_n}\bg)\bg\|_{W_\tas(\R)}\leq \beta,\\
\bg\|v^j_n-(\ld^j_n)^{-\frac{d}{2}}e^{-it|\xi^j_n|^2}e^{i\xi^j_n\cdot x}\psi^j_\beta\bg(\frac{t}{(\ld^j_n)^2}+t^j_n,\frac{x-x^j_n-2t\xi^j_n}{\ld^j_n}\bg)\bg\|_{W_\tas(\R)}\leq \beta.
\end{align}
Indeed, for $\ld_\infty^i,\ld_\infty^j=\infty$, this follows already from \eqref{L2 proxy 3}, while for $\ld_\infty^i,\ld_\infty^j=1$ we choose some $\psi^i_\beta,\psi^j_\beta\in C_c^\infty(\R\times\R^d)$ such that
\begin{align}
\|v^i-\psi^i_\beta\|_{W_\tas(\R)}\leq \beta,\,\|v^j-\psi^j_\beta\|_{W_\tas(\R)}\leq \beta
\end{align}
and the claim follows. Define
$$ \Lambda_n (\psi_\beta^i):=(\ld^i_n)^{-\frac{d}{2}}\psi^i_\beta\bg(\frac{t}{(\ld^i_n)^2}+t^i_n,\frac{x-x^i_n-2t\xi^i_n}{\ld^i_n}\bg).$$
Using H\"older we infer that
\begin{align*}
\|v_n^i v_n^j\|_{L_{t,x}^{\frac{d+2}{d}}(\R^d)}\lesssim \beta+\|\Lambda_n (\psi_\beta^i)\Lambda_n (\psi_\beta^j)\|_{L_{t,x}^{\frac{d+2}{d}}(\R^d)}.
\end{align*}
Since $\beta$ can be chosen arbitrarily small, it suffices to show
\begin{align}\label{step2a1}
\lim_{n\to\infty}\|\Lambda_n (\psi_\beta^i)\Lambda_n (\psi_\beta^j)\|_{L_{t,x}^{\frac{d+2}{d}}(\R^d)}=0.
\end{align}
Assume that $\frac{\ld_n^i}{\ld_n^j}+\frac{\ld_n^j}{\ld_n^i}\to \infty$. By symmetry we may w.l.o.g. assume that $\frac{\ld_n^i}{\ld_n^j}\to 0$. Using change of variables we obtain that
\begin{align}
&\,\|\Lambda_n (\psi_\beta^i)\Lambda_n (\psi_\beta^j)\|_{L_{t,x}^{\frac{d+2}{d}}(\R^d)}\nonumber\\
=&\,\bg(\frac{\ld_n^i}{\ld_n^j}\bg)^{\frac{d}{2}}\bg\|\psi_\beta^i(t,x)\psi_\beta^j\bg(
\bg(\frac{\ld_n^i}{\ld_n^j}\bg)^{2}t-\bg(\bg(\frac{\ld_n^i}{\ld_n^j}\bg)^{2}t_n^i-t_n^j\bg),
\nonumber\\
&\quad\quad\quad\quad
\bg(\frac{\ld_n^i}{\ld_n^j}\bg)x+2\bg(\frac{\ld_n^i}{\ld_n^j}\bg)\ld_n^i(\xi_n^i-\xi_n^j)t
+\frac{x_n^i-x_n^j-2t_n^i(\ld_n^i)^2(\xi_n^i-\xi_n^j)}{\ld_n^j}
 \bg)\bg\|_{L_{t,x}^{\frac{d+2}{d}}(\R^d)}\label{verylong2}\\
\lesssim&\,\bg(\frac{\ld_n^i}{\ld_n^j}\bg)^{\frac{d}{2}}\|\psi_\beta^i\|_{L_{t,x}^{\frac{d+2}{d}}(\R^d)}\|\psi_\beta^j\|_{L_{t,x}^\infty(\R^d)}
\to 0\nonumber.
\end{align}

Suppose therefore $\frac{\ld_n^i}{\ld_n^j}+\frac{\ld_n^j}{\ld_n^i}\to \ld_0\in(0,\infty)$. If $\bg(\frac{\ld_n^i}{\ld_n^j}\bg)^{2}t_n^i-t_n^j\to\pm\infty$, then by \eqref{verylong2} the supports of the integrands become disjoint in the temporal direction.

We may therefore further assume that $\bg(\frac{\ld_n^i}{\ld_n^j}\bg)^{2}t_n^i-t_n^j\to t_0\in\R$. If $\bg|\frac{x_n^i-x_n^j-2t_n^i(\ld_n^i)^2(\xi_n^i-\xi_n^j)}{\ld_n^j}\bg|\to\infty$ and $\xi_n^i= \xi_n^j$ for infinitely many $n$, then the supports of the integrands become disjoint in the spatial direction. If $\bg|\frac{x_n^i-x_n^j-2t_n^i(\ld_n^i)^2(\xi_n^i-\xi_n^j)}{\ld_n^j}\bg|\to\infty$ and $\xi_n^i\neq \xi_n^j$ for infinitely many $n$, then we apply the change of temporal variable $t\mapsto \frac{t}{\ld_n^i|\xi_n^i-\xi_n^j|}$ to see the decoupling of the supports of the integrands in the spatial direction.

Finally, if $\frac{x_n^i-x_n^j-2t_n^i(\ld_n^i)^2(\xi_n^i-\xi_n^j)}{\ld_n^j}\to x_0\in\R^d$, then by \eqref{orthog of pairs} we must have $\ld_n^i|\xi_n^i-\xi_n^j|\to\infty$. Hence for all $t\neq 0$ the integrand converges pointwise to zero. Using the dominated convergence theorem (setting $\|\psi_\beta^j\|_{L_{t,x}^\infty(\R)}\psi_\beta^i$ as the majorant) we finally conclude \eqref{step2a1}.

We now consider the remaining terms. For $\|v_n^i \nabla v_n^j\|_{L_{t,x}^{\frac{d+2}{d}}(\R)}$, arguing similarly as by \eqref{reduction to infinity case} and using \eqref{H1 proxy 2} we know that $\ld_\infty^i\in\{1,\infty\}$. For $\nabla v_n^j$, we use \eqref{L2 proxy 6} or \eqref{H1 proxy 4} as proxy for $\nabla v_n^j$, depending on the value of $\ld_\infty^j$; For $\|v_n^i v_n^j\|_{L_{t,x}^{\frac{d+2}{d-1}}(\R)}$, we first obtain that
\begin{align*}
\|v_n^i v_n^j\|_{L_{t,x}^{\frac{d+2}{d-1}}(\R)}\leq \min\bg\{\|v_n^i\|_{W_\tbs(\R)}^{\frac{d-2}{d-1}}
\|v_n^j\|_{W_\tas(\R)}^{\frac{1}{d-1}},
\|v_n^j\|_{W_\tbs(\R)}^{\frac{d-2}{d-1}}
\|v_n^i\|_{W_\tas(\R)}^{\frac{1}{d-1}}\bg\}.
\end{align*}
Therefore using \eqref{L2 proxy 2} and \eqref{H1 proxy 2} we can reduce the analysis to the case $\ld_\infty^i,\ld_\infty^j=1$; Finally, for $\|v_n^i \nabla v_n^j\|_{L_{t,x}^{\frac{d+2}{d-1}}(\R)}$ we can reduce our analysis to the case $\ld_\infty^i\in\{0,1\}$ and use \eqref{L2 proxy 6} or \eqref{H1 proxy 4} as proxy for $\nabla v_n^j$ and \eqref{H1 proxy 3} for $v_n^i$. Combining also with the boundedness of $(\xi_n^j)_n$, we can proceed as before to conclude the claim. We omit the details of the similar arguments. This completes the proof of Step 2a.

\subsubsection*{Step 2b: Smallness of $I_2$ and $I_3$}
We establish in this substep the asymptotic smallness of $I_2$ and $I_3$. Using H\"older and \eqref{elementary ineq} we obtain that
\begin{align}
&\||\nabla|^s (I_2+I_3)\|_{L_{t,x}^{\frac{2(d+2)}{d+4}}}\nonumber\\
\lesssim_k&
\left\{
             \begin{array}{ll}
             \|\Psi_n^k|\nabla|^s e^{it\Delta}w_n^k\|_{L_{t,x}^{\frac{d+2}{d}}(\R)}(\|\Psi_n^k\|^{\frac{4}{d}-1}_{W_\tas(\R)}
             +\|e^{it\Delta}w_n^k\|^{\frac{4}{d}-1}_{W_\tas(\R)})\\
             \quad\quad\quad\quad
             +\||\nabla|^s\Psi_n^k e^{it\Delta}w_n^k\|_{L_{t,x}^{\frac{d+2}{d}}(\R)}(\|\Psi_n^k\|^{\frac{4}{d}-1}_{W_\tas(\R)}
             +\|e^{it\Delta}w_n^k\|^{\frac{4}{d}-1}_{W_\tas(\R)})\\
             \quad\quad\quad\quad+\|\Psi_n^k|\nabla|^s e^{it\Delta}w_n^k\|_{L_{t,x}^{\frac{d+2}{d-1}}(\R)}(\|\Psi_n^k\|^{\frac{4}{d-2}-1}_{W_\tbs(\R)}
             +\|e^{it\Delta}w_n^k\|^{\frac{4}{d-2}-1}_{W_\tbs(\R)})\\
             \quad\quad\quad\quad+\||\nabla|^s\Psi_n^k e^{it\Delta}w_n^k\|_{L_{t,x}^{\frac{d+2}{d-1}}(\R)}(\|\Psi_n^k\|^{\frac{4}{d-2}-1}_{W_\tbs(\R)}
             +\|e^{it\Delta}w_n^k\|^{\frac{4}{d-2}-1}_{W_\tbs(\R)})\\
             \quad\quad\quad\quad+\|e^{it\Delta}w_n^k\|^{\frac{4}{d}}_{W_\tas(\R)}\||\nabla|^s e^{it\Delta}w_n^k\|_{W_\tas(\R)}\\
             \quad\quad\quad\quad+\|e^{it\Delta}w_n^k\|^{\frac{4}{d-2}}_{W_\tbs(\R)}\||\nabla|^s e^{it\Delta}w_n^k\|_{W_\tas(\R)},
             &\text{if $d=3$,}\\
             \\
             \|\Psi_n^k|\nabla|^s e^{it\Delta}w_n^k\|^{\frac{4}{d}}_{L_{t,x}^{\frac{d+2}{d}}(\R)}\||\nabla|^s e^{it\Delta}w_n^k\|^{1-\frac{4}{d}}_{W_\tas(\R)}\\
             \quad\quad\quad\quad
             +\||\nabla|^s\Psi_n^k e^{it\Delta}w_n^k\|^{\frac{4}{d}}_{L_{t,x}^{\frac{d+2}{d}}(\R)}\||\nabla|^s \Psi_n^k\|^{1-\frac{4}{d}}_{W_\tas(\R)}\\
             \quad\quad\quad\quad+\|\Psi_n^k|\nabla|^s e^{it\Delta}w_n^k\|_{L_{t,x}^{\frac{d+2}{d-1}}(\R)}(\|\Psi_n^k\|^{\frac{4}{d-2}-1}_{W_\tbs(\R)}
             +\|e^{it\Delta}w_n^k\|^{\frac{4}{d-2}-1}_{W_\tbs(\R)})\\
             \quad\quad\quad\quad+\||\nabla|^s\Psi_n^k e^{it\Delta}w_n^k\|_{L_{t,x}^{\frac{d+2}{d-1}}(\R)}(\|\Psi_n^k\|^{\frac{4}{d-2}-1}_{W_\tbs(\R)}
             +\|e^{it\Delta}w_n^k\|^{\frac{4}{d-2}-1}_{W_\tbs(\R)})\\
             \quad\quad\quad\quad+\|e^{it\Delta}w_n^k\|^{\frac{4}{d}}_{W_\tas(\R)}\||\nabla|^s e^{it\Delta}w_n^k\|_{W_\tas(\R)}\\
             \quad\quad\quad\quad+\|e^{it\Delta}w_n^k\|^{\frac{4}{d-2}}_{W_\tbs(\R)}\||\nabla|^s e^{it\Delta}w_n^k\|_{W_\tas(\R)},
             &\text{if $d\in\{4,5\}$,}\\
             \\
             \|\Psi_n^k|\nabla|^s e^{it\Delta}w_n^k\|^{\frac{4}{d}}_{L_{t,x}^{\frac{d+2}{d}}(\R)}\||\nabla|^s e^{it\Delta}w_n^k\|^{1-\frac{4}{d}}_{W_\tas(\R)}\\
             \quad\quad\quad\quad
             +\||\nabla|^s\Psi_n^k e^{it\Delta}w_n^k\|^{\frac{4}{d}}_{L_{t,x}^{\frac{d+2}{d}}(\R)}\||\nabla|^s\Psi_n^k\|^{1-\frac{4}{d}}_{W_\tas(\R)}\\
             \quad\quad\quad\quad+\|\Psi_n^k|\nabla|^s e^{it\Delta}w_n^k\|^{\frac{4}{d-2}}_{L_{t,x}^{\frac{d+2}{d-1}}(\R)}
             \||\nabla|^s e^{it\Delta}w_n^k\|^{1-\frac{4}{d-2}}_{W_\tbs(\R)}\\
             \quad\quad\quad\quad+\||\nabla|^s\Psi_n^k e^{it\Delta}w_n^k\|^{\frac{4}{d-2}}_{L_{t,x}^{\frac{d+2}{d-1}}(\R)}
             \||\nabla|^s \Psi_n^k\|^{1-\frac{4}{d-2}}_{W_\tbs(\R)}\\
             \quad\quad\quad\quad+\|e^{it\Delta}w_n^k\|^{\frac{4}{d}}_{W_\tas(\R)}\||\nabla|^s e^{it\Delta}w_n^k\|_{W_\tas(\R)}\\
             \quad\quad\quad\quad+\|e^{it\Delta}w_n^k\|^{\frac{4}{d-2}}_{W_\tbs(\R)}\||\nabla|^s e^{it\Delta}w_n^k\|_{W_\tas(\R)},
             &\text{if $d\geq6$.}
             \end{array}\label{verylong3}
\right.
\end{align}
In view of \eqref{to zero wnk}, \eqref{orthog L2 and H1}, Strichartz and \eqref{super uniform} it suffices to show that for $s\in\{0,1\}$
\begin{align}
\lim_{k\to K^*}\lim_{n\to\infty}&\bg(\|\Psi_n^k |\nabla|^s e^{it\Delta}w_n^k\|_{L_{t,x}^{\frac{d+2}{d}}(\R)}
+\||\nabla|^s\Psi_n^k e^{it\Delta}w_n^k\|_{L_{t,x}^{\frac{d+2}{d}}(\R)}\nonumber\\
&\quad\quad+\|\Psi_n^k |\nabla|^s e^{it\Delta}w_n^k\|_{L_{t,x}^{\frac{d+2}{d-1}}(\R)}
+\||\nabla|^s\Psi_n^k e^{it\Delta}w_n^k\|_{L_{t,x}^{\frac{d+2}{d-1}}(\R)}\bg)=0.
\end{align}
For $\||\nabla|^s\Psi_n^k e^{it\Delta}w_n^k\|_{L_{t,x}^{\frac{d+2}{d}}(\R)}$ and $\||\nabla|^s\Psi_n^k e^{it\Delta}w_n^k\|_{L_{t,x}^{\frac{d+2}{d-1}}(\R)}$, using H\"older, \eqref{super uniform}, Strichartz, \eqref{orthog L2 and H1} and \eqref{to zero wnk} we have
\begin{align}
&\lim_{k\to K^*}\lim_{n\to\infty}\bg(\||\nabla|^s\Psi_n^k e^{it\Delta}w_n^k\|_{L_{t,x}^{\frac{d+2}{d}}(\R)}
+\||\nabla|^s\Psi_n^k e^{it\Delta}w_n^k\|_{L_{t,x}^{\frac{d+2}{d-1}}(\R)}\bg)\nonumber\\
\lesssim &\lim_{k\to K^*}\lim_{n\to\infty}\bg(\||\nabla|^s\bg(\sum_{j=1}^k v_n^k\bg)\|_{W_\tas(\R)}\|e^{it\Delta}w_n^k\|_{W_\tas(\R)}
+\||\nabla|^s\bg(\sum_{j=1}^k v_n^k\bg)\|^{\frac{1}{d-2}}_{W_\tas(\R)}\|e^{it\Delta}w_n^k\|^{\frac{d-2}{d-1}}_{W_\tbs(\R)}\nonumber\\
&\quad\quad+\||\nabla|^se^{it\Delta}w_n^k\|_{W_\tas(\R)}\|e^{it\Delta}w_n^k\|_{W_\tas(\R)}
+\||\nabla|^se^{it\Delta}w_n^k \|^{\frac{1}{d-2}}_{W_\tas(\R)}\|e^{it\Delta}w_n^k\|^{\frac{d-2}{d-1}}_{W_\tbs(\R)}\bg)\nonumber\\
\lesssim &\lim_{k\to K^*}\lim_{n\to\infty}\bg(\bg(1+\|w_n^k\|_{H^1}+\|w_n^k\|^{\frac{1}{d-2}}_{H^1}\bg)\bg(\|e^{it\Delta}w_n^k\|_{W_\tas(\R)}
+\|e^{it\Delta}w_n^k\|^{\frac{d-2}{d-1}}_{W_\tbs(\R)}\bg)\bg)=0.
\end{align}
It is left to estimate $\|\Psi_n^k \nabla e^{it\Delta}w_n^k\|_{L_{t,x}^{\frac{d+2}{d}}(\R)}$ and $\|\Psi_n^k \nabla e^{it\Delta}w_n^k\|_{L_{t,x}^{\frac{d+2}{d-1}}(\R)}$. By \eqref{kkkk uniform bound of unj}, H\"older, Strichartz and \eqref{orthog L2 and H1} we know that for each $\eta>0$ there exists some $1\leq J'=J'(\eta)\leq K^*$ such that
\begin{align}
\sup_{J'\leq k\leq K^*}\lim_{n\to\infty}\bg(\|(\sum_{j=J'}^k v_n^j)\nabla e^{it\Delta}w_n^k\|_{L_{t,x}^{\frac{d+2}{d}}(\R)}
+\|(\sum_{j=J'}^k v_n^j)\nabla e^{it\Delta}w_n^k\|_{L_{t,x}^{\frac{d+2}{d-1}}(\R)}\bg)\lesssim \eta.
\end{align}
Hence, it suffices to show that
\begin{align}
\lim_{k\to K^*}\lim_{n\to\infty}\bg(\|v_n^j\nabla e^{it\Delta}w_n^k\|_{L_{t,x}^{\frac{d+2}{d}}(\R)}
+\|v_n^j\nabla e^{it\Delta}w_n^k\|_{L_{t,x}^{\frac{d+2}{d-1}}(\R)}\bg)=0.
\end{align}
for any $1\leq j< J'$. For $\|v_n^j\nabla e^{it\Delta}w_n^k\|_{L_{t,x}^{\frac{d+2}{d}}(\R)}$, using \eqref{H1 proxy 2} we may further assume that $\ld_\infty^j\in\{1,\infty\}$. For $\beta>0$, let $\phi_\beta\in C_c^\infty(\R\times \R^d)$ be given according to \eqref{L2 proxy 3}. Let $T,R>0$ such that $\mathrm{supp}\,\phi_\beta\subset [-T,T]\times \{|x|\leq R\}$. Then using H\"older we infer that
\begin{align}
\|v_n^j\nabla e^{it\Delta}w_n^k\|_{L_{t,x}^{\frac{d+2}{d}}(\R)}&\lesssim \beta\|\nabla e^{it\Delta}w_n^k\|_{W_\tas(\R)}
+\Lambda,
\end{align}
where
\begin{align}
\Lambda:&=\bg\|\phi_\beta(t,x)\,\bg((\ld_n^j)^{\frac{d}{2}}[ e^{it\Delta}\nabla w_n^k]\bg((\ld_n^j)^2t-(\ld_n^j)^2 t_n^j,\nonumber\\
&\quad\quad\quad\quad\ld_n^jx+2\xi_n^j(\ld_n^j)^2t+x_n^j-2\xi_n^j(\ld_n^j)^2t_n^j\bg)\bg)\bg\|_{L_{t,x}^{\frac{d+2}{d}}(\R)}\nonumber\\
&=\bg\|\phi_\beta(t,x)\,G_n^j\bg([e^{it\Delta}\nabla w_n^k](t,x+2\xi_n^jt)\bg)\bg\|_{L_{t,x}^{\frac{d+2}{d}}(\R)}
\end{align}
and
$$G_n^j u(t,x):=(\ld_n^j)^{\frac{d}{2}}u((\ld_n^j)^2(t-t_n^j),\ld_n^jx+x_n^j).$$
By the arbitrariness of $\beta$ it suffices to show the asymptotic smallness of $\Lambda$. Using the invariance of the NLS flow under Galilean transformation we know that
\begin{align}
&\,[e^{it\Delta}\nabla w_n^k](t,x+2\xi_n^jt)\nonumber\\
=&\,e^{i\xi_n^j\cdot x}e^{it|\xi_n^j|^2}\bg[e^{it\Delta}[e^{-i\xi_n^j\cdot x}\nabla w_n^k]\bg](t,x)\nonumber\\
=&\,e^{i\xi_n^j\cdot x}e^{it|\xi_n^j|^2}\bg[e^{it\Delta}[\nabla (e^{-i\xi_n^j\cdot x} w_n^k)]\bg](t,x)
+i\xi_n^je^{i\xi_n^j\cdot x}e^{it|\xi_n^j|^2}\bg[e^{it\Delta}[e^{-i\xi_n^j\cdot x} w_n^k]\bg](t,x)\nonumber\\
=&\,e^{i\xi_n^j\cdot x}e^{it|\xi_n^j|^2}\bg[\nabla \bg[e^{it\Delta}[ e^{-i\xi_n^j\cdot x} w_n^k]\bg]\bg](t,x)
+i\xi_n^j[e^{it\Delta}w_n^k](t,x+2\xi_n^jt)\nonumber\\
=:&\,e^{i\xi_n^j\cdot x}e^{it|\xi_n^j|^2}\Lambda_{1}+\Lambda_{2}.
\end{align}
Using H\"older, \eqref{to zero wnk} and the boundedness of $(\xi_n^j)_n$ we infer that
\begin{align}
\|\phi_\beta G_n^j(\Lambda_{2})\|_{L_{t,x}^{\frac{d+2}{d}}(\R)}
&\lesssim
\|\phi_\beta\|_{L_{t,x}^{\frac{2(d+2)}{d}}(\R)}\|\Lambda_{2}\|_{L_{t,x}^{\frac{2(d+2)}{d}}(\R)}\nonumber\\
&=|\xi_n^j|\|\phi_\beta\|_{L_{t,x}^{\frac{2(d+2)}{d}}(\R)}\|e^{it\Delta}w_n^k\|_{L_{t,x}^{\frac{2(d+2)}{d}}(\R)}=o_n(1).
\end{align}
Finally, using H\"older, the change of variables, \eqref{local killip visan 2} and the boundedness of $(\xi_n^j)_n$ we obtain that
\begin{align}
\|\phi_\beta G_n^j(e^{i\xi_n^j\cdot x}e^{it|\xi_n^j|^2}\Lambda_{2})\|_{L_{t,x}^{\frac{d+2}{d}}(\R)}
&\leq C(T,R)\|G_n^j(\Lambda_{2})\|_{L_{t,x}^2([-T,T]\times\{|x|\leq R\})}\nonumber\\
&\leq C(T,R)\|e^{it\Delta}w_n^k\|^{\frac{1}{3}}_{W_\tbs(\R)}\|e^{-i\xi_n^j\cdot x}w_n^k\|^{\frac{2}{3}}_{\dot{H}^1}\nonumber\\
&\leq C(T,R,\sup_{n}|\xi_n^j|)\|e^{it\Delta}w_n^k\|^{\frac{1}{3}}_{W_\tbs(\R)}\|w_n^k\|^{\frac{2}{3}}_{H^1}.
\end{align}
The claim then follows by invoking \eqref{to zero wnk} and \eqref{orthog L2 and H1}. For $d\geq 4$, $\|v_n^j\nabla e^{it\Delta}w_n^k\|_{L_{t,x}^{\frac{d+2}{d-1}}(\R)}$ can be estimated similarly as for $\|v_n^j\nabla e^{it\Delta}w_n^k\|_{L_{t,x}^{\frac{d+2}{d}}(\R)}$. In this case we can further assume that $\ld_\infty^j\in\{0,1\}$ and $\xi_n^j\equiv 0$ (which also holds for $d=3$) and the proof is in fact much easier, we therefore omit the details here. For $d=3$, we notice that $\frac{d+2}{d-1}>2$ and hence we will use the interpolation estimate
\begin{align}
\|\phi_\beta\nabla\tilde{w}_n^k\|_{L_{t,x}^{\frac{5}{2}}(\R)}\lesssim C(T,R)\|\nabla \tilde{w}^k_n\|^{\frac{1}{2}}_{W_\tas}
\|\nabla \tilde{w}^k_n\|^{\frac{1}{2}}_{L^2_{t,x}([-T,T]\times\{|x|\leq R\})}
\end{align}
in order to apply \eqref{local killip visan 2}, where $\phi_\beta$ is deduced from \eqref{H1 proxy 3} and $\tilde{w}_n^k:=\ld_n^j G_n^j w_n^k$. This completes the proof of Step 2b and thus also the desired proof of Step 2.
\subsubsection*{Step 3: Reduction to one bad profile and conclusion.}
From Step 2 we conclude that there exists some $1\leq J_1\leq K^*$ such that
\begin{align}
\|v_n^j\|_{W_\tas\cap W_\tbs(\R)}&=\infty\quad \forall \,1\leq j\leq J_1,\label{infinite}\\
\|v_n^j\|_{W_\tas\cap W_\tbs(\R)}&<\infty\quad \forall \,J_1+1\leq j\leq K^*.
\end{align}
By Lemma \ref{persistance l2} and Lemma \ref{persistance h1} we deduce that $\ld_\infty^j=1$ for all $1\leq j\leq J_1$. If $J_1>1$, then using \eqref{mo sum}, \eqref{eo sum} and Lemma \ref{killip visan curve} (iv) we know that $\mD^*(v_n^i),\mD^*(v_n^j)<\mD^*$, which violates \eqref{infinite} due to the inductive hypothesis. Thus $J_1=1$ and
$$ \psi_n(0,x)=e^{it_n^1 \Delta}\tdu^1(x-x_n^1)+w_n^1(x).$$
In particular, $\tdu^1\in H^1(\R^d)$. Similarly, we must have $\mM(w_n^1)=o_n(1)$ and $\mH(w_n^1)=o_n(1)$, otherwise we could deduce the contradiction \eqref{contradiction 1} using Lemma \ref{long time pert}. Combining with Lemma \ref{killip visan curve} (v) we conclude that $\|w_n^1\|_{H^1}=o_n(1)$. Finally, we exclude the case $t^1_n\to\pm \infty$. We only consider the case $t_n^1\to \infty$, the case $t_n^1 \to -\infty$ can be similarly dealt. Indeed, using Strichartz we obtain that
\begin{align}
\|e^{it\Delta}\psi_n(0)\|_{W_{\tas}\cap W_\tbs([0,\infty))}\lesssim
\|e^{it\Delta}\tdu^1\|_{W_{\tas}\cap W_\tbs([t_n,\infty))}+\|w_n^1\|_{H^1}\to 0
\end{align}
and using Lemma \ref{well posedness lemma} we deduce the contradiction \eqref{contradiction 1} again. This completes the desired proof.
\end{proof}

\begin{lemma}[Existence of the minimal blow-up solution]\label{category 0 and 1}
Suppose that $\mD^*<\infty$. Then there exists a global solution $u_c$ of \eqref{NLS} such that $\mD(u_c)=\mD^*$ and
\begin{align}
\|u_c\|_{W_\tas\cap W_\tbs((-\infty,0])}=\|u_c\|_{W_\tas\cap W_\tbs([0,\infty))}=\infty.
\end{align}
Moreover, $u_c$ is almost periodic in $H^1(\R^d)$ modulo translations, i.e. the set $\{u(t):t\in\R\}$ is precompact in $H^1(\R^d)$ modulo translations.
\end{lemma}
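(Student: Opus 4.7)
The plan is to carry out the standard Kenig--Merle extraction of a minimal non-scattering solution, using Lemma \ref{Palais Smale} as the only compactness input. Starting from the sequence $(\psi_n)_n$ fixed before the statement (which satisfies \eqref{oo1}--\eqref{oo2}, and by time translation we may take the blow-up time $t_n\equiv 0$), I would first invoke Lemma \ref{Palais Smale} to extract a subsequence and spatial translations $(x_n)_n\subset\R^d$ with $\psi_n(0,\cdot+x_n)\to u_0$ strongly in $H^1(\R^d)$. Continuity of $\mM,\mH$ on $H^1$ together with \eqref{oo2} yields $\mD(u_0)=\mD^*$. Since $\mD^*\in(0,\infty)$, Lemma \ref{killip visan curve}(i)--(ii) excludes $u_0=0$ and produces $\mM(u_0)<\mM(Q)$ together with $\mH(u_0)<m_{\mM(u_0)}$; the remaining requirement $\mK(u_0)>0$ follows because $\mK(u_0)\leq 0$ would, via Lemma \ref{mtilde equal m} and the definition of $m_{\mM(u_0)}$, force $\mH(u_0)\geq m_{\mM(u_0)}$. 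Hence $u_0\in\mA$, and the maximal $H^1$-solution $u_c$ of \eqref{NLS} with $u_c(0)=u_0$ satisfies $u_c(t)\in\mA$ and $\mD(u_c(t))=\mD^*$ throughout its lifespan by Lemma \ref{invariant lemma} and mass/energy conservation.

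For the two-sided blow-up of the $W_\tas\cap W_\tbs$-norm, I would argue by contradiction: suppose $\|u_c\|_{W_\tas\cap W_\tbs([0,\sup I^c_{\max}))}<\infty$. By Lemma \ref{well posedness lemma} this forces $\sup I^c_{\max}=\infty$ and forward scattering. The translated solutions $\tilde\psi_n(t,x):=\psi_n(t,x+x_n)$ satisfy $\tilde\psi_n(0)\to u_c(0)$ in $H^1$ and, by Lemma \ref{killip visan curve}(v), are uniformly bounded in $L^\infty_tH^1_x$. Applying Lemma \ref{long time pert} on $[0,\sup I_n)$ with $u=\tilde\psi_n$, $w=u_c$, $e\equiv 0$ and $\beta=\|\tilde\psi_n(0)-u_c(0)\|_{H^1}\to 0$ would yield $\|\tilde\psi_n\|_{W_\tas\cap W_\tbs([0,\sup I_n))}<\infty$ for $n$ large, contradicting \eqref{oo1}. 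The analogous argument in the backward direction furnishes $\|u_c\|_{W_\tas\cap W_\tbs((\inf I^c_{\max},0])}=\infty$.

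The one genuinely subtle point is upgrading $u_c$ to a global solution, since $H^1$-boundedness alone does not preclude finite-time blow-up in the energy-critical regime. Assume for contradiction $T_{\max}:=\sup I^c_{\max}<\infty$ and choose $s_n\uparrow T_{\max}$. The constant sequence $\psi^\sharp_n\equiv u_c$ together with the times $s_n$ meets the hypotheses of Lemma \ref{Palais Smale}: the forward Strichartz norm on $[s_n,T_{\max})$ is infinite by the blow-up criterion of Lemma \ref{well posedness lemma}, and the backward one on $(\inf I^c_{\max},s_n]$ is infinite for all large $n$ thanks to the backward blow-up established above. The lemma then yields translations $(y_n)_n\subset\R^d$ and $v_0\in H^1(\R^d)$ with $u_c(s_n,\cdot+y_n)\to v_0$ in $H^1$. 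Since the local lifespan in Lemma \ref{well posedness lemma} is lower semicontinuous in the initial datum, the solution starting from $u_c(s_n,\cdot+y_n)$ exists on a uniform interval of length $\tau>0$ independent of $n$; for $s_n>T_{\max}-\tau$ this contradicts the maximality of $T_{\max}$. The backward half is identical, so $u_c$ is global. Finally, precompactness of $\{u_c(t,\cdot):t\in\R\}$ in $H^1$ modulo translations follows by applying Lemma \ref{Palais Smale} once more with $\psi_n\equiv u_c$ and an arbitrary sequence $(t_n)_n\subset\R$, whose preconditions now hold because of the two-sided Strichartz blow-up. The chief obstacle of the proof is precisely this globality step, where one has to re-feed the Palais--Smale lemma with a constant sequence in order to compactify $u_c$ at a putative finite blow-up time.
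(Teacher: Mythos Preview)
Your proof is correct and follows the same Kenig--Merle extraction scheme as the paper, invoking Lemma \ref{Palais Smale} first to extract the critical datum, again to rule out finite-time blow-up, and once more for almost periodicity. The only difference is the order of two steps: you establish the two-sided Strichartz blow-up \emph{before} globality (making the verification of \eqref{precondition} in the globality step fully explicit), whereas the paper proves globality first and deduces the two-sided blow-up afterwards via Lemma \ref{long time pert}; both orderings are standard and equivalent.
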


\begin{proof}
As discussed at the beginning of this section, under the assumption $\mD^*<\infty$ one can find a sequence such that \eqref{oo1} and \eqref{oo2} hold. We apply Lemma \ref{Palais Smale} to the sequence $(\psi_n(0))_n$ to infer that $(\psi_n(0))_n$ (up to modifying time and space translation) is precompact in $H^1(\R^d)$. We denote its strong $H^1$-limit by $\psi$. Let $u_c$ be the solution of \eqref{NLS} with $u_c(0)=\psi$. Then $\mD(u_c(t))=\mD(\psi)=\mD^*$ for all $t$ in the maximal lifespan $I_{\max}$ of $u_c$ (recall that $\mD$ is a conserved quantity by Lemma \ref{killip visan curve}).

We first show that $u_c$ is a global solution. We only show that $s_0:=\sup I_{\max}=\infty$, the negative direction can be similarly proved. If this does not hold, then by Lemma \ref{well posedness lemma} there exists a sequence $(s_n)_n\subset \R$ with $s_n\to s_0$ such that
\begin{align*}
\limsup_{n\to\infty}\|u_c\|_{W_\tas\cap W_\tbs([s_n,s_0))}=\infty.
\end{align*}
Define $\psi_n(t):=u_c(t+s_n)$. Then \eqref{precondition} is satisfied with $t_n\equiv 0$. We then apply Lemma \ref{Palais Smale} to the sequence $(\psi_n(0))_n$ to infer that there exists some $\varphi\in H^1(\R^d)$ such that, up to modifying the space translation, $u_c(s_n)$ strongly converges to $\varphi$ in $H^1(\R^d)$. But then using Strichartz we obtain
\begin{align*}
\|e^{it\Delta}u_c(s_n)\|_{W_\tas\cap W_\tbs([s_n,s_0))}=\|e^{it\Delta}\varphi\|_{W_\tas\cap W_\tbs([s_n,s_0))}+o_n(1)=o_n(1).
\end{align*}
By Lemma \ref{well posedness lemma} we can extend $u_c$ beyond $s_0$, which contradicts the maximality of $s_0$. Now by \eqref{oo1} and Lemma \ref{long time pert} it is necessary that
\begin{align}\label{blow up uc}
\|u_c\|_{W_\tas\cap W_\tbs((-\infty,0])}=\|u_c\|_{W_\tas\cap W_\tbs([0,\infty))}=\infty.
\end{align}

We finally show that the orbit $\{u_c(t):t\in\R\}$ is precompact in $H^1(\R^d)$ modulo translations. Let $(\tau_n)_n\subset\R$ be an arbitrary time sequence. Then \eqref{blow up uc} implies
\begin{align*}
\|u_c\|_{W_\tas\cap W_\tbs((-\infty,\tau_n])}=\|u_c\|_{W_\tas\cap W_\tbs([\tau_n,\infty))}=\infty.
\end{align*}
The claim follows by applying Lemma \ref{Palais Smale} to $(u_c(\tau_n))_n$.
\end{proof}

\subsection{Extinction of the minimal blow-up solution}\label{Extinction of the minimal blow-up solution}
The following lemma is an immediate consequence of the fact that $u_c$ is almost periodic in $H^1(\R^d)$ and conservation of momentum. The proof is standard, we refer to \cite{non_radial} for the details of the proof.
\begin{lemma}\label{holmer}
Let $u_c$ be the minimal blow-up solution given by Lemma \ref{category 0 and 1}. Then there exists some function $x:\R\to\R^d$ such that
\begin{itemize}
%\item[(i)] There exists a continuous function $x:[0,\infty)\to\R^2$ such that the set $\{u_c(t,\cdot-x(t)):t\geq 0\}$ is relatively compact in $H^1(\R^2)$.

\item[(i)] For each $\vare>0$, there exists $R>0$ so that
\begin{align}
\int_{|x+x(t)|\geq R}|\nabla u_c(t)|^2+|u_c(t)|^2+|u_c|^\tas+|u_c|^\tbs\,dx\leq\vare\quad\forall\,t\in\R.
\end{align}

%\item[(iii)] The momentum $\mathcal{P}(u_c)=\mathrm{Im}\int\bar{u}_c\nabla u_c\,dx$ is zero.

\item[(ii)] The center function $x(t)$ obeys the decay condition $x(t)=o(t)$ as $|t|\to\infty$.
\end{itemize}
\end{lemma}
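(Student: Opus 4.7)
The plan is to construct $x(t)$ directly from the almost periodicity modulo translations of $u_c$ established in Lemma~\ref{category 0 and 1}, and then to prove part (ii) by a standard truncated center-of-mass argument relying on conservation of momentum. For part (i), precompactness of $\{u_c(t)\}_{t\in\R}$ in $H^1(\R^d)$ modulo translations yields, for each $t\in\R$, a vector $-x(t)\in\R^d$ (chosen, say, to minimize the distance of $u_c(t,\cdot+x(t))$ to a fixed relatively compact representative set) such that $\{u_c(t,\cdot+x(t)):t\in\R\}$ is precompact in $H^1(\R^d)$. Precompactness in $H^1$ is equivalent to uniform tightness, so for every $\varepsilon>0$ there is $R>0$ with $\|\la\nabla\ra u_c(t,\cdot+x(t))\|_{H^1(\{|y|\ge R\})}\le\varepsilon$ uniformly in $t$; after changing variables $y\mapsto x+x(t)$ and using Sobolev/Gagliardo--Nirenberg to control the $L^{\tas}$ and $L^{\tbs}$ tails by the $H^1$ tail, item (i) follows. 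A continuous (in fact measurable will suffice) selection of $x(t)$ can be arranged using the stability of the minimization.

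For part (ii), I would first reduce to the case of vanishing momentum. Since \eqref{NLS double crit} is Galilean invariant and $\mathcal{P}$ is conserved, replacing $u_c$ by its Galilean boost with parameter $\xi_0:=\tfrac{1}{2}\mathcal{P}(u_c)/\mM(u_c)$ preserves the mass, energy, virial sign and almost periodicity, and makes $\mathcal{P}(u_c)=0$; the center $x(t)$ is only modified by an additive linear drift, which does not affect the $o(t)$ claim. Introduce the truncated spatial moment
\begin{align*}
Z_R(t):=\int_{\R^d}\chi\bg(\tfrac{x}{R}\bg)\,x\,|u_c(t,x)|^2\,dx,
\end{align*}
for a smooth cut-off $\chi$ equal to $1$ on $|x|\le 1$ and supported in $|x|\le 2$. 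A direct computation using the equation gives
\begin{align*}
Z_R'(t)=2\int_{\R^d}\chi\bg(\tfrac{x}{R}\bg)\mathrm{Im}(\bar u_c\nabla u_c)(t,x)\,dx+\frac{1}{R}\int_{\R^d}x\,(\nabla\chi)\bg(\tfrac{x}{R}\bg)|u_c|^2\,dx,
\end{align*}
and both terms approach the corresponding untruncated quantities (which are $2\mathcal{P}(u_c)=0$ and $0$, respectively) uniformly in $t$ as $R\to\infty$, thanks to the uniform tightness from part (i). Hence $\sup_t|Z_R'(t)|=:\eta(R)\to 0$ as $R\to\infty$.

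The contradiction step runs as follows. Assume that $x(t)\neq o(t)$, i.e.\ there exist $\varepsilon_0>0$ and $|t_n|\to\infty$ with $|x(t_n)|\ge\varepsilon_0|t_n|$. By the tightness provided by (i), for any $\delta>0$ we may choose $R$ so large that $R\ge 2\sup_t|x(t)|$ fails only on a set where the mass is $\le\delta$, which combined with the uniform concentration gives $Z_R(t)=-x(t)\mM(u_c)+O_\delta(1)+o_R(1)|x(t)|$ for $R$ large (compared with $|x(t)|$). Since $|x(t_n)|\to\infty$, we may pass to a diagonal sequence where $R=R_n$ is chosen with $R_n\gg|x(t_n)|$ and $\eta(R_n)\to 0$, so that on the one hand $|Z_{R_n}(t_n)|\gtrsim|x(t_n)|\gtrsim\varepsilon_0|t_n|$, while on the other hand, from $Z_{R_n}(t_n)-Z_{R_n}(0)=\int_0^{t_n}Z_{R_n}'(s)\,ds$, we have $|Z_{R_n}(t_n)|\le|Z_{R_n}(0)|+\eta(R_n)|t_n|=o(|t_n|)$. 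This contradiction forces $x(t)=o(t)$. The main technical obstacle is justifying that the boundary/commutator error terms in the computation of $Z_R'(t)$ are uniformly small in $t$; this is where one crucially uses item (i) of the lemma and the uniform $H^1$-boundedness of $u_c$ to close the estimates.
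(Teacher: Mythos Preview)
Your overall strategy matches the standard argument the paper defers to: part (i) is a direct consequence of precompactness modulo translations in $H^1$, and part (ii) follows from a truncated center-of-mass computation once one knows $\mathcal P(u_c)=0$. The paper itself gives no proof here, only a reference.

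However, your reduction to zero momentum is incorrect as written. The Galilean boost does \emph{not} preserve the energy: with $v(t,x)=e^{i\xi\cdot x}e^{-it|\xi|^2}u_c(t,x-2\xi t)$ one has $\|\nabla v\|_2^2=\|\nabla u_c\|_2^2+2\xi\cdot\mathcal P(u_c)+|\xi|^2\mM(u_c)$ while all $L^p$-norms are unchanged, so a boost in the direction $-\mathcal P(u_c)/\mM(u_c)$ strictly \emph{lowers} $\mH$ when $\mathcal P(u_c)\neq 0$; moreover the induced linear drift $2\xi t$ on the center function certainly destroys an $o(t)$ conclusion, contrary to your claim. The correct argument is not to replace $u_c$ but to show that the minimal blow-up solution \emph{must} satisfy $\mathcal P(u_c)=0$: if not, take $\xi=-s\,\mathcal P(u_c)/\mM(u_c)$ with $s\in(0,1]$ small enough that $\mK(v)>0$ (possible by continuity); then $v\in\mA$ with $\mM(v)=\mM(u_c)$ and $\mH(v)<\mH(u_c)$, hence $\mD(v)<\mD^*$ by Lemma~\ref{killip visan curve}(iv), yet $\|v\|_{W_\tas\cap W_\tbs(\R)}=\|u_c\|_{W_\tas\cap W_\tbs(\R)}=\infty$, contradicting the definition of $\mD^*$. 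Two smaller issues: the second term in your expression for $Z_R'(t)$ should carry $\mathrm{Im}(\bar u_c\nabla u_c)$ rather than $|u_c|^2$ (it still lives in the annulus $\{R\le|x|\le 2R\}$ and is small by tightness), and $|Z_{R_n}(0)|$ is not bounded uniformly in $R_n$; in the standard scheme one instead fixes $R=R_0+\sup_{[0,t_n]}|x(t)|$ and closes the estimate from there.
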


\begin{proof}[Proof of Theorem \ref{main theorem} for the focusing-focusing regime]
We will show the contradiction that the minimal blow-up solution $u_c$ given by Lemma \ref{category 0 and 1} is equal to zero, which will finally imply Theorem \ref{main theorem} for the focusing-focusing case. Let $\chi$ be a smooth radial cut-off function satisfying
\begin{align*}
\chi=\left\{
             \begin{array}{ll}
             |x|^2,&\text{if $|x|\leq 1$},\\
             0,&\text{if $|x|\geq 2$}.
             \end{array}
\right.
\end{align*}
Define also the local virial function
\begin{align*}
z_{R}(t):=\int R^2\chi\bg(\frac{x}{R}\bg)|u_c(t,x)|^2\,dx.
\end{align*}
Direct calculation yields
\begin{align}
\pt_t z_R(t)=&\,2\,\mathrm{Im}\int R\nabla \chi\bg(\frac{x}{R}\bg)\cdot\nabla u_c(t)\bar{u}_c(t)\,dx,\label{final4}\\
\pt_{tt} z_R(t)=&\,4\int \pt^2_{jk}\chi\bg(\frac{x}{R}\bg)\pt_j u_c\pt_k\bar{u}_c-\frac{1}{R^2}\int\Delta^2\chi\bg(\frac{x}{R}\bg)|u_c|^2\nonumber\\
&\,-\frac{4}{d+2}\int\Delta\chi\bg(\frac{x}{R}\bg)|u_c|^\tas\,dx-\frac{4}{d}\int\Delta\chi\bg(\frac{x}{R}\bg)|u_c|^\tbs\,dx.
\end{align}
We then obtain that
\begin{align}\label{final 2}
\pt_{tt} z_R(t)=8\mK(u_c)+A_R(u_c(t)),
\end{align}
where
\begin{align*}
A_R(u_c(t))=&\,4\int\bg(\pt_{jj}\chi\bg(\frac{x}{R}\bg)-2\bg)|\pt_j u_c|^2+4\sum_{j\neq k}\int_{R\leq|x|\leq 2R}\pt_{jk}\chi\bg(\frac{x}{R}\bg)\pt_j u\pt_k\bar{u}_c\nonumber\\
&\,-\frac{1}{R^2}\int\Delta^2\chi\bg(\frac{x}{R}\bg)|u_c|^2
-\frac{4}{d+2}\int\bg(\Delta\chi\bg(\frac{x}{R}\bg)-2d\bg)|u_c|^\tas\,dx\nonumber\\
&\,-\frac{4}{d}\int\bg(\Delta\chi\bg(\frac{x}{R}\bg)-2d\bg)|u_c|^\tbs\,dx.
\end{align*}
We thus infer the estimate
\begin{align*}
|A_R(u(t))|\leq C_1\int_{|x|\geq R}|\nabla u(t)|^2+\frac{1}{R^2}|u(t)|^2+|u|^\tas+|u|^\tbs
\end{align*}
for some $C_1>0$. Assume that $\mM(u_c)=(1-\delta)^{\frac{d}{2}}\mM(Q)$ for some $\delta\in(0,1)$. Using \eqref{lower bound kt} we deduce that
\begin{align}\label{small extinction ff}
\mK(u_c(t))\geq\min\bg\{\frac{4\delta}{d}\mH(u(0)),
\bg(\bg(\frac{d}{\delta(d-2)}\bg)^{\frac{d-2}{4}}-1\bg)^{-1}\bg(m_{\mM(u(0))}-\mH(u(0))\bg)\bg\}
=: \frac{\eta_1}{4}
\end{align}
for all $t\in\R$. From Lemma \ref{holmer} it follows that there exists some $R_0\geq 1$ such that
\begin{align*}
\int_{|x+x(t)|}|\nabla u_c|^2+|u_c|^2+|u|^\tas+|u|^\tbs\,dx\leq \frac{\eta}{C_1}.
\end{align*}
Thus for any $R\geq R_0+\sup_{t\in[t_0,t_1]}|x(t)|$ with some to be determined $t_0,t_1\in[0,\infty)$, we have
\begin{align}\label{final3}
\pt_{tt} z_R(t)\geq \eta_1
\end{align}
for all $t\in[t_0,t_1]$. By Lemma \ref{holmer} we can choose $t_0$ sufficiently large such that there exists some $\eta_2$ to be determined later (and can be chosen sufficiently small) such that $|x(t)|\leq\eta_2 t$ for all $t\geq t_0$. Now set $R=R_0+\eta_2 t_1$. Integrating \eqref{final3} over $[t_0,t_1]$ yields
\begin{align}\label{12}
\pt_t z_R(t_1)-\pt_t z_R(t_0)\geq \eta_1 (t_1-t_0).
\end{align}
Using \eqref{final4}, Cauchy-Schwarz and Lemma \ref{killip visan curve} we have
\begin{align}\label{13}
|\pt_t z_{R}(t)|\leq C_2 \mD^*R= C_2 \mD^*(R_0+\eta_2 t_1)
\end{align}
for some $C_2=C_2(\mD^*)>0$. \eqref{12} and \eqref{13} give us
\begin{align*}
2C_2 \mD^*(R_0+\eta_2 t_1)\geq\eta_1 (t_1-t_0).
\end{align*}
Setting $\eta_2=\frac{1}{4C_2\mD^*}$ and then sending $t_1$ to infinity we will obtain a contradiction unless $\eta_1=0$, which implies $\mH_0=\mH(u_c)=0$. From Lemma \ref{killip visan curve} we know that $\nabla u_c=0$, which implies $u_c=0$. This completes the proof.
\end{proof}

\section{Scattering threshold for the focusing-defocusing \eqref{NLS double crit}} \label{section d f}
In this Section we prove Theorem \ref{main theorem} for the defocusing-focusing model and Proposition \ref{proposition mc for df}. Throughout the section, we assume that \eqref{NLS double crit} reduces to
\begin{align}\label{NLSdf}
i\pt u+\Delta u -|u|^{\frac{4}{d}}u+|u|^{\frac{4}{d-2}}u=0
\end{align}
We also define the set $\mA$ by
\begin{align*}
\mA:=\{u\in H^1(\R^d):\mH(u)<\mH^*(W),\,\mK(u)>0\}.
\end{align*}

\subsection{Variational formulation for $m_c$}
\begin{lemma}\label{mtilde equal m df}
The following statements hold true:
\begin{itemize}
\item[(i)]Let $u\in H^1(\R^d)\setminus\{0\}$. Then there exists a unique $\ld(u)>0$ such that
\begin{equation*}
\mK(T_\ld u)\left\{
             \begin{array}{ll}
             >0, &\text{if $\ld\in(0,\ld(u))$},  \\
             =0,&\text{if $\ld=\ld(u)$},\\
             <0,&\text{if $\ld\in(\ld(u),\infty)$}.
             \end{array}
\right.
\end{equation*}

\item[(ii)] The mapping $c\mapsto m_c$ is continuous and monotone decreasing on $(0,\infty)$.

\item[(iii)]Let
\begin{align*}
%\hat{m}_\delta&:=\inf_{u\in H^1(\R^d)\setminus\{0\}}\{\mH(u):\|u\|_2\leq {}{\mM(Q)},\mK(u)=0\},\\
\tilde{m}_{c}&:=\inf_{u\in H^1(\R^d)}\{\mI(u):\mM(u)= c,\,\mK(u)\leq 0\}.
\end{align*}
Then $m_{c}=\tm_c$ for any $c\in(0,\infty)$.
\end{itemize}
\end{lemma}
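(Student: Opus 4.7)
The three parts all follow the blueprint of the focusing-focusing section (Lemma \ref{positive or negative k}, Lemma \ref{monotone lemma}, Lemma \ref{mtilde equal m}), with the crucial simplification that in the defocusing-focusing regime the kinetic and mass-critical terms in both $\mH$ and $\mK$ carry the \emph{same} sign. In particular, no Gagliardo--Nirenberg-based mass constraint of the form $\mM(u)<\mM(Q)$ is required, and the curve $c\mapsto m_c$ is defined on all of $(0,\infty)$.

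\textbf{Part (i).} I write out the scaling as in Lemma \ref{positive or negative k}:
\begin{align*}
\mK(T_\ld u)=\ld^2\bg(\|\nabla u\|_2^2+\tfrac{d}{d+2}\|u\|_\tas^\tas\bg)-\ld^{\tbs}\|u\|_\tbs^\tbs=:\ld^2 A(u)-\ld^{\tbs}B(u).
\end{align*}
For any $u\neq 0$ we have $A(u)>0$ and $B(u)>0$ \emph{unconditionally}, in contrast to Lemma \ref{positive or negative k}. Since $\tbs>2$, the one-variable function $\ld\mapsto\ld^2A-\ld^{\tbs}B$ is positive near $\ld=0$, attains a unique global maximum, and decreases to $-\infty$, giving a unique zero $\ld(u)\in(0,\infty)$ with the claimed sign pattern.

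\textbf{Part (ii).} Monotonicity follows the truncation-plus-bubble construction of Lemma \ref{monotone lemma}. Given $0<c_1<c_2$ and $\vare>0$, fix $u_1$ with $\mM(u_1)=c_1$, $\mK(u_1)=0$ and $\mH(u_1)\leq m_{c_1}+\vare/2$; because $\mK(u_1)=0$, part (i) tells us $\max_{t>0}\mH(T_tu_1)=\mH(u_1)$. The key continuity input is the explicit fibering
\begin{align*}
\max_{t>0}\mH(T_tu)=\max_{t>0}\bg\{t^2\bg(\tfrac{1}{2}\|\nabla u\|_2^2+\tfrac{1}{\tas}\|u\|_\tas^\tas\bg)-\tfrac{t^{\tbs}}{\tbs}\|u\|_\tbs^\tbs\bg\},
\end{align*}
whose coefficient of $t^2$ is automatically positive (no mass threshold needed), so the function $f(a,b)=\max_{t>0}(at^2-bt^{\tbs})$ from Lemma \ref{monotone lemma} is continuous on $(0,\infty)^2$. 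Truncating to $\tilde{u}_{1,\delta}=\eta(\delta\cdot)u_1$, adding a disjointly supported bump $v_0$ of mass $c_2-\mM(\tilde u_{1,\delta})$, and rescaling the bump by $T_\ld$ with $\ld\downarrow 0$ yields a competitor $w_\ld$ with $\mM(w_\ld)=c_2$ and $\max_{t>0}\mH(T_tw_\ld)\leq \mH(u_1)+\vare/2$; part (i) then produces a rescaling of $w_\ld$ lying on $\{\mK=0\}$, giving $m_{c_2}\leq m_{c_1}+\vare$. For continuity, monotonicity already yields one-sided limits, so it suffices to handle, say, $c_n\downarrow c$. Take near-minimizers $u_n$ of $m_{c_n}$ and set $\tilde u_n=(c/c_n)^{1/2}u_n$. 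The identity
\begin{align*}
\mH(u_n)-\tfrac{1}{\tbs}\mK(u_n)=\tfrac{1}{d}\|\nabla u_n\|_2^2+\tfrac{1}{d+2}\|u_n\|_\tas^\tas,
\end{align*}
combined with $\mK(u_n)=0$ and $\mH(u_n)\to m_c$, bounds $(u_n)$ in $H^1$; Sobolev applied to $\mK(u_n)=0$ gives the lower bounds $\|\nabla u_n\|_2,\|u_n\|_\tbs\gtrsim 1$, so the values $(\|\nabla u_n\|_2^2,\|u_n\|_\tas^\tas,\|u_n\|_\tbs^\tbs)$ stay in a compact subset of $(0,\infty)^3$ and the continuity of $f$ applies. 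I conclude $m_c\leq\max_{t>0}\mH(T_t\tilde u_n)\leq \mH(u_n)+o_n(1)\leq m_{c_n}+\vare+o_n(1)$.

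\textbf{Part (iii).} Any $u$ with $\mM(u)=c$, $\mK(u)=0$ satisfies $\mH(u)=\mI(u)$ and is admissible for $\tm_c$, so $\tm_c\leq m_c$. Conversely, take a minimizing sequence $(u_n)$ for $\tm_c$ with $\mM(u_n)=c$ and $\mK(u_n)\leq 0$. By part (i), there is a unique $\ld_n\in(0,1]$ with $\mK(T_{\ld_n}u_n)=0$, and since $T_{\ld_n}$ preserves the $L^2$-norm, $T_{\ld_n}u_n$ is admissible for $m_c$. Using $\mI(v)=\tfrac{1}{d}\|v\|_\tbs^\tbs$ and the identity $\|T_\ld v\|_\tbs^\tbs=\ld^{\tbs}\|v\|_\tbs^\tbs$, we get $\mI(T_{\ld_n}u_n)=\ld_n^{\tbs}\mI(u_n)\leq\mI(u_n)$, hence
\begin{align*}
m_c\leq \mH(T_{\ld_n}u_n)=\mI(T_{\ld_n}u_n)\leq \mI(u_n)\longrightarrow \tm_c.
\end{align*}

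\textbf{Main obstacle.} The most delicate step is the continuity of $c\mapsto m_c$: because the minimizing sequence $(u_n)$ is only known a priori in $H^1$, one must extract quantitative non-degeneracy of $\|u_n\|_\tbs$ and $\|\nabla u_n\|_2$ before invoking the continuity of the explicit envelope $f$. The argument hinges on combining $\mK(u_n)=0$ with the sharp Sobolev inequality, exactly as in the last step of Lemma \ref{monotone lemma}.
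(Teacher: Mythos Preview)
Your proof is correct and follows exactly the approach the paper indicates: the paper's own proof of this lemma is a one-line reference (``a straightforward modification of Lemma \ref{positive or negative k}, Lemma \ref{monotone lemma} and Lemma \ref{mtilde equal m}''), and you have carried out that modification in detail, correctly identifying that the defocusing mass-critical term makes the coefficient $A(u)=\|\nabla u\|_2^2+\tfrac{d}{d+2}\|u\|_\tas^\tas$ positive for every $u\neq 0$, which removes the $\mM(u)<\mM(Q)$ constraint throughout. One minor imprecision: you assert that $(\|\nabla u_n\|_2^2,\|u_n\|_\tas^\tas,\|u_n\|_\tbs^\tbs)$ stays in a compact subset of $(0,\infty)^3$, but you have no lower bound on $\|u_n\|_\tas$; this is harmless since the continuity of $f(a,b)$ only requires the two coefficients $a=\tfrac{1}{2}\|\nabla u_n\|_2^2+\tfrac{1}{\tas}\|u_n\|_\tas^\tas$ and $b=\tfrac{1}{\tbs}\|u_n\|_\tbs^\tbs$ to remain in a compact subset of $(0,\infty)^2$, which your gradient and $L^{\tbs}$ lower bounds already guarantee.
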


\begin{proof}
This is a straightforward modification of Lemma \ref{positive or negative k}, Lemma \ref{monotone lemma} and Lemma \ref{mtilde equal m}, we therefore omit the details here.
\end{proof}

\begin{lemma}\label{no hat equal hat}
Let $\mK^c(u):=\|\nabla u\|_2^2-\|u\|_\tbs^\tbs$ and
\begin{align}
\widehat{m}_c:=\inf_{u\in H^1(\R^d)}\{\mI(u):\mM(u)=c,\,\mK^c(u)\leq 0\}.
\end{align}
Then $m_c=\hat{m}_c$ for any $c\in(0,\infty)$.
\end{lemma}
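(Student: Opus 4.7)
The plan is to establish $\widehat{m}_c = m_c$ by proving both inequalities separately. The easy direction $\widehat{m}_c \leq m_c$ follows from the pointwise identity
\[
\mK(u) = \mK^c(u) + \tfrac{d}{d+2}\|u\|_\tas^\tas \geq \mK^c(u),
\]
which gives the inclusion $\{u : \mK(u) \leq 0\} \subseteq \{u : \mK^c(u) \leq 0\}$; combined with Lemma \ref{mtilde equal m df}(iii), this yields $\widehat{m}_c \leq \tm_c = m_c$.

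For the reverse inequality $m_c \leq \widehat{m}_c$, I would fix an arbitrary $u \in H^1(\R^d)$ with $\mM(u) = c$ and $\mK^c(u) \leq 0$ and aim to construct a sequence $(v_\mu)_\mu$ with $\mM(v_\mu) = c$, $\mK(v_\mu) < 0$ for all large $\mu$, and $\mI(v_\mu) \to \mI(u)$; Lemma \ref{mtilde equal m df}(iii) then delivers $m_c = \tm_c \leq \mI(u)$, and taking the infimum produces $m_c \leq \widehat{m}_c$. A preliminary reduction to the strict case $\mK^c(u) < 0$ is obtained using the mass-preserving scaling $T_\ld$: indeed $\mK^c(T_\ld u) = \ld^2\|\nabla u\|_2^2 - \ld^\tbs\|u\|_\tbs^\tbs < 0$ for every $\ld > 1$, while $\mI(T_\ld u) = \ld^\tbs \mI(u) \to \mI(u)$ as $\ld \to 1^+$.

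The construction then relies on the $\tbs$-invariant (equivalently $\dot{H}^1$-critical) rescaling $u_\mu(x) := \mu^{(d-2)/2}u(\mu x)$, under which $\|\nabla u_\mu\|_2$ and $\|u_\mu\|_\tbs$ are unchanged, while $\|u_\mu\|_2^2 = \mu^{-2}\|u\|_2^2$ and $\|u_\mu\|_\tas^\tas = \mu^{-4/d}\|u\|_\tas^\tas$. This dilation alone produces $\mK(u_\mu) \to \mK^c(u) < 0$ and $\mI(u_\mu) = \mI(u)$, but annihilates the mass. To restore it (after a routine density reduction replacing $u$ by a compactly supported approximant), I fix $\phi \in C_c^\infty(\R^d)$ with $\|\phi\|_2 = 1$ and $\mathrm{supp}\,\phi \subset B_1(0)$, and set
\[
w_\mu(x) := R_\mu^{-d/2}\sqrt{c(1-\mu^{-2})}\,\phi\bg(\frac{x-y_\mu}{R_\mu}\bg),\qquad v_\mu := u_\mu + w_\mu,
\]
choosing $R_\mu \to \infty$ and $|y_\mu|$ large enough that the supports of $u_\mu$ and $w_\mu$ are disjoint. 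A direct scaling count gives $\mM(v_\mu) = c$ together with $\|\nabla w_\mu\|_2^2 \sim R_\mu^{-2}$, $\|w_\mu\|_\tas^\tas \sim R_\mu^{-2}$ and $\|w_\mu\|_\tbs^\tbs \sim R_\mu^{-2d/(d-2)}$, all vanishing as $\mu \to \infty$; by disjointness of supports one then gets $\mK(v_\mu) = \mK(u_\mu) + \mK(w_\mu) \to \mK^c(u) < 0$ and $\mI(v_\mu) = \mI(u_\mu) + \mI(w_\mu) \to \mI(u)$, as required.

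The main (essentially only) obstacle is the two-scale bookkeeping: the $\dot{H}^1$-critical dilation that annihilates the subcritical $\|u\|_\tas$ contribution in $\mK$ simultaneously kills the mass, so the mass has to be reinjected by a low-amplitude, widely spread perturbation $w_\mu$ whose contributions to $\mK$ and $\mI$ remain negligible. Balancing the $R_\mu^{-d/2}$ amplitude against the growing spatial scale $R_\mu \to \infty$, together with the preliminary strict-inequality reduction via $T_\ld$, is precisely what makes the above construction go through.
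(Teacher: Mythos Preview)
Your proof is correct and shares the essential core with the paper's argument: both reduce to the strict case $\mK^c(u)<0$ via the mass-preserving scaling $T_\ld$ with $\ld>1$, and both then exploit the $\dot{H}^1$-critical dilation $U_\mu u(x)=\mu^{(d-2)/2}u(\mu x)$, under which $\mI$ and $\mK^c$ are conserved while the subcritical contribution $\|u\|_\tas^\tas$ in $\mK$ is driven to zero.

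The one genuine difference is how the mass constraint is handled after dilation. The paper avoids your bump-function construction entirely by first rewriting, via the monotonicity in Lemma~\ref{mtilde equal m df}(ii),
\[
\tilde m_c=\inf\{\mI(u):\mM(u)\in(0,c],\ \mK(u)\le 0\},
\]
so that the inequality $\mM(U_\mu u)=\mu^{-2}c\le c$ suffices and no mass needs to be restored. Your approach instead keeps the exact constraint $\mM=c$ and reinjects the lost mass through a wide, low-amplitude bump $w_\mu$ with disjoint support, checking that its contributions to $\mK$ and $\mI$ are $o(1)$. Both routes are clean; the paper's is shorter because it leans on monotonicity already proved, while yours is more self-contained and would work even without that lemma. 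The density reduction you invoke (replacing $u$ by a $C_c^\infty$ approximant) is indeed routine: $\mK^c$ and $\mI$ are $H^1$-continuous, and any small mass defect is absorbed into the coefficient of $w_\mu$.
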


\begin{proof}
If $\mM(u)=c$ and $\mK(u)=0$, then it is clear that $\mK^c(u)<0$ and $\mH(u)=\mI(u)$, which implies $m_c\geq \widehat{m}_c$. For the inverse direction, in view of Lemma \ref{mtilde equal m df}, it suffices to show $\tilde{m}_c\leq \widehat{m}_c$. By Lemma \ref{mtilde equal m df} we can further define $\tilde{m}_c$ by
\begin{align}\label{charac m tilde c}
%\hat{m}_\delta&:=\inf_{u\in H^1(\R^d)\setminus\{0\}}\{\mH(u):\|u\|_2\leq {}{\mM(Q)},\mK(u)=0\},\\
\tilde{m}_{c}&=\inf_{u\in H^1(\R^d)}\{\mI(u):\mM(u)\in(0,c],\,\mK(u)\leq 0\}.
\end{align}
Assume that $u\in H^1(\R^d)$ with $\mM(u)=c$ and $\mK^c(u)\leq 0$. Then
\begin{align}
\frac{d}{dt}\mK^c(T_t u)\bg|_{t=1}=2\mK^c(u)-\frac{4}{d-2}\|u\|_\tbs^\tbs<0.
\end{align}
Hence there exists some sufficiently small $\delta>0$ such that $\mK^c(T_t u)<0$ for all $t\in(1,1+\delta)$. In particular,
\begin{align*}
\mI(T_t u)\to\mI(u),\, \mK^c(T_t u)\to \mK^c(u)\quad\text{as $t\downarrow 1$}.
\end{align*}
We now define
$$ U_\ld u(x):=\ld^{\frac{d-2}{2}}u(\ld x).$$
Then $\mK^c(U_\ld u)=\mK^c(u)$ and $\mI(U_\ld u)=\mI(u)$ for any $\ld>0$. Moreover,
\begin{align}
\mK(U_\ld u)&=\mK^c(u)+\frac{2\ld^{-\frac{4}{d}}}{d+2}\|u\|_\tas^\tas\to \mK^c(u),\\
\mM(U_\ld u)&=\ld^{-2}\mM(u)\to 0\label{smallness uldu}
\end{align}
as $\ld\to\infty$. Let $\vare>0$ be an arbitrary positive number. We can then find some $t>1$ sufficiently close to $1$ such that
\begin{align*}
|\mI(T_t u)-\mI(u)|\leq \vare.
\end{align*}
Moreover, we can further find some sufficiently large $\ld=\ld(t)$ such that $\mK(U_\ld T_t u)<0$. Then by \eqref{charac m tilde c} and \eqref{smallness uldu} we infer that
\begin{align*}
\mI(u)\geq \mI(U_\ld T_t u)-\vare\geq \tilde{m}_c-\vare.
\end{align*}
The claim follows by the arbitrariness of $u$ and $\vare$.
\end{proof}

\begin{proof}[Proof of Proposition \ref{proposition mc for df}]
Let $c>0$ and let $u_\vare\in C_c^\infty(\R^d)$ with $\|u_\vare-W\|_{\dot{H}^1}\leq \vare$ for some given small $\vare>0$. We define
$$ v_\vare:=\sqrt{\frac{c}{\mM(u_\vare)}}\,u_\vare,$$
Then $\mM(v_\vare)=c$. Let $t_\vare\in(0,\infty)$ be given such that $ \mK^c(T_{t_\vare}v_\vare)=0$. Direct calculation yields
\begin{align}
t_\vare=\bg(\frac{\|\nabla v_\vare\|_2^2}{\|v_\vare\|_\tbs^\tbs}\bg)^{\frac{d-2}{4}}.
\end{align}
By Lemma \ref{no hat equal hat} we have
\begin{align}\label{complicated}
m_c\leq \mI(T_{t_\vare}v_\vare)=\frac{1}{d}\bg(\frac{\|\nabla v_\vare\|_2^2}{\|v_\vare\|_\tbs^2}\bg)^{\frac{d}{2}}
=\frac{1}{d}\bg(\frac{\|\nabla u_\vare\|_2^2}{\|u_\vare\|_\tbs^2}\bg)^{\frac{d}{2}}.
\end{align}
Taking $\vare\to 0$ we immediately conclude that $m_c\leq \frac{1}{d}\cdot\bg(\frac{\|\nabla W\|_2^2}{\|W\|_\tbs^2}\bg)^{\frac{d}{2}}=\mH^*(W).$ On the other hand, one easily verifies that
$$ \mK^c(u)\leq 0\Rightarrow\mI(u)\geq \bg(\frac{\|\nabla u\|_2^2}{\|u\|_\tbs^2}\bg)^{\frac{d}{2}}.$$
But by Sobolev inequality we always have $\bg(\frac{\|\nabla u\|_2^2}{\|u\|_\tbs^2}\bg)^{\frac{d}{2}}\geq\mS^{\frac{d}{2}}=d\mH^*(W)$. Hence $m_c=\mH^*(W)$. By \cite[Thm. 1.2]{SoaveCritical}, any optimizer $P$ of $m_c$ must satisfy $\mH(P)>\mH^*(W)$, which is a contradiction. This completes the proof of Proposition \ref{proposition mc for df}.
\end{proof}

\subsection{Scattering for the defocusing-focusing \eqref{NLS double crit}}
In this section we establish similar variational estimates as the ones given in Section \ref{variational arguments ff}. The scattering result then follows from the variational estimates by using the arguments given in Section \ref{Existence of the minimal blow-up solution} and \ref{Extinction of the minimal blow-up solution} verbatim.

\begin{lemma}\label{bound of gradient by energy df}
The following statements hold true:
\begin{itemize}
\item[(i)]Assume that $\mK(u)\geq 0$. Then $\mH(u)\geq 0$. If additionally $\mK(u)> 0$, then also $\mH(u)> 0$.

\item[(ii)]Let $u\in \mA$. Then
\begin{align}
\|u\|_\tbs^\tbs&\leq \|\nabla u\|_2^2+\frac{d}{d+2}\|u\|_\tas^\tas,\label{bound of tbs by 2 df}\\
\frac{1}{d}\bg(\|\nabla u\|_2^2+\frac{d}{d+2}\|u\|_\tas^\tas\bg)&\leq \mH(u)\leq \frac{1}{2}\bg(\|\nabla u\|_2^2+\frac{d}{d+2}\|u\|_\tas^\tas\bg).\label{third df}
\end{align}

\item[(iii)]Let $u$ be a solution of \eqref{NLSdf} with $u(0)\in {\mA}$. Then $u(t)\in {\mA}$ for all $t$ in the maximal lifespan. Moreover, we have
\begin{align}\label{lower bound kt df}
&\,\inf_{t\in I_{\max}}\mK(u(t))\nonumber\\
\geq&\,\min\bg\{\frac{4}{d}\mH(u(0)),
\bg(\bg(\frac{d}{d-2}\bg)^{\frac{d-2}{4}}-1\bg)^{-1}\bg(\mH^*(W)-\mH(u(0))\bg)\bg\}.
\end{align}
\end{itemize}
\end{lemma}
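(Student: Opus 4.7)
The plan is to prove all three parts by exploiting the decomposition $\mH(u)=\tfrac{1}{d}\|u\|_\tbs^\tbs+\tfrac{1}{2}\mK(u)$ (i.e.\ the identity $\mH=\mI+\tfrac{1}{2}\mK$) together with the variational results from Lemma~\ref{mtilde equal m df} and Lemma~\ref{no hat equal hat} and the sharp value $m_c=\mH^*(W)$ from Proposition~\ref{proposition mc for df}. Parts (i) and (ii) are essentially algebraic. Part (i) is immediate since $\tfrac{1}{d}\|u\|_\tbs^\tbs\geq 0$, with strict positivity when $\mK(u)>0$ because then $u\not\equiv 0$. In part (ii), \eqref{bound of tbs by 2 df} is just a rearrangement of $\mK(u)\geq 0$; the upper bound in \eqref{third df} comes from discarding the non-negative term $\tfrac{1}{\tbs}\|u\|_\tbs^\tbs$ in $\mH$ and using $\tfrac{1}{\tas}=\tfrac{d}{2(d+2)}$; the lower bound follows by first computing
\begin{equation*}
\mH(u)-\tfrac{1}{\tbs}\mK(u)=\tfrac{1}{d}\big(\|\nabla u\|_2^2+\tfrac{d}{d+2}\|u\|_\tas^\tas\big)
\end{equation*}
and then invoking $\mK(u)>0$ for $u\in\mA$.

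For part (iii), I would first establish invariance of $\mA$ under the NLS flow by contradiction: conservation of energy reduces the question to showing $\mK(u(t))>0$ throughout $I_{\max}$. If $\mK(u(t_0))\leq 0$ for some $t_0$, then continuity of $t\mapsto\mK(u(t))$ produces some $t^{*}$ with $\mK(u(t^{*}))=0$ and $u(t^{*})\not\equiv 0$. Lemma~\ref{mtilde equal m df}(iii) and Proposition~\ref{proposition mc for df} then yield
\begin{equation*}
\mH(u(t^{*}))=\mI(u(t^{*}))\geq \tm_{\mM(u(t^{*}))}=m_{\mM(u(t^{*}))}=\mH^{*}(W),
\end{equation*}
contradicting $\mH(u(t^{*}))=\mH(u(0))<\mH^{*}(W)$.

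The quantitative bound \eqref{lower bound kt df} mirrors the proof of Lemma~\ref{invariant lemma}. I would dichotomize on whether $\mK(u(t))-\tfrac{2}{d-2}\|u(t)\|_\tbs^\tbs\geq 0$ or is strictly negative. In the first subcase, algebraic manipulation combined with the lower bound from part (ii) yields $\mK(u(t))\geq\tfrac{4}{d}\mH(u(0))$. In the second subcase, Lemma~\ref{mtilde equal m df}(i) produces a unique $\ld_{*}>1$ with $\mK(T_{\ld_{*}}u(t))=0$; the scaling identity $\mK(T_{\ld_{*}}u(t))=0$ combined with the strict inequality then yields the uniform bound $\ld_{*}<\big(\tfrac{d}{d-2}\big)^{(d-2)/4}$, while a direct computation shows that the auxiliary function $\ld\mapsto\ld^{-2}\big(\mK(T_\ld u(t))-\tfrac{2}{d-2}\|T_\ld u(t)\|_\tbs^\tbs\big)$ is strictly decreasing, so that $\frac{d^2}{d\ld^2}\mH(T_\ld u(t))\leq 0$ on $[1,\ld_{*}]$. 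Taylor expansion then yields $(\ld_{*}-1)\mK(u(t))\geq \mH(T_{\ld_{*}}u(t))-\mH(u(t))\geq \mH^{*}(W)-\mH(u(0))$, where the last inequality uses Proposition~\ref{proposition mc for df} once more to bound $\mH(T_{\ld_{*}}u(t))$ from below.

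The principal obstacle I anticipate is in the second subcase of part (iii). Unlike the focusing-focusing regime treated in Lemma~\ref{invariant lemma}, Proposition~\ref{proposition mc for df} asserts that \eqref{variational problem for mc} has \emph{no} minimizer in the defocusing-focusing setting, so the crucial bound $\mH(T_{\ld_{*}}u(t))\geq m_{\mM(u(t))}=\mH^{*}(W)$ cannot be read off from an explicit ground state and must be extracted purely from the variational identity $m_c=\tm_c$ of Lemma~\ref{mtilde equal m df}(iii). Carefully tracking the signs in the convexity estimate for $\ld\mapsto\mH(T_\ld u(t))$ and in the scaling control of $\ld_{*}$ also requires attention, but these pieces are manageable once the right auxiliary quantity is identified.
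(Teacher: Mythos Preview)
Your proposal is correct and follows essentially the same route as the paper, which simply says the proof is a straightforward modification of Lemmas \ref{pos of k implies pos of h}, \ref{bound of gradient by energy}, and \ref{invariant lemma}; you have filled in precisely those modifications. One small slip: in the first subcase of (iii) you write ``the lower bound from part (ii)'' but you actually need the \emph{upper} bound $\mH(u)\leq \tfrac{1}{2}\big(\|\nabla u\|_2^2+\tfrac{d}{d+2}\|u\|_\tas^\tas\big)$ to go from $\mK(u(t))\geq\tfrac{2}{d}\big(\|\nabla u\|_2^2+\tfrac{d}{d+2}\|u\|_\tas^\tas\big)$ to $\mK(u(t))\geq\tfrac{4}{d}\mH(u(0))$.
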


\begin{proof}
This is a straightforward modification of Lemma \ref{pos of k implies pos of h}, Lemma \ref{bound of gradient by energy} and Lemma \ref{invariant lemma}, we therefore omit the details here.
\end{proof}

We now define the MEI-functional for \eqref{NLSdf}. Let $\Omega:=\R^2\setminus([0,\infty)\times [\mH^*(W),\infty))$ and let the MEI-functional $\mD$ be given by \eqref{MEI functional}. One has the following analogue of Lemma \ref{killip visan curve}.

\begin{lemma}\label{killip visan curve df}
Assume $v\in H^1(\R^d)$ such that $\mK(v)\geq 0$. Then
\begin{itemize}
\item[(i)]$\mD(v)=0$ if and only if $v=0$.
\item[(ii)] $0<\mD(v)<\infty$ if and only if $v\in\mA$.
\item[(iii)] $\mD$ leaves $\mA$ invariant under the NLS flow.
\item[(iv)] Let $u_1,u_2\in\mA$ with $\mM(u_1)\leq \mM(u_2)$ and $\mH(u_1)\leq \mH(u_2)$, then $\mD(u_1)\leq \mD(u_2)$. If in addition either $\mM(u_1)<\mM(u_2)$ or $\mH(u_1)<\mH(u_2)$, then $\mD(u_1)<\mD(u_2)$.
\item[(v)] Let $\mD_0\in(0,\infty)$. Then
\begin{align}
\|\nabla u\|^2_{2}&\sim_{\mD_0}\mH(u),\label{df 1}\\
\|u\|^2_{H^1}&\sim_{\mD_0}\mH(u)+\mM(u)\sim_{\mD_0}\mD(u)\label{df 2}
\end{align}
uniformly for all $u\in \mA$ with $\mD(u)\leq \mD_0$.
\item[(vi)] For all $u\in \mA$ with $\mD(u)\leq \mD_0$ with $\mD_0\in(0,\infty)$we have
\begin{align}\label{small of unaaa df}
|\mH(u)-\mH^*(W)|\gtrsim 1.
\end{align}
\end{itemize}
\end{lemma}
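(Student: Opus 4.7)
The plan is to mirror the proof of Lemma \ref{killip visan curve}, taking advantage of the fact that in the defocusing-focusing regime the admissible region $\Omega = \R^2 \setminus ([0,\infty) \times [\mH^*(W), \infty))$ is bounded by the straight horizontal line $\{h = \mH^*(W)\}$ rather than by the curve $c \mapsto m_c$. This yields the explicit expression $\mathrm{dist}((\mM(u), \mH(u)), \Omega^c) = \mH^*(W) - \mH(u)$ for $u \in \mA$, and hence
\begin{align*}
\mD(u) = \mH(u) + \frac{\mH(u) + \mM(u)}{\mH^*(W) - \mH(u)},
\end{align*}
which I will use throughout to replace the more delicate geometric arguments of Section \ref{section f f}.

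Items (i), (iii) and (iv) then become routine: (i) follows because $\mD(v) = 0$ forces both $\mH(v) = 0$ and $\mM(v) = 0$, hence $v = 0$; (iii) uses conservation of mass and energy combined with Lemma \ref{bound of gradient by energy df} (iii); and (iv) reduces to the separate monotonicity of $h \mapsto h + (h+c)/(\mH^*(W) - h)$ and $c \mapsto (h+c)/(\mH^*(W) - h)$ on the admissible set. Item (vi) is also immediate from the formula: if along some sequence $u_n \in \mA$ one had $\mH(u_n) \to \mH^*(W)$, the denominator would vanish while the numerator stays bounded away from zero, contradicting $\mD(u_n) \leq \mD_0$.

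For (v), the upper bound $\mH(u) + \mM(u) \lesssim \mD(u)$ follows directly from the explicit formula, whereas the reverse inequality requires a short case analysis according to whether $\mH(u) \leq \tfrac{1}{2}\mH^*(W)$ (in which case the denominator is bounded below by $\tfrac{1}{2}\mH^*(W)$) or $\mH(u) > \tfrac{1}{2}\mH^*(W)$ (in which case $\mH(u) \gtrsim 1$ alone dominates $\mD(u) \leq \mD_0$). The equivalence $\|\nabla u\|_2^2 \sim_{\mD_0} \mH(u)$ is then read off from \eqref{third df} together with the Gagliardo-Nirenberg inequality applied to $\|u\|_\tas^\tas$ and the uniform mass bound just obtained. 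The main obstacle I foresee is item (ii), which is the only point where nontrivial variational input is needed: to exclude the possibility $\mK(v) = 0$ under the hypothesis $\mK(v) \geq 0$ and $0 < \mD(v) < \infty$, I plan to argue by contradiction, invoking Lemma \ref{mtilde equal m df} (iii) together with Proposition \ref{proposition mc for df} to conclude $\mH(v) = \mI(v) \geq \tilde m_{\mM(v)} = m_{\mM(v)} = \mH^*(W)$, contradicting the strict inequality $\mH(v) < \mH^*(W)$ forced by the finiteness of $\mD(v)$.
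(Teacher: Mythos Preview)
Your proposal is correct and follows essentially the same route as the paper. The paper also uses the explicit formula $\mD(u)=\mH(u)+\dfrac{\mH(u)+\mM(u)}{\mH^*(W)-\mH(u)}$ throughout; the only cosmetic difference is in (v), where instead of your case split on $\mH(u)\lessgtr\tfrac12\mH^*(W)$ the paper observes directly from $\mD_0\geq \mH(u)/(\mH^*(W)-\mH(u))$ that $\mH^*(W)-\mH(u)\geq (1+\mD_0)^{-1}\mH^*(W)$, giving a uniform lower bound on the denominator in one stroke.
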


\begin{proof}
(i) to (iv) can be similarly proved as the ones from Lemma \ref{killip visan curve}, we omit the details here.

Next we verify (v). Let $u\in\mA$ with $\mD(u)\leq\mD_0$. Using \eqref{third df} we already have $\|\nabla u\|_2^2\leq d\mH(u)$. On the other hand, by the definition of $\mD$ it is readily to see that
\begin{align}\label{df 0}
\mD_0\geq \mD(u)=\mH(u)+\frac{\mH(u)+\mM(u)}{\mH^*(W)-\mH(u)}\geq \frac{\mM(u)}{\mH^*(W)},
\end{align}
which implies $\mM(u)\leq\mD_0\mH^*(W)$. Using Gagliardo-Nirenberg we infer that
\begin{align}
\frac{d}{d+2}\|u\|_\tas^\tas\leq \bg(\frac{\mM(u)}{\mM(Q)}\bg)^{\frac{2}{d}}\|\nabla u\|_2^2\leq
\bg(\frac{\mD_0\mH^*(W)}{\mM(Q)}\bg)^{\frac{2}{d}}\|\nabla u\|_2^2
\end{align}
Applying \eqref{third df} one more time we conclude that
\begin{align}\label{df 3}
\mH(u)\leq \frac{1}{2}\bg(\|\nabla u\|_2^2+\frac{d}{d+2}\|u\|_\tas^\tas\bg)\leq
\frac{1}{2}\bg(1+\bg(\frac{\mD_0\mH^*(W)}{\mM(Q)}\bg)^{\frac{2}{d}}\bg)\|\nabla u\|_2^2
\end{align}
and \eqref{df 1} and the first equivalence of \eqref{df 2} follow. From \eqref{df 0} it also follows $\mH(u)+\mM(u)\lesssim_{\mD_0} \mD(u)$. To prove the inverse direction, we first obtain that
\begin{align*}
\mD_0\geq \mD(u)=\mH(u)+\frac{\mH(u)+\mM(u)}{\mH^*(W)-\mH(u)}\geq \frac{\mH(u)}{\mH^*(W)-\mH(u)},
\end{align*}
which implies $\mH(u)\leq(1+\mD_0)^{-1}\mD_0\mH^*(W)$. Then
\begin{align*}
\mD(u)&= \mH(u)+\frac{\mH(u)+\mM(u)}{\mH^*(W)-\mH(u)}\leq \mH(u)+\frac{\mH(u)+\mM(u)}{(1-(1+\mD_0)^{-1}\mD_0)\mH^*(W)}\nonumber\\
&=\mH(u)+\frac{(1+\mD_0)(\mH(u)+\mM(u))}{\mH^*(W)},
\end{align*}
which finishes the proof of (v). For (vi), if this were not the case, then we could find a sequence $(u_n)_n\subset\mA$ such that
\begin{align}\label{small of un df}
\mH^*(W)-\mH(u_n)=o_n(1).
\end{align}
Then \eqref{small of un df} implies $\mH(u_n)\gtrsim 1$ and therefore
$$ \mD(u_n)\geq \frac{\mH(u_n)}{\mH^*(W)-\mH(u_n)}\to\infty,$$
which is a contradiction to $\mD(u_n)\leq \mD_0$. This completes the proof of (vi) and also the desired proof of Lemma \ref{killip visan curve df}.
\end{proof}

\begin{proof}[Proof of Theorem \ref{main theorem} for the defocusing-focusing regime]
The proof is almost identical to the one for the focusing-focusing regime, one only needs to replace the results from \cite{Dodson4dmassfocusing} applied in Lemma \ref{persistance l2} by the ones from \cite{dodson1d,dodson2d,dodson3d}, the arguments from Lemma \ref{killip visan curve} by the ones from Lemma \ref{killip visan curve df} and \eqref{small extinction ff} by \eqref{lower bound kt df}. We therefore omit the details here.
\end{proof}

\section{Scattering threshold, existence and non-existence of ground states for the focusing-defocusing \eqref{NLS double crit}} \label{section f d}
In this Section we prove Theorem \ref{main theorem} for the focusing-defocusing model and Proposition \ref{proposition for ground state}. Throughout the section, we assume that \eqref{NLS double crit} reduces to
\begin{align}\label{NLSfd}
i\pt u+\Delta u +|u|^{\frac{4}{d}}u-|u|^{\frac{4}{d-2}}u=0
\end{align}
The corresponding stationary equation reads
\begin{align}\label{standing wave eq fd}
-\Delta u+\omega u-|u|^{\frac{4}{d}}u+|u|^{\frac{4}{d-2}}u=0.
\end{align}
We also define the set $\mA$ by
\begin{align*}
\mA:=\{u\in H^1(\R^d):0<\mM(u)<\mM(Q)\}.
\end{align*}

\subsection{Monotonicity formulae and nonexistence of minimizers for $c\leq \mM(Q)$}
\begin{lemma}\label{pohozaev}
Suppose that $u$ is a solution of \eqref{standing wave eq fd}. Then
\begin{align}
0=&\|\nabla u\|_2^2+\omega\|u\|_2^2-\|u\|_\tas^\tas+\|u\|_\tbs^\tbs,\label{poho1}\\
0=&\|\nabla u\|_2^2+\frac{d}{d-2}\omega\|u\|_2^2-\frac{d^2}{d^2-4}\|u\|_\tas^\tas+\|u\|_\tbs^\tbs,\label{poho2}
\end{align}
and
\begin{align}
\omega\|u\|_2^2=\frac{2}{d+2}\|u\|_\tas^\tas.\label{poho3}
\end{align}
Moreover, if $u\neq 0$, then $\omega\in(0,\frac{2}{d}\bg(\frac{d}{d+2}\bg)^{\frac{d}{2}})$.
\end{lemma}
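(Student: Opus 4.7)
The plan is to derive the three integral identities via standard Pohozaev-type multiplier arguments, and then use an $L^2$-$L^\tbs$ interpolation inequality together with those identities to pin down the range of $\omega$.

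For \eqref{poho1}, I would pair \eqref{standing wave eq fd} with $\bar u$, integrate over $\R^d$, and integrate by parts on the Laplacian term. For the Pohozaev identity \eqref{poho2}, I would multiply the equation by the dilation multiplier $x\cdot\nabla\bar u$, take real parts, and integrate, invoking the standard identities
\[
\mathrm{Re}\int(-\Delta u)(x\cdot\nabla\bar u)\,dx=-\frac{d-2}{2}\|\nabla u\|_2^2,\qquad \mathrm{Re}\int |u|^{p-2}u(x\cdot\nabla\bar u)\,dx=-\frac{d}{p}\|u\|_p^p,
\]
valid for $p\in\{2,\tas,\tbs\}$ (the second follows from $\mathrm{Re}(|u|^{p-2}u\,x\cdot\nabla\bar u)=\tfrac{1}{p}x\cdot\nabla|u|^p$ followed by an integration by parts). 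After dividing through by $-(d-2)/2$, the coefficients collapse to those of \eqref{poho2}. Elliptic regularity for the stationary equation together with a standard exhausting cutoff argument justifies the integration by parts for $H^1$-solutions, so this step is purely routine. Finally, \eqref{poho3} drops out by subtracting \eqref{poho1} from \eqref{poho2}, which cancels both $\|\nabla u\|_2^2$ and $\|u\|_\tbs^\tbs$ and leaves only multiples of $\omega\|u\|_2^2$ and $\|u\|_\tas^\tas$.

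Positivity $\omega>0$ under the assumption $u\neq 0$ is immediate from \eqref{poho3}, since $\|u\|_\tas^\tas>0$ and $\|u\|_2^2>0$. For the upper bound I would first combine \eqref{poho1} and \eqref{poho3} to produce the auxiliary identity $\|\nabla u\|_2^2+\|u\|_\tbs^\tbs=\tfrac{d\omega}{2}\|u\|_2^2$, and then apply H\"older interpolation with exponent $\theta=d/(d+2)$ between $L^2$ and $L^\tbs$ to get $\|u\|_\tas^\tas\leq \|u\|_2^{4/d}\|u\|_\tbs^2$. Substituting \eqref{poho3} on the left yields $\|u\|_\tbs^2\geq \tfrac{d+2}{2}\omega\|u\|_2^{2-4/d}$, and raising to the power $\tbs/2$ gives the crucial lower bound $\|u\|_\tbs^\tbs\geq \bigl(\tfrac{d+2}{2}\omega\bigr)^{\tbs/2}\|u\|_2^2$, in which the seemingly awkward mass exponent $(2-4/d)\cdot\tbs/2$ simplifies to exactly $2$. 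Since $u\in H^1(\R^d)\setminus\{0\}$ forces $\|\nabla u\|_2>0$, the auxiliary identity delivers the strict upper bound $\|u\|_\tbs^\tbs<\tfrac{d\omega}{2}\|u\|_2^2$. Comparing the two estimates on $\|u\|_\tbs^\tbs$ cancels the $\|u\|_2^2$ factors entirely and leaves the pure inequality $\bigl(\tfrac{d+2}{2}\bigr)^{\tbs/2}\omega^{\tbs/2}<\tfrac{d\omega}{2}$ in $\omega$, which rearranges via $\tbs/2-1=2/(d-2)$ to $\omega<\tfrac{2}{d}\bigl(\tfrac{d}{d+2}\bigr)^{d/2}$.

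The main obstacle, or rather the only non-routine step, is choosing the right inequality for the upper bound: the naive Gagliardo-Nirenberg approach only reproduces the mass lower bound $\|u\|_2^2\geq \mM(Q)$ and fails to control $\omega$ from above. The decisive feature is that the interpolation exponent $\theta=d/(d+2)$ is tuned so that the mass exponent $(2-4/d)\cdot \tbs/2$ degenerates to $2$, which is exactly what lets the $\|u\|_2^2$ factors cancel and produces a dimensionless bound on $\omega$.
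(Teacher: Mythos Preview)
Your derivation of \eqref{poho1}--\eqref{poho3} and the positivity of $\omega$ matches the paper exactly. For the upper bound $\omega<\frac{2}{d}\bigl(\frac{d}{d+2}\bigr)^{d/2}$, however, you take a genuinely different route. The paper rewrites \eqref{poho2} as
\[
\int_{\R^d}|u|^2\Bigl(|u|^{\frac{4}{d-2}}-\tfrac{d^2}{d^2-4}|u|^{\frac{4}{d}}+\tfrac{d}{d-2}\omega\Bigr)\,dx=-\|\nabla u\|_2^2<0
\]
and then checks by elementary calculus that the bracket, viewed as a function of $t=|u|$, is pointwise nonnegative whenever $\omega\geq\frac{2}{d}\bigl(\frac{d}{d+2}\bigr)^{d/2}$, forcing a contradiction. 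Your argument instead stays at the level of integral quantities: combining \eqref{poho1} with \eqref{poho3} to isolate $\|u\|_{2^*}^{2^*}$ and $\|\nabla u\|_2^2$, and then closing via the H\"older interpolation $\|u\|_{2_*}^{2_*}\leq\|u\|_2^{4/d}\|u\|_{2^*}^2$.

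Both are correct and of comparable length. The paper's polynomial argument is slightly more self-contained (one need only minimize a one-variable function), while yours makes the obstruction to equality more transparent: sharpness would require both $\|\nabla u\|_2=0$ and equality in H\"older, which is clearly impossible for nonzero $u\in H^1(\R^d)$. Your observation that the exponent $(2-4/d)\cdot 2^*/2$ collapses to $2$ is exactly the algebraic coincidence that makes the bound scale-invariant, and it mirrors the fact that the paper's polynomial minimum is attained at a value of $t$ independent of $\|u\|_2$.
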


\begin{proof}
\eqref{poho1} follows from multiplying \eqref{standing wave eq fd} with $\bar{u}$ and then integrating by parts. \eqref{poho2} is the Pohozaev inequality, see for instance \cite{Berestycki1983}. \eqref{poho3} follows immediately from \eqref{poho1} and \eqref{poho2}. That $\omega>0$ for $u\neq 0$ follows directly from \eqref{poho3}. To see $\omega<\frac{2}{d}\bg(\frac{d}{d+2}\bg)^{\frac{d}{2}}$, one can easily check this by using the fact that the polynomial
\begin{align*}
t^{\frac{4}{d-2}}-\frac{d^2}{d^2-4}t^{\frac{4}{d}}+\frac{d}{d-2}\omega
\end{align*}
is non-negative for $\omega\geq \frac{2}{d}\bg(\frac{d}{d+2}\bg)^{\frac{d}{2}}$.
\end{proof}

\begin{lemma}
The mapping $c\to\gamma_c$ is non-positive on $(0,\infty)$ and equal to zero on $(0,\mM(Q)]$. Consequently, $\gamma_c$ has no minimizer for any $c\in (0,\mM(Q)]$.
\end{lemma}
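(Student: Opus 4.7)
The plan is a sandwich argument: upper-bound $\gamma_c$ by a scaling test function and lower-bound it through the sharp $L^2$-critical Gagliardo-Nirenberg inequality. First I would establish $\gamma_c\le 0$ on $(0,\infty)$. Given any $u\in H^1(\R^d)$ with $\mM(u)=c$, set
\begin{align*}
w_\ld(x):=\ld^{-d/2}u(\ld^{-1}x),
\end{align*}
so that $\mM(w_\ld)=c$ for every $\ld>0$, and a direct computation using the scaling behaviour of each functional gives
\begin{align*}
\mH(w_\ld)=\ld^{-2}\bg(\frac{1}{2}\|\nabla u\|_2^2-\frac{1}{\tas}\|u\|_\tas^\tas\bg)+\frac{\ld^{-\tbs}}{\tbs}\|u\|_\tbs^\tbs.
\end{align*}
Both $\ld^{-2}$ and $\ld^{-\tbs}$ vanish as $\ld\to\infty$, so $\mH(w_\ld)\to 0$. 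Since $\gamma_c\le\mH(w_\ld)$ for every $\ld>0$, this yields $\gamma_c\le 0$.

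Next, for $c\in(0,\mM(Q)]$ I would invoke the sharp Gagliardo-Nirenberg inequality in the form \eqref{def gn l2 crit}--\eqref{GN-L1}, which yields
\begin{align*}
\frac{1}{\tas}\|u\|_\tas^\tas\le\frac{1}{2}\bg(\frac{\mM(u)}{\mM(Q)}\bg)^{2/d}\|\nabla u\|_2^2.
\end{align*}
Inserting this into $\mH(u)$ gives
\begin{align*}
\mH(u)\ge\frac{1}{2}\bg(1-\bg(\frac{c}{\mM(Q)}\bg)^{2/d}\bg)\|\nabla u\|_2^2+\frac{1}{\tbs}\|u\|_\tbs^\tbs\ge 0
\end{align*}
for every admissible $u$, hence $\gamma_c\ge 0$. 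Combining the two bounds, $\gamma_c=0$ throughout $(0,\mM(Q)]$.

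For the nonexistence assertion, suppose toward contradiction that $u^*$ attains $\gamma_c=0$ with $\mM(u^*)=c\in(0,\mM(Q)]$. When $c<\mM(Q)$ the prefactor $1-(c/\mM(Q))^{2/d}$ is strictly positive, so the two nonnegative summands in the lower bound must individually vanish; in particular $\|u^*\|_\tbs=0$, forcing $u^*\equiv 0$ and contradicting $\mM(u^*)>0$. The step I expect to be mildly delicate is the endpoint $c=\mM(Q)$, where the gradient coefficient drops out and the Gagliardo-Nirenberg inequality is saturated along the rescalings of $Q$; here the defocusing $L^\tbs$-term plays the role of a tiebreaker, since the same estimate still gives $\mH(u^*)\ge\frac{1}{\tbs}\|u^*\|_\tbs^\tbs$, and $u^*\not\equiv 0$ forces this quantity to be strictly positive, contradicting $\mH(u^*)=0$.
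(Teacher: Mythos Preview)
Your proof is correct and follows essentially the same approach as the paper: both use the $L^2$-invariant dilation (your $w_\ld$ is the paper's $T_{\ld^{-1}}$) to push $\mH$ down to zero, then invoke the sharp Gagliardo--Nirenberg inequality \eqref{GN-L1} to obtain nonnegativity of $\mH$ on the constraint, and conclude nonexistence from the strict positivity of the $L^{\tbs}$ contribution for any nonzero candidate. Your explicit case split at the endpoint $c=\mM(Q)$ is slightly more detailed than the paper's one-line remark, but the underlying mechanism---the defocusing $\|u\|_\tbs^\tbs$ term acting as tiebreaker---is identical.
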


\begin{proof}
First we obtain that
\begin{align*}
\mH(T_\ld u)&=\frac{\ld^2}{2} \bg(\|\nabla u\|_2^2-\frac{d}{d+2} \|u\|_\tas^\tas\bg)+\frac{\ld^{\tbs}}{\tbs}\|u\|_\tbs^\tbs.
\end{align*}
By sending $\ld\to 0$ we see that $\gamma_c\leq 0$. On the other hand, using \eqref{GN-L2} we infer that
\begin{align*}
\mH(u)\geq \frac{1}{2}\bg(1-\bg(\frac{\mM(u)}{{\mM(Q)}}\bg)^{\frac{2}{d}}\bg)\|\nabla u\|_2^2+\frac{\ld^{\tbs}}{\tbs}\|u\|_\tbs^\tbs\geq 0
\end{align*}
for $\mM(u)\in(0, \mM(Q)]$. In particular, since $\bg(1-\bg(\frac{\mM(u)}{{\mM(Q)}}\bg)^{\frac{2}{d}}\bg)$ is non-negative for $\mM(u)\in(0, \mM(Q)]$, we deduce that $\mH(u)=0$ is only possible when $u=0$, which is a contradiction since $\mM(u)>0$. Thus there is no minimizer for $\gamma_c$ when $c\in(0,\mM(Q)]$.
\end{proof}

\begin{lemma}
The mapping $c\mapsto \gamma_c$ is monotone decreasing and $\gamma_c>-\infty$ on $(0,\infty)$. Moreover, $\gamma_c$ is negative on $(\mM(Q),\infty)$.
\end{lemma}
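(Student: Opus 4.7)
The statement decomposes into three essentially independent claims: (a) $\gamma_c>-\infty$ on $(0,\infty)$; (b) $c\mapsto\gamma_c$ is non-increasing on $(0,\infty)$; and (c) $\gamma_c<0$ for $c>\mM(Q)$. I expect part (a) to be the only genuinely delicate point, since there the defocusing $L^\tbs$-potential must be harnessed to tame the focusing mass-critical term; parts (b) and (c) reduce to standard test-function constructions, with (b) running parallel to the argument of Lemma \ref{monotone lemma}.

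For (a) my plan is first to interpolate $\|u\|_\tas^\tas\leq \|u\|_2^{4/d}\|u\|_\tbs^{2}$ by log-convexity of $L^p$-norms (the exponent relation $\frac{1}{\tas}=\frac{d/(d+2)}{\tbs}+\frac{2/(d+2)}{2}$ is easily verified), and then to apply Young's inequality with conjugate exponents $\tbs/2$ and $d/2$ to obtain, for every $\vare>0$,
\begin{align*}
\|u\|_\tas^\tas\leq \vare\|u\|_\tbs^\tbs+C_\vare\|u\|_2^2.
\end{align*}
Inserting this into the definition of $\mH$ and choosing $\vare$ so small that $\vare/\tas<1/\tbs$, the defocusing term absorbs the offending contribution and one is left with $\mH(u)\geq -C_\vare c/\tas$ whenever $\mM(u)=c$, establishing $\gamma_c>-\infty$.

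For (b), given $0<c_1<c_2$ and $\eta>0$, I would take a near-minimizer $u$ for $\gamma_{c_1}$ and glue in a spreading bump $v_{\lambda,y}(x):=\lambda^{d/2}\phi(\lambda(x-y))$ with $\phi\in C_c^\infty(\R^d)$ satisfying $\mM(\phi)=c_2-c_1$. Scaling preserves the mass while sending the gradient, $L^\tas$ and $L^\tbs$ norms of $v_{\lambda,y}$ to zero as $\lambda\to 0$; translating $y=y_\lambda$ far enough out makes the supports of $u$ and $v_{\lambda,y}$ disjoint, so that $\mH(u+v_{\lambda,y})=\mH(u)+\mH(v_{\lambda,y})=\mH(u)+o_\lambda(1)$ while $\mM(u+v_{\lambda,y})=c_2$, giving $\gamma_{c_2}\leq \gamma_{c_1}+\eta$ and hence $\gamma_{c_2}\leq\gamma_{c_1}$ after $\eta\to 0$. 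For (c), setting $a:=(c/\mM(Q))^{1/2}>1$ and $u:=aQ$ yields $\mM(u)=c$, and a direct scaling calculation gives
\begin{align*}
\mH(T_\lambda u)=\lambda^2\bg(\tfrac{1}{2}\|\nabla u\|_2^2-\tfrac{1}{\tas}\|u\|_\tas^\tas\bg)+\tfrac{\lambda^{2d/(d-2)}}{\tbs}\|u\|_\tbs^\tbs.
\end{align*}
The Pohozaev identity for $Q$ (equivalently, saturation of \eqref{GN-L1} at $Q$) supplies $\|\nabla Q\|_2^2=\tfrac{2}{\tas}\|Q\|_\tas^\tas$, so the $\lambda^2$-coefficient reduces to $-\tas^{-1}\|Q\|_\tas^\tas\,a^2(a^{4/d}-1)<0$; since $2d/(d-2)>2$, this negative term dominates for $\lambda$ small, forcing $\mH(T_\lambda u)<0$ and hence $\gamma_c<0$.
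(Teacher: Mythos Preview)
Your proof is correct; parts (a) and (c) are essentially identical to the paper's (the paper stops at $\mH(u)\geq -\tfrac{c^{2/d}}{\tas}\|u\|_\tbs^2+\tfrac{1}{\tbs}\|u\|_\tbs^\tbs$ and simply observes this polynomial in $\|u\|_\tbs$ is bounded below, while you push one step further via Young --- same content). One minor imprecision in (b): a near-minimizer $u$ for $\gamma_{c_1}$ need not be compactly supported, so ``translating $y$ far enough out makes the supports disjoint'' is not literally true; you would first truncate $u$ by a cutoff as in Lemma \ref{monotone lemma}, or else argue via vanishing cross terms as $|y|\to\infty$ and renormalize the mass. This is routine.

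The genuine difference is in (b). The paper bypasses the gluing entirely by exploiting the $\dot H^1$-invariant scaling $U_\lambda u(x):=\lambda^{(d-2)/2}u(\lambda x)$, under which $\|\nabla u\|_2$ and $\|u\|_\tbs$ are preserved while $\mM(U_\lambda u)=\lambda^{-2}\mM(u)$ and $\|U_\lambda u\|_\tas^\tas=\lambda^{-4/d}\|u\|_\tas^\tas$. Taking a minimizing sequence $(u_n)_n$ for $\gamma_{c_1}$ and setting $\lambda_*=\sqrt{c_1/c_2}<1$ gives $\mM(U_{\lambda_*}u_n)=c_2$ and $\mH(U_{\lambda_*}u_n)=\mH(u_n)+\tas^{-1}(1-\lambda_*^{-4/d})\|u_n\|_\tas^\tas\leq\mH(u_n)$, hence $\gamma_{c_2}\leq\gamma_{c_1}$ in one line. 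This scaling is tailor-made for the focusing-defocusing structure (the only term that moves under $U_\lambda$ is the focusing $L^\tas$ potential, and it moves in the favorable direction) and is cleaner than the gluing route; your approach, on the other hand, is more model-agnostic and would transfer to settings without such a convenient scaling.
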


\begin{proof}
We define the scaling operator $U_\ld$ by
\begin{align*}
U_\ld u(x):=\ld^{\frac{d-2}{2}}u(\ld x).
\end{align*}
Then
\begin{align*}
\mH(U_\ld u)&=\mH(u)+\frac{1}{\tas}(1-\ld^{-\frac{4}{d}})\|u\|_\tas^\tas,\\
\mM(U_\ld u)&=\ld^{-2}\mM(u).
\end{align*}
For $u\neq 0$ we see that $\mH(U_\ld u)\to-\infty$ and $\mM(U_\ld u)\to \infty$ as $\ld\to 0$, which implies that $\gamma_c<0$ for large $c$. Next we show the monotonicity of $c\mapsto \gamma_c$. Let $0< c_1<c_2<\infty$. By definition of $\gamma_{c_1}$ there exists a sequence $(u_n)_n\subset H^1(\R^d)$ satisfying
\begin{align*}
\mM(u_n)&=c_1,\\
\mH(u_n)&=\gamma_{c_1}+o_n(1).
\end{align*}
Let $\ld_*:=\sqrt{\frac{c_1}{c_2}}<1$. Then $\mM(U_{\ld_*}u_n)=c_2$ and we conclude that
\begin{align*}
\gamma_{c_1}=\mH(u_n)+o_n(1)\geq\mH(U_{\ld_*}u_n)+o_n(1)\geq \gamma_{c_2}+o_n(1).
\end{align*}
Sending $n\to\infty$ follows the monotonicity. To see that $\gamma_c$ is negative on $(\mM(Q),\infty)$, we define $S=tQ$ for some to be determined $t\in(1,\infty)$. Using Pohozaev we infer that
\begin{align*}
\|\nabla Q\|_2^2=\frac{d}{d+2}\|Q\|_\tas^\tas,
\end{align*}
which yields
\begin{align*}
\mH(T_\ld S)=-\frac{\ld^2}{\tas}(t^{\tas}-t^2)\|Q\|_\tas^\tas+\frac{\ld^\tbs}{\tbs}t^\tbs\|Q\|_\tbs^\tbs.
\end{align*}
By direct calculation we also see that
\begin{align*}
0<\ld<\bg(\frac{\tbs(t^\tas-t^2)\|Q\|_\tas^\tas}{\tas t^\tbs\|Q\|_\tbs^\tbs}\bg)^{\frac{d-2}{4}}\Rightarrow\mH(T_\ld S)<0.
\end{align*}
This shows that $\gamma_c<0$ on $(\mM(Q),\infty)$. Finally we show that $\gamma_c$ is bounded below. By H\"older inequality we obtain that
\begin{align*}
\|u\|_\tas^\tas\leq (\mM(u))^{\frac{2}{d}}\|u\|_\tbs^2.
\end{align*}
Then for $u\in H^1(\R^d)$ with $\mM(u)=c$ we have
\begin{align}\label{bounded below}
\mH(u)\geq -\frac{c^{\frac{2}{d}}}{\tas}\|u\|_\tbs^2+\frac{1}{\tbs}\|u\|_\tbs^\tbs.
\end{align}
But the function $t\mapsto -\frac{c^{\frac{2}{d}}}{\tas}t^2+\frac{1}{\tbs}t^\tbs$ is bounded below on $[0,\infty)$. This completes the proof.
\end{proof}

\subsection{Existence of minimizers of $\gamma_c$ for $c> \mM(Q)$}
\begin{lemma}\label{existence of ground state lemma}
For each $c>\mM(Q)$, the variational problem $\gamma_c$ has a minimizer which is positive and radially symmetric.
\end{lemma}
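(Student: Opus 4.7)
The plan is to solve the constrained minimization $\gamma_c$ by the direct method, using Schwarz rearrangement to assume radial monotonicity and the Strauss compact embedding $H^1_{\mathrm{rad}}\hookrightarrow L^\tas$ to avoid concentration-compactness dichotomy. Starting from a minimizing sequence $(u_n)_n$ with $\mM(u_n)=c$ and $\mH(u_n)\to\gamma_c<0$, I would replace each $u_n$ by its symmetric decreasing rearrangement $|u_n|^*$, which preserves $\mM$, $\|\cdot\|_\tas$, $\|\cdot\|_\tbs$ and lowers $\|\nabla\cdot\|_2$; hence I may assume $u_n\ge 0$ is radial and radially decreasing. The Hamiltonian lower bound \eqref{bounded below}, combined with the coercivity of $t\mapsto-\tfrac{c^{2/d}}{\tas}t^2+\tfrac{1}{\tbs}t^\tbs$ at infinity, furnishes a uniform bound on $\|u_n\|_\tbs$; the interpolation $\|u\|_\tas^\tas\le c^{2/d}\|u\|_\tbs^2$ then yields a uniform $H^1$ bound. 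Passing to a subsequence, $u_n\rightharpoonup u^*$ weakly in $H^1(\R^d)$ with $u^*\ge 0$ radial, and the Strauss compactness gives $u_n\to u^*$ strongly in $L^\tas$.

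Applying the Brezis--Lieb lemma to $v_n:=u_n-u^*$ decouples the $\mM$, $\|\nabla\cdot\|_2^2$ and $\|\cdot\|_\tbs^\tbs$ pieces modulo $o(1)$, while $\|v_n\|_\tas=o(1)$. Hence
\begin{align*}
\gamma_c=\mH(u^*)+\tfrac{1}{2}\|\nabla v_n\|_2^2+\tfrac{1}{\tbs}\|v_n\|_\tbs^\tbs+o(1)\ge\mH(u^*)+o(1),
\end{align*}
so $\mH(u^*)\le\gamma_c$. Setting $a:=\mM(u^*)\in[0,c]$, the remaining task is to rule out $a<c$. The cases $a=0$ (giving $\mH(u^*)=0$) and $0<a\le\mM(Q)$ (giving $\mH(u^*)\ge\gamma_a=0$) both contradict $\mH(u^*)\le\gamma_c<0$.

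The main obstacle is the case $\mM(Q)<a<c$, which requires upgrading the monotonicity of $c\mapsto\gamma_c$ to a strict one on $(\mM(Q),\infty)$. I would establish this via the scaling $u_t(x):=u(x/t)$, which satisfies $\mM(u_t)=t^d\mM(u)$ and
\begin{align*}
\mH(u_t)=t^d\mH(u)+\tfrac{t^{d-2}(1-t^2)}{2}\|\nabla u\|_2^2.
\end{align*}
Applied to any near-minimizer $u$ of $\gamma_a$ with $\mH(u)<\tfrac{1}{2}\gamma_a<0$ and the choice $t=(c/a)^{1/d}>1$, both summands are negative and $\gamma_c\le\mH(u_t)<t^d\mH(u)$; sending $\mH(u)\to\gamma_a$ yields $\gamma_c\le t^d\gamma_a<\gamma_a$, strictly. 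Since $\mH(u^*)\ge\gamma_a$ by definition and $\mH(u^*)\le\gamma_c$, this contradicts $\gamma_c<\gamma_a$. Hence $a=c$, $u^*$ attains $\gamma_c$, and all Brezis--Lieb inequalities are in fact equalities. As a constrained critical point, $u^*$ satisfies $-\Delta u^*+\omega u^*=(u^*)^{\tas-1}-(u^*)^{\tbs-1}$ for some Lagrange multiplier $\omega$, and Lemma \ref{pohozaev} pins down $\omega\in(0,\tfrac{2}{d}(\tfrac{d}{d+2})^{d/2})$. Since $u^*\ge 0$ is not identically zero, the strong maximum principle applied to the linearized operator $-\Delta+\omega-(u^*)^{\tas-2}+(u^*)^{\tbs-2}$ yields $u^*>0$ throughout $\R^d$.
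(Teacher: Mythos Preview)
Your argument is correct, but the paper rules out mass loss more directly and avoids the case split and the strict-monotonicity detour. Instead of establishing strict monotonicity on $(\mM(Q),\infty)$ via the dilation $u_t(x)=u(x/t)$, the paper applies the $\dot H^1$-critical scaling $U_\ld u(x)=\ld^{(d-2)/2}u(\ld x)$ directly to the weak limit $u$: this scaling leaves $\|\nabla u\|_2$ and $\|u\|_\tbs$ invariant and satisfies $\mM(U_\ld u)=\ld^{-2}\mM(u)$, $\mH(U_\ld u)=\mH(u)+\tfrac{1}{\tas}(1-\ld^{-4/d})\|u\|_\tas^\tas$. If $\mM(u)<c$, one can take $\ld<1$ close to $1$ so that $\mM(U_\ld u)<c$ still holds, while $\mH(U_\ld u)<\mH(u)\le\gamma_c$; this already contradicts the \emph{non-strict} monotonicity, since $\gamma_{\mM(U_\ld u)}\ge\gamma_c>\mH(U_\ld u)$ violates the definition of $\gamma_{\mM(U_\ld u)}$. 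Your route via strict monotonicity works and would be the natural choice had the $L^\tbs$-norm not been controlled by the scaling; here the defocusing energy-critical term makes the paper's one-step argument available, treating all cases $\mM(u)<c$ at once without needing $\gamma_a<0$.
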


\begin{proof}
Let $(u_n)_n\subset H^1(\R^d)$ be a minimizing sequence, i.e.
\begin{align*}
\mM(u_n)&=c,\\
\mH(u_n)&=\gamma_c+o_n(1).
\end{align*}
Since $\mH$ is stable under the Steiner symmetrization, we may further assume that $u_n$ is radially symmetric. Using \eqref{bounded below} we infer that
\begin{align*}
\gamma_c+o_n(1)\geq -\frac{c^{\frac{2}{d}}}{\tas}\|u_n\|_\tbs^2+\frac{1}{\tbs}\|u_n\|_\tbs^\tbs,
\end{align*}
thus $(\|u_n\|_\tbs)_n$ is a bounded sequence. Hence
\begin{align*}
\frac{1}{2}\|\nabla u_n\|^2_2\leq\gamma_c+o_n(1)+\frac{c^{\frac{2}{d}}}{\tas}\|u_n\|_\tbs^2\lesssim 1,
\end{align*}
and therefore $(u_n)_n$ is a bounded sequence in $H^1(\R^d)$. Up to a subsequence $(u_n)_n$ converges to some radially symmetric $u\in H^1(\R^d)$ weakly in $H^1(\R^d)$ and $\mM(u)\leq c$. By weak lower semicontinuity of norms and the Strauss compact embedding for radial functions we know that
\begin{align*}
\mH(u)\leq \gamma_c<0,
\end{align*}
and therefore $u\neq 0$. Suppose that $\mM(u)<c$. Then $\mM(U_\ld u)=\ld^{-2}\mM(u)<c$ for $\ld$ in a neighborhood of $1$ and
\begin{align*}
\mH(U_\ld u)&=\mH(u)+\frac{1}{\tas}(1-\ld^{-\frac{4}{d}})\|u\|_\tas^\tas=\gamma_c+\frac{1}{\tas}(1-\ld^{-\frac{4}{d}})\|u\|_\tas^\tas<\gamma_c
\end{align*}
for $\ld<1$ sufficiently close to $1$. This contradicts the monotonicity of $c\mapsto \gamma_c$, thus $\mM(u)=c$. By Lagrange multiplier theorem we know that any minimizer of $\gamma_c$ is automatically a solution of \eqref{standing wave eq fd} and thus the positivity of $u$ follows from the strong maximum principle. The proof is then complete.
\end{proof}

\begin{proof}[Proof of Proposition \ref{proposition for ground state}]
This follows immediately from Lemma \ref{pohozaev} to Lemma \ref{existence of ground state lemma}.
\end{proof}

\subsection{Scattering for the focusing-defocusing \eqref{NLS double crit}}
\begin{lemma}\label{invariant lemma fd}
Let $u$ be a solution of \eqref{NLSfd} with $u(0)\in \mA$. Then $u(t)\in {\mA}$ for all $t\in\R$. Assume also $\mM(u)=(1-\delta)^{\frac{d}{2}}\mM(Q)$, then
\begin{align}\label{lower bound kt fd}
\inf_{t\in I_{\max}}\mK(u(t))\geq2\mH(u(0)).
\end{align}
\end{lemma}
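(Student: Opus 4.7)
The plan is direct, as the focusing–defocusing sign configuration makes the desired bound essentially algebraic. First, I would establish globality and the invariance $u(t)\in\mA$. Mass conservation gives $\mM(u(t))=\mM(u(0))\in(0,\mM(Q))$ for all $t\in I_{\max}$, so membership in $\mA$ is automatic once $u(t)\in H^1(\R^d)$ is known. Global existence in $H^1$ has already been recorded in the paper (Zhang \cite{Zhang2006} and Tao–Visan–Zhang \cite{TaoVisanZhang}), but it can also be recovered on the spot: applying the sharp Gagliardo–Nirenberg estimate \eqref{GN-L2} with $(\mM(u)/\mM(Q))^{2/d}=1-\delta$ gives
\begin{align*}
\mH(u(t))\geq \frac{\delta}{2}\|\nabla u(t)\|_2^2+\frac{1}{\tbs}\|u(t)\|_\tbs^\tbs,
\end{align*}
so energy conservation furnishes the uniform bound $\|\nabla u(t)\|_2^2\leq \frac{2}{\delta}\mH(u(0))$, which rules out blow-up.

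For the virial estimate, the main ingredient is the pointwise identity recorded in the notations section, namely $\mI(u)=\mH(u)-\frac{1}{2}\mK(u)=\frac{\mu_2}{d}\|u\|_\tbs^\tbs$. Specialized to the defocusing energy-critical sign $\mu_2=-1$, this rearranges to
\begin{align*}
\mK(u)=2\mH(u)+\frac{2}{d}\|u\|_\tbs^\tbs\geq 2\mH(u).
\end{align*}
Applying this to $u(t)$ for each $t\in\R$ and invoking the conservation of energy $\mH(u(t))=\mH(u(0))$ immediately yields $\mK(u(t))\geq 2\mH(u(0))$, which is the desired inequality.

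There is no genuine obstacle here; in particular the hypothesis $\mM(u)=(1-\delta)^{d/2}\mM(Q)$ is used solely to secure globality, while the virial bound itself is unconditional on $\delta$. The contrast with Lemma \ref{invariant lemma} and Lemma \ref{bound of gradient by energy df} (iii) is instructive: in the focusing–focusing and defocusing–focusing regimes the sign $\mu_2=+1$ reverses the identity to $\mK-2\mH=-\frac{2}{d}\|u\|_\tbs^\tbs\leq 0$, which forces one to invoke the variational thresholds $m_c$ or $\mH^*(W)$ together with rescaling arguments along the curve $\ld\mapsto\mK(T_\ld u)$. Here the defocusing sign of the energy-critical nonlinearity provides a free non-negative remainder that does all the work.
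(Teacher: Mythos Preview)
Your proof is correct and follows essentially the same approach as the paper: membership in $\mA$ comes from mass conservation (since $\mA$ is defined solely by the mass condition in this regime), and the virial bound follows from the identity $\mK(u(t))=2\mH(u(t))+\frac{2}{d}\|u(t)\|_\tbs^\tbs\geq 2\mH(u(0))$ together with energy conservation. Your additional remarks on globality and the contrast with the $\mu_2=+1$ regimes are accurate but go beyond what the paper's short proof records.
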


\begin{proof}
That $u(t)\in\mA$ for all $t\in\R$ follows immediately from the conservation of mass. Moreover, \eqref{lower bound kt fd} follows from
\begin{align*}
\mK(u(t))=2\mH(u(t))+\frac{2}{d}\|u\|_\tbs^\tbs\geq 2\mH(u(0)),
\end{align*}
where we also used the conservation of energy.
\end{proof}

We now define the MEI-functional for \eqref{NLSfd}. Let $ \Omega:=(-\infty,\mM(Q))\times\R$ and let the MEI-functional $\mD$ be given by \eqref{MEI functional}. One has the following analogue of Lemma \ref{killip visan curve}.

\begin{lemma}\label{killip visan curve fd}
Assume $u\in H^1(\R^d)$. Then
\begin{itemize}
\item[(i)]$\mD(u)=0$ if and only if $u=0$.
\item[(ii)] $0<\mD(u)<\infty$ if and only if $u\in\mA$.
\item[(iii)] $\mD$ leaves $\mA$ invariant under the NLS flow.
\item[(iv)] Let $u_1,u_2\in\mA$ with $\mM(u_1)\leq \mM(u_2)$ and $\mH(u_1)\leq \mH(u_2)$, then $\mD(u_1)\leq \mD(u_2)$. If additionally either $\mM(u_1)<\mM(u_2)$ or $\mH(u_1)<\mH(u_2)$, then $\mD(u_1)<\mD(u_2)$.
\item[(v)] Let $\mD_0\in(0,\infty)$. Then
\begin{align}
\|\nabla u\|^2_{2}&\sim_{\mD_0}\mH(u),\label{fd 1}\\
\|u\|^2_{H^1}&\sim_{\mD_0}\mH(u)+\mM(u)\sim_{\mD_0}\mD(u)\label{fd 2}
\end{align}
uniformly for all $u\in \mA$ with $\mD(u)\leq \mD_0$.
\end{itemize}
\end{lemma}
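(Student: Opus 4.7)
The plan is to follow the template of Lemma \ref{killip visan curve} and Lemma \ref{killip visan curve df}, exploiting two simplifications specific to the focusing-defocusing regime. First, the admissible domain $\Omega=(-\infty,\mM(Q))\times\R$ is a half-plane, so $\mathrm{dist}((c,h),\Omega^c)=\mM(Q)-c$ for $(c,h)\in\Omega$. Second, the sharp $L^2$-critical Gagliardo--Nirenberg inequality \eqref{GN-L1}, together with the non-negativity of the defocusing energy-critical term, will yield for all $u\in H^1(\R^d)$ with $\mM(u)\leq\mM(Q)$ the bound
\begin{align*}
\mH(u)\geq\frac12\bg(1-\bg(\frac{\mM(u)}{\mM(Q)}\bg)^{2/d}\bg)\|\nabla u\|_2^2+\frac{1}{\tbs}\|u\|_\tbs^\tbs\geq 0,
\end{align*}
with equality only when $u=0$ (provided $\mM(u)<\mM(Q)$). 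This replaces the more delicate variational analysis needed in the other regimes.

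\textbf{Items (i)--(iv).} These are short bookkeeping arguments. For (i), $\mD(u)<\infty$ forces $(\mM(u),\mH(u))\in\Omega$, in which case both summands defining $\mD$ are non-negative; their simultaneous vanishing forces $u=0$, while the converse is immediate. For (ii), $u\in\mA$ means $\mM(u)\in(0,\mM(Q))$ and $u\neq 0$, giving $\mD(u)<\infty$ and, via the strict Gagliardo--Nirenberg inequality, $\mH(u)>0$ hence $\mD(u)>0$. Item (iii) follows from conservation of mass and energy together with Lemma \ref{invariant lemma fd}. For (iv), the map $f(c,h)=h+(h+c)/(\mM(Q)-c)$ has partial derivatives $\partial_h f=1+(\mM(Q)-c)^{-1}>0$ and $\partial_c f=(\mM(Q)+h)(\mM(Q)-c)^{-2}>0$ on the subdomain $\{(c,h)\in\Omega:h\geq 0\}$, giving the required strict monotonicity.

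\textbf{Item (v).} This is where the actual work lies. The key quantitative step is the implication $\mD(u)\leq\mD_0\Rightarrow\mM(u)\leq\frac{\mD_0}{1+\mD_0}\mM(Q)$, which follows from $\mD(u)\geq\mM(u)/(\mM(Q)-\mM(u))$ and produces a uniform coercivity gap $1-(\mM(u)/\mM(Q))^{2/d}\geq c_0(\mD_0)>0$. Plugged into Gagliardo--Nirenberg this yields $\|\nabla u\|_2^2\lesssim_{\mD_0}\mH(u)$, and the reverse inequality is a direct consequence of $\mH(u)\leq\frac12\|\nabla u\|_2^2+\tbs^{-1}\|u\|_\tbs^\tbs$ combined with Sobolev's inequality $\|u\|_\tbs\lesssim\|\nabla u\|_2$ and the a priori bound $\|\nabla u\|_2\lesssim_{\mD_0}1$. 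Adding $\mM(u)$ produces the first equivalence in \eqref{fd 2}. For $\mH(u)+\mM(u)\sim_{\mD_0}\mD(u)$, the lower bound uses $\mD(u)\geq(\mH(u)+\mM(u))/\mM(Q)$ and the upper bound uses $\mM(Q)-\mM(u)\geq(1+\mD_0)^{-1}\mM(Q)$, which together give $\mD(u)\leq\mH(u)+(1+\mD_0)\mM(Q)^{-1}(\mH(u)+\mM(u))\lesssim_{\mD_0}\mH(u)+\mM(u)$.

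\textbf{Expected difficulty.} I do not anticipate serious obstacles: the geometry of $\Omega$ is much simpler than in the focusing-focusing case (no ground-state curve $c\mapsto m_c$ to track), and the defocusing energy-critical nonlinearity automatically dominates the focusing mass-critical one via Gagliardo--Nirenberg, obviating the type of argument carried out in Lemma \ref{invariant lemma} for the other regimes. The only genuinely analytic step is the quantitative distance bound $\mM(u)\leq\mD_0(1+\mD_0)^{-1}\mM(Q)$, which exactly mirrors the corresponding step in Lemma \ref{killip visan curve}(v) with the added simplification that the role of the threshold curve $m_c$ is replaced by the constant value $\mM(Q)$.
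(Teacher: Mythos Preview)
Your proposal is correct and follows essentially the same route as the paper: items (i)--(iv) are dismissed as trivial bookkeeping, and for (v) both you and the paper derive the key bound $\mM(u)\leq\frac{\mD_0}{1+\mD_0}\mM(Q)$ from $\mD(u)\geq\mM(u)/(\mM(Q)-\mM(u))$, then combine Gagliardo--Nirenberg for the lower bound on $\mH$, Sobolev plus the a priori $\dot H^1$-bound for the upper bound, and the half-plane geometry of $\Omega$ for the equivalence $\mD(u)\sim_{\mD_0}\mH(u)+\mM(u)$.
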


\begin{remark}
Due to the positivity of the defocusing energy-critical potential we do not need to impose the additional condition $\mK(u)\geq 0$.
\end{remark}

\begin{proof}
(i) to (iv) are trivial. We still need to verify (v). Let $u\in\mA$ with $\mD(u)\leq\mD_0$. It is readily to see that
\begin{align}\label{fd 0}
\mD_0\geq \mD(u)=\mH(u)+\frac{\mH(u)+\mM(u)}{\mM(Q)-\mM(u)}\geq \frac{\mM(u)}{\mM(Q)-\mM(u)},
\end{align}
which implies $\mM(u)\leq(1+\mD_0)^{-1}\mD_0\mM(Q)$. Hence
\begin{align}\label{fd 3}
\mH(u)\geq\frac{1}{2}\bg(1-\bg(\frac{\mM(u)}{\mM(Q)}\bg)^{\frac{2}{d}}\bg)\|\nabla u\|_2^2+\frac{1}{\tbs}\|u\|_\tbs^\tbs
\geq \frac{1}{2}\bg(1-\big((1+\mD_0)^{-1}\mD_0\big)^{\frac{2}{d}}\bg)\|\nabla u\|_2^2.
\end{align}
Similarly, we obtain
\begin{align*}
\mD_0\geq \mH(u)\geq\frac{1}{2}\bg(1-\big((1+\mD_0)^{-1}\mD_0\big)^{\frac{2}{d}}\bg)\|\nabla u\|_2^2,
\end{align*}
which implies
\begin{align*}
\|\nabla u\|_2^2\leq \frac{2\mD_0}{1-\big((1+\mD_0)^{-1}\mD_0\big)^{\frac{2}{d}}}.
\end{align*}
Using Sobolev inequality and \eqref{fd 3} we obtain that
\begin{align}\label{fd 4}
\mH(u)&\leq\frac{1}{2}\|\nabla u\|_2^2+\frac{1}{\tbs}\|u\|_\tbs^\tbs
\leq\frac{1}{2}\|\nabla u\|_2^2+\frac{1}{\tbs}\|u\|_\tbs^\tbs\nonumber\\
&\leq\frac{1}{2}\|\nabla u\|_2^2+\frac{\mS^{-\frac{d}{d-2}}}{\tbs}\bg(\frac{2\mD_0}{1-\big((1+\mD_0)^{-1}\mD_0\big)^{\frac{2}{d}}}\bg)^{\frac{2}{d-2}}\|\nabla u\|_2^2.
\end{align}
\eqref{fd 1} and the first equivalence of \eqref{fd 2} now follow from \eqref{fd 3} and \eqref{fd 4}. From \eqref{fd 0} it also follows $\mH(u)+\mM(u)\lesssim_{\mD_0} \mD(u)$. That $\mD(u)\lesssim_{\mD_0}\mH(u)+\mM(u)$ follows immediately from
\begin{align*}
\mD(u)&= \mH(u)+\frac{\mH(u)+\mM(u)}{\mM(Q)-\mM(u)}\leq \mH(u)+\frac{\mH(u)+\mM(u)}{(1-(1+\mD_0)^{-1}\mD_0)\mM(Q)}\nonumber\\
&=\mH(u)+\frac{(1+\mD_0)(\mH(u)+\mM(u))}{\mM(Q)}.
\end{align*}
\end{proof}

\begin{proof}[Proof of Theorem \ref{main theorem} for the focusing-defocusing regime]
The proof is almost identical to the one for the focusing-focusing regime, one only needs to replace the results from \cite{KenigMerle2006,KillipVisan2010focusing,Dodson4dfocusing} applied in Lemma \ref{persistance h1} by the ones from \cite{defocusing3d,defocusing4d,defocusing5dandhigher}, the arguments from Lemma \ref{killip visan curve} by the ones from Lemma \ref{killip visan curve fd} and \eqref{small extinction ff} by \eqref{lower bound kt fd}. We therefore omit the details here.
\end{proof}

\addcontentsline{toc}{section}{Acknowledgments}
\subsubsection*{Acknowledgments}
\selectlanguage{German}
The author acknowledges the funding by Deutsche Forschungsgemeinschaft (DFG) through the Priority Programme SPP-1886.

\selectlanguage{English}

\appendix
\section{Endpoint values of the curve $c\mapsto m_c$ for the focusing-focusing \eqref{NLS double crit}}\label{section endpoint}
\begin{proposition}
Let $\mu_1=\mu_2=1$ and $m_c$ be defined through \eqref{variational problem for mc}. Let
\begin{align*}
m_0:=\lim_{c\,\downarrow \,0}m_c,\quad
m_{Q}:=\lim_{c\,\uparrow\, \mM(Q)}m_c.
\end{align*}
Then $m_0=\mH^*(W)$ and $m_{Q}=0$.
\end{proposition}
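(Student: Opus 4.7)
The plan is to establish the two endpoint limits separately. For each limit the easy direction uses the ``sharp'' form of the constraint $\mK(u)=0$ combined with Lemma~\ref{pos of k implies pos of h}, while the hard direction is the construction of near-optimal test functions living on the constraint manifold.

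\medskip

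I would first settle the trivial lower bound $m_Q\ge 0$. Indeed, every competitor $u$ in the variational problem defining $m_c$ satisfies $\mK(u)=0$, hence by Lemma~\ref{pos of k implies pos of h} one has $\mH(u)\ge 0$; taking infima gives $m_c\ge 0$ for each $c\in(0,\mM(Q))$, so $m_Q\ge 0$. For the matching upper bound, I would use the explicit test function $u_\alpha:=T_{\ld_\alpha}(\alpha Q)$ with $\alpha\in(0,1)$. Since $\mM(u_\alpha)=\alpha^2\mM(Q)$ and $Q$ satisfies the Pohozaev identity $\|\nabla Q\|_2^2=\tfrac{d}{d+2}\|Q\|_\tas^\tas$, one computes
\begin{align*}
\mK(T_\ld\alpha Q)=\ld^2(\alpha^2-\alpha^\tas)\|\nabla Q\|_2^2-\ld^\tbs\alpha^\tbs\|Q\|_\tbs^\tbs,
\end{align*}
so the unique zero $\ld_\alpha$ of $\mK(T_\ld\alpha Q)$ satisfies $\ld_\alpha^{\tbs-2}\to 0$ as $\alpha\uparrow 1$, because $\alpha^2-\alpha^\tas\to 0^+$ (recall $\tas>2$). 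Exploiting the identity $\mH(u_\alpha)=\mH(u_\alpha)-\tfrac{1}{\tbs}\mK(u_\alpha)=\tfrac{1}{d}\|\nabla u_\alpha\|_2^2-\tfrac{1}{d+2}\|u_\alpha\|_\tas^\tas$, one finds $\mH(u_\alpha)=\tfrac{\ld_\alpha^2}{d}(\alpha^2-\alpha^\tas)\|\nabla Q\|_2^2\to 0$, yielding $m_{\alpha^2\mM(Q)}\le\mH(u_\alpha)\to 0$ and hence $m_Q=0$.

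\medskip

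For the lower bound $m_0\ge\mH^*(W)$, I would proceed as follows. Fix a competitor $u$ with $\mM(u)=c$ and $\mK(u)=0$. The identity $\mK(u)=0$ rewrites as $\|\nabla u\|_2^2-\tfrac{d}{d+2}\|u\|_\tas^\tas=\|u\|_\tbs^\tbs$; coupling the sharp Gagliardo--Nirenberg inequality (with optimal constant $\mathrm{C}_{\mathrm{GN}}=\tfrac{d}{d+2}\mM(Q)^{2/d}$) on the left with the sharp Sobolev inequality on the right yields
\begin{align*}
\Bigl(1-(\mM(u)/\mM(Q))^{2/d}\Bigr)\|\nabla u\|_2^2\le\|u\|_\tbs^\tbs\le \csob^{-d/(d-2)}\|\nabla u\|_2^\tbs,
\end{align*}
which gives a lower bound $\|\nabla u\|_2^2\ge\csob^{d/2}(1-(c/\mM(Q))^{2/d})^{(d-2)/2}$. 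Inserting this into the representation $\mH(u)=\tfrac{1}{d}\|\nabla u\|_2^2-\tfrac{1}{d+2}\|u\|_\tas^\tas\ge\tfrac{1}{d}(1-(c/\mM(Q))^{2/d})\|\nabla u\|_2^2$ derived as in the first paragraph yields $\mH(u)\ge\mH^*(W)(1-(c/\mM(Q))^{2/d})^{d/2}$, and letting $c\downarrow 0$ we conclude $m_0\ge\mH^*(W)$.

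\medskip

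The main difficulty lies in the matching upper bound $m_0\le\mH^*(W)$, since the Aubin--Talenti optimizer $W$ itself does not belong to $L^2(\R^d)$ and therefore cannot be used directly. My strategy is a two-parameter approximation. For any $\eta>0$ pick $\psi\in C_c^\infty(\R^d)$ with $\|\psi-W\|_{\dot H^1}+\|\psi-W\|_\tbs\le\eta$ and arranged so that $\|\nabla\psi\|_2^2-\|\psi\|_\tbs^\tbs>0$ (a slight contraction of a smooth cut-off of $W$ does the job). Then apply the energy-critical rescaling $U_\mu\psi(x):=\mu^{(d-2)/2}\psi(\mu x)$, which preserves $\|\nabla\cdot\|_2$ and $\|\cdot\|_\tbs$, kills the mass ($\mM(U_\mu\psi)=\mu^{-2}\mM(\psi)$) and kills the $L^\tas$-norm at rate $\mu^{-4/d}$. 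Consequently $\mK(U_\mu\psi)\to\|\nabla\psi\|_2^2-\|\psi\|_\tbs^\tbs>0$ as $\mu\to\infty$; combined with $\mM(U_\mu\psi)\to 0<\mM(Q)$, Lemma~\ref{positive or negative k} furnishes a unique $\ld_\mu>1$ with $\mK(T_{\ld_\mu}U_\mu\psi)=0$, legitimizing $T_{\ld_\mu}U_\mu\psi$ as a competitor for $m_{\mu^{-2}\mM(\psi)}$. The delicate step is to track the limit of $\ld_\mu$: explicit computation gives $\ld_\mu^{\tbs-2}\to\|\nabla\psi\|_2^2/\|\psi\|_\tbs^\tbs=:\ld_0^{\tbs-2}$, and using $\ld_0^{\tbs-2}\|\psi\|_\tbs^\tbs=\|\nabla\psi\|_2^2$ the corresponding Hamiltonian collapses to
\begin{align*}
\lim_{\mu\to\infty}\mH(T_{\ld_\mu}U_\mu\psi)=\tfrac{\ld_0^2}{d}\|\nabla\psi\|_2^2=\tfrac{1}{d}\bigl(\|\nabla\psi\|_2^2/\|\psi\|_\tbs^2\bigr)^{d/2}.
\end{align*}
Sending $\eta\downarrow 0$, the right-hand side converges to $\tfrac{1}{d}\csob^{d/2}=\mH^*(W)$ by definition of the Sobolev constant, finishing the proof. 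The critical obstacle is precisely controlling that both $\ld_\mu$ stays bounded away from $0$ and $\infty$ and that the mass indeed collapses to $0$ along this sequence; both follow from the carefully chosen strict inequality $\|\nabla\psi\|_2^2>\|\psi\|_\tbs^\tbs$.
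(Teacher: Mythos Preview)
Your proof is correct. The approach largely parallels the paper's, with two differences worth noting.

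For $m_Q=0$, the paper uses a minimizing sequence $(u_n)_n$ for the Gagliardo--Nirenberg inequality rescaled to mass $\delta\mM(Q)$ and passes to the limit $\delta\uparrow 1$; you instead plug in the explicit optimizer $\alpha Q$, which is cleaner (no approximating sequence needed) but conceptually the same idea. For the lower bound $m_0\ge\mH^*(W)$, the paper argues via the optimizers $P_c$ whose existence is granted by Soave's Theorem~\ref{soave}, while you carry out the identical chain of inequalities directly for an arbitrary competitor $u$ with $\mK(u)=0$; your version is slightly more self-contained since it does not invoke the existence of minimizers.

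The genuine difference is in the upper bound $m_0\le\mH^*(W)$. The paper simply quotes Theorem~\ref{soave} (Soave's result that $m_c\in(0,\mH^*(W))$ for every $c\in(0,\mM(Q))$) and is done in one line. You instead give a self-contained construction: approximate $W$ by a compactly supported $\psi$, apply the $\dot H^1$-scaling $U_\mu$ to kill the mass and the $L^{\tas}$-norm, then project onto the constraint manifold via $T_{\ld_\mu}$ and compute the limit of the Hamiltonian explicitly. This is more work but avoids the dependence on the existence/variational-estimate machinery from \cite{SoaveCritical}. One small remark: the condition $\|\nabla\psi\|_2^2>\|\psi\|_\tbs^\tbs$ you impose is not actually needed---the formula $\ld_\mu^{\tbs-2}\to\|\nabla\psi\|_2^2/\|\psi\|_\tbs^\tbs\in(0,\infty)$ holds regardless of the sign, and the rest of the argument goes through verbatim.
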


\begin{proof}
By Theorem \ref{soave} we already know that $m_0\leq \mH^*(W)$. For $c\in(0,\mM(Q))$, let $P_c$ be an optimizer of the variational problem $m_c$, whose existence is guaranteed by Theorem \ref{soave}. We first show $m_0=\mH^*(W)$ and let $c\downarrow 1$. Then by $\mK(P_c)=0$ and \eqref{GN-L2} we obtain that
\begin{align}\label{lower bd mc}
m_c&=\mH(P_c)=\mH(P_c)-\frac{1}{\tbs}\mK(P_c)\nonumber\\
&=\frac{1}{d}\bg(\|\nabla P_c\|_2^2-\frac{d}{d+2}\|P_c\|_\tas^\tas\bg)\nonumber\\
&\geq\frac{1}{d}\bg(1-\bg(\frac{\mM(P_c)}{{\mM(Q)}}\bg)^{\frac{2}{d}}\bg)\|\nabla P_c\|_2^2\nonumber\\
&=\frac{1}{d}\bg(1-\bg(\frac{o_c(1)}{{\mM(Q)}}\bg)^{\frac{2}{d}}\bg)\|\nabla P_c\|_2^2.
\end{align}
Hence $(P_c)_{c\,\downarrow \,0}$ is bounded in $H^1(\R^d)$. On the other hand, using $\mK(P_c)=0$ and Sobolev inequality we infer that
\begin{align*}
\frac{1}{d}\bg(1-\bg(\frac{o_c(1)}{{\mM(Q)}}\bg)^{\frac{2}{d}}\bg)\|\nabla P_c\|_2^2
\leq \frac{1}{d}\bg(\|\nabla P_c\|_2^2-\frac{d}{d+2}\|P_c\|_\tas^\tas\bg)=\frac{1}{d}\|P_c\|_\tbs^\tbs\leq
\frac{\mS^{\frac{d}{2-d}}}{d}\|\nabla P_c\|_2^\tbs,
\end{align*}
which implies that (up to a subsequence) $l:=\lim_{c\,\downarrow \, 0}\|\nabla P_c\|_2^2>0$. But then by the Gagliardo-Nirenberg inequality and $\mK(P_c)=0$ we obtain that
\begin{align*}
\|\nabla P_c\|_2^\tbs\mS^{\frac{d}{2-d}}\geq\|P_c\|_\tbs^\tbs=\|\nabla P_c\|_2^2-\frac{d}{d+2}\|P_c\|_\tas^\tas
\geq \bg(1-\bg(\frac{o_n(1)}{{\mM(Q)}}\bg)^{\frac{2}{d}}\bg)\|\nabla P_c\|_2^2 \to l.
\end{align*}
Therefore $l^{\tbs}\mS^{\frac{d}{2-d}}\geq l$. Since $l\neq 0$, we infer that $l\geq \mS^{\frac{d}{2}}$. But then \eqref{lower bd mc} implies $m_c\geq \frac{\mS^{\frac{d}{2}}}{d}=\mH^*(W)$, which completes the proof.

Next we show $m_Q=0$. Let $(u_n)_n$ be a minimizing sequence for \eqref{def gn l2 crit}. By rescaling we may assume that $\mM(u_n)=\delta \mM(Q)$ and $\|u_n\|_\tas=1$ for a fixed $\delta\in(0,1)$ which will be sended to $1$ later. Then combining with \eqref{GN-L1} we obtain that $\|\nabla u_n\|_2^2=\frac{d}{d+2}\delta^{-\frac{2}{d}}+o_n(1)$. We then conclude that
\begin{align*}
\mK(T_\ld u_n)=\frac{d\ld^2}{d+2}\bg(\delta^{-\frac{2}{d}}-1+o_n(1)\bg)-\ld^\tbs\|u_n\|_\tbs^\tbs.
\end{align*}
By setting
\begin{align*}
\ld_{n,\delta}=\bg(\frac{d}{(d+2)\|u_n\|_\tbs^\tbs}\bg(\delta^{-\frac{2}{d}}-1+o_n(1)\bg)\bg)^{\frac{d-2}{4}}
\end{align*}
we see that $\mK(T_{\ld_{n,\delta}}u_n)=0$. By H\"older we deduce that
\begin{align*}
\|u_n\|_\tbs^\tbs\geq \mM(u_n)^{-\frac{2}{d-2}}\|u_n\|_\tas^{\frac{2(d+2)}{d-2}}=(\delta\mM(Q))^{-\frac{2}{d-2}}.
\end{align*}
We now choose $N=N(\delta)\in\N$ such that $|o_n(1)|\leq \delta^{-\frac{2}{d}}-1$ for all $n>N$. Summing up and using the definition of $m_c$ we finally conclude that
\begin{align*}
m_{\delta M(Q)}&\leq \sup_{n>N}\mH(T_{\ld_{n,\delta}}u_n)=\sup_{n>N}\bg(\mH(T_{\ld_{n,\delta}}u_n)-\frac{1}{2}\mK(T_{\ld_{n,\delta}}u_n)\bg)\nonumber\\
&=\sup_{n>N}\frac{1}{\tbs}\|T_{\ld_{n,\delta}}u_n\|_\tbs^\tbs=\sup_{n>N}
\frac{\ld_{n,\delta}^\tbs}{\tbs}\|u_n\|_\tbs^\tbs\nonumber\\
&\leq \frac{2^{\frac{d}{2}}}{\tbs}\bg(\frac{d}{d+2}\bg)^{\frac{d}{2}}(\delta^{-\frac{2}{d}}-1)\delta\mM(Q)\to 0
\end{align*}
as $\delta\to 1$. This proves $m_Q=0$.
\end{proof}

\addcontentsline{toc}{section}{References}
\bibliography{bib_double_crit}
\bibliographystyle{hacm}
\end{document}